\newtheorem{theorem}{Theorem}[section]
\newtheorem{proposition}[theorem]{Proposition}
\newtheorem{lemma}[theorem]{Lemma}
\numberwithin{equation}{section}
\newcommand \newperp  {\underline{\partial}_\perp}
\newcommand \sbar {{\bar s}}
\newcommand \bel {\be \label}
\newcommand \Lcal {\mathcal L}
\newcommand \phib {\overline \phi}
\newcommand \xb {\overline {x}}
\newcommand \delb {\overline {\del}}
\newcommand \hb{\overline h}
\newcommand \Hb{\overline H}
\newcommand \Tb {\overline {T}}
\newcommand \minb{\overline{m}}
\newcommand \Cbar {\overline C}
\newcommand \Phib{\overline{\Phi}}
\newcommand \Psib{\overline{\Psi}}
\newcommand \Kcal {\mathcal K}
\newcommand \Hcal {\mathcal H}
\newcommand \Boxt {\widetilde {\Box}}
\newcommand \del \partial
\newcommand \delu {\underline{\del}}
\newcommand \Tu {\underline{T}}
\newcommand {\Gammau} {\underline M}
\newcommand {\Thetau}{\underline{\Theta}}
\newcommand {\thetau}{\underline{\theta}}
\newcommand {\minu}{\underline{m}}
\newcommand \RR{\mathbb{R}}
\newcommand {\vep}{\varepsilon}
\newcommand {\gb}{\overline{g}}
\newcommand {\Mb}{\overline {M}}
\newcommand {\Rb}{\overline{R}}
\newcommand {\nablab}{\overline{\nabla}}
\let\oldmarginpar\marginpar
\renewcommand\marginpar[1]{\-\oldmarginpar[\raggedleft\footnotesize #1]%
{\raggedright\footnotesize #1}}
\newcommand \eps {\epsilon}
\newcommand \ih {\widehat {\imath}}
\newcommand \ic {\widecheck{\imath}}
\newcommand \jh {\widehat {\jmath}}
\newcommand \jc {\widecheck{\jmath}}
\newcommand \kc {\widecheck{k}}
\newcommand \alphar {{{\alpha}}}
\newcommand \betar {{{\beta}}}
\newcommand \Hf {{\textbf H}}
\newcommand \be {\begin{equation}}
\newcommand \ee {\end{equation}}
\begin{document}

\title[The nonlinear stability of Minkowski space for massive fields]{The global nonlinear stability of Minkowski space
\\ 
for self-gravitating massive fields.
\\ 
\Large 
T\MakeLowercase{he} W\MakeLowercase{ave-}K\MakeLowercase{lein}-G\MakeLowercase{ordon} M\MakeLowercase{odel}}


\author[Philippe G. L{\smaller e}FLOCH and Yue MA]{Philippe G. L{\scriptsize e}Floch and Yue Ma}

\address{PLF: Laboratoire Jacques-Louis Lions, Centre National de la Recherche Scientifique,
Universit\'e Pierre et Marie Curie (Paris VI), 4 Place Jussieu, 75252 Paris, France. 
\newline 
E-mail:{\tt contact@philippelefloch.org} 
\newline 
\newline 
YM:  School of Mathematics and Statistics, Xi'an Jiaotong University, Xi'an, 710049 Shaanxi, P.R. China. Email: {\tt yuemath@xjtu.edu.cn}
}


\date{} 

\ 

\begin{abstract} The Hyperboloidal Foliation Method (introduced by the authors in 2014) is extended here and applied to the Einstein equations of general relativity. Specifically, we establish the nonlinear stability of Minkowski spacetime for self-gravitating massive scalar fields, while existing methods only apply to massless scalar  fields. First of all, by analyzing the structure of the Einstein equations in wave coordinates, we exhibit a nonlinear wave-Klein-Gordon model defined on a curved background, which is the focus of the present paper. For this model, we prove here the existence of global-in-time solutions to the Cauchy problem, when the initial data have sufficiently small Sobolev norms. A major difficulty comes from the fact that the class of conformal Killing fields of Minkowski space is significantly reduced in presence of a massive scalar field, since the scaling vector field is not conformal Killing for the Klein-Gordon operator. Our method relies on the foliation (of the interior of the light cone) of Minkowski spacetime by hyperboloidal hypersurfaces and uses Lorentz-invariant energy norms. We introduce a frame of vector fields adapted to the hyperboloidal foliation and we establish several key properties: Sobolev and Hardy-type inequalities on hyperboloids, as well as sup-norm estimates which correspond to the sharp time decay for the wave and the Klein-Gordon equations. These estimates allow us to control interaction terms associated with the curved geometry and the massive field, by distinguishing between two levels of regularity and energy growth and by a successive use of our key estimates in order to close a bootstrap argument. 
\end{abstract}
\maketitle
 
\footnotetext[1]{Published in: {\it Communications in Mathematical Physics} (2016).} 
\setcounter{tocdepth}{1}
\tableofcontents



\section{Introduction}

\subsection{The global existence problem}

In this paper and its companion \cite{PLF-YM-two}, we study the global-in-time existence problem for small amplitude solutions to nonlinear wave equations, with a two-fold objective:

\begin{itemize}

\item First, we provide a significant extension of the Hyperboloidal Foliation Method, recently proposed by the authors \cite{PLF-MY-book}. This method is based on a foliation of the interior of the future  light cone by hyperboloidal hypersurfaces and on Sobolev and Hardy inequalities adapted to this foliation.  
This method takes its root in work by Klainerman \cite{Klainerman85} and, later on, H\"ormander \cite{Hormander} concerning the standard Klein-Gordon equation. In comparison to our earlier theory in \cite{PLF-MY-book}, we are now able to encompass a much broader class of coupled wave-Klein-Gordon systems.

\item Our second objective is to apply this method to the Einstein equations of general relativity and arrive at a new approach for proving the nonlinear stability of Minkowski spacetime. Our method covers self-gravitating {\sl massive} scalar fields (as will be presented in full details in \cite{PLF-YM-two}), while earlier works were restricted to vacuum spacetimes or to spacetimes with massless scalar fields;
cf.~Christodoulou and Klainerman \cite{CK}, and Lindblad and Rodnianski \cite{LR1,LR2}, as well as Bieri and Zipser \cite{BieriZipser}.

\end{itemize}

\noindent The problem of the global dynamics of self-gravitating massive fields had remained open until now. The presence of a mass term poses a major challenge in order to establish a global existence theory for the Einstein equations (and construct future geodesically complete spacetimes). Namely, the class of conformal Killing fields of Minkowski spacetime
is reduced in presence of a massive scalar field, since the so-called scaling vector field is no longer conformal Killing and, therefore, cannot be used in implementing Klainerman's vector field method \cite{Klainerman80,Klainerman85}. 

In suitably chosen coordinates, the Einstein equations take the form of a coupled system of nonlinear wave-Klein-Gordon equations. More precisely, as in \cite{LR2}, we introduce wave coordinates, also called harmonic or De Donder gauge \cite{Asanov}, which allows one to exhibit the (quasi-null, see below) structure of the Einstein equations. 
The Hyperboloidal Foliation Method \cite{PLF-MY-book} was introduced precisely to handle such systems.
Yet, due to the presence of metric-related terms in the system under consideration, an important generalization is required before we can tackle the Einstein equations. Proposing such a generalization is our main purpose in the present paper.

By imposing asymptotically flat initial data on a spacelike hypersurface with sufficiently small ADM mass, one can first solve the Cauchy problem for the Einstein equations within a neighborhood  of this hypersurface 
(see  \cite{PLF-MY-book} for a sketch of the argument\footnote{The time of existence of the solution can be made arbitrarily large for compactly supported initial data with sufficiently small norm, so that this neighborhood does contain a hyperboloidal hypersurface.}) 
and, next, formulate the Cauchy problem when the initial data are posed on a hyperboloidal hyperspace or, alternatively, on a hyperboloid for the flat Minkowski metric after introducing suitable coordinates. In fact, the hyperboloidal Cauchy problem is, both, geometrically and physically natural.
 More precisely, let us consider Minkowski spacetime in standard Cartesian coordinates $(t, x^1, x^2, x^3)$ and observe that points on a hypersurface of constant time $t$ cannot be connected by a timelike curve, while points on a hyperboloid can be connected by such curves. Hence, hyperboloidal initial data can be ``physically prepared", while data on standard flat hypersurfaces cannot. An alternative standpoint would be to pose the Cauchy problem on a light cone, but while it is physically appealing and the Cauchy problem on a light cone has not been proven to be convenient for global analysis.

We emphasize that hyperboloidal foliations were used by Friedrich \cite{Friedrich81,Friedrich83} in order to establish the stability of Minkowski space in the future of a hyperboloidal hypersurface.  
Hyperboloidal foliations have also been found to be very efficient in numerical computations \cite{Frauendiener,MoncriefRinne,Rinne,Zenginoglu}. 

As was demonstrated in \cite{PLF-MY-book} for a rather general class of nonlinear wave equations, analyzing the global existence problem is quite natural in the hyperboloidal foliation of Minkowski spacetime and, importantly, lead to {\sl uniform bounds} on the energy of the solutions. Before proceeding with further details, let us summarize the main features of the method we propose:

\begin{itemize}

\item {\bf Lorentz invariance.} We rely on the foliation of Minkowski space by hyperboloids (defined as the level sets of constant Lorentzian distance from some origin), so that the fundamental energy of the wave-Klein-Gordon equations 
remains invariant under Lorentz transformations of Minkowski spacetime. Observe that in our construction, all the hyperboloids are asymptotic to the same limiting cone and approach the same sphere at infinity. (In particular, no energy can escape through null infinity.) 

\item {\bf Smaller set of Killing fields.} We avoid using the scaling vector field $S := r\del_r + t\del_t$, which is the key in order to handle Klein-Gordon equations and cover the Einstein-matter system when the evolution equation for the matter is {\sl not conformally invariant}.

\item {\bf Sharp rate of time decay.}
In order to control source-terms related to the curved geometry,
we establish sharp pointwise bounds for solutions to wave equations and Klein-Gordon equations with source-terms.

\end{itemize}

In the rest of this introduction, we explain how to derive, from the Einstein equations, a {\sl model problem} which will be our main focus in the present paper.


\subsection{Einstein equations for massive scalar fields}

We thus consider the Einstein equations for an unknown spacetime $(M,g)$:
\be
\label{eq main geo 0}
R_{\alpha\beta} - {R \over 2} g_{\alpha\beta} = 8 \pi \, T_{\alpha\beta},
\ee
where $R_{\alpha\beta}$ denotes\footnote{Throughout, Greek indices $\alpha,\beta,\gamma$ take values $0,1,2,3$ and Einstein convention is used.}
the Ricci curvature tensor  
and $R= g^{\alpha\beta} R_{\alpha\beta}$ denotes the scalar curvature.
The matter is taken to be a massive scalar field with potential $V=V(\phi)$ and stress-energy tensor
\be
\label{eq tensor T}
T_{\alpha\beta} := \nabla_\alpha \phi \nabla_\beta \phi - \Big( {1 \over 2} \nabla_\gamma \phi \nabla^\gamma \phi + V(\phi) \Big) g_{\alpha\beta}
\ee
and, specifically,  
\be
V(\phi) := \frac{c^2}{2}\phi^2,
\ee
where $c^2>0$ represents the mass of the scalar field.
By applying $\nabla^\alpha$ to \eqref{eq tensor T} and using the Bianchi identity 
$$
\nabla^\alpha \big( R_{\alpha\beta} - (R/2) g_{\alpha\beta} \big) =0, 
$$
we easily check that the Einstein--scalar field system implies 
\begin{subequations}
\label{eq main geo}
\be
\label{eq main geo a}
R_{\alpha\beta} = 8\pi\big(\nabla_\alpha\phi\nabla_\beta\phi + V(\phi) \, g_{\alpha\beta}\big),
\ee
\bel{eq main geo b}
\Box_g\phi = V'(\phi) = c^2 \phi.
\ee
\end{subequations}

The Cauchy problem for the Einstein-scalar field equations is formulated as follows \cite{CB}. An initial data set consists of a Riemannian three-manifold $(\Mb,\gb)$, a symmetric two-tensor $K$ defined on $\Mb$, and
two scalar fields $(\phib_0, \phib_1)$ defined on $\Mb$.
We then seek for a $(3+1)$-dimensional Lorentzian manifold $(M,g)$ satisfying the following properties:
\begin{itemize}

\item There exists an embedding $i: \Mb\to M$ such that the induced metric $i^*(g)$ coincides with $\gb$, while  the second fundamental form of $i(\Mb) \subset M$ coincides with the prescribed two-tensor $K$. 

\item The restriction of $\phi$ and $\Lcal_\nu \phi$ to $i(\Mb)$ coincides with the data $\phi_0$ and $\phi_1$ respectively, where $\nu$ denotes the (future-oriented) unit normal to $i(\Mb) \subset M$.

\item Moreover, the manifold $(M,g)$ satisfies the Einstein equations \eqref{eq main geo}.
\end{itemize}

More precisely, one seeks for a {\sl globally hyperbolic development} of the given initial data, that is, a Lorentzian manifold such that every time-like geodesic extends toward the past direction in order to meet the initial hypersurface $\Mb$. Furthermore, a notion of {\sl maximal development} was defined by Choquet-Bruhat and Geroch \cite{ChoquetBruhatGeroch,CB} and such a development was shown to exist for a large class of matter models. The maximal development need not be future geodesically complete, and a main challenge in the field of mathematical general relativity is the construction of classes of future geodesically complete spacetimes. 

Furthermore, it should be emphasized that, in order to fulfill the equations \eqref{eq main geo}, the initial data set $(\Mb, \gb, K)$ cannot be arbitrary and must satisfy Einstein's constraint equations:
\bel{eq constraint}
\aligned
&  \Rb + K_{ij} \, K^{ij} - (K_i^i)^2   = 8\pi T_{00},
\\
& \nablab^i K_{ij} - \nablab_j K_l^l  = 8 \pi T_{0j},
\endaligned
\ee
where $\Rb$ is the scalar curvature of the metric $\gb$ and $\nablab$ denotes its Levi-Civita connection, and the terms $T_{00}$ and $T_{0i}$ are determined from the data $\phib_0$ and $\phib_1$.

Minkowski spacetime provides one with a trivial solution to the Einstein equations, which satisfies the Cauchy problem associated with the initial data $(\Mb,\gb,K,\phib_0,\phib_1)$ when $\Mb = \RR^3$ is endowed with  the standard Euclidian metric and $K \equiv 0$, while the matter terms vanish identically $\phib_0=\phib_1 \equiv 0$.  The question we address in the present paper is whether this solution is dynamically stable under small perturbations of the initial data. More precisely, given an initial data set $(\Mb, \gb, K, \phib_0, \phib_1)$ such that $\Mb$ is diffeomorphic to $\RR^3$, $\gb$ is close to the flat metric and $K, \phib_0, \phib_1$ are sufficiently small,
does the associated solution $(M,g)$ to the Einstein-massive scalar field system remain
close to the flat Minkowski spacetime $\RR^{1+3}$ ? 

Clearly, this nonlinear stability problem is of fundamental importance in physics. It is expected that Minkowski spacetime is the ground state state of the theory with the lowest possible energy. As far as massless scalar fields are concerned,  the nonlinear stability of Minkowski spacetime was indeed established in Christodoulou and Klainerman's pioneering work \cite{CK}. In the  present work (including \cite{PLF-YM-two}), we solve this question for {\sl massive} scalar fields.


\subsection{Einstein-scalar field equations in wave coordinates}\label{subsec intro 3} 

Our first task is to express the field equations \eqref{eq main geo} in a well-chosen coordinate system and then derive our wave-Klein-Gordon model problem. We follow \cite{CB,LR1} and work in wave coordinates satisfying, by definition, 
\be
\Box_g x^\alpha = 0. 
\ee 
We postpone to \cite{PLF-YM-two} the details of the derivation and directly write the formulation of the Einstein-massive scalar field equations in wave coordinates:
\begin{subequations}\label{eq main wavecoordinate}
\be
\Boxt_g g_{\alpha\beta}  = Q_{\alpha\beta}(g;\del g,\del g) + P_{\alpha\beta}(g;\del g,\del g)
- 16\pi\big(\del_\alpha\phi\del_\beta\phi + V(\phi)g_{\alpha\beta}\big),
\ee
\be
\Boxt_g \phi - V'(\phi) = 0,
\ee
\end{subequations}
where $\Boxt_g := g^{\alpha\beta} \del_{\alpha} \del_{\beta}$ is referred to as the (reduced) wave operator in curved space. In \eqref{eq main wavecoordinate}, we distinguish between several types of nonlinearity:

\begin{itemize}

\item {\bf Null terms.} 
The quadratic terms 
\be
\aligned
Q_{\alpha\beta}
:= \, &
  g^{\lambda\lambda'}g^{\delta\delta'}\del_{\delta}g_{\alpha\lambda'}\del_{\delta'}g_{\beta\lambda}
-g^{\lambda\lambda'}g^{\delta\delta'}\big
(\del_{\delta}g_{\alpha\lambda'}\del_{\lambda}g_{\beta\delta'} - \del_{\delta}g_{\beta\delta'}\del_{\lambda}g_{\alpha\lambda'}\big)
\\
&+g^{\lambda\lambda'}g^{\delta\delta'}
\big(\del_\alpha g_{\lambda'\delta'}\del_{\delta}g_{\lambda\beta} - \del_\alpha g_{\lambda\beta}\del_{\delta}g_{\lambda'\delta'}\big)
+\frac{1}{2}g^{\lambda\lambda'}g^{\delta\delta'}
\big(\del_\alpha g_{\lambda\beta}\del_{\lambda'}g_{\delta\delta'} - \del_\alpha g_{\delta\delta'}\del_{\lambda'}g_{\lambda\beta}\big)
\\
&+g^{\lambda\lambda'}g^{\delta\delta'}
\big(\del_\beta g_{\lambda'\delta'}\del_{\delta}g_{\lambda\alpha} - \del_\beta g_{\lambda\alpha}\del_{\delta}g_{\lambda'\delta'}\big)+\frac{1}{2}g^{\lambda\lambda'}g^{\delta\delta'}
\big(\del_\beta g_{\lambda\alpha}\del_{\lambda'}g_{\delta\delta'} - \del_\beta g_{\delta\delta'}\del_{\lambda'}g_{\lambda\alpha}\big)
\endaligned
\ee
are standard null forms with cubic corrections. Their treatment in a global existence proof
is a now classical matter and, in particular, are already dealt with by standard methods.  

\item  {\bf Quasi-null terms.} 
The quadratic terms 
\be
P_{\alpha\beta} 
: = - \frac{1}{2}g^{\lambda\lambda'}g^{\delta\delta'}\del_\alpha g_{\delta\lambda'}\del_\beta g_{\lambda\delta'}
+\frac{1}{4}g^{\delta\delta'}g^{\lambda\lambda'}\del_\beta g_{\delta\delta'}\del_\alpha g_{\lambda\lambda'}
\ee
are referred to as ``weak null" terms in \cite{LR1}, but we prefer to propose the new terminology 
{\sl ``quasi-null terms''.} As first noted in \cite{LR1}, quasi-null terms are found to be analogous to standard null terms, 
{\sl provided} the tensorial structure of the Einstein equations and the wave coordinate condition are carefully taken into account.

\item {\bf Curved metric terms.}
Setting now 
\be
h^{\alpha\beta} := g^{\alpha\beta} - m^{\alpha\beta}, 
\qquad 
h_{\alpha\beta} := m_{\alpha\beta} - g_{\alpha\beta}
\ee
and considering the term $\Boxt_g g_{\alpha\beta}$, we see that we must also treat the quasi-linear terms
$$
h^{\alpha'\beta'}\del_{\alpha'}\del_{\beta'}h_{\alpha\beta},
\qquad 
h^{\alpha'\beta'}\del_{\alpha'}\del_{\beta'}\phi.
$$
We will deal with these metric-related terms by the following two approaches:

\begin{itemize}

\item First, thanks to the wave coordinate condition, we can assume that $h^{\alpha\beta}$ behaves essentially like a null quadratic form and consider, therefore, that $h^{\alpha\beta}$ is null. More precisely, the first term
$$
h^{\alpha'\beta'}\del_{\alpha'}\del_{\beta'}h_{\alpha\beta},
\qquad h^{\alpha\beta}\text{ being a null form}
$$
can be treated by our arguments in \cite{PLF-MY-book}. 

\item The second quasi-linear term $h^{\alpha'\beta'}\del_{\alpha'}\del_{\beta'}\phi$ (without necessarily imposing the null condition) requires our new technique which is presented in this paper and is based on sharp sup-norm bounds for solutions to
wave equations and Klein-Gordon equations. 
\end{itemize}
\end{itemize}

Our aim is presenting first in a simplifed form several arguments that will be required to cope with the full system of Einstein equations in  \cite{PLF-YM-two}. In order to derive here a model problem, we proceed by removing (from \eqref{eq main wavecoordinate}): 
\begin{itemize}

\item the null terms $Q_{\alpha\beta}$
(which are handled in \cite{PLF-MY-book}), 

\item the quasi-null terms $P_{\alpha\beta}$ (postponed to \cite{PLF-YM-two}, where the structure of the Einstein equations and the wave coordinate condition will be discussed), and

\item the quasi-linear terms $h^{\alpha'\beta'}\del_{\alpha'}\del_{\beta'}h_{\alpha\beta}$
(to be treated by the wave coordinate condition and, in turn, the method already presented in \cite{PLF-MY-book}). 

\end{itemize} 
These formal simplifications, therefore, lead us to the model\footnote{Our convention for the wave operator is $\Box := -\del_t\del_t + \sum_a\del_a\del_a$.}
$$
\aligned
&\Box h_{\alpha\beta} = \del_\alpha\phi\del_\beta\phi + m_{\alpha\beta}V(\phi),
\\
&\Box \phi = H^{\alpha\beta}(h)\del_\alpha\del_\beta\phi + V'(\phi),
\endaligned
$$
with unknowns $h_{\alpha\beta}, \phi$ defined on Minkowski space,
where $H^{\alpha\beta}(h)$ can be assumed to depend linearly on $h_{\alpha\beta}$. We are primarily interested in the potential $V(\phi) = \frac{c^2}{2}\phi^2$ and, therefore after changing the notation, we arrive at the following system of two coupled equations: 
$$
\aligned
&-\Box u = P^{\alpha\beta}\del_\alpha v\del_\beta v + Rv^2,
\\
&-\Box v + H^{\alpha\beta} u\del_\alpha\del_\beta v + c^2 v = 0, 
\endaligned
$$
where $u, v$ are two scalar unknowns and $P^{\alpha\beta}, H^{\alpha\beta}, R, c$ are given constants (and only the 
obvious positivity condition $c^2>0$ is relevant).


\subsection{Analysis on the model problem}

As illustrated by the derivation above, in order to deal with the Einstein-massive scalar field equations, 
we must weaken a key assumption made in \cite{PLF-MY-book} and, as we will see, cope with wave equations posed on a curved space for which the Minkowski metric need not represent the underlying geometry in a sufficiently accurate manner. Namely, we recall that, in the notation of \cite[Section~1]{PLF-MY-book}, interaction terms like $u_{\ih}\del\del v_{\jc}$ involving components $u_{\ih}$ of wave equations and component $v_{\jc}$ of Klein-Gordon equations were not included in our theory. The same restriction was also assumed in a pioneering work by Katayama \cite{Katayama12a,Katayama12b} 
on wave-Klein-Gordon equations. In the present paper, we overcome this challenging difficulty and extend our earlier analysis (of the system (1.2.1) in \cite{PLF-MY-book} by now removing the condition (1.2.4e) therein). 

To this end, in the present paper, we derive and take advantage of two pointwise estimates:

\begin{itemize}

\item {\bf A sharp sup-norm estimate for solutions to the wave equation} in Minkowski space with source-term, as stated in Theorem~\ref{Linfini wave}, below. Suitable decay is assumed on the source-term, as is relevant for our analysis, and the proof is based on the solution formula available for the wave equation in flat space.

\item {\bf A sharp sup-norm estimate for solutions to the Klein-Gordon equation} in curved space in $(3+1)$-dimensions (as stated in Theorem~\ref{Linfini KG}, below). Our estimate is motivated by a pioneering work by Klainerman \cite{Klainerman85} on the global existence problem for small amplitude solutions to nonlinear Klein-Gordon equations in four spacetime dimensions.

\end{itemize}
\noindent  Note that an estimate as above could also be derived in $(2+1)$-dimensions with different rates \cite{Ma}. Global existence results for nonlinear Klein-Gordon equations were also established by Shatah in the pioneering work \cite{Shatah}. 
Klein-Gordon systems have received a lot of attention in the literature and we can, for instance, refer to \cite{Bachelot88,Bachelot94,Delort01,Delort04,Hormander,Klainerman85} and the references therein.

For clarity in the presentation, we do not treat the most general class of systems, but based on our formal derivation from the Einstein equations, we now study the following {\bf wave-Klein-Gordon model}:
\bel{eq main}
\aligned
&-\Box u = P^{\alpha\beta}\del_\alpha v\del_\beta v + Rv^2,
\\
&-\Box v + u \, H^{\alpha\beta} \del_\alpha\del_\beta v + c^2 v = 0,
\endaligned
\ee
with unknowns $u, v$ 
posed on Minkowski space $\RR^{3+1}$ and prescribed initial data\footnote{For convenience in the following proof and without loss of generality, we prescribe data at time $t=2$.} $u_0, u_1, v_0, v_1$ posed on the spacelike hypersurface $t=2$: 
\bel{initialdata}
\aligned
& u|_{t=2} = u_0, \quad &&\del_t u|_{t=2} = u_1,
\\
& v|_{t=2} = v_0, \qquad &&\del_t v|_{t=2} = v_1. 
\endaligned
\ee 
Here, $P^{\alpha\beta}, R, H^{\alpha\beta}, c$ are given constants, and the initial data are sufficiently smooth functions that are compactly
supported in the unit ball $\{ (x_1)^2 + (x_2)^2 + (x_3)^2 < 1\}$ with $x=(x_1, x_2, x_3) \in \RR^3$. 

We emphasize that, according to our analysis in Section~\ref{subsec intro 3}, \eqref{eq main} includes the essential difficulty arising in the Einstein-massive field system. Note in passing that, in \eqref{eq main}, 
there is no such term like $R u^2$ which would imply finite time blow-up (as first pointed out by John \cite{John}). 
Our main result in the present paper is as follows.

\begin{theorem}[Global existence theory for the wave-Klein-Gordon model]
\label{thm main}
Consider the nonlinear wave-Klein-Gordon system \eqref{eq main} for some given parameter values 
$P^{\alpha\beta}, R, H^{\alpha\beta}$ and $c>0$. 
Given any integer $N \geq 8$, there exists a positive constant $\vep_0=\vep_0(N)>0$ such that if the initial data satisfy
\bel{eq main initial}
\aligned
&\| (u_0, v_0) \|_{H^{N+1}(\RR^3)} + \| (u_1, v_1) \|_{H^N(\RR^3)} < \vep_0,
\endaligned
\ee
then the Cauchy problem  \eqref{eq main}-\eqref{initialdata} admits a global-in-time solution.
\end{theorem}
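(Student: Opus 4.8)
The plan is to prove global existence via a bootstrap (continuity) argument carried out on the hyperboloidal foliation $\{\Hcal_s\}_{s\geq 2}$ of the interior of the light cone, using the hyperboloidal energy functionals $E_m(s,\cdot)$ introduced in the companion book \cite{PLF-MY-book}. First I would set up a hierarchy of energy bounds distinguishing two levels of regularity: a high-order level (up to $N$ derivatives, where a slow growth $s^\delta$ is allowed) and a low-order level (up to some $N-k$, where the energy is bounded by a fixed small constant). Concretely, one assumes on a maximal interval $[2,s^*)$ bootstrap bounds of the schematic form $E_N(s,\del^I Z^J u)^{1/2}+E_{N,c^2}(s,\del^I Z^J v)^{1/2}\lesssim C_1\vep s^{\delta}$ for $|I|+|J|\leq N$, together with the sharper estimates $\lesssim C_1\vep$ at the lower order $|I|+|J|\leq N-k$, where $Z$ ranges over the boosts and translations (notably \emph{not} the scaling field), $\delta>0$ is small and $k$ is a fixed integer like $4$ or $5$. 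Local existence (standard) gives $s^*>2$ and the data assumption \eqref{eq main initial} gives the initial bound with constant $C_0$; the goal is to improve $C_1$ to $C_1/2$ on $[2,s^*)$, which forces $s^*=+\infty$.

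Next I would differentiate the system \eqref{eq main}: applying $\del^I Z^J$ commutes with $-\Box$ up to good commutators (the boosts commute with $\Box$ and with $c^2$, the translations exactly), and for the Klein-Gordon equation one must carefully commute through the quasilinear term $u\,H^{\alpha\beta}\del_\alpha\del_\beta v$, producing principal terms $u\,H^{\alpha\beta}\del_\alpha\del_\beta(\del^I Z^J v)$ plus lower-order products. The energy estimate on hyperboloids then yields, for each multi-index, an inequality of the form $\frac{d}{ds}E_m(s,w)^{1/2}\lesssim \|(\text{source})\|_{L^2(\Hcal_s)}+ (\text{quasilinear correction})$, where the quasilinear term $u\,H^{\alpha\beta}\del_\alpha\del_\beta w$ contributes both to the time-derivative of the energy (when it cannot be integrated by parts cleanly) and needs the sup-norm bound on $u$. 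The remaining ingredients are the key a priori tools from the excerpt and \cite{PLF-MY-book}: the hyperboloidal Sobolev inequality (to pass from $L^2$ energy to pointwise bounds, losing a few derivatives), the Hardy-type inequality on hyperboloids (to control $L^2$ norms of $u/r$-type terms appearing from $\Box u=\cdots$), and—crucially—the two sharp sup-norm estimates, Theorem~\ref{Linfini wave} for $u$ and Theorem~\ref{Linfini KG} for $v$, which give the sharp $t^{-1}$ (wave) and $t^{-3/2}$ (Klein-Gordon) interior decay needed to make the nonlinear integrals converge.

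The estimates then close as follows. In the wave equation $-\Box u = P^{\alpha\beta}\del_\alpha v\del_\beta v + Rv^2$, the right-hand side is quadratic in $v$ and its derivatives; using the Klein-Gordon energy bounds plus the sharp $v$ sup-norm decay, the $L^2(\Hcal_s)$ norm of $\del^IZ^J(\del v\,\del v + v^2)$ is integrable in $s$ (or grows only like $s^{\delta}$ at top order), so integrating the energy inequality recovers the improved bound for $u$. In the Klein-Gordon equation, the dangerous term is $u\,H^{\alpha\beta}\del_\alpha\del_\beta v$: one handles the highest-order piece $u\,H^{\alpha\beta}\del_\alpha\del_\beta(\del^IZ^J v)$ by the sharp sup-norm decay $|u|\lesssim C_1\vep\,t^{-1}$ (from Theorem~\ref{Linfini wave}, fed by the already-improved $u$ bound), which against $\del\del(\del^IZ^J v)$—itself controlled in $L^2$ up to the factor $(s/t)$ inherent to hyperboloidal energy—gives an $s^{-1+\delta}$-type integrand, marginally integrable; the lower-order commutator products use the low-order sharp bounds on $u$ together with moderate-order energy on $v$, or vice versa, exploiting the gap $k$ between the two regularity levels so that at least one factor always carries the strong $C_1\vep$ (non-growing) bound and enough decay. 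Finally the sup-norm inputs $|u|\lesssim t^{-1}$ and $|v|\lesssim t^{-3/2}$ are re-derived from the improved energy bounds via Theorems~\ref{Linfini wave} and~\ref{Linfini KG}, completing the bootstrap.

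The main obstacle is precisely the quasilinear Klein-Gordon interaction $u\,H^{\alpha\beta}\del_\alpha\del_\beta v$: it is exactly the term excluded in \cite{PLF-MY-book} and in Katayama's work, it couples the slow ($t^{-1}$) decay of the wave field to second derivatives of the massive field, and it sits at the principal-symbol level so it cannot simply be moved to the source. Controlling it requires (i) the \emph{sharp} rather than almost-sharp decay of $u$, which is why Theorem~\ref{Linfini wave} is stated with a genuine $t^{-1}$ and the right weighted hypotheses on the source, and (ii) organizing the two-tier energy hierarchy so the borderline $s^{-1}$ integrals only ever occur with a companion $s^{\delta}$ at top order (absorbed by a Gronwall-type argument with the small parameter) or with a strictly integrable surplus at lower order. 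Getting the exponents $\delta$, $k$, and the threshold $N\geq 8$ mutually consistent—so that every nonlinear term is either time-integrable or Gronwall-absorbable—is the delicate bookkeeping at the heart of the proof.
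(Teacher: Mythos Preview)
Your outline follows the paper's strategy closely and identifies all the right ingredients: two-tier bootstrap on hyperboloidal energies, Sobolev and Hardy inequalities on $\Hcal_s$, and the sharp sup-norm Propositions~\ref{Linfini wave} and~\ref{Linfini KG}. There is, however, one concrete point where your schematic bootstrap would fail to close: the high-order Klein-Gordon energy cannot be assumed to grow only like $s^{\delta}$. In the paper the hierarchy is graded by the number of boosts $k=|J|$ and reads $E_m(s,\del^IL^Ju)^{1/2}\leq C_1\vep s^{k\delta}$ for the wave component but $E_{m,c}(s,\del^IL^Jv)^{1/2}\leq C_1\vep s^{1/2+k\delta}$ for the Klein-Gordon component at high order $|I|+|J|\leq N$. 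The extra $s^{1/2}$ is forced: at top order the commutator satisfies only $\big\|[\del^IL^J,uH^{\alpha\beta}\del_\alpha\del_\beta]v\big\|_{L^2_f(\Hcal_s)}\lesssim (C_1\vep)^2 s^{-1/2+k\delta}$ (see \eqref{ineq lem energy higher 2}), not $s^{-1+\delta}$ as you suggest, because one factor in the product must carry nearly $N$ derivatives of $v$ and hence inherits the $s^{1/2}$ from the Klein-Gordon energy itself. Your ``marginally integrable'' $s^{-1+\delta}$ rate is valid only at low order $|I|+|J|\leq N-4$; at top order the integrand is genuinely non-integrable and the bootstrap must accommodate the resulting $s^{1/2+k\delta}$ growth.

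A second refinement your outline understates: obtaining the sharp rates $|L^Ju|\lesssim C_1\vep\, t^{-1}s^{k\delta}$ and $|\del^IL^Jv|\lesssim C_1\vep\,(s/t)^{2-C\delta}s^{-3/2+k\delta}$ is not a single application of Propositions~\ref{Linfini wave}--\ref{Linfini KG} but a multi-stage iteration (Section~7). A first crude bound on $u$ controls the curvature input $\int_{s_0}^s|h'_{t,x}(\lambda)|\,d\lambda$ and the correction terms $R_i[\del^IL^Jv]$ entering Proposition~\ref{Linfini KG}; this yields a first sharp bound on $v$, which feeds back into Proposition~\ref{Linfini wave} to sharpen $u$; and then an inner bootstrap-within-the-bootstrap, inducting on $k=|J|$ (Proposition~\ref{prop refine4 kde}), propagates the sharp decay through all boost orders. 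The $k$-graded growth rates are exactly what makes this induction run.
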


As done in \cite{PLF-YM-two}, the Cauchy problem can be reformulated with initial data prescribed on a hyperboloid and the smallness condition \eqref{eq main initial} leads to a similar smallness condition for the hyperboloidal initial data. As already pointed out in \cite{PLF-MY-book}, the presence of the quasi-linear term $u \, H^{\alpha\beta}\del_\alpha\del_\beta v$ may possibly change the asymptotic behavior of solutions for large times. In fact, our proof will only show that the lower-order energy of the wave component remains globally bounded for all times, while
the high-order energy of the wave component $u$ and the lower-order energy of the Klein-Gordon component $v$ could in principle grow at the rate $t^\delta$ for some (small) $\delta>0$. On the other hand, the higher-order energy associated with the Klein-Gordon component $v$ 
may significantly increase at the rate $t^{\delta+1/2}$ for some (small) $\delta>0$.

The proof of Theorem~\ref{thm main} will occupy the rest of this paper which we outline as follows:

\begin{itemize}

\item Proceeding with a bootstrap argument, we assume that, within some hyperbolic time interval, the hyperboloidal energy of suitable derivatives of the unknowns (up to a certain order) satisfy a set of bounds.

\item Our assumptions use {\sl two levels of regularity} and distinguish between the behavior of lower-order and higher-order energy norms, the low-order derivatives enjoying a much better control in time. Recall that, in \cite{PLF-MY-book}, we could already prove that the lower-order energy of the wave component is uniformly bounded in time,  
but 
the growth rate for  the high-order Klein Gordon energy was solely $t^{\delta}$.  

\item By Sobolev inequality (on hyperboloids), we can turn these $L^2$ type inequalities to a set of sup-norm estimates, which we refer to as {\sl basic decay estimates.} These decay estimates are not sharp enough in order to close our bootstrap argument.

\item Relying on these basic decay estimates, we establish {\sl refined decay estimates} by relying on two technical sup-norm estimates established below for wave equations and Klein-Gordon equations.

\item Equipped with these refined decay estimates, we are able to improve our initial assumptions and close the bootstrap argument.

\end{itemize}

Before we proceed with the details of the proof (which is rather long), the reader may find it useful to read through the following {\sl heuristic arguments} which rely on notations (only briefly explained here) to be rigorously introduced only later (in the course of the following three sections). Our proof proceeds with a bootstrap argument and considers the largest time interval $[2,s^*]$ (in the `hyperbolic time' $s$ defined as $s^2 = t^2 - r^2$) within which the following energy bounds hold: 
$$
\aligned
&E_m(s,\del^IL^J u)^{1/2}\leq C_1\vep s^{k\delta}, &&|J|=k, \quad |I|+|J|\leq N, \qquad
&&& \text{wave / high-order,}
\\
&E_m(s,\del^IL^J u)^{1/2}\leq C_1\vep, \quad && \hskip1.6cm  |I|+|J|\leq N-4, &&& \text{wave / low-order,}
\\
&E_{m,c^2}(s,\del^IL^J v)^{1/2}\leq C_1\vep s^{1/2+k\delta}, &&|J|=k, \quad |I|+|J|\leq N, &&& \text{Klein-Gordon / high-order,}
\\
&E_{m,c^2}(s,\del^IL^J v)^{1/2}\leq C_1\vep s^{k\delta}, &&|J|=k, \quad |I|+|J|\leq N-4 &&& \text{Klein-Gordon / low-order,}
\endaligned
$$
where $\eps, \delta, C_1$ are parameters. 
These bounds concern the energy of the wave component $u$ and the Klein-Gordon component $v$, and distinguish between low-order and high-order derivatives. We have denoted by $E_m$ the energy associated with the wave equation (for the flat metric $m$), while 
$\del^I$ are partial derivative operators and $L^J$ are combinations of Lorentz boosts (see below for details). 
The heart of our proof of Theorem~\ref{thm main} is proving that, by selecting a sufficiently large constant $C_1$ and sufficiently small $\vep, \delta>0$, the above energy bounds in fact imply the following improved energy bounds (obtained by replacing $C_1$ by $C_1/2$): 
$$
\aligned
&E_m(s,\del^IL^J u)^{1/2}\leq \frac{1}{2}C_1\vep s^{k\delta}, &&|J|=k, \quad |I|+|J|\leq N, \qquad
&&& \text{wave / high-order,}
\\
&E_m(s,\del^IL^J u)^{1/2}\leq \frac{1}{2}C_1\vep, \quad && \hskip1.6cm |I|+|J|\leq N-4, &&& \text{wave / low-order,}
\\
&E_{m,c^2}(s,\del^IL^J v)^{1/2}\leq \frac{1}{2}C_1\vep s^{1/2+k\delta}, &&|J|=k, \quad |I|+|J|\leq N, &&& \text{Klein-Gordon / high-order,}
\\
&E_{m,c^2}(s,\del^IL^J v)^{1/2}\leq \frac{1}{2}C_1\vep s^{k\delta}, &&|J|=k, \quad |I|+|J|\leq N-4 &&& \text{Klein-Gordon / low-order.}
\endaligned
$$
(Of course, it is then a standard matter to deduce from this property that, in fact, $s^*= +\infty$.)

To derive the improved energy bounds, we differentiate the equations \eqref{eq main} with $\del^IL^J$ with $|I|+|J|\leq N$: 
$$
\aligned
-\Box \del^I L^Ju 
& = \del^IL^J\left(P^{\alpha\beta}\del_{\alpha}v\del_{\beta}v\right) + \del^IL^J\left(Rv^2\right), 
\\
-\Box \del^IL^J v + u \, H^{\alpha\beta}\del^IL^J v+ c^2\del^IL^J v 
& = -[\del^IL^J, u \, H^{\alpha\beta}\del_{\alpha}\del_{\beta}]v. 
\endaligned
$$
For these differentiated equations, we perform energy estimates along the hyperboloidal foliation and we are led to seek for an integrable time decay for the following the three terms\footnote{See \eqref{add-form} for the notation.}:
\bel{eq:948}
\aligned
T_1^{I,J}(s):=& \big\|\del^IL^J\left(P^{\alpha\beta}\del_{\alpha}v\del_{\beta}v\right)\big\|_{L_f^2(\Hcal_s)},
\\
T_2^{I,J}(s) :=&  \big\|\del^IL^J\left(Rv^2\right)\big\|_{L_f^2(\Hcal_s)},
\\
T_3^{I,J}(s) :=& 
\big\|[\del^IL^J, u \, H^{\alpha\beta}\del_{\alpha}\del_{\beta}]v\big\|_{L_f^2(\Hcal_s)}.
\endaligned
\ee
For lower-order indices $|I|+|J|\leq N-4$, the terms $T_1^{I,J}(s)$ and $T_2^{I,J}(s)$ are easily controlled, since 
from the bootstrap assumption and the global Sobolev inequalities on hyperboloids we 
have (basic) decay estimates which lead to time-integrable bounds: 
\bel{eq:948A}
T_1^{I,J}(s) + T_2^{I,J}(s) \lesssim s^{-3/2+(k+2)\delta}, \qquad \text{ provided } |I|+|J|\leq N-4 \text{ with } |J| =k.
\ee
On the other hand, for higher-order derivatives these basic decay rates are not sufficient and we can not conclude directly. In addition, for the third term $T_3^{I,J}(s)$ (for arbitrary $|I|+|J|$), we also cannot conclude directly and we need  sharper pointwise decay. 

To overcome this challenge, we rely on our $L^{\infty}$--$L^{\infty}$ sharp decay estimates, established below in Proposition \ref{Linfini wave} (for the wave component) and Proposition \ref{Linfini KG} (for the Klein-Gordon component). These $L^{\infty}$--$L^{\infty}$ bounds allow us to improve the basic pointwise estimates, and we find (for all $|I|+|J|\leq N-4$):  
$$
\aligned
|L^Iu| & \lesssim C_1\vep \, t^{-1}s^{k\delta},
\\
|\del^IL^Jv| & \lesssim C_1\vep \, (s/t)^{2-7\delta}s^{-3/2+k\delta},
\\
|\del^IL^J \del_{\alpha}v| & \lesssim C_1\vep \, (s/t)^{1-7\delta}s^{-3/2+k\delta}.
\endaligned
$$
Returning to our bootstrap assumptions, we thus see that for all  $|I|+|J|\leq N$  
\bel{eq:948B}
\aligned
\left\|\del^IL^J\left(\del_{\alpha}v\del_{\beta}v\right)\right\|_{L_f^2(\Hcal_s)}
\simeq& \sum_{I_1+I_2=I\atop J_1+J_2=J}\|\del^{I_1}L^{J_1}\del_{\alpha}v \, \del^{I_2}L^{J_2}\del_{\beta}v\|_{L_f^2(\Hcal_s)}
\\
\lesssim&C_1\vep s^{-3/2} \|\del^{I_2}L^{J_2}\del_{\beta}v\|_{L_f^2(\Hcal_s)}\lesssim (C_1\vep)^2s^{-1+k\delta}
\endaligned
\ee
(by assuming, without loss of generality, $|I_1|+|J_1|\leq N-4$ in the above calculation). 
Similarly, we also obtain 
\bel{eq:948C}
\|\del^IL^J \left(v^2\right)\|_{L_f^2(\Hcal_s)}\lesssim (C_1\vep)^2s^{-1+k\delta}.
\ee
We thus succeed to uniformly control the terms $T_1^{I,J}(s)$ and $T_2^{I,J}(s)$ (for all relevant $I, J$), and this is already sufficient to conclude with the desired improved energy bounds for the {\sl wave component.} 

Dealing with the last term $T_3^{I,J}(s)$ arising in the equation of the Klein-Gordon component is more delicate. Observe that the commutator is a linear combination of the following three types of terms:
\bel{eq:liste} 
\aligned
& (\del^{I_1}L^{J_1}u) \del^{I_2}L^{L_2}\del_{\alpha}\del_{\beta}v, \qquad && I_1+I_2=I, \quad J_1+J_2=J,  \quad  |I_1|\geq 1, 
\\
& (L^{J_1'}u) \del^IL^{J_2'}\del_{\alpha}\del_{\beta}v, && J_1'+J_2'=J, \quad J_1'\geq 1, 
\\
&u\del_{\alpha}\del_{\beta}\del^IL^{J'}v, && J'\leq J-1. 
\endaligned
\ee
The first expression above is directly controled thanks to the available sharp decay estimate, while for the second and third ones and due to the presence of the term $L^Ju$, a refined decay estimates and a Hardy-type inequality (for the hyperboloidal foliation) must be used, as we now explain. 

Let us begin by discussing derivatives of higher-order and consider (for instance) the second type of terms in \eqref{eq:liste}: for all $|J_1'|\leq N-4$, we use the sharp decay bound $|L^Iu| \lesssim C_1\vep t^{-1}s^{k\delta}$ combined with the energy bound on $\del_{\alpha}\del_{\beta}v$ (implied by our bootstrap assumption). When $|I|+|J_2'|\leq N-4$, the sharp bound $|\del^IL^J \del_{\alpha}v| \lesssim C_1\vep \, (s/t)^{1-7\delta}s^{-3/2+k\delta}$ and Hardy's inequality are used. We thus find 
\bel{eq:948D}
\big\|[\del^IL^J,H^{\alpha\beta}\del_{\alpha}\del_{\beta}]v\big\|_{L_f^2(\Hcal_s)}\lesssim (C_1\vep)^2s^{-1/2+k\delta}.
\ee

Dealing with lower-order derivatives is easier and, again, we take the second type of terms in \eqref{eq:liste} as an example: for $|J_1'|\leq |I|+|J|\leq N-4$, we apply directly the sharp bound $|L^Iu| \lesssim C_1\vep \, t^{-1}s^{k\delta}$ and the energy bound given by our bootstrap assumption. This leads us to the stronger decay 
\bel{eq:948E}
\big\|[\del^IL^J,H^{\alpha\beta}\del_{\alpha}\del_{\beta}]v\big\|_{L_f^2(\Hcal_s)}\lesssim (C_1\vep)^2s^{-1+k\delta}.
\ee
In conclusion, in view of \eqref{eq:948A}--\eqref{eq:948E}, we can gain enough time decay for all of the terms arising in the evolution of our energy expressions and, therefore, the energy estimate on the hyperboloidal foliation leads us to the desired improved energy bounds.


\subsection{A general class of wave-Klein-Gordon systems}

The technique presented here applies immediately to a much broader class of systems. Indeed, it applies to the following system of wave--Klein-Gordon equations
\be
\label{main eq main}
\begin{cases}
\aligned
& \Box u_i + B^{j\alpha\beta} u_j \del_\alpha\del_\beta u_i 
= 
F_i(u,\del u, v, \del v) 
= P_i^{jk\alpha\beta} \del_\alpha u_j \del_\beta u_k + R_i v^2 + S_i^{\alpha\beta} \del_\alpha v \del_\beta v,
\\
& \Box v + B^{j\alpha\beta} u_j \del_\alpha \del_\beta v - c^2 v^2 = 0,
\endaligned
\\
\aligned
& w_i|_{t=2} =  {w_i}_0, \qquad \qquad && v|_{t=2} = v_0,
\\
& \del_t w_i|_{t=2} =  {w_i}_1,  && \del_t v|_{t=2} = v_1,
\endaligned
\end{cases}
\ee
with unknowns $u=(u_i)$ ($1\leq i \leq n$) and $v$ defined on Minkowski space $\RR^{3+1}$,
while ${w_i}_0, v_0,  {w_i}_1, v_1$ are prescribed initial data and $c>0$ is a constant.
As usual, we assume the symmetry conditions
\bel{pre condition symmetry}
B^{j\alpha\beta} = B^{j\beta\alpha}
\ee
and our main assumption is the null condition for the wave components $w_i$:
\index{null}
\bel{main structure c}
\aligned
&  B^{j\alphar\betar}\xi_{\alphar}\xi_{\betar}
= P_{i}^{jk\alphar\betar}\xi_{\alphar}\xi_{\betar} = 0
 \qquad
\text{ for all } (\xi_0)^2 - \sum_a (\xi_a)^2 = 0.
\endaligned
\ee

In the earlier work \cite{PLF-MY-book}, the nonlinear terms $B^{j\alpha\beta} w_j\del_\alpha\del_\beta v$
(actually denoted $B_{\ic}^{\jh \kc\alphar\betar} w_{\jh}\del_{\alphar}\del_{\betar}w_{\kc}$ therein) were assumed to be vanishing, and in fact this was our only genuine restriction 
since, with such terms, solutions may not have the same time decay and asymptotics of solutions as the ones of the homogeneous linear wave-Klein-Gordon equations in Minkowski space.
With the new technique in the present paper, the hyperboloidal foliation method does extend to encompass these terms (provided $B^{j\alpha\beta}$ is a null quadratic form).

Let us consider the initial value problem \eqref{main eq main} with sufficiently smooth initial data posed on the spacelike hypersurface $\{t= 2\}$ of constant time and compactly supported in the ball $\{t= 2; \, |x|\leq 1\}$. Under the conditions \eqref{pre condition symmetry}--\eqref{main structure c}, there exists a real $\eps_0>0$ such that, for all initial data
${w_i}_0, {w_i}_1, v_0, v_1: \RR^3 \to \RR$ satisfying the smallness condition
\be
\sum_i \| ({w_i}_0, v_0) \|_{\Hf^{N+1}(\RR^3)} + \| ({w_i}_1, v_1) \|_{\Hf^N(\RR^3)} < \eps_0,
\ee
the Cauchy problem  \eqref{main eq main} admits a unique, smooth global-in-time solution. In addition, the lower-order
energy of the wave components
remains globally bounded in time.


\section{The hyperboloidal foliation method}
\label{sec the hyper}

\subsection{The semi-hyperboloidal frame}

We begin with basic notions and consider the $(3+1)$-dimensional Minkowski space with signature $(-,+,+,+)$. In canonical Cartesian coordinates, we write $(t,x) = (x^0,x^1,x^2,x^3)$
and $r^2 := |x|^2 = (x^1)^2 + (x^2)^2 + (x^3)^2$. In addition to the partial derivative fields $\del_t=\del_0$ and $\del_a$, we will also use the {\sl Lorentz boosts} (for $a=1,2,3$): 
\be
L_a := x^a\del_t + t\del_a
      = x_a\del_0 - x_0\del_a,
\ee
where we raise and lower indices with the Minkowski metric.

More precisely, throughout, we analyze solutions defined in the interior of the future light cone
$$
\Kcal := \{(t,x) \, / \, r<t-1\}
$$
with vertex $(1,0,0,0)$, and we introduce the following foliation of the interior of the cone ${\big\{ (t,x) \, / \, |x| < t \big\}}$
by hyperboloidal hypersurfaces with hyperbolic radius $s$: 
$$
\Hcal_s := \big\{(t,x) \, / \, t^2-r^2 = s^2; \quad t>0 \big\}. 
$$
 The sub-domain of $\Kcal$ limited by two hyperboloids (with $s_0 < s_1$) is denoted by
$$
\Kcal_{[s_0,s_1]} := \big\{ (t,x) \, / \, s_0^2 \leq t^2-r^2 \leq s_1^2; \quad r<t-1 \big\}
\subset \Kcal. 
$$
Observe that the hyperboloids eventually ``exit'' the region $\Kcal$ and are asymptote to the same light cone
$\big\{ t - r = 0 \big\}$. 

The semi-hyperboloidal frame, as we call it, is defined by rescaling the Lorentz boosts: 
\be
\delu_0 := \del_t,
\qquad \delu_a:= \frac{x_a}{t}\del_t + \del_a \qquad (a=1,2,3). 
\ee
Observe that the vectors $\delu_a$ generates the tangent space to the hyperboloids.
Furthermore, we also introduce the vector field  
$\newperp  : = \del_t + \frac{x^a}{t}\del_a$,  
which is orthogonal to the hyperboloids for the Minkowski metric. (This vector field also coincides, up to an essential factor $1/t$, with the scaling vector field $S$.)

To make explicit the change of frame formulas 
$\delu_\alpha = \Phi_\alpha^\beta \del_\beta$ and
$\del_\alpha = \Psi_\alpha^\beta \delu_\beta$, we need the following matrices
$$
\big(\Phi_\alpha^{\beta}\big)
= \big({\Phi_\alpha}^{\beta}\big)
=
\left(
\aligned
&1 &&0 &&&0 &&&&0
\\
&x^1/t &&1 &&&0 &&&&0
\\
&x^2/t &&0 &&&1 &&&&0
\\
&x^3/t &&0 &&&0 &&&&1
\endaligned
\right),
\qquad
\qquad
\big(\Psi_\alpha^{\beta}\big)
= \big({\Psi_\alpha}^{\beta}\big)
=
\left(
\aligned
&1 &&0 &&&0 &&&&0
\\
-&x^1/t &&1 &&&0 &&&&0
\\
-&x^2/t &&0 &&&1 &&&&0
\\
-&x^3/t &&0 &&&0 &&&&1
\endaligned
\right).
$$
Any tensor can be expressed in either the Cartesian natural frame $\{\del_\alpha\}$ or the semi-hyperboloidal frame $\{\delu_\alpha\}$. We use standard letters for components in the Cartesian frame and we use underlined letters for components in the semi-hyperboloidal frame, so that, for example,
$
T^{\alpha\beta}\del_\alpha\otimes\del_\beta = \Tu^{\alpha\beta}\delu_\alpha\otimes \delu_\beta,
$
and the relations between $T^{\alpha\beta}$ and $\Tu^{\alpha\beta}$ are
$$
\Tu^{\alpha\beta} = \Psi_{\alpha'}^{\alpha}\Psi_{\beta'}^{\beta}T^{\alpha'\beta'},
\qquad
T^{\alpha\beta} = \Phi_{\alpha'}^{\alpha}\Phi_{\beta'}^{\beta}\Tu^{\alpha'\beta'}.
$$

Associated with the semi-hyperboloidal frame, we have the dual semi-hyperboloidal frame
\be
\thetau^0:= dt - \frac{x^a}{t}dx^a,\qquad \thetau^a: = dx^a,
\ee
and the relations between the semi-hyperboloidal dual frame and the standard dual frame are
$
\thetau^{\alpha} = \Psi_{\alpha'}^{\alpha}dx^{\alpha'}$, 
$
dx^{\alpha} = \Phi^{\alpha}_{\alpha'}\thetau^{\alpha'}.
$
Hence, for any two-tensor $T_{\alpha\beta}dx^\alpha\otimes dx^{\beta} = \Tu_{\alpha\beta} \thetau^{\alpha}\otimes \thetau^{\beta}$, we have the change of basis formulas
$$
\Tu_{\alpha\beta} = T_{\alpha'\beta'}\Phi_\alpha^{\alpha'}\Phi_\beta^{\beta'},
\qquad
T_{\alpha\beta} = \Tu_{\alpha'\beta'}\Psi_\alpha^{\alpha'}\Psi_\beta^{\beta'}.
$$

With the above notation, in the semi-hyperboloidal frame we can express the Minkowski metric and its inverse as
$$
\aligned
&\minu_{\alpha\beta} =
\left(
\begin{array}{cccc}
-1 &-x^1/t &-x^2/t &-x^3/t
\\
-x^1/t &1-(x^1/t)^2 &-x^1x^2/t^2 &-x^1x^3/t^2
\\
-x^2/t &-x^2x^1/t^2 &1-(x^2/t)^2 &-x^2x^3/t^2
\\
-x^3/t &-x^3x^1/t^2 &-x^3x^2/t^2 &1-(x^3/t)^2
\end{array}
\right),
\\
&\minu^{\alpha\beta} =
\left(
\begin{array}{cccc}
-s^2/t^2 &-x^1/t &-x^2/t &-x^3/t
\\
-x^1/t &1 &0&0
\\
-x^2/t &0 &1 &0
\\
-x^3/t &0 &0 &1
\end{array}
\right).
\endaligned
$$

Furthermore, given any multi-index $I = (\alpha_n,\alpha_{n-1},\ldots, \alpha_1)$ (where the order is chosen for convenience), we denote by 
$
\del^I := \del_{\alpha_n}\del_{\alpha_{n-1}} \ldots \del_{\alpha_1}
$
the product of $n=|I|$ partial derivatives  (with $0 \leq \alpha_i\leq 3$) and, similarly, by
$
L^J = L_{a_n}L_{a_{n-1}} \ldots L_{a_1}
$
the product of $n=|J|$ Lorentz boosts (with $1\leq a_i\leq 3$).


\subsection{The hyperbolic variables and the hyperboloidal frame}

Within the  future cone $\Kcal$, we introduce the change of variables
\bel{Hyper vairables}
\aligned
&\xb^0 = s: = \sqrt{t^2 - r^2},
\qquad
\xb^a = x^a,
\endaligned
\ee
together with the corresponding natural frame
\bel{Hyper frame}
\aligned
&\delb_0 := \del_s = \frac{s}{t}\del_t  = \frac{\sqrt{t^2-r^2}}{t}\del_t,
\\
&\delb_a := \del_{\xb^a} = \frac{\xb^a}{t}\del_t + \del_a = \frac{x^a}{t}\del_t + \del_a,
\endaligned
\ee
which we refer to as the {\sl hyperboloidal frame.}
The transition matrices between the hyperboloidal frame and the Cartesian frame are
$$
\big(\Phib^{\beta}_\alpha\big)
=
\big({\Phib^{\beta}}_\alpha\big)
 = \left(
\begin{array}{cccc}
s/t &0 &0 &0
\\
x^1/t &1 &0 &0
\\
x^2/t &0 &1 &0
\\
x^3/t &0 &0 &1
\end{array}
\right),
$$ 
$$
\big(\Phib^{\beta}_\alpha\big)^{-1}
 = \big(\Psib^{\beta}_\alpha\big) = \big({\Psib^{\beta}}_\alpha\big)
= \left(
\begin{array}{cccc}
t/s &0 &0 &0
\\
-x^1/s &1 &0 &0
\\
-x^2/s &0 &1 &0
\\
-x^3/s &0 &0 &1
\end{array}
\right),
$$
so that
$
\delb_\alpha = \Phib^{\beta}_\alpha\del_\beta
$
and
$
\del_\alpha = \Psib^{\beta}_\alpha\delb_\beta.
$

The dual hyperboloidal frame then reads
$
d\xb^0 := ds = \frac{t}{s}dt - \frac{x^a}{s}dx^a$ and
$d\xb^a := dx^a$. 
The Minkowski metric in the hyperboloidal frame reads\footnote{Our sign convention is opposite to the one in our monograph \cite{PLF-MY-book}, since the metric here has signature $(-, +, +, +)$.}
$$
\minb^{\alpha\beta} = \left(
\begin{array}{cccc}
-1 &-x^1/s &-x^2/s &-x^3/s
\\
-x^1/s &1 &0 &0
\\
-x^2/s &0 &1 &0
\\
-x^3/s &0 &0 &1
\end{array}
\right).
$$

In summary, an arbitrary tensor can be expressed in three different frames: the standard frame $\{\del_\alpha\}$, the semi-hyperboloidal frame $\{\delu_\alpha\}$, or the hyperboloidal frame $\{\delb_\alpha\}$.
We use symbols, underlined symbols, and overlined symbols for tensor components in these frames, respectively. For example, a tensor $T^{\alpha\beta}\del_\alpha\otimes \del_\beta$ is written as
$$
T^{\alpha\beta}\del_\alpha\otimes\del_\beta = \Tu^{\alpha\beta}\delu_\alpha\otimes \delu_\beta = \Tb^{\alpha\beta}\delb_\alpha\otimes\delb_\beta, 
$$
where 
$\Tb^{\alpha\beta} = \Psib_{\alpha'}^{\alpha}\Psib_{\beta'}^{\beta}T^{\alpha'\beta'}$
and, moreover, by setting $C :=\max_{\alpha\beta}|T^{\alpha\beta}|$, we have in the hyperboloidal frame
\be
|\Tb^{00}|\leq C(t/s)^2,
\qquad |\Tb^{a0}|\leq C \, (t/s),
\qquad |\Tb^{ab}|\leq C.
\ee


\subsection{Energy estimate on hyperboloids}

Throughout this paper, for any function $u=u(t,x)$ defined in $\RR^{3+1}$ (or a subset of it), we consider the integral of 
on the hyperboloids $\Hcal_s$ defined as follows: 
\bel{add-form}
\| u \|_{L_f^1(\Hcal_s)}
:= 
\int_{\Hcal_s} u \, dx = \int_{\RR^3}u\big(\sqrt{s^2+r^2},x\big)dx
\ee
The subscript refer the fact that we are endowing $\Hcal_s$ with the {\sl flat} Euclidian metric. We emphasize that this is {\sl not } an integration with respect to the induced Riemannian metric and volume form which should be $(s/t) \, dx$. This notation will be more convenient for our analysis in this paper but, of course, it is completely  straightforward to re-state all of our estimates by including the weight $s/t$. 

Consider the hyperboloidal foliation of a region $\Kcal_{[2,s_1]} = \bigcup_{2\leq s\leq s_1}\Hcal_{s}$, together with the  {\sl hyperboloidal energy} (associated to the Minkowski metric) at some hyperbolic time $s \in [2, s_1]$ 
\begin{equation}\label{eq hyper-energy}
\aligned
E_{m,c}(s,u):=& \int_{\Hcal_s}\Big((\del_tu)^2 + \sum_a(\del_au)^2 + 2(x^a/t)\del_tu\del_au + c^2u^2\Big) dx
\\
=&\int_{\Hcal_s} \Big(\big((s/t)\del_t u\big)^2 + \sum_a (\delu_a u)^2  + c^2u^2\Big) \, dx
\\
=&\int_{\Hcal_s}\left((\newperp  u)^2 + \sum_a\left((s/t)\del_a u\right)^2 + \sum_{a<b}\left(t^{-1}\Omega_{ab}u\right)^2 + c^2u \right) \, dx, 
\endaligned
\end{equation}
where we have also introduced the rotational vector fields $\Omega_{ab}: = x^a\del_b - x^b\del_a$ (not directly used here). When $c=0$, we also write $E_m(s,u): = E_{m,0}(s,u)$ for short.

We will also need the  {\sl hyperboloidal energy for the curved metric $g^{\alpha\beta} = m^{\alpha\beta} + h^{\alpha\beta}$}:
\bel{eq hyper-energy curved}
E_{g,c}(s,u):=  
E_{m,c}(s,u) + \int_{\Hcal_s}
\Big( 2
h^{\alpha\beta}\del_tv\del_\beta v X_\alpha  
-  h^{\alpha\beta}\del_\alpha v\del_\beta v \Big) \, dx, 
\ee
where we have used the notation $X^0 = 1$ and $X^a = -x^a/t$.

All of our estimates concern the interior of the light cone $\Kcal \cap \big\{ t \geq 2 \big\}$ away from the origin.  From here onwards, we assume that all the functions under consideration are {\sl spatially compact} and, in particular, vanish identically in a neighborhood of the light cone $\big\{ t-1 = |x|=r \big\}$.  
More precisely, we assume that the initial data on the slice $t=2$ are supported in the ball $|x| \leq M$ for some $M \in (0,1)$, and we construct solutions supported in the larger domain $|x| \leq M + t$. In short, we will say that the functions under consideration are {\bf spatially compactly supported in } $\Kcal$ or, in short, spatially compactly supported.

We easily adapt the energy estimate in \cite[Proposition 2.3.1]{PLF-MY-book} to  the equation \eqref{eq main}, as follows.

\begin{proposition}[Energy estimate for the hyperboloidal foliation]
\label{prop energy}
1. 
For every function $u$ which is defined in the region $\Kcal_{[2,s]}$ and spatially supported in $\Kcal$, 
 one has for all $s \geq 2$ 
\be
\label{ineq energy wave}
E_m(s,u)^{1/2}\leq E_m(2,u)^{1/2} + \int_2^s \| \Box u\|_{L_f^2(\Hcal_\sbar)} d\sbar.
\ee

2. Let $v$ be a solution to the Klein-Gordon equation on a curved space (with the definition of $\Boxt$ given earlier) 
\be
- \Boxt_g v + c^2 v = f,
\ee
defined the region $\Kcal_{[2,s]}$ and spatially supported in $\Kcal$. Suppose that $h^{\alpha\beta}
= g^{\alpha\beta} - m^{\alpha\beta}$ satisfies the following two conditions
(for some constant $\kappa \geq 1$ and some function $M$):
\begin{subequations}
\bel{ineq coersive 1}
\kappa^{-2} E_{g,c}(s,v) \leq E_{m,c}(s,v) \leq \kappa^2 E_{g,c}(s,v),
\ee
\be
\label{ineq coersive 1'}
\bigg|\int_{\Hcal_s}(s/t)\Big(\del_\alpha h^{\alpha\beta}
\del_t v\del_\beta v - \frac{1}{2}\del_t h^{\alpha\beta}\del_\alpha v\del_\beta v\Big) \, dx\bigg|
\leq M(s)E_{m,c}(s,v)^{1/2}.
\ee
\end{subequations}
Then, the evolution of the hyperboloidal energy is controlled (for all $s \geq 2$) by
\bel{ineq energy KG}
E_{m,c}(s,v)^{1/2}\leq \kappa^2 E_m(2,v)^{1/2} + \kappa^2 \int_2^s \Big(\|f\|_{L_f^2(\Hcal_\sbar)} + M(s)\Big) d\sbar.
\ee
\end{proposition}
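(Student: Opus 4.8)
The plan is to establish the two inequalities \eqref{ineq energy wave} and \eqref{ineq energy KG} by the standard multiplier (energy) method on hyperboloidal slabs, i.e.\ by multiplying each equation by the appropriate time derivative, integrating over the region $\Kcal_{[2,s]}$, and using the divergence theorem. The key geometric input is that, because the functions are spatially compactly supported in $\Kcal$, the lateral boundary contribution (near $\{t-r=1\}$) vanishes, so only the top and bottom hyperboloids $\Hcal_s$ and $\Hcal_2$ contribute. Moreover, the flux through $\Hcal_s$ is coercive: for the flat wave operator the multiplier $\newperp u$ (equivalently $2\del_t u$) produces precisely the positive quantity $\tfrac12 E_m(s,u)$ as written in \eqref{eq hyper-energy}, after discarding a boundary term at null infinity which is nonnegative (or zero under our support hypothesis). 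This is exactly the computation carried out in \cite[Proposition~2.3.1]{PLF-MY-book}, so for part~1 I would simply recall that argument: write $\tfrac{d}{ds}E_m(s,u) \le 2\,\|\Box u\|_{L_f^2(\Hcal_s)}\,E_m(s,u)^{1/2}$, divide by $2E_m(s,u)^{1/2}$, and integrate in $s$ from $2$ to $s$ to obtain \eqref{ineq energy wave}. (The division is justified by a routine approximation / $\vep$-regularization if $E_m$ vanishes at some time.)

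For part~2, the point is to track the corrections coming from the curved principal part $\Boxt_g = m^{\alpha\beta}\del_\alpha\del_\beta + h^{\alpha\beta}\del_\alpha\del_\beta$. I would multiply the equation $-\Boxt_g v + c^2 v = f$ by $\del_t v$ (more precisely by the hyperboloidal multiplier, which amounts to integrating against $\del_t v\,dx$ over $\Hcal_s$) and integrate over $\Kcal_{[2,s]}$. The flat part $m^{\alpha\beta}\del_\alpha\del_\beta v$ together with $c^2 v$ reproduces $\tfrac12\tfrac{d}{ds}E_{m,c}(s,v)$ as before. The extra term $-h^{\alpha\beta}\del_\alpha\del_\beta v\cdot\del_t v$ is handled by integrating by parts once: it splits into a total derivative, which upon integration over the slab assembles into the boundary term $\int_{\Hcal_s}\big(2h^{\alpha\beta}\del_t v\,\del_\beta v\,X_\alpha - h^{\alpha\beta}\del_\alpha v\,\del_\beta v\big)dx$ appearing in the definition \eqref{eq hyper-energy curved} of $E_{g,c}$, plus a bulk remainder in which one derivative has landed on $h^{\alpha\beta}$, namely terms of the schematic form $\del_\alpha h^{\alpha\beta}\,\del_t v\,\del_\beta v$ and $\del_t h^{\alpha\beta}\,\del_\alpha v\,\del_\beta v$. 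Rewriting the bulk integral over $\Kcal_{[2,s]}$ as $\int_2^s\!\!\int_{\Hcal_\sbar}(\dots)(\sbar/t)\,dx\,d\sbar$ (the factor $s/t$ coming from the hyperboloidal volume element relative to the flat measure $dx$), hypothesis \eqref{ineq coersive 1'} bounds precisely this remainder by $\int_2^s M(\sbar)\,E_{m,c}(\sbar,v)^{1/2}d\sbar$. Thus one arrives at
\[
\tfrac12 E_{g,c}(s,v) \;\le\; \tfrac12 E_{g,c}(2,v) \;+\; \int_2^s\big(\|f\|_{L_f^2(\Hcal_\sbar)} + M(\sbar)\big)\,E_{m,c}(\sbar,v)^{1/2}\,d\sbar.
\]
Now invoking the coercivity comparison \eqref{ineq coersive 1} — which lets one replace $E_{g,c}$ by $E_{m,c}$ up to the factor $\kappa^{\pm2}$ — and applying a Gronwall-type lemma (divide by $E_{m,c}(s,v)^{1/2}$, integrate) yields \eqref{ineq energy KG}, the two factors of $\kappa^2$ accounting for the one comparison at the initial slice and the overall rescaling.

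The main obstacle, and the step deserving the most care, is the bookkeeping of the boundary versus bulk terms generated by integrating $h^{\alpha\beta}\del_\alpha\del_\beta v\cdot\del_t v$ by parts: one must verify that the boundary piece produced on $\Hcal_s$ is \emph{exactly} the quantity built into $E_{g,c}$ in \eqref{eq hyper-energy curved} (with the correct coefficients and the correct vector $X^0=1$, $X^a=-x^a/t$ normal to the hyperboloid), and that every leftover bulk term indeed has one derivative falling on $h$, so that it is captured by \eqref{ineq coersive 1'}. One also has to confirm that the multiplier used at the level of $\Hcal_s$ is the outward normal contracted with the stress-energy tensor of the Klein-Gordon field, which is what makes the flux coincide with $E_{m,c}$. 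Apart from this, the only subtlety is the usual one of justifying the Gronwall step when the energy may momentarily vanish, handled by a standard regularization; and the observation that the lateral boundary term drops out by spatial compact support, which is already part of our running hypotheses. All of these are precisely the points where \cite[Proposition~2.3.1]{PLF-MY-book} is adapted rather than reproven, so I would present the proof as a modification of that argument, emphasizing only the new curved-metric contributions.
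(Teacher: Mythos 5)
Your proposal follows essentially the same route as the paper's proof: multiply by $\del_t u$ (resp.\ $\del_t v$), integrate over the slab $\Kcal_{[2,s]}$, use compact support to drop the lateral boundary, identify the flux through $\Hcal_s$ with $E_m$ (resp.\ $E_{g,c}$), bound the bulk remainder where a derivative falls on $h^{\alpha\beta}$ by hypothesis \eqref{ineq coersive 1'}, differentiate in $s$, divide by the square root of the energy, and integrate; the two applications of \eqref{ineq coersive 1} produce the $\kappa^2$. One small slip worth flagging: the multiplier is $\del_t u$, not $\newperp u$, and $\newperp u$ is \emph{not} ``equivalently $2\del_t u$'' — that parenthetical is incorrect, although it plays no role in the rest of your argument, which correctly tracks the boundary/bulk split and matches the paper's derivation.
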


\begin{proof} We apply the multiplier $\del_t u$ to $\Box u$ and, by a standard calculation, 
$$
\frac{1}{2}\del_t\Big((\del_t u)^2 + \sum_a(\del_au)^2\Big) - \sum_a\del_a(\del_t u \del_a u) = \del_t u \, \Box u.
$$
We integrate this identity in $\Kcal_{[2,s]}$ and apply Stokes' formula by observing that, by assumption, the functions under consideration are spatially supported in $\Kcal$,  
so that there is no ``boundary'' contribution, and we find (see \cite[Sec. 2.3]{PLF-MY-book})
$$
\frac{1}{2}E_m(s,u) - \frac{1}{2}E_m(2,u) = \int_2^s\int_{\Hcal_\sbar}(\sbar/t)\del_tu \, \Box u \, dx d\sbar.
$$
We differentiate this identity with respect to $s$ and apply Cauchy-Schwarz inequality, as follows:
$$
E_m(s,u)^{1/2}\frac{d}{ds}E_m(s,u)^{1/2} 
= \int_{\Hcal_s}(\sbar/t)\del_t v  \Box u \, dx \leq \| \Box u\|_{L_f^2(\Hcal_s)}\|(s/t)\del_t u\|_{L_f^2(\Hcal_s)}.
$$
Next, by recalling \eqref{eq hyper-energy}, we find
$
\frac{d}{ds}E_m(s,u)^{1/2} \leq \| \Box u \|_{L_f^2(\Hcal_s)}
$
and, by integration over $[2,s]$, the inequality \eqref{ineq energy wave} is established.

\vskip.3cm 

Next, for the derivation of \eqref{ineq energy KG}, we rely on the multiplier
$\del_t v$ and, by a standard calculation, we get
$$
\aligned
&\frac{1}{2}\del_t\big((\del_tv)^2+\sum_a(\del_av)^2+c^2v^2\big) + \sum_a\del_a\big(-\del_av\del_t v\big)
\\
&+ \del_{\alpha}\big(-h^{\alpha\beta}\del_{\beta}v\del_t v\big) + \frac{1}{2}\del_t\big(h^{\alpha\beta}\del_{\alpha}v\del_{\beta}v\big)
\\
&= \del_t v f - \del_{\alpha}g^{\alpha\beta}\del_\beta v\del_t v + \frac{1}{2}\del_tg^{\alpha\beta}\del_{\alpha}v\del_{\beta}v. 
\endaligned
$$

As was done in the derivation of \eqref{ineq energy wave}, we combine this identity with \eqref{ineq coersive 1}-\eqref{ineq coersive 1'} and obtain
$$
E_{g,c}(s,v)^{1/2}\frac{d}{ds}E_{g,c}(s,v)^{1/2} \leq \kappa \big(\| f \|_{L_f^2(\Hcal_s)} +M(s)\big) E_{g,c}(s,v)^{1/2}.
$$
We integrate this inequality on $[2,s]$ and find
$$
E_{g,c}(s,v)^{1/2}\leq E_{g,c}(2,v)^{1/2} + \kappa \int_2^s \big(\| f \|_{L_f^2(\Hcal_\sbar)} +M(\sbar)\big) ds. 
$$
Finally, we again apply \eqref{ineq coersive 1}, and \eqref{ineq energy KG} is proven.
\end{proof}


\subsection{Sobolev inequality on hyperboloids}

In order to turn an $L^2$ energy estimate into an $L^\infty$ estimate, we will rely on the following version of the Sobolev inequality (Klainerman \cite{Klainerman85}, H\"ormander \cite[Lemma 7.6.1]{Hormander}, LeFloch and Ma \cite[Section 5]{PLF-MY-book}).

\begin{proposition}[Sobolev-type estimate on hyperboloids]
\label{pre lem sobolev}
For any sufficiently smooth function $u=u(s,x)$ which is defined in $\Kcal_{[2, +\infty)}$ and 
is spatially supported in $\Kcal$,  
one has 
\be
\label{eq 1 sobolev}
\sup_{(s,x) \in \Hcal_s} (s + |x|)^{3/2} \, |u(s,x)|
\lesssim
\sum_{|I|\leq 2} \| L^I u (s, \cdot) \|_{L_f^2(\Hcal_s)},  \qquad s \geq 2,
\ee
where the implied constant is independent of $s$ and $u$.
\end{proposition}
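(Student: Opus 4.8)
The plan is to deduce the weighted sup-norm bound on $\Hcal_s$ from the classical three-dimensional Sobolev embedding $H^2(\RR^3)\hookrightarrow L^\infty(\RR^3)$ by a scaling argument. The weight $(s+|x|)^{3/2}$ appears because, in the hyperboloidal parametrization of $\Hcal_s$ by $x\in\RR^3$, every spatial derivative $\del_{x^a}$ amounts to $t^{-1}$ times a Lorentz boost $L_a$.

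First I would fix $s\geq 2$ and set $g_s(x):=u\big(\sqrt{s^2+|x|^2},x\big)$. Since the functions under consideration are spatially supported in $\Kcal$, we may extend $u$ by zero and regard $g_s$ as a $C^2$ function on all of $\RR^3$ vanishing outside a compact set; with this convention $\|L^Iu\|_{L_f^2(\Hcal_s)}$ is exactly the $L^2(\RR^3_x)$ norm of $x\mapsto(L^Iu)\big(\sqrt{s^2+|x|^2},x\big)$. The elementary observation is that, for any spacetime function $w$, $\del_{x^a}\big(w|_{\Hcal_s}\big)=\big(\delu_a w\big)|_{\Hcal_s}$ with $\delu_a=\tfrac{x^a}{t}\del_t+\del_a=t^{-1}L_a$. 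Iterating this identity and using $|x^a|\leq r<t$ on $\Hcal_s$, I would obtain the pointwise chain-rule bound
$$
\big|\del^\alpha g_s(x)\big|\;\lesssim\;t(x)^{-|\alpha|}\sum_{|I|\leq|\alpha|}\big|(L^Iu)\big(\sqrt{s^2+|x|^2},x\big)\big|,\qquad |\alpha|\leq 2,
$$
where $t(x):=\sqrt{s^2+|x|^2}$.

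Next comes the scaling step. For a fixed point $x_0$ I would put $\theta:=\tfrac14\big(s+|x_0|\big)$ and check, by elementary estimates distinguishing $|x_0|\leq s$ from $|x_0|>s$, that $t(x)\simeq s+|x_0|\simeq\theta$ uniformly for $x\in B(x_0,\theta)$. Applying the local Sobolev inequality $|h(0)|^2\lesssim\sum_{|\alpha|\leq 2}\|\del^\alpha h\|_{L^2(B(0,1))}^2$ to $h(y):=g_s(x_0+\theta y)$ and changing variables back yields
$$
|g_s(x_0)|^2\;\lesssim\;\sum_{|\alpha|\leq 2}\theta^{2|\alpha|-3}\int_{B(x_0,\theta)}|\del^\alpha g_s|^2\,dx .
$$
Substituting the chain-rule bound and using $t(x)\simeq\theta$ to absorb the positive powers of $\theta$, the right-hand side is bounded by $\theta^{-3}\sum_{|I|\leq 2}\|L^Iu\|_{L_f^2(\Hcal_s)}^2$, whence $\theta^{3}|g_s(x_0)|^2\lesssim\sum_{|I|\leq 2}\|L^Iu\|_{L_f^2(\Hcal_s)}^2$. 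Taking square roots, recalling $\theta\simeq s+|x_0|$, and taking the supremum over $x_0\in\RR^3$ gives \eqref{eq 1 sobolev}.

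The frame bookkeeping and the rescaling are routine. The single delicate point, which I would treat most carefully, is the choice of the radius $\theta$ and the claim that $t(x)$ stays comparable to $s+|x_0|$ throughout $B(x_0,\theta)$: the hyperboloid $\Hcal_s$ is very ``flat'' near its tip (where $|x|$ is small and $t\simeq s$) but steep far out (where $t\simeq|x|$), so the two regimes must be examined separately to confirm that a fixed fraction of $s+|x_0|$ is a legitimate radius. It is precisely this uniform comparability $t(x)\simeq\theta$ on the ball that lets the factors $\theta^{2|\alpha|}$ produced by the rescaling cancel the weights $t(x)^{-2|\alpha|}$ generated by the boosts, leaving the clean $\theta^{-3}$ that is responsible for the $(s+|x|)^{3/2}$ decay.
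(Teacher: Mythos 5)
Your proposal is correct and follows essentially the same route as the paper: restrict $u$ to the hyperboloid, observe $\del_a = \delu_a = t^{-1}L_a$ on the restriction, rescale by a length comparable to $\sqrt{s^2+|x_0|^2}$, and apply the local $H^2\hookrightarrow L^\infty$ Sobolev bound together with the comparability $t(x)\simeq t(x_0)$ on the rescaled ball. The only cosmetic differences are the choice of scaling radius ($\tfrac14(s+|x_0|)$ rather than $\tfrac13\sqrt{s^2+|x_0|^2}$, which are equivalent) and the fact that you package the factors $(t_0/t)^2$ the paper tracks explicitly into a clean chain-rule estimate $|\del^\alpha g_s|\lesssim t^{-|\alpha|}\sum_{|I|\leq|\alpha|}|L^I u|$; both formulations are legitimate and neither introduces a gap.
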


\begin{proof} Recalling that $t = \sqrt{s^2 + |x|^2}$ on $\Hcal_s$, we consider the restriction to the hyperboloid
$$
w_s(x) := u(\sqrt{s^2+|x|^2},x).
$$
Fixing $s_0$ and a point $(t_0,x_0) \in \Hcal_{s_0}$ (with
$t_0 = \sqrt{s_0^2 + |x_0|^2}$), we observe that
\bel{eq:gh67}
\del_aw_{s_0}(x) = \delu_au\big(\sqrt{s_0^2+|x|^2},x\big) = \delu_au(t,x),
\ee
with $t = \sqrt{s_0^2 + |x|^2}$ and
\be
t\del_aw_{s_0}(x) = t\delu_au\big(\sqrt{s_0^2+|x|^2},t\big) = L_au(t,x).
\ee
We introduce the function $g_{s_0,t_0}(y) := w_{s_0}(x_0 + t_0\,y)$ and write
$$
g_{s_0,t_0}(0) = w_{s_0}(x_0) = u\big(\sqrt{s_0^2+|x_0|^2},x_0\big)=u(t_0,x_0).
$$
By applying the standard Sobolev inequality to the function $g_{s_0,t_0}$, we find
$$
\big|g_{s_0,t_0}(0)\big|^2\leq C\sum_{|I|\leq 2}\int_{B(0,1/3)}|\del^Ig_{s_0,t_0}(y)|^2 \, dy,
$$
where $B(0, 1/3) \subset \RR^3$ is the ball centered at the origin and with radius $1/3$.

Next, taking into account the identity (with $x = x_0 + t_0y$)
$$
\aligned
\del_ag_{s_0,t_0}(y)
& = t_0\del_aw_{s_0}(x_0 + t_0y) 
\\
& = t_0\del_aw_{s_0}(x) = t_0\delu_au\big(t,x)
\endaligned
$$
and, in view of \eqref{eq:gh67}, we find (for all $I$)
$\del^Ig_{s_0,t_0}(y) = (t_0\delu)^I u(t,x)$
and, thus,
$$
\aligned
\big|g_{s_0,t_0}(0)\big|^2 \leq& C\sum_{|I|\leq 2}\int_{B(0,1/3)}\big|(t_0\delu)^I u\big(t,x)\big)\big|^2dy
\\
= & C t_0^{-3}\sum_{|I|\leq 2}\int_{B((t_0,x_0),t_0/3)\cap \Hcal_{s_0}}\big|(t_0\delu)^I u\big(t,x)\big)\big|^2dx.
\endaligned
$$

We note that
$$
\aligned
(t_0\delu_a(t_0\delu_b w_{s_0}))
& = t_0^2\delu_a\delu_bw_{s_0}
\\
& = (t_0/t)^2(t\delu_a)(t\delu_b) w_{s_0} - (t_0/t)^2 (x^a/t)L_b w_{s_0}
\endaligned
$$
and that $x^a/t = x^a_0/t + yt_0/t = (x^a_0/t_0 + y)(t_0/t)$. Consequently, in the region $y\in B(0,1/3)$ of interest, the factor $|x^a/t|$ is bounded by $C(t_0/t)$ and we conclude that (for $|I| \leq 2$)
$$
|(t_0 \delu)^I u| \leq \sum_{|J| \leq |I|} | L^J u| (t_0/t)^2.
$$

On the other hand, in the region $|x_0|\leq t_0/2$, we have $t_0\leq \frac{2}{\sqrt{3}}s_0$ and thus
$$
t_0 \leq C s_0 \leq C \sqrt{|x|^2 + s_0^2} = Ct
$$
for some fixed constant $C>0$.  When $|x_0|\geq t_0/2$ then in the region $B((t_0,x_0),t_0/3)\cap \Hcal_{s_0}$ we have
$t_0 \leq C|x| \leq C\sqrt{|x|^2 + s_0^2} =Ct$ and, consequently,
$$
|(t_0 \delu)^I u| \leq C \, \sum_{|J| \leq |I|} | L^J u|
$$
and
$$
\aligned
\big|g_{s_0,t_0}(y_0)\big|^2
\leq& Ct_0^{-3}\sum_{|I|\leq 2}\int_{B(x_0,t_0/3)\cap\Hcal_{s_0}}\big|(t\delu)^I u\big(t,x)\big)\big|^2 \, dx
\\
\leq& Ct_0^{-3}\sum_{|I|\leq 2}\int_{\Hcal_{s_0}}\big|L^I u(t,x)\big|^2 \, dx. 
\endaligned
$$ 
\end{proof}


\subsection{Hardy-type estimate along the hyperboloidal foliation}

The following variant of Hardy's inequality was established in \cite[Section 5]{PLF-MY-book}. This inequality plays an essential role in order to estimate the $L^2$ norm of the wave component itself (but not only its gradient).

\begin{proposition}[Hardy-type estimate on the hyperboloidal foliation]
\label{pre Hardy hyper}
For any sufficiently smooth function which is defined in the future region $\Kcal_{[2, s]}$ and is 
spatially supported in $\Kcal$,  
one has for  $s \geq 2$
\bel{eq 1 Hardy ineq}
\aligned
\|s^{-1}u\|_{L_f^2(\Hcal_s)} \lesssim \,
&
 \|u\|_{L^2(\Hcal_2)} + \sum_{a}\|\delu_a u\|_{L_f^2(\Hcal_s)}
\\
&+ \sum_a \int_2^{s}\sbar^{-1} \Big( \|\delu_a u\|_{L_f^2(\Hcal_\sbar)} + \|(\sbar/t)\del_a u\|_{L_f^2(\Hcal_\sbar)} \Big) \, d\sbar,
\endaligned
\ee
where the implied constant is independent of $s$ and $u$.
\end{proposition}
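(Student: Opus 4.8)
The plan is to reduce the hyperboloidal Hardy inequality \eqref{eq 1 Hardy ineq} to the classical one-dimensional Hardy inequality applied radially on each hyperboloid, and then to convert the resulting ``time derivative of the restriction'' into the hyperboloidal frame derivatives that actually appear in the right-hand side. First, I would introduce the restriction $w_s(x):=u(\sqrt{s^2+|x|^2},x)$ exactly as in the proof of Proposition~\ref{pre lem sobolev}. Since $u$ is spatially compactly supported in $\Kcal$, the function $w_s$ vanishes for $|x|$ near $t-1$, hence has compact support in $\RR^3$, so the boundary terms in the integrations by parts below are harmless. On a fixed $\Hcal_s$ we have $t=\sqrt{s^2+r^2}$, so that $s^{-1}\leq t/(s\,r)$ away from the origin and $s \sim t$ when $r$ is bounded; a little care near $r=0$ (using that $\|s^{-1}u\|$ over the region $r\le 1$ is controlled by $\|u\|$ there, which in turn is controlled by the Sobolev bound or directly) isolates the genuine difficulty to the region $r\gtrsim 1$.

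The heart of the argument is the classical Hardy inequality in the radial variable: for a compactly supported $w$ on $\RR^3$,
\be
\int_{\RR^3} \frac{|w|^2}{r^2}\,dx \lesssim \int_{\RR^3} |\del_r w|^2 \, dx .
\ee
Applying this with $w=w_s$ gives $\|r^{-1}w_s\|_{L^2(\RR^3)}\lesssim \|\del_r w_s\|_{L^2(\RR^3)}$. Now $\del_r w_s = \frac{x^a}{r}\del_a w_s = \frac{x^a}{r}\,\delu_a u$ by \eqref{eq:gh67} (the chain rule through the hyperboloid produces precisely the semi-hyperboloidal derivatives $\delu_a$), so the right-hand side is bounded by $\sum_a\|\delu_a u\|_{L_f^2(\Hcal_s)}$. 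Combined with the elementary weight comparison $s^{-1}\lesssim r^{-1}$ on the relevant region, this already yields an inequality of the form $\|s^{-1}u\|_{L_f^2(\Hcal_s)}\lesssim \sum_a\|\delu_a u\|_{L_f^2(\Hcal_s)}$ plus the low-$r$ contribution — which is weaker than \eqref{eq 1 Hardy ineq} only in that it has no $\|u\|_{L^2(\Hcal_2)}$ term and no time integral, so the real content is controlling the near-light-cone region where $s^{-1}$ and $r^{-1}$ are genuinely different.

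The main obstacle, then, is the transition region where $r$ is comparable to $t$ (so $s/t$ is small) and the naive comparison $s^{-1}\lesssim r^{-1}$ loses a factor $t/s$. To handle this I would not work on a single $\Hcal_s$ in isolation but differentiate in $s$: writing $\frac{d}{ds}\|s^{-1}u\|_{L_f^2(\Hcal_s)}^2$ and using $\del_s w_s=\frac{s}{t}\del_t u = \newperp u - \frac{x^a}{t}\delu_a u$ (the decomposition of $\del_s$ in the hyperboloidal frame), one generates, after a further radial integration by parts in $x$, exactly the combination $\|\delu_a u\|_{L_f^2(\Hcal_\sbar)}+\|(\sbar/t)\del_a u\|_{L_f^2(\Hcal_\sbar)}$ weighted by $\sbar^{-1}$, together with the initial contribution $\|u\|_{L^2(\Hcal_2)}$ from integrating the differential inequality from $\sbar=2$ to $\sbar=s$. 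This is the step that needs the most care: one must track how the Jacobian of the change of variables $(s,x)\leftrightarrow$ (point on $\Hcal_s$) interacts with the $s$-derivative, and check that the cross terms coming from $\frac{x^a}{t}\delu_a u$ in the expression for $\del_s w_s$ are absorbed either by the main term or by the $\sbar^{-1}\|\delu_a u\|$ integral. Everything else — the compact support bookkeeping, the splitting into $r\lesssim 1$ and $r\gtrsim 1$, and the final assembly — is routine. Since this inequality is quoted from \cite[Section 5]{PLF-MY-book}, I would present the above as the skeleton and refer to the monograph for the remaining computational details.
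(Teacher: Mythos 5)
Your high-level plan does match the paper's: split with a cutoff $\chi(r/t)$ into a far-from-light-cone region (where $s^{-1}\lesssim t^{-1}\le r^{-1}$ and Lemma~\ref{pre Hardy lem1} already gives the bound $\sum_a\|\delu_a u\|_{L_f^2(\Hcal_s)}$) and a near-light-cone region, and attack the latter by differentiating in $s$ and integrating the resulting differential inequality from $\Hcal_2$ up to $\Hcal_s$. Two things, however, must be fixed before this becomes a proof.

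First, there is a factor error in your central identity. Since $\newperp u - \frac{x^a}{t}\delu_a u = \frac{s^2}{t^2}\del_t u$, the correct relation is $\del_s w_s = \frac{s}{t}\del_t u = \frac{t}{s}\big(\newperp u - \frac{x^a}{t}\delu_a u\big)$, not $\newperp u - \frac{x^a}{t}\delu_a u$. The missing $t/s$ is exactly the weight that blows up near the light cone — i.e.\ the whole difficulty — so writing the identity without it makes the remaining work look easier than it is.

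Second, and more seriously, the claim that the bad cross terms are ``absorbed either by the main term or by the $\sbar^{-1}\|\delu_a u\|$ integral'' is not a viable mechanism. If a term $\sbar^{-1}\|\sbar^{-1}u\|_{L_f^2(\Hcal_\sbar)}$ survives on the right-hand side, Gronwall only gives a bound growing polynomially in $s$, not the uniform constant asserted in \eqref{eq 1 Hardy ineq}. The paper instead applies Stokes' theorem on $\Kcal_{[2,s]}$ to the explicit vector field $W=\big(0,\,-x^a\,t\,(u\chi(r/t))^2/((1+r^2)s^2)\big)$; its divergence splits into three pieces $T_1+T_2+T_3$, where $T_1$, $T_2$ are controlled by Cauchy--Schwarz and produce precisely $\sbar^{-1}\|(\sbar/t)\del_a u\|$ and $\sbar^{-1}\|\delu_a u\|$, while $T_3$ is manifestly nonpositive and is simply discarded. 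That favorable sign — not absorption — is what closes the argument, and it is the one ingredient your outline does not identify; as written, the sketch would not terminate.
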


The proof uses a version of the classical Hardy inequality on hyperboloids, 
as well as a vector field that will be introduced in the proof of the proposition, below.

\begin{lemma}
\label{pre Hardy lem1}
For any sufficiently smooth function which is defined in the future region $\Kcal_{[2, s]}$
and is spatially supported in $\Kcal$,  
one has for all $s \geq 2$ 
$$
\|r^{-1} u\|_{L_f^2(\Hcal_s)}\lesssim \sum_a \|\delu_a u\|_{L_f^2(\Hcal_s)},
$$
where the implied constant is independent of $s$ and $u$.
\end{lemma}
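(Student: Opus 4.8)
We want to prove a Hardy-type inequality on a single hyperboloid $\Hcal_s$: namely $\|r^{-1}u\|_{L^2_f(\Hcal_s)}\lesssim \sum_a\|\delu_a u\|_{L^2_f(\Hcal_s)}$. First I would pass from the function $u(t,x)$ defined on $\Kcal$ to its restriction $w_s(x) := u(\sqrt{s^2+|x|^2},x)$ on the slice, exactly as in the proof of the Sobolev inequality above. The key point is that, on the hyperboloid, $\del_a w_s(x) = \delu_a u(\sqrt{s^2+|x|^2},x)$, so the right-hand side of the claimed inequality is simply $\sum_a\|\del_a w_s\|_{L^2(\RR^3)}$ where the integral is over $\RR^3$ with the flat measure $dx$; and the left-hand side is $\|r^{-1}w_s\|_{L^2(\RR^3)}$. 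Since $w_s$ is spatially compactly supported (as $u$ is spatially compactly supported in $\Kcal$), $w_s\in C^\infty_c(\RR^3)$.

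**Reduction to classical Hardy.** Once the statement is reformulated this way, it is precisely the classical Hardy inequality in $\RR^3$: for $w\in C^\infty_c(\RR^3)$,
\[
\int_{\RR^3}\frac{|w(x)|^2}{|x|^2}\,dx \;\leq\; 4\int_{\RR^3}|\nabla w(x)|^2\,dx,
\]
which follows from the standard argument (multiply by a suitable radial vector field $x/|x|^2$, integrate by parts — the boundary term at infinity vanishes by compact support, the contribution near the origin is controlled because $1/|x|$ is locally $L^2$ in three dimensions — and apply Cauchy–Schwarz). Then $\sum_a\|\del_a w_s\|_{L^2}^2 = \|\nabla w_s\|_{L^2}^2$ gives the result with implied constant independent of $s$ and $u$.

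**Main obstacle.** There is essentially no deep obstacle here; the content of the lemma is entirely in the change-of-frame observation $\del_a w_s = \delu_a u|_{\Hcal_s}$, which converts a weighted estimate along the curved foliation into the flat Euclidean Hardy inequality on $\RR^3$. The one point requiring a little care is the justification that $w_s$ is genuinely a compactly supported $H^1$ function on $\RR^3$ so that the classical Hardy inequality (and the integration by parts it rests on) applies — but this is immediate from the spatial-compactness hypothesis. The only reason this is stated as a separate lemma, rather than absorbed into Proposition~\ref{pre Hardy hyper}, is that the full hyperboloidal Hardy estimate \eqref{eq 1 Hardy ineq} additionally needs to trade the weight $r^{-1}$ for $s^{-1}$ away from the light cone, which is where the time-integral terms and an auxiliary vector field enter; that refinement is carried out in the proof of the proposition using this lemma as input.
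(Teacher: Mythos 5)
Your proof is correct and is essentially the paper's own argument: restrict $u$ to the hyperboloid via $w_s(x) := u(\sqrt{s^2+|x|^2},x)$, observe $\del_a w_s = \delu_a u|_{\Hcal_s}$, and invoke the classical Hardy inequality on $\RR^3$. The compact-support justification you add is implicit in the paper.
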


\begin{proof} As in the proof of Proposition~\ref{pre lem sobolev}, we consider the function
$w_s(x):= u\big(\sqrt{s^2+|x|^2},x\big)$, which satisfies
$\del_aw_s(x) = \delu_a u\big(\sqrt{s^2+|x|^2},x\big)$, and we apply the classical Hardy inequality to $w_s$. It follows that 
$$
\aligned
\int_{\mathbb{R}^3}|r^{-1} w_s(x)|^2dx
&\lesssim \int_{\mathbb{R}^3}|\nabla w_s(x)|^2dx
= C\sum_a\int_{\mathbb{R}^3}\big|\delu_au(\sqrt{s^2+r^2},x)\big|^2dx
\\
&\lesssim \sum_a\int_{\Hcal_s}\big|\delu_a u(t,x)\big|^2dx.
\endaligned
$$ 
\end{proof}

\begin{proof}[Proof of Proposition \ref{pre Hardy hyper}]
Let $\chi$ be a a smooth cut-off function satisfying
$$
\chi (r) =\begin{cases}
0, \quad & 0\leq r\leq 1/3
\\
1, \quad & 2/3\leq r,
\end{cases} 
$$
and let us distinguish between the region ``near'' and ``away'' from the light cone.
We consider the decomposition
$$
\|s^{-1}u\|_{L_f^2(\Hcal_s)} \leq \|\chi(r/t)s^{-1} u\|_{L_f^2(\Hcal_s)} + \|(1-\chi(r/t))s^{-1} u\|_{L_f^2(\Hcal_s)}.
$$
Our estimate of $\|(1-\chi(r/t))s^{-1} u\|_{L_f^2(\Hcal_s)}$ is based on the inequality 
$
\big(1-\chi(r/t)\big)s^{-1}\leq Ct^{-1}   
$
so that, by Lemma \ref{pre Hardy lem1},
\bel{eq pr2 Hardy}
\aligned
\|(1-\chi(r/t))us^{-1}\|_{L_f^2(\Hcal_s)}
& \leq \|t^{-1}u\|_{L_f^2(\Hcal_s)}
\\
& \leq \|r^{-1}u\|_{L_f^2(\Hcal_s)}\leq C\sum_a\|\delu_a u\|_{L_f^2(\Hcal_s)}.
\endaligned
\ee

The estimate near the light cone is more delicate and we first observe that, in the region $\Kcal_{[2,s]}$ of interest, 
$\chi(r/t)\lesssim \frac{\chi(r/t)r}{(1+r^2)^{1/2}}$
and, thus,
$$
\|\chi(r/t)s^{-1}u\|_{L_f^2(\Hcal_s)}\leq C\|r(1+r^2)^{-1/2}\chi(r/t)s^{-1}u\|_{L_f^2(\Hcal_s)},
$$
and the right-hand side of this inequality is controlled as follows. We introduce the vector field
$W = \big(0, -x^a  {t (u\chi(r/t))^2\over (1+r^2)s^2}\big)$
and compute its divergence
\bel{eq-itsdiv} 
\aligned
\text{div} \hskip.05cm  W
& = -2s^{-1} \del_a u \frac{r\chi(r/t)u}{(1+r^2)^{1/2}s}\cdot \frac{x^a t\chi(r/t)}{r(1+r^2)^{1/2}}
- 2s^{-1}\frac{u}{r} \frac{r\chi(r/t) u}{s(1+r^2)^{1/2}}\cdot \frac{\chi'(r/t)r}{(1+r^2)^{1/2}}
\\
& \quad
-\bigg(\frac{r^2t+3t}{(1+r^2)^2s^2} + 2 \frac{r^2t}{(1+r^2)s^4}\bigg)\big(u\chi(r/t)\big)^2.
\endaligned
\ee  
By applying Stokes'  theorem in the region $\Kcal_{[2,s_1]}$, we find
$$
\aligned
\int_{\Kcal_{[2,s_1]}}\!\!\!\!\text{div} \hskip.05cm W \, dxdt
=&\int_{\Hcal_{s}}W \cdot n \, d\sigma + \int_{\Hcal_{2}}W\cdot n \, d\sigma
\\
=&
\int_{\Hcal_{s}}\frac{r^2}{1+r^2}\big|u\chi(r/t)s^{-1}\big|^2 dx
-\int_{\Hcal_{2}}\frac{r^2}{1+r^2}\big|u\chi(r/t)s^{-1}\big|^2dx.
\endaligned
$$
Differentiating this identity with respect to $s$ leads us to
\bel{pre Hardy hyper eq1}
\aligned
&\frac{d}{ds}\bigg(\int_{\Kcal_{[2,s_1]}}\!\!\!\!\text{div} \hskip.05cm W \, dxdt\bigg) =
\frac{d}{ds}\bigg(\int_{\Hcal_{s}}\frac{r^2}{1+r^2}\big|u\chi(r/t)s^{-1}\big|^2 dx\bigg)
\\
&= 2 \, \bigg{\|}\frac{r u\chi(r/t)}{s(1+r^2)^{1/2}}\bigg{\|}_{L^2(\Hcal_{s})}\frac{d}{ds}\bigg{\|}\frac{r u\chi(r/t)}{s(1+r^2)^{1/2}}\bigg{\|}_{L^2(\Hcal_{s})}.
\endaligned
\ee

We then integrate \eqref{eq-itsdiv} in the region $\Kcal_{[2,s_1]}\subset \Kcal\cap \{2 \leq \sqrt{t^2-r^2}\leq s_1\}$:
$$
\aligned
\int_{\Kcal_{[2,s_1]}}\!\!\!\!\text{div} \hskip.05cm  W \, dxdt
&= -2\int_{\Kcal_{[2,s_1]}}\!\!\!\!s^{-1} \bigg( \del_a u \frac{r\chi(r/t)u}{(1+r^2)^{1/2}s}\frac{x^a t\chi(r/t)}{r(1+r^2)^{1/2}}\bigg) \, dxdt
\\
& \quad -2\int_{\Kcal_{[2,s_1]}}\!\!\!\! s^{-1}\frac{u}{r}  \frac{r\chi(r/t) u}{s(1+r^2)^{1/2}}\frac{\chi'(r/t)r}{(1+r^2)^{1/2}}dxdt
\\
&\quad -\int_{\Kcal_{[2,s_1]}}\!\!\!\!\bigg(\frac{r^2t+3t}{(1+r^2)^2s^2} + 2 \frac{r^2t}{(1+r^2)s^4}\bigg)\big(u\chi(r/t)\big)^2dxdt,
\endaligned
$$
which yields the identity
$$
\aligned
\int_{\Kcal_{[2,s_1]}}\!\!\!\!\text{div} \hskip.05cm  W \, dxdt
&= -2\int_{2}^{s_1}\int_{\Hcal_s}(s/t)s^{-1} \bigg( \del_a u \frac{r\chi(r/t)u}{(1+r^2)^{1/2}s}\frac{x^a t\chi(r/t)}{r(1+r^2)^{1/2}}\bigg) \, dxds
\\
&\quad -2\int_{2}^{s_1}\int_{\Hcal_s}(s/t) s^{-1}\frac{u}{r}  \frac{r\chi(r/t) u}{s(1+r^2)^{1/2}}\frac{\chi'(r/t)r}{(1+r^2)^{1/2}}dxds
\\
&\quad -\int_{2}^{s_1}\int_{\Hcal_s}(s/t)\bigg(\frac{r^2t+3t}{(1+r^2)^2s^2} + 2 \frac{r^2t}{(1+r^2)s^4}\bigg)\big(u\chi(r/t)\big)^2dxds
\\
&=:\int_{2}^{s_1}\big( T_1 + T_2 +T_3\big) \, ds.
\endaligned
$$
Here, we have $T_3\leq 0$, while
$$
\aligned
T_1 =& -2 s^{-1}\int_{\Hcal_s}(s/t) \bigg( \del_a u \frac{r\chi(r/t)u}{(1+r^2)^{1/2}s}\frac{x^a t\chi(r/t)}{r(1+r^2)^{1/2}}\bigg) \, dx
\\
\leq & 2s^{-1}\bigg{\|}\frac{r u\chi(r/t)}{s(1+r^2)^{1/2}}\bigg{\|}_{L_f^2(\Hcal_s)}
\\
& \qquad \qquad \cdot
\sum_a \|(s/t)\del_a u\|_{L_f^2(\Hcal_s)}\big{\|}\chi(r/t)x^atr^{-1}(1+r^2)^{-1/2}\big{\|}_{L^\infty(\Hcal_s)}
\\
\leq &Cs^{-1}\bigg{\|}\frac{r u\chi(r/t)}{s(1+r^2)^{1/2}}\bigg{\|}_{L_f^2(\Hcal_s)}\sum_a\|(s/t)\del_a u\|_{L_f^2(\Hcal_s)}
\endaligned
$$
and
$$
\aligned
T_2 =& -2 s^{-1}\int_{\Hcal_s}(s/t)\frac{u}{r}  \frac{r\chi(r/t) u}{s(1+r^2)^{1/2}}\frac{\chi'(r/t)r}{(1+r^2)^{1/2}}dx
\\
\leq &Cs^{-1}
\bigg{\|}\frac{r u\chi(r/t)}{s(1+r^2)^{1/2}}\bigg{\|}_{L_f^2(\Hcal_s)}
\|ur^{-1}\|_{L_f^2(\Hcal_s)}\big{\|}r\chi'(r/t)(1+r^2)^{-1/2}\big{\|}_{L^\infty(\Hcal_s)}
\\
\leq &Cs^{-1}\bigg{\|}\frac{r u\chi(r/t)}{s(1+r^2)^{1/2}}\bigg{\|}_{L_f^2(\Hcal_s)}\sum_{a}\|\delu_a u\|_{L_f^2(\Hcal_s)},
\endaligned
$$
where Lemma \ref{pre Hardy lem1} was used.

We write our identity in the form
$
\frac{d}{ds}\bigg(\int_{\Kcal_{[2,s]}}\!\!\!\!\text{div} \hskip.05cm W \, dxdt\bigg) = T_1 + T_2 + T_3
$
and obtain
\bel{pre Hardy hyper eq0}
\aligned
& \frac{d}{ds}\bigg(\int_{\Kcal_{[2,s_1]}}\!\!\!\!\text{div} \hskip.05cm W \, dxdt\bigg)
\\
& \lesssim s^{-1}\bigg{\|}\frac{r u\chi(r/t)}{s(1+r^2)^{1/2}}\bigg{\|}_{L_f^2(\Hcal_s)}
\sum_{a} \Big( \|(s/t)\del_a\|_{L_f^2(\Hcal_s)}+\|\delu_a u\|_{L_f^2(\Hcal_s)} \Big).
\endaligned
\ee

Finally,
combining \eqref{pre Hardy hyper eq0} and \eqref{pre Hardy hyper eq1} yields us
$$
\frac{d}{ds}\bigg{\|}\frac{r u\chi(r/t)}{s(1+r^2)^{1/2}}\bigg{\|}_{L^2(\Hcal_{s})}
\lesssim
s^{-1}\sum_a\big(\| {s \over t} \del_a u\|_{L^2(\Hcal_{s})} + \|\delu_a u\|_{L^2(\Hcal_{s})}\big)
$$
and, by integration over $[2,s]$,
\bel{pre Hardy hyper eq2}
\aligned
&
\big{\|}r(1+r^2)^{-1/2}\chi(r/t)s^{-1}u\big{\|}_{L_f^2(\Hcal_s)}
\\
&\leq \big{\|}r(1+r^2)^{-1/2}\chi(r/t)2^{-1}u\big{\|}_{L^2(\Hcal_{2})}
  + \sum_a\int_{2}^s\sbar^{-1}\big(\|  {\sbar \over t} \del_a u\|_{L_f^2(\Hcal_\sbar)} + \|\delu_a u\|_{L_f^2(\Hcal_\sbar)}\big)d\sbar.
\endaligned
\ee
From Lemma \ref{pre Hardy lem1}, we then deduce that
\bel{eq pr1 ch5 prop 1}
\aligned
\|\chi(r/t)s^{-1}u\|_{L_f^2(\Hcal_s)}
&\lesssim \|r(1+r^2)^{-1/2}\chi(r/t)s^{-1}u\|_{L_f^2(\Hcal_s)}
\\
&\lesssim \big{\|}2^{-1}u\big{\|}_{L^2(\Hcal_{2})} 
+ \sum_a\int_{2}^s\sbar^{-1}\big(\|  {\sbar \over t} \del_a u\|_{L_f^2(\Hcal_\sbar)} + \|\delu_a u\|_{L_f^2(\Hcal_\sbar)}\big)d\sbar, 
\endaligned
\ee
and remains to combine \eqref{eq pr2 Hardy} with \eqref{eq pr1 ch5 prop 1}.
\end{proof}


\section{Sup-norm estimates for the wave and Klein-Gordon equations}
\label{sec supnorm}

\subsection{Statement for the wave equation}

\begin{proposition}[A sup-norm estimate for the wave equation with source]
\label{Linfini wave}
Let $u$ be a spatially compactly supported to the wave equation
\be
\aligned
&-\Box u = f,
\\
& u|_{t=2} = 0,\qquad \del_t u|_{t=2} = 0,
\endaligned
\ee
where the source $f$ is spatially compactly supported in $\Kcal$  
and satisfies the estimate
$$
|f|\leq C_f t^{-2-\nu}(t-r)^{-1+\mu}
$$
for some constants $C_f>0$, $0<\mu\leq 1/2$, and $0< |\nu|\leq 1/2$. 
Then, the following estimate holds:
\be
\label{Linfini wave ineq}
|u(t,x)|
\lesssim
\begin{cases}
\frac{C_f}{\nu\mu}(t-r)^{\mu-\nu} t^{-1}, \qquad & 0< \nu\leq 1/2,
\\
\frac{C_f}{|\nu|\mu}(t-r)^{\mu} t^{-1 -\nu}, &-1/2\leq \nu < 0.
\end{cases}
\ee
\end{proposition}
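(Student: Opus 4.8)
The plan is to use the explicit Kirchhoff-type representation formula for solutions of the inhomogeneous wave equation in $\RR^{3+1}$ with vanishing initial data. Since $u$ solves $-\Box u = f$ with $u|_{t=2} = \del_t u|_{t=2} = 0$ (recall our convention $\Box = -\del_t\del_t + \sum_a\del_a\del_a$), we have the representation
\be
u(t,x) = \frac{1}{4\pi}\int_{|y-x| \leq t-2} \frac{f\bigl(t - |y-x|, y\bigr)}{|y-x|}\, dy,
\ee
and after the change of variables $y = x + \rho\omega$ with $\rho = |y-x|$ and $\omega \in S^2$, this becomes
\be
u(t,x) = \frac{1}{4\pi}\int_0^{t-2} \rho \int_{S^2} f\bigl(t-\rho, x + \rho\omega\bigr)\, d\omega\, d\rho.
\ee
First I would insert the pointwise bound $|f| \leq C_f\, \tau^{-2-\nu}(\tau - |z|)^{-1+\mu}$ evaluated at $\tau = t-\rho$, $z = x + \rho\omega$, and reduce to estimating the scalar integral $\int_0^{t-2}\rho\,(t-\rho)^{-2-\nu}\, J(\rho)\,d\rho$, where $J(\rho) := \int_{S^2}\bigl((t-\rho) - |x+\rho\omega|\bigr)^{-1+\mu}\,d\omega$ (here one must be careful that the spatial compact support of $f$ in $\Kcal$ guarantees $(t-\rho) - |x+\rho\omega| > 0$ on the relevant region, and also confines $\rho$ to a bounded-from-the-light-cone range so there is no issue at $\rho$ near $t-2$).

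The key technical step is the angular integral $J(\rho)$. Writing $|x+\rho\omega|^2 = |x|^2 + 2\rho\, x\cdot\omega + \rho^2$ and parametrizing by the cosine $\xi = x\cdot\omega/(|x|\rho)\cdot$ — more precisely using $d\omega = 2\pi\, d(\cos\theta)$ with $\theta$ the angle between $\omega$ and $x$ — one reduces $J(\rho)$ to a one-dimensional integral in the variable $\lambda := |x+\rho\omega|$ ranging over $[|\,|x|-\rho\,|,\ |x|+\rho]$, with Jacobian $\lambda\,d\lambda/(|x|\rho)$. Thus
\be
J(\rho) = \frac{2\pi}{|x|\rho}\int_{|\,|x|-\rho\,|}^{|x|+\rho} \bigl((t-\rho)-\lambda\bigr)^{-1+\mu}\, \lambda\, d\lambda
\lesssim \frac{1}{|x|\rho}\, (|x|+\rho)\, \bigl((t-\rho) - |x| + \rho\bigr)^{\mu} \cdot \frac{1}{\mu},
\ee
where I used $\lambda \leq |x|+\rho \lesssim t$ and that the antiderivative of $((t-\rho)-\lambda)^{-1+\mu}$ contributes the factor $\mu^{-1}$ together with the value at the lower endpoint $\lambda = |\,|x|-\rho\,| \geq |x|-\rho$, which yields the quantity $(t-\rho) - |x| + \rho \geq t - r$ when $\rho \leq |x|$; the opposite regime $\rho \geq |x|$ is handled similarly and is in fact better. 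The upshot is a bound of the form $J(\rho) \lesssim \mu^{-1}\, t\,\rho^{-1}\,(t - r + 2\rho)^{\mu}$ or, more usefully after noting the integrand is supported away from $\lambda$ making the bracket vanish, $\rho\, J(\rho) \lesssim \mu^{-1}\, t\, (t-r)^{\mu}$ in the worst case (and this is where the restriction $\mu \leq 1/2$ and the spatial compactness keep everything finite).

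Substituting back, I would be left with $|u(t,x)| \lesssim \frac{C_f}{\mu}\, t \int_0^{t-2}(t-\rho)^{-2-\nu}\,(t - r + 2\rho)^{\mu}\, d\rho$, roughly speaking; the remaining radial integral in $\rho$ is an elementary one-variable estimate. The key is to split according to whether $t - \rho$ is comparable to $t$ or is small (i.e. near the light cone $\rho$ near $t - r$), track the factor $(t-r)$, and use $\int \tau^{-2-\nu}\,d\tau$ which produces the additional $\nu^{-1}$ (respectively $|\nu|^{-1}$) and distinguishes the two cases $\nu > 0$ and $\nu < 0$ in the statement: for $\nu > 0$ the integral converges at $\tau \to \infty$ and one gains $t^{-1}(t-r)^{\mu-\nu}$, while for $\nu < 0$ the growth in $\tau$ must be compensated by $t$, giving $t^{-1-\nu}(t-r)^{\mu}$. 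The main obstacle, and the step requiring the most care, is the angular integral $J(\rho)$: one must correctly identify, using the spatial compact support hypothesis, that the backward light cone from $(t,x)$ meets the support of $f$ only where $(t-\rho) - |x+\rho\omega|$ stays bounded below by a positive multiple of $t - r$ (uniformly in the admissible $\rho,\omega$), so that the negative power $-1+\mu$ does not produce a divergence; getting the sharp $t$ and $(t-r)$ powers out of this integral, rather than a lossy bound, is what makes the final decay rates sharp.
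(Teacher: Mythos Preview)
Your overall strategy is the same as the paper's: start from Kirchhoff's formula, reduce the spherical average to a one-dimensional integral via $\lambda = |x+\rho\omega|$ with Jacobian $\lambda\,d\lambda/(r\rho)$, then estimate the remaining radial integral. However, your execution of the angular step has a genuine gap. Your claim $\rho\,J(\rho)\lesssim \mu^{-1}t\,(t-r)^{\mu}$ fails in the regime $\rho>r$ when $r$ is small. Your derivation there gives $\rho J(\rho)\le 2\pi(t-\rho)\,(r\mu)^{-1}(t+r-2\rho)^{\mu}$, carrying an explicit $1/r$; substituting this into the radial integral over $\rho\in[r,(t+r-1)/2]$ (length $\sim t$) produces an extra factor $t/r$, which blows up as $r\to 0$ and destroys the estimate. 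The dismissal ``the opposite regime $\rho\ge|x|$ is handled similarly and is in fact better'' is exactly where the argument breaks.

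The fix, which is the content of the paper's technical lemma on $I(\lambda)$, is that when $\rho>r$ and the full sphere lies in the support (i.e.\ $r<\rho\le (t-r-1)/2$), one must \emph{not} integrate the $\lambda$-integral but instead bound it by (length of interval)$\times$(sup of integrand): the interval $[\rho-r,\rho+r]$ has length $2r$, which cancels the $1/r$, yielding $\rho J(\rho)\lesssim (\rho+r)(t-r-2\rho)^{-1+\mu}$. This then integrates in $\rho$ to $O(\mu^{-1})$, uniformly in $r$. The paper carries out the full case analysis (partial vs.\ full sphere, $\rho\lessgtr r$, and separately $r\le(t-1)/3$ vs.\ $r\ge(t-1)/3$) precisely to handle this. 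Also, your closing heuristic that $(t-\rho)-|x+\rho\omega|$ stays bounded below by a positive multiple of $t-r$ is incorrect: the support condition only guarantees this quantity exceeds $1$, and it can be $O(1)$ even when $t-r$ is large. The factor $(t-r)^{\mu}$ in the answer arises from the \emph{opposite} endpoint of the $\lambda$-integral (namely $\lambda=|r-\rho|$), not from any pointwise lower bound on the integrand.
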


We recall that the energy estimate on wave equation does not control the solution itself but only its gradient. So when we apply the Sobolev inequality and obtain a sup-norm estimate (cf.~for example \cite{PLF-MY-book}), there is no immediate estimate on the sup-norm of the solution itself. The estimate above yields a (sharp) sup-norm estimate on the solution itself and will play an essential role for the control of
the quasi-linear term $u\del_t\del_t v$ in our model problem. We emphasize that the range $-1/2\leq \nu < 0$ will only be used in the second part \cite{PLF-YM-two}.


\subsection{Proof of the sup-norm estimate for the wave equation}

We now state a technical lemma and give the proof of Proposition \ref{Linfini wave}, but postpone the proof of the lemma to the end of this section. Let $d\sigma$ be the Lebesgue measure on the sphere $\{|y| = 1-\lambda \}$ and $x\in \mathbb{R}^3$ with $r = |x|$. We are interested in controling the integral
$$
I(\lambda)=I(\lambda, t, x/t):=\int_{|y| = 1-\lambda,|\frac{x}{t}-y|\leq \lambda-t^{-1}}
 \frac{d\sigma(y)}{\big(\lambda-\big|\frac{x}{t} - y\big|\big)^{1-\mu}}.
$$
Our bounds below are consistent with the obvious estimate where $x = 0$:
\begin{equation}\label{last 1}
I(\lambda, t, 0) = 4\pi(2\lambda - 1)^{-1+\mu}(1-\lambda)^2.
\end{equation}
Clearly, when $0< \lambda\leq \frac{t-r+1}{2t}$, one has $I(\lambda) = 0$.

\begin{lemma}\label{wave lemma1}
When $\frac{t-r+1}{2t}\leq \lambda\leq 1$,  the following estimate holds:
$$
I(\lambda) \lesssim
\left\{
\aligned
&\frac{\lambda t(1-\lambda)}{\mu r}\left(\frac{t-r}{t}\right)^{\mu},
\qquad
&&\frac{t-r+1}{2t}\leq \lambda\leq \frac{t+r+1}{2t},
\\
&(1-\lambda)\left(\frac{t+r}{t}-\lambda\right)\left(2\lambda - \frac{t+r}{t}\right)^{-1+\mu}, \qquad
&&\frac{t+r+1}{2t}\leq \lambda\leq \frac{t-r}{t},
\\
& &&  \hskip2.cm  \text{ provided } \frac{t+r+1}{2t}\leq  \frac{t-r}{t},
\\
&\frac{(1-\lambda)t}{\mu r}\left(\frac{t-r}{t}\right)^\mu,\qquad
&&\max\left(\frac{t-r}{t},\frac{t+r+1}{2t}\right) \leq \lambda\leq 1.
\endaligned
\right.
$$
\end{lemma}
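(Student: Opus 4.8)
The plan is to evaluate, or at least sharply estimate, the surface integral $I(\lambda)$ by introducing spherical coordinates on the sphere $\{|y| = 1-\lambda\}$ with the polar axis pointing along $x/t$. Writing $\rho := |x/t| = r/t$ and $a := 1-\lambda$, the quantity $|x/t - y|$ depends only on the polar angle $\theta$ through the law of cosines, $|x/t - y|^2 = \rho^2 + a^2 - 2\rho a\cos\theta$. Thus $d\sigma(y) = a^2 \sin\theta \, d\theta \, d\varphi$, and after integrating the trivial $\varphi$-dependence and substituting $w := |x/t - y|$ (so that $w\,dw = \rho a \sin\theta\, d\theta$, which requires $\rho > 0$, i.e. $r > 0$), the integral collapses to a one-dimensional integral
\be
I(\lambda) = \frac{2\pi a}{\rho}\int_{w_-}^{w_+} \frac{w\, dw}{(\lambda - w)^{1-\mu}},
\ee
where $w_\pm = |\rho \pm a| = |\rho \pm (1-\lambda)|$ are the extreme values of $|x/t-y|$, further truncated by the support constraint $w \le \lambda - t^{-1}$. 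This reduces everything to understanding the interplay between the three points $w_-$, $w_+$, $\lambda - t^{-1}$ on the real line, which is exactly what produces the case distinction in the statement.

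Next I would carry out the case analysis according to which of $w_\pm$ actually constrain the domain of the $w$-integral. The sign of $\rho - a = (r/t) - 1 + \lambda$ flips at $\lambda = (t-r)/t$; the constraint $w_+ \le \lambda - t^{-1}$ versus $w_+ > \lambda - t^{-1}$ flips near $\lambda = (t+r+1)/(2t)$; and the integral is empty unless $w_- < \lambda - t^{-1}$, which is the already-noted threshold $\lambda \geq (t-r+1)/(2t)$. Matching these thresholds with the four regimes in the statement: in the first regime the upper limit is the support cutoff $\lambda - t^{-1}$ and the lower limit is $w_- = |\rho - a|$, which is comparable to $(t-r)/t$; in the middle regime both endpoints are the geometric ones $w_\mp = a \mp \rho$ (the cutoff being inactive), giving a clean closed-form type bound; in the last regime the roles of $w_-$ and the cutoff switch because $\rho > a$. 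In each regime I would bound $\int w(\lambda - w)^{-1+\mu}\,dw$ by pulling $w$ out at its maximal value $O(\rho + a) = O(t\cdot\text{stuff}/t)$ and then integrating $(\lambda - w)^{-1+\mu}$ explicitly to get a factor $\mu^{-1}$ times a difference of $\mu$-th powers; the endpoints $\lambda - w_-$ and $\lambda - (\lambda - t^{-1}) = t^{-1}$ are what generate the $\left(\tfrac{t-r}{t}\right)^\mu$ and $\left(2\lambda - \tfrac{t+r}{t}\right)^\mu$ factors after elementary estimates such as $\lambda - w_- \simeq (t-r)/t$ on the relevant range.

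The main obstacle I anticipate is \emph{bookkeeping rather than depth}: one must track, across the four overlapping parameter windows, exactly which endpoint dominates $\lambda - w$ and verify that the crude bound $w \lesssim \rho + a$ does not lose a power of $t/r$ or $t/(t-r)$ that would break the subsequent energy argument — in particular the factor $\lambda t(1-\lambda)/(\mu r)$ in the first line is delicate because it contains an $r^{-1}$ that arose precisely from the Jacobian $1/\rho = t/r$ of the substitution, so one has to confirm the numerator really does carry the compensating power. A secondary point requiring care is the degenerate case $r = 0$ (equivalently $\rho = 0$), where the substitution is invalid; there one falls back on the explicit formula \eqref{last 1}, and one should check it is consistent with (indeed, dominated by) the stated bounds in the limit $r \to 0^+$. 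Once the endpoint analysis is organized, each of the four estimates follows from a single explicit antiderivative and a handful of elementary inequalities relating $\lambda$, $r/t$, and $1-\lambda$ on the respective window.
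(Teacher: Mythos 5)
Your proposal is correct and, modulo the cosmetic change of variable, identical in strategy to the paper's proof. The paper also places the polar axis along $x/t$, integrates out the azimuthal angle, and substitutes first $\omega = \cos\theta$, then $\gamma = (r/t)^2 + (1-\lambda)^2 - 2(r/t)(1-\lambda)\omega$, then $\zeta := \lambda - \sqrt{\gamma}$; since $\sqrt{\gamma} = |x/t - y|$ this is precisely your $w$ after the affine shift $\zeta = \lambda - w$, and the resulting one-dimensional integral is the same $\tfrac{2\pi t(1-\lambda)}{r}\int w(\lambda - w)^{-1+\mu}\,dw$. The paper's case split (partial sphere vs.\ full sphere at $\lambda = \tfrac{t+r+1}{2t}$; $\rho \lessgtr a$, i.e.\ $\lambda \lessgtr \tfrac{t-r}{t}$) matches your endpoint analysis, and the estimates are obtained by pulling out the linear factor and integrating the power, just as you describe. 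The degenerate case $r=0$ is dismissed as trivial in the paper, consistent with your note about \eqref{last 1}.

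One small caution worth recording: in the first regime the upper limit of the $w$-integral is $\lambda - t^{-1}$, not $w_+ = \rho + a$, and the paper bounds the pulled-out factor by $\lambda$ (i.e.\ $w \le \lambda$), not by $\rho + a$. When $r$ is close to $t$ and $\lambda$ is near its lower threshold $\tfrac{t-r+1}{2t}$, the crude bound $w \lesssim \rho + a$ would overshoot by a factor of order $\tfrac{t+r}{t-r}$, so you must use the active endpoint (the cutoff) in that regime, exactly as you anticipate in your discussion of the $r^{-1}$ bookkeeping.
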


\begin{proof}[Proof of Proposition \ref{Linfini wave}]
From the explicit expression
\bel{eq:form-onde}
u(t,x) = \frac{1}{4\pi}\int_2^t\frac{1}{t-\sbar}\int_{|y| = t-\sbar} f(\sbar,x-y) \, d\sigma d\sbar,
\ee
in which  the integration is made on the intersection of the cone $\big\{(\sbar,y) \, / \, |y-x| = t-\sbar, 2\leq \sbar\leq t\big\}$ and $\big\{(t,x) \, / \, r<t-1, t^2-r^2\leq s^2, t\geq 2 \big\}$, we obtain
$$
\aligned
|u(t,x)|
&\leq \frac{C_f}{4\pi}\int_2^t\int_{|y| = t-\sbar,|x-y|\leq \sbar-1}\frac{\sbar^{-2-\nu}(\sbar-|x-y|)^{-1+\mu}}{t-\sbar}d\sigma d\sbar
\\
& = \frac{C_f}{4\pi t^{1+\nu-\mu}}\int_{\frac{2}{t}}^1\int_{|y'|= 1-\lambda,|\frac{x}{t}-y'|\leq \lambda-t^{-1}}
\frac{(1-\lambda)^{-1}\lambda^{-2-\nu}d\sigma d\lambda}{\big(\lambda-\big|\frac{x}{t} - y'\big|\big)^{1-\mu}}
\quad(\lambda:= \sbar/t,\quad y' := y/t)
\\
& = \frac{C_f}{4\pi t^{1+\nu-\mu}}\int_{\frac{2}{t}}^1(1-\lambda)^{-1}\lambda^{-2-\nu} 
\int_{|y'| = 1-\lambda,|\frac{x}{t}-y'|\leq \lambda-t^{-1}} \frac{d\sigma}{\big(\lambda-\big|\frac{x}{t} - y'\big|\big)^{1-\mu}} \, d\lambda. 
\endaligned
$$
When $|\frac{x}{t} - y'|\leq \lambda - t^{-1}$ holds, we obtain $\frac{t-r+1}{2t}\leq\lambda\leq 1$. For  convenience in the notation, in the following calculation we replace $y'$ by $y$. We first assume that $r>0$ and we distinguish between two main cases:

\

\noindent{\bf Case 1:} $\frac{t-r}{t}> \frac{t+r+1}{2t}\Leftrightarrow r\leq \frac{t-1}{3}$.
In Lemma \ref{wave lemma1}, all three cases are possible:
$$ 
\aligned
|u(t,x)|
\leq
& \frac{C_f}{4\pi t^{1+\nu-\mu}}\int_{\frac{t-r+1}{2t}}^1(1-\lambda)^{-1}\lambda^{-2-\nu} 
\int_{|y| = 1-\lambda,|\frac{x}{t}-y|\leq \lambda-t^{-1}} \frac{d\sigma}{\big(\lambda-\big|\frac{x}{t} - y\big|\big)^{1-\mu}} \, d\lambda
\\
\lesssim &\frac{C_f}{\mu t^{1+\nu-\mu}} \int_{\frac{t-r+1}{2t}}^{\frac{t+r+1}{2t}}(1-\lambda)^{-1}\lambda^{-2-\nu} \frac{\lambda t(1-\lambda)}{r}\left(\frac{t-r}{t}\right)^{\mu} d\lambda
\\
&+ \frac{C_f}{ t^{1+\nu-\mu}}\int_{\frac{t+r+1}{2t}}^{\frac{t-r}{t}}
(1-\lambda)^{-1}\lambda^{-2-\nu}(1-\lambda)\left(\frac{t+r}{t}-\lambda\right)\left(2\lambda - \frac{t+r}{t}\right)^{-1+\mu} \, d\lambda
\\
&+ \frac{C_f}{\mu t^{1+\nu-\mu}}\int_{\frac{t-r}{t}}^{1}
(1-\lambda)^{-1}\lambda^{-2-\nu}\frac{(1-\lambda)t}{ r}\left(\frac{t-r}{t}\right)^\mu d\lambda, 
\endaligned
$$
thus 
$$ 
\aligned
|u(t,x)|
\lesssim
&\frac{C_f}{\mu t^{1+\nu-\mu}}\frac{t}{r}\bigg(\frac{t-r}{t}\bigg)^{\mu}\int_{\frac{t-r+1}{2t}}^{\frac{t+r+1}{2t}}\lambda^{-1-\nu} \, d\lambda
\\
&+\frac{C_f}{t^{1+\nu-\mu}}\int_{\frac{t+r+1}{2t}}^{\frac{t-r}{t}}\lambda^{-2-\nu}\left(\frac{t+r}{t}-\lambda\right)\left(2\lambda - \frac{t+r}{t}\right)^{-1+\mu} \, d\lambda
\\
&+\frac{C_f}{\mu t^{1+\nu-\mu}}\frac{t}{r}\left(\frac{t-r}{t}\right)^{\mu}\int_{\frac{t-r}{t}}^1\lambda^{-2-\nu} \, d\lambda.
\endaligned
$$
For the first integral, we recall that $r\leq \frac{t-1}{3}$ and  that $0<|\nu|\leq1/2$, and write 
$$
\frac{t}{r}\int_{\frac{t-r+1}{2t}}^{\frac{t+r+1}{2t}}\lambda^{-1-\nu} \, d\lambda
\lesssim \bigg(\frac{t}{t-r}\bigg)^{1+\nu} \lesssim 1, 
$$
so that 
$$
\left|\frac{C_f}{\mu t^{1+\nu-\mu}}\frac{t}{r}\bigg(\frac{t-r}{t}\bigg)^{\mu}\int_{\frac{t-r+1}{2t}}^{\frac{t+r+1}{2t}}\lambda^{-1-\nu} \, d\lambda\right | \lesssim C_f\mu^{-1}(t-r)^{\mu}t^{-1-\nu}.
$$
Next, for the second integral in the right-hand-side,   
 we just remark that
$$
\aligned
&\int_{\frac{t+r+1}{2t}}^{\frac{t-r}{t}}\lambda^{-2-\nu}\left(\frac{t+r}{t}-\lambda\right)\left(2\lambda - \frac{t+r}{t}\right)^{-1+\mu} \, d\lambda
\\
&\lesssim \int_{\frac{t+r+1}{2t}}^{\frac{t-r}{t}}\left(2\lambda - \frac{t+r}{t}\right)^{-1+\mu} \, d\lambda
 = \frac{1}{\mu}\left(2\lambda - \frac{t+r}{t}\right)^{\mu}\bigg|_{\frac{t+r+1}{2t}}^{\frac{t-r}{t}} 
\lesssim \frac{1}{\mu}.
\endaligned
$$
This leads to
$$
\frac{C_f}{t^{1+\nu-\mu}}\int_{\frac{t+r+1}{2t}}^{\frac{t-r}{t}}\lambda^{-2-\nu}\left(\frac{t+r}{t}-\lambda\right)\left(2\lambda - \frac{t+r}{t}\right)^{-1+\mu} \, d\lambda
\lesssim
 \frac{C_f}{\mu t^{1+\nu-\mu}}.
$$
For the third term, in view of $\frac{t-r}{t}\geq \frac{t+r+t1}{2t}\geq \frac{1}{2}$, we obtain
$$
\aligned
\frac{C_f}{\mu t^{1+\nu-\mu}}\frac{t}{r}\left(\frac{t-r}{t}\right)^{\mu}\int_{\frac{t-r}{t}}^1\lambda^{-2-\nu} \, d\lambda
\lesssim &\frac{C_f}{\mu t^{1+\nu-\mu}}\frac{t}{r}\left(\frac{t-r}{t}\right)^{\mu}\int_{\frac{t-r}{t}}^1 2^{2+\mu} \, d\lambda
\\
\lesssim & C_f\mu^{-1} (t-r)^{\mu}t^{-1-\nu}.
\endaligned
$$
So we conclude that in the case $0<r\leq \frac{t-1}{3}$, $|u(t,x)|\lesssim C_f\mu^{-1}(t-r)^{\mu}t^{-1-\nu}$.

\

\noindent {\bf Case 2:} $\frac{t+r+1}{2t}\geq \frac{t-r}{t}\Leftrightarrow r\geq \frac{t-1}{3}$.
The second case in Lemma \ref{wave lemma1} is not possible, and we have
$$
|u(t,x)|
\lesssim\frac{C_f}{\mu t^{1+\nu-\mu}}\left(\frac{t-r}{t}\right)^{\mu}
\left(\int_{\frac{t-r+1}{2t}}^{\frac{t+r+1}{2t}}\lambda^{-1-\nu} \, d\lambda + \int_{\frac{t+r+1}{2t}}^1\lambda^{-2-\nu} \, d\lambda\right).
$$
Since $\frac{t+r+1}{2t}\geq 1/2$, the second integral is bounded by a constant $C$. For the first integral, we see that when $\nu> 0$,
$$
\int_{\frac{t-r+1}{2t}}^{\frac{t+r+1}{2t}}\lambda^{-1-\nu} \, d\lambda
\lesssim
 \frac{1}{\nu}\left(\frac{t-r+1}{t}\right)^{-\nu}.
$$
So in this case when $\nu>0$, we obtain
$|u(t,x)|\lesssim C_f(\mu\nu)^{-1}(t-r)^{\mu-\nu}t^{-1}$.

When $\nu<0$, we write 
$$
\int_{\frac{t-r+1}{2t}}^{\frac{t+r+1}{2t}}\lambda^{-1-\nu} \, d\lambda
\lesssim
\frac{1}{|\nu|}\left(\frac{t+r+1}{t}\right)^{-\nu}
\lesssim \frac{1}{|\nu|}
$$
and, therefore, we obtain
$|u(t,x)|\lesssim C_f(\mu|\nu|)^{-1}(t-r)^{\mu}t^{-1-\nu}$.

When $r=0$, we make the following direct calculation, remark that in this case, $\frac{t+1}{2t}\leq \lambda\leq 1$, by \eqref{last 1}:
$$
\aligned
|u(t,x)| &\leq \frac{C_f}{4\pi t^{1+\nu-\mu}}\int_{\frac{t-r+1}{2t}}^1(1-\lambda)^{-1}\lambda^{-2-\nu} \, d\lambda
\int_{|y| = 1-\lambda,|\frac{x}{t}-y|\leq \lambda-t^{-1}} \frac{d\sigma}{\big(\lambda-\big|\frac{x}{t} - y\big|\big)^{1-\mu}}
\\
&\lesssim\frac{C_f}{t^{1+\nu-\mu}}\int_{\frac{t+1}{2t}}^1(1-\lambda)^{-1}\lambda^{-2-\nu}(2-\lambda)^{-1+\mu}(1-\lambda)^2 d\lambda
\\
&\lesssim \frac{C_f}{\mu}t^{-1-\nu+\mu} 
=\frac{C_f}{\mu}(t-r)^{\mu}t^{-1-\nu} \qquad \text{(since $r=0$),} 
\endaligned
$$
which completes the proof.   
\end{proof}

\begin{proof}[Proof of Lemma \ref{wave lemma1}] When $r=0$, the estimate is trivial. When $r>0$,
without loss of generality, let $x = (r,0,0)$. The surface $S_{\lambda} := \{|y| = 1-\lambda\}\cap \{\left|\frac{x}{t} - y\right|\leq \lambda - t^{-1}\}$ is parameterized as follows:
\begin{itemize}

\item $\theta$: angle from $(1,0,0)$ to $y$ with $0\leq \theta\leq \pi$,

\item $\phi$: angle from the plane determined by $(1,0,0)$ and $(0,1,0)$ to the plane determined by $y$ and $(1,0,0)$,
with $0\leq \phi\leq 2\pi$. 

\end{itemize} 
Then, we have
$y = (1-\lambda)\big(\cos\theta,\sin\theta  \cos\phi,\sin\theta \sin\phi\big)$ and we distinguish between two cases, as follows.

\

\noindent {\bf Case 1.}
When $\frac{t-r+1}{2t}\leq\lambda\leq \frac{t+r+1}{2t}$, 
we only have a part of the sphere $\{|y|=1-\lambda\}$ contained in the ball $\{\left|\frac{x}{t}-y\right|\leq \lambda - t^{-1}\}$ where $\cos(\theta) \geq \frac{(r/t)^2+(1-\lambda)^2 - \left(\lambda-t^{-1}\right)^2}{(2r/t)(1-\lambda)}$. So we set 
$\theta_0 := \arccos\left(\frac{(r/t)^2+(1-\lambda)^2 - \left(\lambda-t^{-1}\right)^2}{(2r/t)(1-\lambda)}\right)$
and see that 
$$
\lambda - \big|\frac{x}{t} - y\big|  = \lambda -\sqrt{\frac{r^2}{t^2} + (1-\lambda)^2 - 2\frac{r}{t}(1-\lambda)\cos\theta}
$$
and
$d\sigma = (1-\lambda)^2\sin(\theta)d\theta d\phi$. 
The integral is estimated as follows:
$$
\aligned
& \hskip-.3cm \int_{|y| = 1-\lambda,|\frac{x}{t}-y|\leq \lambda-t^{-1}} \frac{d\sigma}{\big(\lambda-\big|\frac{x}{t} - y\big|\big)^{1-\mu}}
\\
=&\int_0^{2\pi}d\phi\int_0^{\theta_0}(1-\lambda)^2\sin\theta
\bigg(\lambda -\sqrt{\frac{r^2}{t^2} + (1-\lambda)^2 - 2\frac{r}{t}(1-\lambda)\cos\theta}\bigg)^{-1+\mu}d\theta
\\
=&2\pi \int_0^{\theta_0}(1-\lambda)^2\sin\theta
\bigg(\lambda -\sqrt{\frac{r^2}{t^2} + (1-\lambda)^2 - 2\frac{r}{t}(1-\lambda)\cos\theta}\bigg)^{-1+\mu}d\theta
\\
=&-2\pi(1-\lambda)^2\int_0^{\theta_0}
\bigg(\lambda -\sqrt{\frac{r^2}{t^2} + (1-\lambda)^2 - 2\frac{r}{t}(1-\lambda)\cos\theta}\bigg)^{-1+\mu}d\cos\theta
\endaligned
$$
thus by setting $\omega = \cos \theta$ 
$$
\aligned
& \hskip-.3cm \int_{|y| = 1-\lambda,|\frac{x}{t}-y|\leq \lambda-t^{-1}} \frac{d\sigma}{\big(\lambda-\big|\frac{x}{t} - y\big|\big)^{1-\mu}}
\\
=&2\pi(1-\lambda)^2\int_{\cos\theta_0}^1
\bigg(\lambda -\sqrt{\frac{r^2}{t^2} + (1-\lambda)^2 - 2\frac{r}{t}(1-\lambda)\omega}\bigg)^{-1+\mu}d\omega
\\
=&\frac{\pi t(1-\lambda)}{r}\int_{|\frac{r}{t} - (1-\lambda)|^2}^{(\lambda-t^{-1})^2}\big(\lambda - \sqrt{\gamma}\big)^{-1+\mu} \, d\gamma
=2 \frac{\pi t(1-\lambda)}{r}\int_{t^{-1}}^{\lambda-|\frac{r}{t}-(1-\lambda)|}\zeta^{-1+\mu}(\lambda-\zeta) \, d\zeta, 
\endaligned
$$
where we have used $\gamma = \frac{r^2}{t^2} + (1-\lambda)^2 - 2\frac{r}{t}(1-\lambda)\omega$ and
$\zeta := \lambda - \sqrt{\gamma}$. Then, we distinguish between the following two sub-cases.

\

\noindent {\bf Case 1.1:} $\frac{r}{t}\leq 1-\lambda$ or, equivalently, $\lambda \leq \frac{t-r}{t}$. We now find
$$
\aligned
 &2 \frac{\pi t(1-\lambda)}{r}\int_{t^{-1}}^{\lambda-|\frac{r}{t}-(1-\lambda)|}\zeta^{-1+\mu}(\lambda-\zeta) \, d\zeta
\\
&=2 \frac{\pi t(1-\lambda)}{r}\int_{t^{-1}}^{2(\lambda-\frac{t-r}{2t})}\zeta^{-1+\mu}(\lambda-\zeta) \, d\zeta
\lesssim \frac{\lambda t(1-\lambda)}{\mu r}\frac{(t-r)^{\mu}}{t^{\mu}}.
\endaligned
$$

\

\noindent {\bf Case 1.2:}  $1-\lambda<\frac{r}{t}$ or, equivalently, $\lambda>\frac{t-r}{t}$. We find
$$
\aligned
&2 \frac{\pi t(1-\lambda)}{r}\int_{t^{-1}}^{\lambda-|\frac{r}{t}-(1-\lambda)|}\zeta^{-1+\mu}(\lambda-\zeta) \, d\zeta
\\
&=2 \frac{\pi t(1-\lambda)}{r}\int_{t^{-1}}^{\frac{t-r}{t}}\zeta^{-1+\mu}(\lambda-\zeta) \, d\zeta
\lesssim
 \frac{\lambda t(1-\lambda)}{\mu r}\frac{(t-r)^{\mu}}{t^{\mu}}.
\endaligned
$$

\

\noindent {\bf Case 2.} When $\frac{t+r+1}{2t}\leq \lambda\leq 1$, the sphere $\{|y|=1-\lambda\}$ is entirely contained in $\{\left|(x/t)-y\right|\leq \lambda-t^{-1} \}$: 
$$
\aligned
&\int_{|y| = 1-\lambda,|\frac{x}{t}-y|\leq \lambda-t^{-1}} \frac{d\sigma}{\big(\lambda-\big|\frac{x}{t} - y\big|\big)^{1-\mu}}
=\int_{|y| = 1-\lambda}\frac{d\sigma}{\big(\lambda-\big|\frac{x}{t} - y\big|\big)^{1-\mu}}
\\
&=2\pi \int_0^{\pi}(1-\lambda)^2\sin\theta
\bigg(\lambda -\sqrt{\frac{r^2}{t^2} + (1-\lambda)^2 - 2\frac{r}{t}(1-\lambda)\cos\theta}\bigg)^{-1+\mu}d\theta
\\
&=2\pi(1-\lambda)^2\int_{-1}^1
\bigg(\lambda -\sqrt{\frac{r^2}{t^2} + (1-\lambda)^2 - 2\frac{r}{t}(1-\lambda)\omega}\bigg)^{-1+\mu}d\omega
\endaligned
$$
and thus 
$$
\aligned
\int_{|y| = 1-\lambda,|\frac{x}{t}-y|\leq \lambda-t^{-1}} \frac{d\sigma}{\big(\lambda-\big|\frac{x}{t} - y\big|\big)^{1-\mu}}
&=2 \frac{\pi t(1-\lambda)}{r}\int_{\lambda - (\frac{r}{t} + (1-\lambda))}^{\lambda-|\frac{r}{t}-(1-\lambda)|}\zeta^{-1+\mu}(\lambda-\zeta) \, d\zeta
\\
&=2 \frac{\pi t(1-\lambda)}{r}\int_{2\lambda - \frac{t+r}{t}}^{\lambda-|\frac{r}{t}-(1-\lambda)|}\zeta^{-1+\mu}(\lambda-\zeta) \, d\zeta.
\endaligned
$$
We now distinguish between two sub-cases.

\

\noindent {\bf Case 2.1: } When $\frac{r}{t}\leq 1-\lambda$ or, equivalently, $\lambda \leq \frac{t-r}{t}$, we find
$$
\aligned
&2 \frac{\pi t(1-\lambda)}{r}\int_{2\lambda - \frac{t+r}{t}}^{\lambda-|\frac{r}{t}-(1-\lambda)|}\zeta^{-1+\mu}(\lambda-\zeta) \, d\zeta
\\
&=2 \frac{\pi t(1-\lambda)}{r}\int_{2\lambda - \frac{t+r}{t}}^{2\lambda-\frac{t-r}{t}}\zeta^{-1+\mu}(\lambda-\zeta) \, d\zeta
\leq C(1-\lambda)\left(\frac{t+r}{t}-\lambda\right)\left(2\lambda - \frac{t+r}{t}\right)^{-1+\mu},
\endaligned
$$
where we have observed that in the integral the function $\zeta^{-1+\mu}(\lambda-\zeta)$ is decreasing and we can bound this integral by the value of the function taken at the inferior boundary (which is $2\lambda - \frac{t+r}{t}$) times the length of the interval which is $2r/t$.

\

\noindent {\bf Case 2.2:} When $1-\lambda<\frac{r}{t}$ or, equivalently, $\lambda>\frac{t-r}{t}$, we have
$$
\aligned
& 2 \frac{\pi t(1-\lambda)}{r}\int_{2\lambda - \frac{t+r}{t}}^{\lambda-|\frac{r}{t}-(1-\lambda)|}\zeta^{-1+\mu}(\lambda-\zeta) \, d\zeta
\\
&=2 \frac{\pi t(1-\lambda)}{r}\int_{2\lambda - \frac{t+r}{t}}^{\frac{t-r}{t}}\zeta^{-1+\mu}(\lambda-\zeta) \, d\zeta
\leq C(1-\lambda)\frac{t}{r}\int_{2\lambda - \frac{t+r}{t}}^{\frac{t-r}{t}}\zeta^{-1+\mu}d\zeta
\\
&\leq\frac{C(1-\lambda)t}{\mu r}\zeta^{\mu}\bigg|_0^{\frac{t-r}{r}} =\frac{ C(1-\lambda)t}{\mu r}\left(\frac{t-r}{t}\right)^\mu.
\endaligned
$$
When $\frac{t+r+1}{2t}\leq \frac{t-r}{t}$, both case above may occur, while only Case 2.2 is possible if the opposite inequality holds true. 
\end{proof}


\subsection{Statement for the Klein-Gordon equation}
\label{sec subsec KG-supnorm}

Consider (sufficiently smooth and spatially compactly supported) solutions to a Klein-Gordon equation on a curved space and, specifically,
\bel{Linfini KG eq}
\aligned
&-\Boxt_g v + c^2 v = f,
\\
&v|_{\Hcal_2} = v_0,
\qquad
\del_t v|_{\Hcal_2} = v_1,
\endaligned
\ee
with initial data $v_0, v_1$ given on $\Hcal_2$ and compactly supported in $\Hcal_2\cap \Kcal$, 
and the metric has the form $g^{\alpha\beta} = m^{\alpha\beta} - h^{\alpha\beta}$ with $h^{\alpha\beta}$ is spatially compactly supported in $\Kcal$ 
with $\sup |\hb^{00}| \leq 1/3$.

Before we can state our estimate, we need some notation.
Given a constant $C>0$ and using the notation $s = \sqrt{t^2 - r^2}$, we consider the function 
$$
h_{t,x}(\lambda):= \hb^{00}(\lambda t/s,\lambda x/s),
$$
and, by denoting by $h_{t,x}'(\lambda)$ for the derivative with respect to $\lambda$, 
$$
\aligned
h_{t,x}'(\lambda) 
& = {t \over s} \del_t \hb^{00}(\lambda t/s,\lambda x/s) + {x^a \over s} \del_a \hb^{00}(\lambda t/s,\lambda x/s)
\\
& = {t \over s} \newperp \hb^{00}(\lambda t/s,\lambda x/s).
\endaligned 
$$
We set
\bel{eq:szero}
s_0 :=\left
\{
\aligned
& 2, \quad &&0\leq r/t \leq 3/5,
\\
& \sqrt{\frac{t+r}{t-r}},\quad &&3/5\leq r/t\leq 1, 
\endaligned
\right.
\ee
and introduce the following function $V$ which is defined by distinguishing between the regions ``near" and ``far" from
the light cone:
$$
V:=
 \left\{
\aligned
 & \big( \|v_0\|_{L^\infty(\Hcal_2)} + \|v_1\|_{L^\infty(\Hcal_2)} \big)
\Big(1+\int_2^s|h_{t,x}'(\sbar)|e^{C\int_\sbar^s|h_{t,x}'(\lambda)|d\lambda} \, d\sbar \Big)
\\
& \hskip4.cm  + F(s) + \int_2^s F(\sbar)|h_{t,x}'(\lambda)|e^{C\int_\sbar^s|h_{t,x}'(\lambda)|d\lambda} \, d\sbar,
\hskip.cm  && 0\leq r/t\leq 3/5,
\\
& 
F(s) + \int_{s_0}^s F(\sbar)|h_{t,x}'(\sbar)|e^{C\int_\sbar^s|h_{t,x}'(\lambda)|d\lambda} \, d\sbar,
\hskip2.cm  &&3/5<r/t<1, 
\endaligned
\right.
$$ 
with 
$$
F(\sbar): = \int_{s_0}^\sbar \bigg(\big(R_1[v] + R_2[v] + R_3[v]\big)(\lambda t/s,\lambda x/s) + \lambda^{3/2}f(\lambda t/s,\lambda x/s) \bigg) d\lambda
$$
and
$$
\aligned
R_1[v] &:= s^{3/2}\sum_a\delb_a\delb_a v  + \frac{x^ax^b}{s^{1/2}}\delb_a\delb_b v + \frac{3}{4s^{1/2}} v + \sum_a\frac{3x^a}{s^{1/2}}\delb_a v,
\\
R_2[v] &:=\hb^{00}\bigg(\frac{3v}{4s^{1/2}} + 3s^{1/2}\delb_0 v\bigg)
- s^{3/2}\big(2\hb^{0b}\delb_0\delb_bv + \hb^{ab}\delb_a\delb_bv + h^{\alpha\beta}\del_\alpha\Psib^{\beta'}_\beta\,\delb_{\beta'}v\big),
\\
R_3[v] &:= \hb^{00}\bigg(2x^as^{1/2}\delb_0\delb_a v + \frac{2x^a}{s^{1/2}}\delb_a v +\frac{x^ax^b}{s^{1/2}}\delb_a\delb_bv\bigg).
\endaligned
$$

\noindent With these notations, our result is as follows.

\begin{proposition}[A sup-norm estimate for the Klein-Gordon equation with source]
\label{Linfini KG}
Considering the Klein-Gordon problem \eqref{Linfini KG eq} for a every sufficiently smooth and spatially compactly supported solution $v$ defined the future region $\Kcal_{[2, +\infty)}$, one has (for all relevant $(t,x)$)
\bel{Linfty KG ineq a}
s^{3/2}|v(t,x)| + (s/t)^{-1}s^{3/2}|\newperp  v(t,x)|\lesssim V(t,x),
\ee 
\end{proposition}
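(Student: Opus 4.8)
The plan is to reduce, along each ray issued from the origin, the curved Klein--Gordon equation to a scalar second-order ODE of perturbed-harmonic-oscillator type, and then to run a Gr\"onwall argument for that ODE. So I would fix $(t,x)$ in the region of interest, put $s=\sqrt{t^2-r^2}$, and follow the ray $\lambda\mapsto\gamma(\lambda):=(\lambda t/s,\lambda x/s)$: its hyperbolic radius at parameter $\lambda$ is exactly $\lambda$, and along $\gamma$ one has $\frac{d}{d\lambda}=\delb_0+(x^a/s)\delb_a=(t/s)\newperp$, so that $h_{t,x}(\lambda)=\hb^{00}(\gamma(\lambda))$ and $h_{t,x}'(\lambda)=(t/s)\newperp\hb^{00}(\gamma(\lambda))$, consistently with the statement. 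The ray lies inside $\Kcal$ only for $\lambda>\sqrt{(t+r)/(t-r)}$; hence when $r/t\le 3/5$ one has $\sqrt{(t+r)/(t-r)}\le 2$ and $\gamma$ reaches $\Hcal_2$ (so $s_0=2$ and the Cauchy data enter), whereas when $r/t>3/5$ the ray leaves $\Kcal$ at $\lambda=s_0=\sqrt{(t+r)/(t-r)}$, a point near the light cone where, by the spatial-compactness assumption on $v$, both $v$ and all its derivatives vanish.

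The next step is to derive the ODE. Rewriting $\Box=m^{\alpha\beta}\del_\alpha\del_\beta$ and the curved correction $h^{\alpha\beta}\del_\alpha\del_\beta$ in the hyperboloidal frame (via the transition matrices $\Phib,\Psib$ and the explicit form of $\minb^{\alpha\beta}$) shows that the only genuinely uncontrolled second-order contribution is $\hb^{00}\delb_0\delb_0 v$, every remaining second-order term being tangential to the hyperboloids or carrying at least one $\delb_a$. Setting $W(\lambda):=\lambda^{3/2}v(\gamma(\lambda))$ and computing $W''$ by the chain rule above, the equation $-\Boxt_g v+c^2v=f$ becomes, along $\gamma$, an ODE of the form $(1+\hb^{00})W''+c^2W=(R_1[v]+R_2[v]+R_3[v]+\lambda^{3/2}f)(\gamma(\lambda))$ modulo lower-order remainders, the right-hand side being precisely the integrand in the definition of $F$. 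Equivalently, writing $\tilde c^2:=c^2/(1+\hb^{00})$, which lies in $[\frac34 c^2,\frac32 c^2]$ because $\sup|\hb^{00}|\le 1/3$, one obtains $W''+\tilde c^2W=\tilde G$. The crucial point is that $\hb^{00}\delb_0\delb_0 v$, restricted to $\gamma$, equals $\hb^{00}W''$ up to first-order terms, so it can be absorbed into the principal part; the bookkeeping of all these lower-order terms is exactly what the definitions of $R_1,R_2,R_3$ encode, and it is the lengthiest part of the computation.

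For the ODE one sets $\mathcal E(\lambda):=W'(\lambda)^2+\tilde c^2W(\lambda)^2$. Differentiating, and using the ODE together with $\frac{d}{d\lambda}\tilde c^2=-(h_{t,x}'/(1+\hb^{00}))\tilde c^2$, gives $\mathcal E'=2W'\tilde G-(h_{t,x}'/(1+\hb^{00}))\tilde c^2W^2$, hence $\frac{d}{d\lambda}\mathcal E^{1/2}\lesssim|\tilde G|+|h_{t,x}'|\mathcal E^{1/2}$. A Gr\"onwall inequality on $[s_0,s]$ then yields $\mathcal E^{1/2}(s)\lesssim\mathcal E^{1/2}(s_0)\exp(C\int_{s_0}^s|h_{t,x}'|)+\int_{s_0}^s|\tilde G(\sbar)|\exp(C\int_{\sbar}^s|h_{t,x}'|)\,d\sbar$. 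Since $\tilde G$ equals, up to bounded factors, the derivative of $F$ and $F(s_0)=0$, an integration by parts turns the source integral into $F(s)+C\int_{s_0}^s F(\sbar)|h_{t,x}'|\exp(C\int_\sbar^s|h_{t,x}'|)\,d\sbar$, while the identity $\exp(C\int_{s_0}^s|h_{t,x}'|)=1+C\int_{s_0}^s|h_{t,x}'(\sbar)|\exp(C\int_\sbar^s|h_{t,x}'|)\,d\sbar$ casts the homogeneous term in the form appearing in $V$. When $r/t>3/5$ one has $\mathcal E(s_0)=0$; when $r/t\le 3/5$, $W(2)$ and $W'(2)$ are controlled by the Cauchy data on $\Hcal_2$ (the factor $t/s$ at $\gamma(2)$ being bounded exactly in this range). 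In every case $\mathcal E^{1/2}(s)\lesssim V(t,x)$.

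It remains to unwind $W$ at $\lambda=s$, where $W(s)=s^{3/2}v(t,x)$ and $W'(s)=\frac32 s^{1/2}v(t,x)+ts^{1/2}\newperp v(t,x)$; hence $s^{3/2}|v(t,x)|=|W(s)|\le\tilde c^{-1}\mathcal E^{1/2}(s)$ and $(s/t)^{-1}s^{3/2}|\newperp v(t,x)|=|W'(s)-\frac32 s^{1/2}v(t,x)|\le\mathcal E^{1/2}(s)+\frac32 s^{-1}|W(s)|\lesssim\mathcal E^{1/2}(s)$ (using $c>0$ and $s\ge 2$), and combining with $\mathcal E^{1/2}(s)\lesssim V$ gives \eqref{Linfty KG ineq a}. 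I expect the main obstacle to be this reduction step: transporting the curved Klein--Gordon operator onto a ray and verifying that, after the weight $\lambda^{3/2}$, the resulting scalar ODE is exactly $(1+\hb^{00})W''+c^2W=(R_1+R_2+R_3+\lambda^{3/2}f)\circ\gamma$ plus controllable lower-order terms --- that is, that $\hb^{00}\delb_0\delb_0 v$ is the sole problematic second-order term and recombines into the principal part. A secondary difficulty is the behaviour near the light cone, where $t/s$ is large and the hyperboloids exit $\Kcal$, which is precisely why the refined lower endpoint $s_0=\sqrt{(t+r)/(t-r)}$ and the spatial-compactness hypothesis are needed.
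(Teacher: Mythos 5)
Your proof is correct and follows the same reduction as the paper up to the ODE step, but then handles the ODE bound by a genuinely different (and arguably cleaner) technique. Both you and the paper fix $(t,x)$, parametrize the ray $\gamma(\lambda)=(\lambda t/s,\lambda x/s)$, compute $\frac{d}{d\lambda}=(t/s)\newperp$, and rewrite the curved operator in the hyperboloidal frame so that $W(\lambda)=\lambda^{3/2}v(\gamma(\lambda))$ solves $W''+\frac{c^2}{1+\hb^{00}}W=(1+\hb^{00})^{-1}(R_1+R_2+R_3+\lambda^{3/2}f)$; this is exactly the content of the paper's Lemma~\ref{lem 0 K-G}, which you correctly identify as the bulkiest but purely computational step. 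Where the paths diverge is the analogue of Lemma~\ref{lem 1 K-G}: the paper writes the ODE as a first-order system $b'=Ab+(0,k)^T$, diagonalizes $A=PQP^{-1}$, treats $(P^{-1})'b$ as a forcing term, and uses variation of parameters plus Gr\"onwall; you instead introduce the energy $\mathcal E=(W')^2+\tilde c^2W^2$ with $\tilde c^2=c^2/(1+\hb^{00})$, compute $\mathcal E'=2W'\tilde G-\frac{h_{t,x}'}{1+\hb^{00}}\tilde c^2W^2$, and apply Gr\"onwall directly to $\mathcal E^{1/2}$. The two routes give the same shape of bound, and your integration by parts (using $F(s_0)=0$) together with the identity $e^{C\int_{s_0}^s|h_{t,x}'|}=1+C\int_{s_0}^s|h_{t,x}'(\sbar)|e^{C\int_{\sbar}^s|h_{t,x}'|}\,d\sbar$ reproduces the exact form of $V$. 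The energy method avoids writing down $P$, $P^{-1}$, $Q$ and checking their norms, at the cost of having to track the term $(\tilde c^2)'W^2$ coming from the $\lambda$-dependence of the effective frequency; the paper's diagonalization makes the smallness hypothesis $\sup|\hb^{00}|\le 1/3$ enter through boundedness of $P^{\pm1}$ rather than through the lower bound on $\tilde c^2$. Both require $c>0$ in an essential way (you use it to pass from $\mathcal E^{1/2}$ back to $|W|$; the paper uses it in the diagonalization). One minor remark: as in the paper itself, the integration by parts implicitly requires replacing the integrand in $F$ by its absolute value (or equivalently working with $K(\sbar)=\int_{s_0}^\sbar|\tilde G|$ and noting $|\tilde G|\lesssim|F'|$), but this is a presentational point shared with the original.
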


This result is motivated by a pioneering work by Klainerman \cite{Klainerman85} and the decomposition in Lemma~\ref{lem 0 K-G} below. An analogue statement in two spatial dimensions and flat Minkowski spacetime is discussed in \cite{Ma}; see also the earlier work  \cite{Delort04}.


\subsection{Proof of the sup-norm estimate for the Klein-Gordon equation}

We begin with two technical results.

\begin{lemma}[A decomposition identity]
\label{lem 0 K-G}
For every sufficiently smooth solution $v$ to \eqref{Linfini KG eq}, the function
$$
w_{t,x}(\lambda) := \lambda^{3/2}v(\lambda t/s, \lambda x/s), \qquad (t,x)\in\Kcal, 
$$ 
satisfies the following second-order ODE in $\lambda$
$$
\aligned
&
\frac{d^2}{d\lambda^2}w_{t,x}(\lambda) + \frac{c^2}{1+\hb^{00}(\lambda t/s,\lambda x/s)} w_{t,x}(\lambda)
\\
&= \big(1+\hb^{00}(\lambda t/s,\lambda x/s)\big)^{-1}\big(R_1[v] + R_2[v] + R_3[v] +s^{3/2}f\big)(\lambda t/s,\lambda x/s).
\endaligned
$$
\end{lemma}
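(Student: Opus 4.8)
The plan is to reduce \eqref{Linfini KG eq} to a second-order ODE along the ray $p(\lambda):=(\lambda t/s,\lambda x/s)$, exploiting the fact that in the hyperboloidal coordinates $(\xb^0,\xb^a)=(s,x^a)$ this ray is a radial line and that $\lambda$ is precisely the hyperbolic radius of $p(\lambda)$. First I would set $\phi(\lambda):=v(p(\lambda))$ and note that, since the generator of dilations is the scaling field $S:=t\del_t+x^a\del_a$, one has $\phi'(\lambda)=\lambda^{-1}(Sv)\circ p(\lambda)$ and $\phi''(\lambda)=\lambda^{-2}\bigl((S^2-S)v\bigr)\circ p(\lambda)$. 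Since $w_{t,x}(\lambda)=\lambda^{3/2}\phi(\lambda)$ and $\lambda^{3/2}\,v(p(\lambda))=w_{t,x}(\lambda)$, a short Leibniz computation gives
\[
\frac{d^2}{d\lambda^2}w_{t,x}(\lambda)=\lambda^{-1/2}\Bigl(S^2v+2Sv+\tfrac34 v\Bigr)\circ p(\lambda),
\]
so that everything reduces to rewriting $S^2+2S+\tfrac34$ in the hyperboloidal frame and invoking the equation.

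Next I would express $S$ in the hyperboloidal frame as the Euler field $S=s\,\delb_0+x^a\delb_a$, and -- using that $\delb_0,\delb_a$ are commuting coordinate fields with $\delb_0 s=1$, $\delb_a s=0$, $\delb_0 x^b=0$, $\delb_a x^b=\delta_a^b$ -- expand $\bigl(S^2+2S+\tfrac34\bigr)v$ as $s^2\delb_0\delb_0 v$ plus the tangential second-order terms $2s\,x^a\delb_0\delb_a v+x^ax^b\delb_a\delb_b v$ and the first-order terms $3s\,\delb_0 v+3x^a\delb_a v+\tfrac34 v$. In parallel, using $\del_t=(t/s)\delb_0$, $\del_a=\delb_a-(x^a/s)\delb_0$ and $\del_\alpha=\Psib^\beta_\alpha\delb_\beta$, I would compute the flat wave operator
\[
\Box v=-\delb_0\delb_0 v+\sum_a\delb_a\delb_a v-\frac{2x^a}{s}\delb_0\delb_a v-\frac{3}{s}\delb_0 v
\]
together with the curvature term $h^{\alpha\beta}\del_\alpha\del_\beta v=\hb^{00}\delb_0\delb_0 v+2\hb^{0b}\delb_0\delb_b v+\hb^{ab}\delb_a\delb_b v+h^{\alpha\beta}(\del_\alpha\Psib^{\beta'}_\beta)\delb_{\beta'}v$, the last term being exactly the one appearing inside $R_2[v]$.

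The final step combines these. Rewriting \eqref{Linfini KG eq} as $-\Box v=f-h^{\alpha\beta}\del_\alpha\del_\beta v-c^2 v$ and using the two displays above, the coefficients of $\delb_0\delb_0 v$ collect into $(1+\hb^{00})\delb_0\delb_0 v$, which is the origin of the factor $1+\hb^{00}$ (positive under the standing hypothesis $\sup|\hb^{00}|\le 1/3$). I would solve for $\delb_0\delb_0 v$, substitute it into the formula for $w_{t,x}''$, and use $\lambda^{3/2}\,v(p(\lambda))=w_{t,x}(\lambda)$ to extract the term $-\tfrac{c^2}{1+\hb^{00}}w_{t,x}$. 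Transferring that term to the left-hand side, writing the remaining "good" terms over the common factor $1+\hb^{00}$, and cancelling the $s^{1/2}\delb_0 v$ and $s^{1/2}x^a\delb_0\delb_a v$ contributions, the surviving terms regroup into the purely tangential/first-order block $R_1[v]$, the source $s^{3/2}f$, and the $\hb^{00}$-weighted blocks $R_2[v]$ and $R_3[v]$, all evaluated at $p(\lambda)$ -- which is the asserted identity.

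I expect the only genuine difficulty to be the bookkeeping: one must consistently distinguish the fixed base point $(t,x)$ from the running point $p(\lambda)$ (at which the hyperbolic radius equals $\lambda$, so powers of $\lambda$ become powers of $s$ inside the $R_i$), and organize the sizeable collection of first-order and mixed second-order terms produced above so that they collapse into the precise combinations $R_1,R_2,R_3$ -- in particular matching the tangential coefficients $x^ax^b/s^{1/2}$ and the weights of the $\hb^{00}$-terms. There is essentially no analytic content in this lemma; the delicate pointwise decay analysis begins only once this ODE is in hand.
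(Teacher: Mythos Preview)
Your approach is correct and essentially the same as the paper's. The paper introduces $w(t,x)=s^{3/2}v(t,x)$ globally, computes $\delb_0\delb_0 w$, $\delb_0\delb_a w$, $\delb_a\delb_b w$, and uses the identity
\[
\frac{d^2}{d\lambda^2}w_{t,x}(\lambda)=\Big(\delb_0\delb_0+2\tfrac{x^a}{s}\delb_0\delb_a+\tfrac{x^ax^b}{s^2}\delb_a\delb_b\Big)w\,\Big|_{p(\lambda)},
\]
whereas you reach the same expression via the scaling field $S=s\,\delb_0+x^a\delb_a$ and the Leibniz identity $w_{t,x}''=\lambda^{-1/2}\bigl(S^2v+2Sv+\tfrac34 v\bigr)\circ p(\lambda)$. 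Your route has the minor advantage of making transparent \emph{why} the weight $\lambda^{3/2}$ is the right one (it is the power that turns the Euler operator into a pure second derivative along the ray), but the subsequent steps---expressing $\Box$ and $h^{\alpha\beta}\del_\alpha\del_\beta$ in the hyperboloidal frame, isolating the $(1+\hb^{00})\delb_0\delb_0 v$ block, and regrouping the residual tangential and $\hb^{00}$-weighted terms into $R_1,R_2,R_3$---are identical in content to the paper's Steps~1--3. Your remark that the only difficulty is bookkeeping is accurate; in particular the ``common factor'' manoeuvre you describe is exactly how the paper produces $R_3[v]$ from the $\hb^{00}$-multiple of the non-$\delb_0\delb_0$ part of $w_{t,x}''$.
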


\begin{lemma}[Technical ODE estimate]
\label{lem 1 K-G}
Let $G$ be a function defined on an interval $[s_0,s_1]$ and
satisfying $\sup | G| \leq 1/3$.
and $k$ be an integrable function defined on $[s_0,s_1]$.
Then, the solution $z$ to the ordinary differential equation
\be
\aligned
& z''(\lambda) + \frac{c^2}{1+ G(\lambda)} z(\lambda) = k(\lambda),
\\
& z(s_0) = z_0, \qquad z'(s_0) = z_1,
\endaligned
\ee
with prescribed initial data $z_0, z_1$ 
satisfies the uniform bound
\bel{Linfini ineq ODE}
\aligned
&
|z(s)| + |z'(s)|
\lesssim  \big(|z_0| + |z_1| + K(s)\big) + \int_{s_0}^s\Big(|z_0|+|z_1| + K(\sbar)\Big) \, | G'(\sbar)|e^{C\int_\sbar^s| G'(\lambda)|d\lambda} \, d\sbar
\endaligned
\ee 
for all $s \in [s_0, s_1]$ and with $K(s) := \int_{s_0}^s |k(\sbar)| \, d\sbar$ and a suitable constant $C>0$.
\end{lemma}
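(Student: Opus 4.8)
The plan is to treat the equation $z'' + \frac{c^2}{1+G}\, z = k$ as a perturbation of the constant-coefficient Klein-Gordon ODE $z'' + c^2 z = 0$, whose explicit solutions are $\cos(c\lambda)$ and $\sin(c\lambda)$. Writing $\frac{c^2}{1+G} = c^2 - \frac{c^2 G}{1+G}$, I would recast the ODE as
\[
z''(\lambda) + c^2 z(\lambda) = k(\lambda) + \frac{c^2 G(\lambda)}{1+G(\lambda)}\, z(\lambda),
\]
and apply the variation-of-constants (Duhamel) formula with the explicit fundamental solution of $z'' + c^2 z = 0$. This yields an integral equation of the form
\[
z(\lambda) = z_{\mathrm{hom}}(\lambda) + \int_{s_0}^\lambda \frac{\sin\big(c(\lambda-\sbar)\big)}{c}\,\Big(k(\sbar) + \frac{c^2 G(\sbar)}{1+G(\sbar)} z(\sbar)\Big)\, d\sbar,
\]
with $|z_{\mathrm{hom}}(\lambda)| \lesssim |z_0| + |z_1|$, and similarly for $z'$ after differentiating. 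The hypothesis $\sup|G| \le 1/3$ is exactly what is needed so that $\big|\frac{c^2 G}{1+G}\big| \lesssim |G|$ stays bounded and the division never degenerates. Because $|\sin|, |\cos| \le 1$, one immediately gets
\[
|z(\lambda)| + |z'(\lambda)| \lesssim |z_0| + |z_1| + K(\lambda) + \int_{s_0}^\lambda |G(\sbar)|\,\big(|z(\sbar)| + |z'(\sbar)|\big)\, d\sbar,
\]
and Gr\"onwall's inequality closes this into $|z(s)| + |z'(s)| \lesssim \big(|z_0|+|z_1|+K(s)\big)\, e^{C\int_{s_0}^s |G(\sbar)|\, d\sbar}$.

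However, the stated conclusion \eqref{Linfini ineq ODE} is \emph{not} that crude bound: the exponential weight involves $|G'|$, not $|G|$, and $|G|$ itself does not appear at all. This is the crucial point and the main obstacle. The reason one can trade $|G|$ for $|G'|$ is that $G = h_{t,x}$ is evaluated along a ray emanating from the light cone, where $h^{00}$ decays and actually vanishes near the cone; more to the point, in the intended application $G$ is small but its relevant smallness is encoded through its variation. The right approach is therefore \emph{not} to estimate the term $\frac{c^2 G}{1+G} z$ pointwise, but to integrate by parts in the Duhamel integral so that a derivative falls on $G$: schematically, using $\sin(c(\lambda - \sbar))$ and $\cos(c(\lambda-\sbar))$ as the two "directions", one rewrites $\frac{d}{d\sbar}\big[\text{(oscillatory factor)}\cdot G(\sbar)\, z(\sbar)\big]$ and isolates the terms $G'(\sbar) z(\sbar)$ and $G(\sbar) z'(\sbar)$. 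The oscillatory factors are bounded, the boundary terms at $\sbar = s_0$ and $\sbar = \lambda$ contribute $G(s_0)$ and $G(\lambda)$ times $(|z|+|z'|)$ — these are absorbed since $|G| \le 1/3$ is small enough to move to the left side — and what remains under the integral is genuinely weighted by $|G'|$ (the $G z'$ term being handled by re-substituting the ODE for $z'$ or by a further integration by parts, again producing $G'$ or lower-order $G$ terms).

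Carrying this out, I would set $Z(\lambda) := |z(\lambda)| + |z'(\lambda)|$, derive the integral inequality
\[
Z(\lambda) \lesssim |z_0| + |z_1| + K(\lambda) + \int_{s_0}^\lambda |G'(\sbar)|\, Z(\sbar)\, d\sbar
\]
(after absorbing the small boundary and $|G|$-weighted terms on the left, which is legitimate for $\sup|G|$ small and, if necessary, after first establishing a crude a priori bound on $Z$ on a short interval and bootstrapping), and then invoke the integral form of Gr\"onwall's lemma, which gives exactly
\[
Z(s) \lesssim \big(|z_0|+|z_1|+K(s)\big) + \int_{s_0}^s \big(|z_0|+|z_1|+K(\sbar)\big)\, |G'(\sbar)|\, e^{C\int_{\sbar}^s |G'(\lambda)|\, d\lambda}\, d\sbar,
\]
i.e.\ \eqref{Linfini ineq ODE}. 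One technical nuisance worth flagging is the monotonicity of $K$: since $K$ is nondecreasing one may freely replace $K(\sbar)$ by $K(s)$ inside the integral when convenient, and likewise $K(\lambda) \le K(s)$, which keeps the bookkeeping in the Gr\"onwall step clean. The remaining steps — verifying the explicit fundamental solution, checking that $c>0$ is the only thing used about $c$, and confirming the boundary-term absorption is uniform in $c$ — are routine once the integration-by-parts identity is written down correctly.
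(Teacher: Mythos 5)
Your instinct to flag the appearance of $|G'|$ (rather than $|G|$) as the crux is exactly right, and the Gr\"onwall step at the end is correct \emph{if} the input inequality
\[
|z(\lambda)| + |z'(\lambda)| \lesssim |z_0| + |z_1| + K(\lambda) + \int_{s_0}^\lambda |G'(\sbar)|\big(|z(\sbar)| + |z'(\sbar)|\big)\,d\sbar
\]
is established. But the integration-by-parts argument you sketch for that inequality has a genuine gap: it does not close.

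Concretely, write $\psi := \frac{c^2G}{1+G}$ so that the troublesome Duhamel term is $T := \int_{s_0}^\lambda \frac{\sin(c(\lambda-\sbar))}{c}\,\psi\,z\,d\sbar$. The first IBP (using $\sin(c(\lambda-\sbar)) = -\tfrac{1}{c}\partial_\sbar\cos(c(\lambda-\sbar))$) gives absorbable boundary terms, a good term $\int\cos\cdot\psi'z$, and the bad term $\tfrac{1}{c^2}\int\cos\cdot\psi\,z'$. ``Re-substituting the ODE for $z'$'' is not available, since the ODE determines $z''$, not $z'$. The further IBP on $\int\cos\cdot\psi\,z'$ then produces, via $z'' = -\tfrac{c^2}{1+G}z + k$, the residual
\[
\frac{1}{c}\int \sin(c(\lambda-\sbar))\,\frac{\psi(\sbar)}{1+G(\sbar)}\,z(\sbar)\,d\sbar,
\]
which has \emph{the same structure and the same order} as the original $T = \tfrac{1}{c}\int\sin\cdot\psi\,z$: the factor $\tfrac{1}{1+G}$ lies in $[3/4,3/2]$ under $\sup|G|\leq 1/3$ and gives no contraction. (If $G<0$ it is actually larger than $1$, so the ``lower-order in $G$'' claim is false.) The difference $T - \tilde T$ is weighted by $\tfrac{G^2}{(1+G)^2}$, which is bounded but not small, and without a smallness factor in front of $\int|z|$ there is no Gr\"onwall-compatible structure; iterating does not converge. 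The ``crude a priori bound plus bootstrap'' fallback does not save this either, since the crude Gr\"onwall gives a factor $e^{\int|G|}$ that grows without bound.

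The underlying reason this cannot be fixed by IBP alone is that the equation is \emph{not} a small perturbation of $z''+c^2z=0$: its frequency is $\omega(\lambda)=c(1+G(\lambda))^{-1/2}$, which can differ from $c$ by a fixed amount. The $|G'|$ weight is an \emph{adiabatic} effect: it appears once the unperturbed propagator is allowed to track the actual frequency. The paper does exactly this by writing the first-order system $b'=Ab+(0,k)^T$ and diagonalizing $A=PQP^{-1}$ with $Q$ purely imaginary-diagonal and $P = P(G)$; the propagator $e^{\int Q}$ is then bounded for free, and the only error is $(P^{-1})'b$, which carries a genuine $G'$ because $P^{-1}$ depends on $G$ through $(1+G)^{1/2}$. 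An equivalent and shorter route is the energy
\[
E(\lambda) := (z'(\lambda))^2 + \frac{c^2}{1+G(\lambda)}\,(z(\lambda))^2, \qquad E'(\lambda) = 2z'k - \frac{c^2G'}{(1+G)^2}\,z^2,
\]
from which $(\sqrt E)' \leq |k| + C|G'|\sqrt E$ and Gr\"onwall gives the stated bound. If you want to keep a Duhamel-type presentation, you must first change variables to the WKB/amplitude--phase unknowns $z = a\cos\Phi + b\sin\Phi$ with $\Phi' = \omega$ (which is the diagonalization in disguise) rather than iterate around $\sin(c\lambda),\cos(c\lambda)$.
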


\begin{proof}[Proof of Lemma~\ref{lem 0 K-G}]
{\bf 1. Decomposition of the flat wave operator.} By recalling $s=\sqrt{t^2-r^2}$ and $r=|x|$, an elementary calculation shows that the flat wave operator $\Box$ in the hyperboloidal frame reads
\bel{Hyper box1}
-\Box = \delb_0 \delb_0 - \sum_a\delb_a\delb_a + 2 \sum_a \frac{x^a}{s}\delb_0\delb_a + \frac{3}{s}\delb_0.
\ee
Given a function $v$, we can set
$$
w(t,x) = s^{3/2}v(t,x) = (t^2-|x|^2)^{3/4}v(t,x),
$$
and obtain
\bel{Hyper box2}
-s^{3/2}\Box v = \delb_0\delb_0 w - \sum_a\delb_a\delb_a w + 2\sum_a \frac{x^a}{s}\delb_0\delb_a w - \frac{3w}{4s^2} - \sum_a\frac{3x^a\delb_a w}{s^2}.
\ee
Again, we define a function of a single variable by 
$$
w_{t,x}(\lambda) := w(\lambda t/s,\lambda x/s)
= \lambda^{3/2}v(\lambda t/s,\lambda x/s).
$$
We see that
$$
\frac{d}{d\lambda}w_{t,x}(\lambda) = \big(\delb_0 + s^{-1}x^a\delb_a\big)w(\lambda t/s,\lambda x/s)
= \frac{t}{s}\newperp  w\left(\lambda t/s,\lambda x/s\right)
$$
and
\bel{Hyper 2order}
\frac{d^2}{d\lambda^2}w_{t,x}(\lambda) = \bigg(\delb_0\delb_0 + 2 \frac{x^a}{s}\delb_0\delb_a +\frac{x^ax^b}{s^2}\delb_a\delb_b\bigg)w(\lambda t/s,\lambda x/s).
\ee
Combining this with \eqref{Hyper box2} and recalling that $w(t,x) = s^{3/2}v(t,x)$, we get 
\bel{Hyper 2order-box1}
\aligned
&\bigg(\delb_0\delb_0 + 2 \frac{x^a}{s}\delb_0\delb_a +\frac{x^ax^b}{s^2}\delb_a\delb_b\bigg)w
\\
& = -s^{3/2}\Box v + \sum_a\delb_a\delb_a w + \frac{x^ax^b}{s^2}\delb_a\delb_b w + \frac{3}{4s^2}w + \sum_a\frac{3x^a}{s^2}\delb_a w
  = - s^{3/2}\Box v + R_1[v].
\endaligned
\ee


\noindent {\bf 2.  Decomposition of the curved wave operator.} We write
$$
-\Box v = h^{\alpha\beta}\del_\alpha\del_\beta v  - c^2v + f
$$
and, by performing a change of frame,
$$
\aligned
h^{\alpha\beta}\del_\alpha\del_\beta v =& \hb^{\alpha\beta}\delb_\alpha\delb_\beta v + h^{\alpha\beta}\del_\alpha\Psib^{\beta'}_\beta\,\delb_{\beta'}v
\\
=& \hb^{00}\delb_0\delb_0 v
+ 2\hb^{0b}\delb_0\delb_bv + \hb^{ab}\delb_a\delb_bv + h^{\alpha\beta}\del_\alpha\Psib^{\beta'}_\beta\,\delb_{\beta'}v.
\endaligned
$$
Then, we obtain
$$
\aligned
-s^{3/2}\Box v
=& - s^{3/2}\hb^{00}\delb_0\delb_0 v
      - s^{3/2}\big(2\hb^{0b}\delb_0\delb_bv + \hb^{ab}\delb_a\delb_bv + h^{\alpha\beta}\del_\alpha\Psib^{\beta'}_\beta\,\delb_{\beta'}v\big)
 - c^2s^{3/2}v + s^{3/2}f
\\
=& - \hb^{00}\delb_0\delb_0 \big(s^{3/2}v\big)- c^2s^{3/2}v
\\
& + \hb^{00}\bigg(\frac{3v}{4s^{1/2}} + 3s^{1/2}\delb_0 v\bigg)
 -  s^{3/2}\big(2\hb^{0b}\delb_0\delb_bv + \hb^{ab}\delb_a\delb_bv + h^{\alpha\beta}\del_\alpha\Psib^{\beta'}_\beta\,\delb_{\beta'}v\big) + s^{3/2}f,
\endaligned
$$
and we conclude with
\bel{Hyper KG}
\aligned
-s^{3/2}\Box v
& =  - \hb^{00}\delb_0\delb_0 w - c^2w + \hb^{00}\bigg(\frac{3v}{4s^{1/2}} + 3s^{1/2}\delb_0 v\bigg)
\\
& \quad -  s^{3/2}\big(2\hb^{0b}\delb_0\delb_bv + \hb^{ab}\delb_a\delb_bv + h^{\alpha\beta}\del_\alpha\Psib^{\beta'}_\beta\,\delb_{\beta'}v\big) + s^{3/2}f
\\
&= - \hb^{00}\delb_0\delb_0 w - c^2w + R_2[v] + s^{3/2}f.
\endaligned
\ee
We then combine \eqref{Hyper 2order-box1} with \eqref{Hyper KG} and obtain
\bel{Hyper 2order-box2}
\delb_0\delb_0w + 2 \frac{x^a}{s}\delb_0\delb_a w+\frac{x^ax^b}{s^2}\delb_a\delb_bw
- \hb^{00}\delb_0\delb_0 w + c^2w
= R_1[v] + R_2[v] + s^{3/2}f.
\ee

\

\noindent {\bf 3. Conclusion.} We continue with \eqref{Hyper 2order-box2} and write
$$
\aligned
& \big(1 + \hb^{00}\big)\bigg(\delb_0\delb_0 + 2 \frac{x^a}{s}\delb_0\delb_a +\frac{x^ax^b}{s^2}\delb_a\delb_b\bigg)w + c^2w
\\
& = \hb^{00}\bigg( 2 \frac{x^a}{s}\delb_0\delb_a +\frac{x^ax^b}{s^2}\delb_a\delb_b\bigg)w + R_1[v] + R_2[v] + s^{3/2}f
\endaligned
$$
and, so, we have
\bel{Linfini eq1}
\aligned
& \bigg(\delb_0\delb_0 + 2 \frac{x^a}{s}\delb_0\delb_a +\frac{x^ax^b}{s^2}\delb_a\delb_b\bigg)w
+ \frac{c^2w}{1 + \hb^{00}} 
\\
& = \big(1 + \hb^{00}\big)^{-1}\big(R_1[v] + R_2[v] + R_3[v] +s^{3/2}f\big).
\endaligned
\ee
It follows that
\be
\label{Linfini ineq ODE0}
\aligned
& \frac{d^2}{d\lambda^2}w_{t,x}(\lambda) + \frac{c^2w_{t,x}(\lambda)}{1 + \hb^{00}(\lambda t/s,\lambda x/s)}
\\
&= \big(1 + \hb^{00}(\lambda t/s,\lambda x/s)\big)^{-1}\big(R_1[v] + R_2[v] + R_3[v] +s^{3/2}f\big)(\lambda t/s,\lambda x/s).
\endaligned
\ee
\end{proof}


\begin{proof}[Proof of Lemma~\ref{lem 1 K-G}] We simply need to integrate out the ODE.
We consider the vector field $b(\lambda) = \big(z(\lambda),z'(\lambda)\big)^T$ and the matrix $A(\lambda) := \left(
\begin{array}{cc}
0 &1
\\
-c^2(1+G)^{-1} &0
\end{array}
\right).
$
We write
$
b' = Ab +
\left(
\begin{array}{c}
0\\
k
\end{array}
\right).
$
and introduce the diagonalization of $A = PQP^{-1}$ with
$$
Q =
\left(
\begin{array}{cc}
ic\big(1+G\big)^{-1/2} &0
\\
0 &-ic\big(1+G \big)^{-1/2}
\end{array}
\right)
$$
and
$$
P =
\left(
\begin{array}{cc}
1 & 1
\\
\frac{ic}{(1+G)^{1/2}} &-\frac{ic}{(1+G)^{1/2}}
\end{array}
\right),
\qquad
\quad
P^{-1} =
\left(
\begin{array}{cc}
1/2 & \frac{(1+G)^{1/2}}{2ic}
\\
1/2 & -\frac{(1+G)^{1/2}}{2ic}
\end{array}
\right).
$$
We thus have
$b' = PQP^{-1}b +
\left(
\begin{array}{c}
0
\\
k
\end{array}
\right)$,
which leads us to
$$
\big(P^{-1}b\big)' = Q\big(P^{-1}b\big) + \big(P^{-1}\big)'b + P^{-1}
\left(
\begin{array}{c}
0
\\
k
\end{array}
\right).
$$
We regard the term $\big(P^{-1}\big)'b$ as a source term and, by a standard formula,
$$
\aligned
P^{-1}b(s) = e^{\int_{s_0}^s Q(\sbar)d\sbar} P^{-1}b(s_0)
& + \int_{s_0}^s e^{\int_{\lambda}^s Q(\sbar)d\sbar}P^{-1}
\left(
\begin{array}{c}
0
\\
k
\end{array}
\right)
d\lambda
\\
& + \int_{s_0}^s e^{\int_{\lambda}^s Q(\sbar)d\sbar} \big(P^{-1}\big)'(\lambda)\,b(\lambda) \, d\lambda.
\endaligned
$$
Recall that when $\sup_{\lambda\in[1,s]} |G(\lambda)|\leq 1/3$, the norm of $P(\lambda)$ and $P^{-1}(\lambda)$ are bounded for $\lambda\in[s_0,s]$. We also remarks that the norm of $\big(P^{-1}\big)'(\lambda)$ is bounded by $C|G'(\lambda)|$ with $C$ a constant depending only on $c$, and the norm of $Q$ is also bounded by a constant $C>0$. Furthermore, we observe that
$$
\int_{\lambda}^sQ(\sbar)d\sbar =
\left(
\begin{array}{cc}
ic\int_\lambda^s (1+G)^{-1/2}(\sbar)d\sbar & 0
\\
0 & -ic\int_\lambda^s (1+G)^{-1/2}(\sbar)d\sbar
\end{array}
\right)
$$
and thus
$$
e^{\int_{\lambda}^sQ(\sbar)d\sbar} =
\left(
\begin{array}{cc}
e^{ic\int_\lambda^s (1+G)^{-1/2}(\sbar)d\sbar} & 0
\\
0 & e^{-ic\int_\lambda^s (1+G)^{-1/2}(\sbar)d\sbar}
\end{array}
\right).
$$
The norm of the matrix $e^{\int_{\lambda}^sQ(\sbar)d\sbar}$ is uniformly bounded by a constant, and the following estimate is now proven:
$$
|z(s)| + |z'(s)|\leq C (|z(s_0)|+|z'(s_0)|) + C \, K(s) + C\int_{s_0}^s| G'(\lambda)|\big(|z(\lambda)| + |z'(\lambda)|\big) \, d\lambda, 
$$
and we conclude with Gronwall's lemma.
\end{proof}

\begin{proof}[Proof of Proposition \ref{Linfini KG}]
The proof is based on a combination of the bounds \eqref{Linfini ineq ODE} and \eqref{Linfini ineq ODE0}.
By recalling the definition of $w_{t,x}(\lambda)$, we have 
$$
\aligned
&w_{t,x}(\lambda) = \lambda^{3/2}v(\lambda t/s, \lambda x/s),
\\
&w'_{t,x}(\lambda) = \frac{3}{2}\lambda^{1/2}v(\lambda t/s, \lambda x/s) + \frac{t}{s}\lambda^{3/2}\newperp v(\lambda t/s, \lambda x/s).
\endaligned
$$
That is, $w_{t,x}$ is the restriction of $w(t,x) = s^{3/2}v(t,x)$ on the line segment $\big\{(\lambda t/s, \lambda x/s),\lambda\in[s_0,s] \big\}$.
We then apply \eqref{Linfini ineq ODE} and \eqref{Linfini ineq ODE0}  to this line segment, with
$$
s_0 =\left
\{
\aligned
& 2, \quad 0\leq r/t \leq 3/5,
\\
& \sqrt{\frac{t+r}{t-r}},\quad 3/5\leq r/t\leq 1.
\endaligned
\right.
$$
This segment is the part of the line $\{(\lambda t/s, \lambda x/s)\}$ between the point $(t,x)$ and the boundary of $\Kcal_{[s_0,+\infty)}$.

Recall that $v$ is supported in $\Kcal$. and the restriction of $v$ on the initial hyperboloid $\Hcal_2$ is supported in $\Hcal_2\cap \Kcal$. We recall that when $3/5 \leq r/t\leq 1$, $w_{t,x}(s_0) = 0$ and when $0\leq r/t\leq 3/5$, $w_{t,x}(s_0)$ is determined by $v_0$.

When $0\leq r/t\leq 3/5$, we apply \eqref{Linfini ineq ODE} with $s_0 = 2$. When $\lambda = 2$, we write 
$
w_{t,x}(2) = w(2t/s,2x/s) = 2^{3/2}v(2t/s,2x/s) = 2^{3/2}v_0(2x/s),
$
and
$$
\aligned
w'_{s,x}(2) =& \frac{d}{d\lambda}\big(\lambda^{3/2}v(\lambda t/s,\lambda x/s)\big)\big|_{\lambda = 2}
\\
=& \frac{3\sqrt{2}}{2}v(2t/s,2x/s) + 2^{3/2}(s/t)^{-1}\newperp v(2t/s,2x/s)
\\
=& \frac{3\sqrt{2}}{2}v(2t/s,2x/s) + 2^{3/2}(s/t)^{-1}\del_tv(2t/s,2x/s) + 2^{3/2}(x^a/s)\del_av(2t/s,2x/s)
\\
=& \frac{3\sqrt{2}}{2}v_0(2x/s) + 2^{3/2}(x^a/s)\del_av_0(2x/s) + 2^{3/2}(s/t)^{-1}v_1(2t/s,2x/s).
\endaligned
$$
Recall that when $0\leq r/t\leq 3/5$, we have $4/5\leq s/t\leq 1$. So we see that $|w_{t,x}(s_0)| + |w'_{t,x}(s_0)|\leq C(\|v_0\|_{L^\infty(\Hcal_2)} + \|v_1\|_{L^\infty(\Hcal_2)})$. Then by \eqref{Linfini ineq ODE} and \eqref{Linfini ineq ODE0} we have
$$
\aligned
|w_{t,x}(s)| + |w'_{t,x}(s)| \leq
\, & C(\|v_0\|_{L^\infty(\Hcal_2)} + \|v_1\|_{L^\infty(\Hcal_2)}) + C F(s)
\\
&+ C(\|v_0\|_{L^\infty(\Hcal_2)} + \|v_1\|_{L^\infty(\Hcal_2)})\int_2^s|h_{t,x}'(\sbar)|e^{C\int_{\sbar}^s |h_{t,x}'(\lambda)| d\lambda} \, d\sbar
\\
&+ C\int_2^s F(\sbar)|h_{t,x}'(\sbar)|e^{C\int_\sbar^s |h_{t,x}'(\lambda)|d\lambda} \, d\sbar.
\endaligned
$$

We recall that when $3/5\leq r/t \leq 1$, $w_{t,x}(s_0) = w_{t,x}'(s_0) = 0$ and so we have
$$
\aligned
|w_{t,x}(s)| + |w'_{t,x}(s)| \leq \, & C F(s) + C\int_{s_0}^s F(\sbar)|h_{t,x}'(\sbar)|e^{C\int_\sbar^s |h_{t,x}'(\lambda)|d\lambda} \, d\sbar, 
\endaligned
$$
which leads to 
$
|w_{t,x}(s)| + |w'_{t,x}(s)|\lesssim V(t,x).
$
It remains to recall the relation between $v$ and $w$, that is, 
$v(t,x) = s^{3/2}w_{t,x}(s)$
and
$$
(s/t)^{-1}s^{3/2}\newperp  v(t,x) = w_{t,x}'(s) - \frac{3}{2}s^{1/2}v(t,x) = w_{t,x}'(s) - \frac{3}{2}s^{-1}w_{t,x}(s), 
$$
and the desired estimate is established.
\end{proof}


\section{Commutator estimates}

\subsection{Algebraic decomposition of the commutators}

We consider the commutators $ [X,Y]u := X(Yu)- Y(Xu)$ of operators associated with our vector fields
when the function $u$ is defined in the future cone
$\Kcal=\{|x|< t-1\}$. Our uniform bounds rely on homogeneity arguments and
on the observation that the coefficients of our decompositions are smooth in $\Kcal$.

First of all, the vector fields $\del_\alpha, $ and $L_a$ are Killing fields for the (flat) wave operator $\Box$, so that
 the following commutation relations hold:
\be
[\del_\alpha, \,\Box]=0, \qquad [L_a,\, \Box] =0.
\ee
By introducing the notation
\be
\label{pre commutator base L-P}
\aligned
\, [L_a,\del_\beta]
& =: \Theta_{a\beta}^{\gamma}\del_{\gamma},
\qquad
[\del_\alpha,\delu_\beta]
 =: t^{-1}\Gammau_{\alpha\beta}^{\gamma}\del_{\gamma},
\qquad
[L_a,\delu_\beta]
 =: \Thetau_{a\beta}^{\gamma}\delu_{\gamma},
\endaligned
\ee
we find easily that
\bel{pre commutator base'}
\aligned
&\Theta_{a0}^{\gamma} = -\delta_a^{\gamma},
\qquad
&&\Theta_{ab}^{\gamma} = -\delta_{ab}\delta_0^{\gamma},
\\
&\Gammau_{0b}^{\gamma} = -\frac{x^b}{t}\delta_0^{\gamma} = \Psi^0_b\delta_0^{\gamma},
\quad&&
 \Gammau_{\alpha0}^{\gamma}  = 0,
\qquad
&&&\Gammau_{ab}^{\gamma}= \delta_{ab}\delta_0^{\gamma},\quad
\\
&\Thetau_{a0}^{\gamma} = -\delta^{\gamma}_a + \frac{x^a}{t}\delta^{\gamma}_0 =  -\delta^{\gamma}_a + \Phi_0^a\delta^{\gamma}_0,
\qquad
&&\Thetau_{ab}^{\gamma} = -\frac{x^b}{t}\delta^{\gamma}_a = \Psi^0_b\delta^{\gamma}_a,
\endaligned
\ee
where $\Phi$ and $\Psi$ were defined at the beginning of Section \ref{sec the hyper}.
All of these coefficients are smooth in the (open) cone $\Kcal$ and homogeneous of degree $0$.
Furthermore, we can also check that
\bel{pre commutator base''}
\Thetau_{ab}^0 = 0, \quad \text{ so that } \quad
[L_a,\delu_b] =
 \Thetau_{ab}^c\delu_c, 
\ee
which means that the commutator of a ``good'' derivative $\delu_b$ with $L_a$ is again a ``good'' derivative. (That is, these derivatives enjoy better decay compared to the gradient itself.)

\begin{lemma}
[Algebraic decomposition of commutators. I]
There exist constants $\lambda_{aJ}^I$ such that
\bel{pre lem commutator pr1}
[\del^I, L_a] = \sum_{|J|\leq|I|}\lambda^I_{aJ}\del^J.
\ee
\end{lemma}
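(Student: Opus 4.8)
The plan is to prove \eqref{pre lem commutator pr1} by induction on the order $|I|$ of the multi-index, the only structural input being that the first-order commutators $[\del_\alpha, L_a]$ have \emph{constant} coefficients. For $|I|=1$ this is exactly the relation $[\del_\alpha, L_a] = \Theta_{a\alpha}^\gamma \del_\gamma$ from \eqref{pre commutator base L-P}, together with the explicit values $\Theta_{a0}^\gamma = -\delta_a^\gamma$ and $\Theta_{ab}^\gamma = -\delta_{ab}\delta_0^\gamma$ recorded in \eqref{pre commutator base'}, which are genuine numerical constants. This is precisely the point that would fail for the semi-hyperboloidal frame (where $\Gammau$ and $\Thetau$ carry $x^a/t$ factors), so it is worth isolating at the outset.

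For the inductive step, assuming the claim for multi-indices of order $|I'| = |I|-1$, I would write $\del^I = \del_\alpha \del^{I'}$ and compute
\be
[\del^I, L_a]
= \del_\alpha \del^{I'} L_a - L_a \del_\alpha \del^{I'}
= [\del_\alpha, L_a]\,\del^{I'} + \del_\alpha\,[\del^{I'}, L_a],
\ee
the cross term $L_a\del_\alpha\del^{I'}$ cancelling because $\del_\alpha$ and $\del^{I'}$ commute. The first summand is $\Theta_{a\alpha}^\gamma\,\del_\gamma\del^{I'}$, a constant-coefficient combination of derivatives of order $|I|$. For the second summand the induction hypothesis gives $[\del^{I'},L_a] = \sum_{|J'|\leq|I'|}\lambda^{I'}_{aJ'}\del^{J'}$ with the $\lambda^{I'}_{aJ'}$ constant, and applying $\del_\alpha$ yields $\sum_{|J'|\leq|I'|}\lambda^{I'}_{aJ'}\del_\alpha\del^{J'}$, again a constant-coefficient combination of derivatives of order $\leq |I'|+1=|I|$. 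Summing the two contributions and relabelling the resulting multi-indices produces constants $\lambda^I_{aJ}$ with $|J|\leq|I|$, which is \eqref{pre lem commutator pr1}.

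I do not expect any real obstacle: the entire argument is the cancellation of the cross term plus the observation that constancy of coefficients is preserved when one further $\del_\alpha$ is applied (in contrast to a general smooth coefficient, which would get differentiated and produce new functions). If a fully explicit recursion for $\lambda^I_{aJ}$ were wanted, one would simply track at each step the single index $\gamma$ produced by $\Theta_{a\alpha}^\gamma$; but all later uses of the lemma need only the existence of the constants together with the order bound $|J|\leq|I|$, so I would stop there.
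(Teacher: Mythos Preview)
Your proposal is correct and follows essentially the same inductive argument as the paper: write $\del^I=\del_\alpha\del^{I'}$, use the Leibniz identity $[\del_\alpha\del^{I'},L_a]=\del_\alpha[\del^{I'},L_a]+[\del_\alpha,L_a]\del^{I'}$, and invoke the induction hypothesis together with the constant-coefficient base case \eqref{pre commutator base L-P}. One small remark: the Leibniz identity holds for any operators, so your comment that the cross term cancels ``because $\del_\alpha$ and $\del^{I'}$ commute'' is unnecessary (and the term that actually cancels is $\del_\alpha L_a\del^{I'}$), but this does not affect the argument.
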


\begin{proof} We proceed by induction on $|I|$.
For $|I| = 1$, the result is guaranteed by \eqref{pre commutator base L-P}. Suppose that \eqref{pre lem commutator pr1} holds for all $|I_1|\leq m$, we will prove that it is still valid for $|I|\leq m+1$. Let $I = (\alpha,\alpha_m,\alpha_{m-1},\dots,\alpha_1)$ and  $I_1=(\alpha_m,\alpha_{m-1}, \ldots, \alpha_1)$, so that $\del^I = \del_\alpha\del^{I_1}$. Then we have 
$$
\aligned
\,[\del^I,L_a]
= \, & [\del_\alpha\del^{I_1},L_a] = \del_\alpha\big([\del^{I_1},L_a]\big)
 + [\del_\alpha,L_a]\del^{I_1}
= \del_\alpha\bigg(\sum_{|J|\leq |I_1|}\lambda_{aJ}^{I_1}\del^J \bigg) - \Theta_{a\alpha}^\gamma\del_{\gamma}\del^{I_1}
\\
= \, &\sum_{|J|\leq |I_1|}\lambda_{aJ}^{I_1}\del_\alpha\del^J - \Theta_{a\alpha}^\gamma\del_{\gamma}\del^{I_1},
\endaligned
$$
which yields the desired statement for $|I| = m+1$.
\end{proof}

\begin{lemma}
[Algebraic decomposition of commutators. II]
There exist constants $\theta_{\alpha J}^{I\gamma}$ such that
\be
\label{pre lem commutator pr2}
[L^I, \del_\alpha] = \sum_{|J|\leq|I|-1,\gamma}\theta_{\alpha J}^{I\gamma}\del_{\gamma}L^J.
\ee
\end{lemma}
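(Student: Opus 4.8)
The plan is to argue by induction on $|I|$, in complete parallel with the proof of the previous lemma, the only new ingredient being that a boost must be pushed past a Cartesian derivative during the inductive step. For the base case $|I|=1$, the relation $[L_a,\del_\alpha]=\Theta_{a\alpha}^\gamma\del_\gamma$ recorded in \eqref{pre commutator base L-P}, together with the explicit values \eqref{pre commutator base'} (which exhibit $\Theta_{a0}^\gamma=-\delta_a^\gamma$ and $\Theta_{ab}^\gamma=-\delta_{ab}\delta_0^\gamma$ as genuine numerical constants), yields \eqref{pre lem commutator pr2} with $J$ the empty multi-index.

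For the inductive step I would assume \eqref{pre lem commutator pr2} for all multi-indices of length at most $m$, take $|I|=m+1$, and factor $L^I=L_aL^{I'}$ with $|I'|=m$. The elementary identity $[AB,C]=A[B,C]+[A,C]B$, applied with $A=L_a$, $B=L^{I'}$, $C=\del_\alpha$, gives
$$
[L^I,\del_\alpha]=L_a\,[L^{I'},\del_\alpha]+[L_a,\del_\alpha]\,L^{I'}.
$$
The second summand equals $\Theta_{a\alpha}^\gamma\del_\gamma L^{I'}$, which is already of the required shape since $|I'|=|I|-1$. For the first summand I would insert the induction hypothesis, reducing matters to terms $L_a\del_\gamma L^J$ with $|J|\le m-1$, and then commute $L_a$ to the right via $L_a\del_\gamma=\del_\gamma L_a+\Theta_{a\gamma}^{\gamma'}\del_{\gamma'}$. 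This produces $\del_\gamma\big(L_aL^J\big)$, where $|L_aL^J|=|J|+1\le m=|I|-1$, together with $\Theta_{a\gamma}^{\gamma'}\del_{\gamma'}L^J$, where $|J|\le m-1\le|I|-1$. Both contributions lie in the span displayed in \eqref{pre lem commutator pr2}, and their coefficients are finite sums of products of the constants furnished by the induction hypothesis with the constants $\Theta$, hence again constants; collecting terms defines the required $\theta_{\alpha J}^{I\gamma}$ and closes the induction.

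The sole point demanding care is the bookkeeping of the $L$-string lengths: one must check at each manipulation that every boost-monomial produced still has length at most $|I|-1$, which is precisely why it matters that all structure coefficients appearing here --- the $\Theta_{a\beta}^\gamma$ and the previously-built $\theta$'s --- are constants rather than functions, so that no ``loss'' is hidden in differentiating a coefficient. Beyond this I anticipate no real obstacle; the argument is a short, self-contained induction of the same type already carried out for $[\del^I,L_a]$.
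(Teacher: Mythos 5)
Your argument is correct and matches the paper's proof essentially verbatim: the same induction on $|I|$, the same factorization $L^I=L_aL^{I'}$, the same use of $[AB,C]=A[B,C]+[A,C]B$, and the same step of pushing $L_a$ past $\del_\gamma$ via $L_a\del_\gamma=\del_\gamma L_a+\Theta_{a\gamma}^{\gamma'}\del_{\gamma'}$. The remark that the constancy of the structure coefficients is what keeps the bookkeeping clean is implicit in the paper but is a worthwhile observation to make explicit.
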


\begin{proof} We proceed by induction and observe that the case $|I|=1$ is already covered by \eqref{pre commutator base L-P}. We assume that \eqref{pre lem commutator pr2} is valid for $|I|\leq m$ and we will prove that it is still valid when $|I|=m+1$. For this purpose, we take
$
L^I = L_aL^{I_1}$ with $|I_1| = m$, and we have 
$$
\aligned
\,[L^I,\del_\alpha]
 = &[L_aL^{I_1},\del_\alpha]
 = L_a\big([L^{I_1},\del_\alpha]\big) + [L_a,\del_\alpha]L^{I_1}
\\
=& L_a\bigg(\sum_{|J|\leq |I_1|-1,\gamma}\theta_{\alpha J}^{I_1\gamma}\del_{\gamma}L^J \bigg)
+ \sum_{\gamma}\Theta_{a\alpha}^{\gamma}\del_{\gamma}L^{I_1}
\\
=&\sum_{|J|\leq |I_1|-1,\gamma}\theta_{\alpha J}^{I_1\gamma}L_a\del_{\gamma}L^J
 + \sum_{\gamma}\Theta_{a\alpha}^{\gamma}\del_{\gamma}L^{I_1}
\endaligned
$$
and, therefore,
$$
\aligned
\,[L^I,\del_\alpha]
=&\sum_{|J|\leq |I_1|-1,\gamma}\theta_{\alpha J}^{I_1\gamma}\del_{\gamma}L_aJ^J
+\sum_{|J|\leq |I_1|-1,\gamma}\theta_{\alpha J}^{I_1\gamma}[L_a,\del_{\gamma}]J^J
+ \sum_{\gamma}\Theta_{a\alpha}^{\gamma}\del_{\gamma}L^{I_1}
\\
=&\sum_{|J|\leq |I_1|-1,\gamma}\theta_{\alpha J}^{I_1\gamma}\del_{\gamma}L_aJ^J
+\sum_{|J|\leq |I_1|-1,\gamma}\theta_{\alpha J}^{I_1\gamma}\Theta_{a\gamma}^{\gamma'}\del_{\gamma'}L^J
+ \sum_{\gamma}\Theta_{a\alpha}^{\gamma}\del_{\gamma}L^{I_1}.
\endaligned
$$
\end{proof}

An immediate consequence of \eqref{pre lem commutator pr2} is
\be
[\del^IL^J,\del_\alpha]u = \sum_{|J'|<|J|,\gamma}\theta_{\alpha J'}^{J\gamma}\del_{\gamma}\del^{I}L^{J'} u.
\ee

\begin{lemma}
[Algebraic decomposition of commutators. III]
\label{lem-com2}
In the future cone $\Kcal$,
 the following identity holds:
\bel{pre lem commutator pr2 NEW}
[\del^IL^J,\delu_\beta]
 = \sum_{|J'| \leq |J|,|I'|\leq|I|\atop |I'|+|J'|<|I|+|J|}\thetau_{\beta I'J'}^{IJ \gamma}\del_{\gamma}\del^{I'}L^{J'},
\ee
where the coefficients $\thetau_{\beta I'J'}^{IJ\gamma}$ are smooth functions
and satisfy (in $\Kcal$)
\bel{pre lem commutator pr4a}
 \big|\del^{I_1}L^{J_1}\thetau_{\beta I'J'}^{IJ\gamma}\big| \leq
\begin{cases}
C\big(|I|,|J|,|I_1|,|J_1|\big) \, t^{-|I_1|}
  &\text{ when } |J'| < |J|,
\\
C\big(|I|,|J|,|I_1|,|J_1|\big) \, t^{-|I_1|-1}     &\text{ when } |I'| < |I|.
\end{cases}
\ee
\end{lemma}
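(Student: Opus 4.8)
The plan is to establish the decomposition \eqref{pre lem commutator pr2 NEW} by induction on the total order $n := |I|+|J|$, using as building blocks the elementary commutator relations \eqref{pre commutator base L-P}--\eqref{pre commutator base''}, and then to read off the coefficient bounds \eqref{pre lem commutator pr4a} from a homogeneity observation rather than re-deriving bounds at every step.

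First I would record the following \emph{homogeneity fact}. Every coefficient that can possibly occur in the calculation --- the entries of $\Phi$ and $\Psi$, the components $\Gammau_{\alpha\beta}^{\gamma}$ and $\Thetau_{a\beta}^{\gamma}$ from \eqref{pre commutator base'}, and any finite sum or product of these together with $x^a$, $t$, $t^{-1}$ --- is a rational function of $(t,x)$, smooth on $\{t\neq 0\}$, with numerator a homogeneous polynomial, and hence homogeneous of some integer degree $d\leq 0$. For such a function $g$, on $\Kcal$ (where $|x|<t-1<t$ and $t>1$) one has
$$
\big|\del^{I_1}L^{J_1}g\big|\leq C\big(|I_1|,|J_1|\big)\,t^{\,d-|I_1|},
$$
since $\del_{\gamma}$ lowers the homogeneity degree by one while $L_a=x^a\del_0+t\del_a$ preserves it, and a function of this type of degree $p$ is bounded by $C\,t^{p}$ on $\Kcal$. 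It therefore suffices to produce \eqref{pre lem commutator pr2 NEW} with each coefficient $\thetau_{\beta I'J'}^{IJ\gamma}$ of the above type and of homogeneity degree $0$ when $|I'|=|I|$ (which forces $|J'|<|J|$) and of degree $\leq -1$ when $|I'|<|I|$; the estimates \eqref{pre lem commutator pr4a} are then immediate.

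For the induction, the case $n=1$ is precisely \eqref{pre commutator base L-P}: $[\del_\alpha,\delu_\beta]=t^{-1}\Gammau_{\alpha\beta}^{\gamma}\del_{\gamma}$ has a degree-$(-1)$ coefficient with $\del$-index of length $0<1=|I|$, and $[L_a,\delu_\beta]=\Thetau_{a\beta}^{\gamma}\delu_{\gamma}=(\Thetau_{a\beta}^{\gamma}\Phi_{\gamma}^{\delta})\,\del_{\delta}$ has a degree-$0$ coefficient with $\del$-index $0=|I|$ and $L$-index $0<1=|J|$. For $n\geq 2$ I split into two cases. If $|I|\geq 1$, write $\del^I=\del_\alpha\del^{\tilde I}$ and use
$$
[\del^IL^J,\delu_\beta]=\del_\alpha\big([\del^{\tilde I}L^J,\delu_\beta]\big)+[\del_\alpha,\delu_\beta]\,\del^{\tilde I}L^J ;
$$
expanding the first term by the induction hypothesis and the Leibniz rule, a $\del_\alpha$ falling on a coefficient drops its degree by one (consistently, since the $\del$-index then has length $<|I|$), while a $\del_\alpha$ falling on $\del_\gamma\del^{I'}L^{J'}$ merely enlarges $I'$ (so that $|I'|\leq|I|$, with the coefficient keeping its degree), and the second term contributes, via \eqref{pre commutator base L-P}, a degree-$(-1)$ coefficient $t^{-1}\Gammau_{\alpha\beta}^{\gamma}$ attached to $\del^{\tilde I}L^J$ with $|\tilde I|=|I|-1<|I|$. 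If $|I|=0$ and $|J|\geq 1$, write $L^J=L_aL^{\tilde J}$ and use
$$
[L^J,\delu_\beta]=L_a\big([L^{\tilde J},\delu_\beta]\big)+[L_a,\delu_\beta]\,L^{\tilde J} ;
$$
expand by the induction hypothesis and move each extra $L_a$ into place with $[L_a,\del_\gamma]=\Theta_{a\gamma}^{\gamma'}\del_{\gamma'}$, whose coefficients are \emph{constant} and hence do not affect any degree count, using also that $L_a$ preserves homogeneity degree; here no term with $|I'|<|I|$ arises and all coefficients stay of degree $0$. In both cases a routine check shows the index ranges $|I'|\leq|I|$, $|J'|\leq|J|$, $|I'|+|J'|<|I|+|J|$ are preserved, which closes the induction.

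The only genuinely delicate point --- and the step I expect to be the main obstacle --- is verifying the dichotomy in \eqref{pre lem commutator pr4a}: each ``lost'' $\del$ (that is, $|I'|<|I|$) must carry an additional power of $t^{-1}$, whereas a ``lost'' $L$ need not. This holds because among the relevant commutators only $[\del_\alpha,\delu_\beta]$ produces a factor $t^{-1}$, while $[L_a,\delu_\beta]$ and $[L_a,\del_\gamma]$ have homogeneous degree-$0$ (indeed, for the latter, constant) coefficients; combined with the facts that $\del_\gamma$ lowers and $L_a$ preserves homogeneity degree, carefully propagating this through the two cases of the induction yields exactly the bounds stated.
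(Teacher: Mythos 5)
Your proof is correct, and the inductive bookkeeping of homogeneity degrees (recording that $\del_\alpha$ lowers the degree by one while $L_a$ preserves it, and that the only commutator producing $t^{-1}$ is $[\del_\alpha,\delu_\beta]$) does establish the dichotomy in \eqref{pre lem commutator pr4a}. The approach, however, is genuinely different from the paper's. You run a single induction on $|I|+|J|$ peeling off one operator at a time and re-verify the degree dichotomy at each step; the paper instead expands $\delu_\beta=\Phi_\beta^{\gamma}\del_{\gamma}$ directly, applies the Leibniz rule once to $[\del^IL^J,\Phi_\beta^{\gamma}\del_{\gamma}]$, and then invokes the \emph{already-proved} Lemma~II, which gives $[L^{J_2},\del_\gamma]=\sum_{|J_2'|<|J_2|}\theta_{\gamma J_2'}^{J_2\delta}\del_{\delta}L^{J_2'}$ with \emph{constant} coefficients. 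This reduces the coefficients $\thetau$ to linear combinations of $\del^{I_1}L^{J_1}\Phi_\beta^{\gamma}$ (times constants) with $I_1+I_2=I$, $J_1+J_2=J$, after which \eqref{pre lem commutator pr4a} is read off in one stroke from the homogeneity of $\Phi_\beta^{\gamma}$: the exponent of $t^{-1}$ is exactly $|I_1|=|I|-|I'|$, so $|I'|<|I|$ forces one extra power of $t^{-1}$, while $|I'|=|I|$ forces $|J_1|\ge1$, hence $|J'|<|J|$, and no extra power. The trade-off is that your argument is self-contained and more elementary, at the price of a lengthier case analysis, whereas the paper's is shorter but leans on the structure already established in Lemma~II. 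Both hinge on the same homogeneity observation, and both deliver the required bounds.
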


\begin{proof} Consider the  identity
$$
\aligned
\,[\del^IL^J,\delu_\beta]
= [\del^IL^J, \Phi_\beta^{\gamma}\del_{\gamma}]
=&
\Phi_\beta^{\gamma}[\del^IL^J,\del_{\gamma}]
+ \sum_{I_1+I_2=I, J_1+J_2=J \atop |I_1|+|J_1|<|I|+|J|}
\del^{I_1}L^{J_1}\Phi_\beta^{\gamma}\del^{I_2}L^{J_2}\del_{\gamma}.
\endaligned
$$
In the first sum, we commute $\del^{I_2}L^{J_2}$ and $\del_{\gamma}$ and obtain  
$$
\aligned
\,[\del^IL^J,\delu_\beta]
=& \Phi_\beta^{\gamma}[\del^IL^J,\del_{\gamma}]
\\
& + \sum_{I_1+I_2=I,J_1+J_2 = J \atop |I_1|+|J_1|<|I|+|J|}
\del^{I_1}L^{J_1}\Phi_\beta^{\gamma}\del_{\gamma}\del^{I_2}L^{J_2}
 + \sum_{I_1+I_2=I,J_1+J_2= J\atop |I_1|+|J_1|<|I|+|J|}
\del^{I_1}L^{J_1}\Phi_\beta^{\gamma}[\del^{I_2}L^{J_2},\del_{\gamma}]
\\
=&  \sum_{I_1+I_2=I,J_1+J_2= J\atop |I_1|+|J_1|<|I|+|J|}
\del^{I_1}L^{J_1}\Phi_\beta^{\gamma}\del_{\gamma}\del^{I_2}L^{J_2}
 + \sum_{I_1+I_2=I\atop J_1+J_2=J}
\del^{I_1}L^{J_1}\Phi_\beta^{\gamma}[\del^{I_2}L^{J_2},\del_{\gamma}]
\\
=& \sum_{I_1+I_2=I,J_1+J_2= J\atop |I_1|+|J_1|<|I|+|J|}
\del^{I_1}L^{J_1}\Phi_\beta^{\gamma}\del_{\gamma}\del^{I_2}L^{J_2}
+ \sum_{I_1+I_2=I\atop J_1+J_2=J}\sum_{|J_2'|<|J_2|}
\big(\del^{I_1} L^{J_1}\Phi_\beta^{\gamma} \big) \, \theta_{\gamma J_2'}^{J_2\delta}\del_{\delta} \del^{I_2}L^{J_2'}.
\endaligned
$$
Hence, $\thetau_{\gamma I'J'}^{IJ\alpha}$ are linear combinations of $\del^{I_1}L^{J_1} \Phi_\beta^{\gamma}$ and
$\big( \del^{I_1}L^{J_1}\Phi_\beta^{\gamma} \big) \theta_{\gamma J_2'}^{J_2\delta}$ and $J_1+J_2=J$,
which yields \eqref{pre lem commutator pr2 NEW}.
Note that $\theta_{\gamma J_2'}^{J_2\delta}$ are constants, so that
$$
\del^{I_3}L^{J_3}\big(\del^{I_1}L^{J_1}\Phi_{\beta}^{\gamma}\theta_{\gamma J_2'}^{J_2\delta}\big)
= \theta_{\gamma J_2'}^{J_2\delta}\del^{I_3}L^{J_3}\del^{I_1}L^{J_1}\Phi_{\beta}^{\gamma}.
$$ 
By definition, $\Phi_{\beta}^{\gamma}$ is a homogeneous function of degree zero, so that $\del^{I_1}L^{J_1}\Phi_{\beta}^{\gamma}$ is again homogeneous but with degree $\leq 0$. We thus arrive at  \eqref{pre lem commutator pr4a}.
\end{proof}

\begin{lemma}[Algebraic decomposition of commutators. IV]
Within the future cone $\Kcal$, the following identity holds
\bel{pre lem commutator pr3}
[L^I,\delu_c]
 = \sum_{|J|<|I|}\sigma^{Ia}_{cJ}\delu_aL^J,
\ee
where the coefficients $\sigma_{c J}^{Ia}$ are
smooth functions and
satisfy (in $\Kcal$)
\bel{pre lem commutator pr3b}
\big|\del^{I_1}L^{J_1}\sigma_{c J}^{Ia}\big| \leq C(|I|,|J|,|I_1|,|J_1|)t^{-|I_1|}.
\ee
\end{lemma}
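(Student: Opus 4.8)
The plan is to argue by induction on $|I|$, following the same pattern as the three preceding lemmas; the only genuinely new point is keeping track of the coefficients and their decay. For $|I|=1$ the identity is exactly \eqref{pre commutator base''}, namely $[L_a,\delu_c]=\Thetau_{ac}^b\delu_b$ with $b$ running over spatial indices only. Here $\Thetau_{ac}^b=-(x^c/t)\,\delta^b_a$ is a rational function which is homogeneous of degree $0$ and smooth on $\Kcal$, so that each $\del$-differentiation lowers its homogeneity degree by one while preserving its explicit polynomial structure in the quantities $x^a/t$, and boost differentiations preserve the degree; this yields the bound \eqref{pre lem commutator pr3b} at level $|I|=1$, in complete analogy with the estimate \eqref{pre lem commutator pr4a} for the coefficients $\thetau$.

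For the inductive step, assume \eqref{pre lem commutator pr3}--\eqref{pre lem commutator pr3b} for all multi-indices of length $\le m$ and write $L^I=L_aL^{I_1}$ with $|I_1|=m$. Then
$$
[L^I,\delu_c]=L_a\big([L^{I_1},\delu_c]\big)+[L_a,\delu_c]L^{I_1}.
$$
Into the first term I insert the induction hypothesis $[L^{I_1},\delu_c]=\sum_{|J|<|I_1|}\sigma^{I_1a'}_{cJ}\delu_{a'}L^J$, use the Leibniz rule to let $L_a$ act on the coefficient and on $\delu_{a'}L^J$ separately, and commute $L_a$ past $\delu_{a'}$ once more by \eqref{pre commutator base''}, namely $L_a(\delu_{a'}L^J)=\delu_{a'}L_aL^J+\Thetau_{aa'}^b\,\delu_bL^J$; into the second term I substitute $[L_a,\delu_c]=\Thetau_{ac}^b\,\delu_b$ again. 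Every resulting term has the form (smooth coefficient)$\,\cdot\,\delu_bL^{J'}$ with $|J'|<|I|$, because $|L_aL^J|=|J|+1\le|I_1|<|I|$ and $|I_1|<|I|$. This proves \eqref{pre lem commutator pr3} at level $m+1$, with $\sigma^{Ia}_{cJ'}$ a finite linear combination of the functions $\sigma^{I_1a'}_{cJ}$, $L_a\sigma^{I_1a'}_{cJ}$, $\sigma^{I_1a'}_{cJ}\,\Thetau_{aa'}^b$ and $\Thetau_{ac}^b$.

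To propagate the quantitative bound I apply an arbitrary $\del^{I_2}L^{J_2}$ to these new coefficients. For $L_a\sigma^{I_1a'}_{cJ}$ one notes that $L^{J_2}L_a=L^{\tilde J_2}$ with $|\tilde J_2|=|J_2|+1$, so $\del^{I_2}L^{J_2}\big(L_a\sigma^{I_1a'}_{cJ}\big)=\del^{I_2}L^{\tilde J_2}\sigma^{I_1a'}_{cJ}$ is controlled by the induction hypothesis and bounded by $Ct^{-|I_2|}$; for $\Thetau_{aa'}^b$ and $\Thetau_{ac}^b$ the bound follows from their homogeneity exactly as in the case $|I|=1$; and for the products $\sigma^{I_1a'}_{cJ}\Thetau_{aa'}^b$ one uses the Leibniz rule and multiplies the two bounds, obtaining $\sum Ct^{-|I_2'|}Ct^{-|I_2''|}=Ct^{-|I_2|}$. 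Summing over the finitely many contributions gives \eqref{pre lem commutator pr3b} for $|I|=m+1$. I expect the only delicate point to be precisely this coefficient bookkeeping — checking that applying $L_a$, multiplying by a transition coefficient, and then re-differentiating all preserve the exact structure "gain of $t^{-1}$ per $\del$, no loss per boost" — but there is no analytic difficulty beyond multi-index accounting. The structural fact that makes the statement hold at all is $\Thetau_{ab}^0=0$ from \eqref{pre commutator base''}: it guarantees that commuting a boost with a good derivative $\delu_c$ never produces a genuine gradient $\del_\gamma$, so the whole computation stays within the good derivatives $\delu_a$.
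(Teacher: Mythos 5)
Your proof is correct and follows essentially the same inductive scheme as the paper's: same base case via \eqref{pre commutator base''}, same split $[L_aL^{I_1},\delu_c]=L_a([L^{I_1},\delu_c])+[L_a,\delu_c]L^{I_1}$, same Leibniz-and-recommute step using $[L_a,\delu_{a'}]=\Thetau_{aa'}^b\delu_b$. The only cosmetic difference is that the paper justifies the bound \eqref{pre lem commutator pr3b} by observing that the resulting coefficients remain homogeneous of degree $0$, while you propagate the bound $t^{-|I_1|}$ directly through the Leibniz expansion of $\del^{I_2}L^{J_2}$ applied to $L_a\sigma$, to $\Thetau$, and to the products $\sigma\Thetau$; these are two phrasings of the same homogeneity argument. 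Your explicit remark that $\Thetau_{ab}^0=0$ is what keeps the computation inside the good derivatives is precisely the structural point the paper relies on.
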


\begin{proof}
This is also by induction. Again, when $|I|=1$, \eqref{pre lem commutator pr3} together with \eqref{pre lem commutator pr3b} are guaranteed by \eqref{pre commutator base''}. Assume that \eqref{pre lem commutator pr3} and \eqref{pre lem commutator pr3b} hold for $|I|\leq m$, we will prove that they are valid for $|I| = m+1$. We take $L^I = L_aL^J$ with $|J| = m$, and obtain
$$
\aligned
\,[L^I,\delu_c]  =&[L_aL^J,\delu_c]  = L_a\big([L^J,\delu_c] \big) + [L_a,\delu_c]L^J
\\
=&L_a\bigg(\sum_{|J'|<|J|}\sigma^{Ja}_{cJ'}\delu_aL^{J'}  \bigg) + \Thetau_{ac}^b\delu_b L^J
\\
=&\sum_{|J'|<|J|}L_a\sigma^{Jb}_{cJ'}\delu_bL^{J'}
 + \sum_{|J'|<|J|}\sigma^{Jb}_{cJ'}L_a\delu_bL^{J'}  + \Thetau_{ac}^b\delu_b L^J, 
\endaligned
$$
so that 
$$
\aligned
\,[L^I,\delu_c]  
=&\sum_{|J'|<|J|}L_a\sigma^{Jb}_{cJ'}\delu_bL^{J'}  + \sum_{|J'|<|J|}\sigma^{Jb}_{cJ'}\delu_b L_a L^{J'}
+ \sum_{|J'|<|J|}\sigma^{Jb}_{cJ'}[L_a,\delu_b]L^{J'}  + \Thetau_{ac}^b\delu_b L^J
\\
=&\sum_{|J'|<|J|}L_a\sigma^{Jb}_{cJ'}\delu_bL^{J'}  + \sum_{|J'|<|J|}\sigma^{Jb}_{cJ'}\delu_b L_a L^{J'}
+ \sum_{|J'|<|J|}\sigma^{Jb}_{cJ'}\Thetau_{ab}^d\delu_dL^{J'}  + \Thetau_{ac}^b\delu_b L^J.
\endaligned
$$
In each term the coefficients are homogeneous of degree $0$ (by applying \eqref{pre lem commutator pr3b}),
and the desired result is proven.
\end{proof}

The following result is also checked by induction along the same lines as above, and so its proof is omitted. 

\begin{lemma}[Algebraic decomposition of commutators. V]
Within the future cone $\Kcal$, the following identity holds:
\bel{pre lem commutator pr4}
[\del^I,\delu_c]
=  t^{-1}\!\!\!\!\sum_{|J|\leq|I|}\rho_{cJ}^{I}\del^{J},
\ee
where the coefficients $\rho_{cJ}^{I}$ are smooth functions
and satisfy (in $\Kcal$)
\bel{pre lem commutator pr4b}
\big|\del^{I_1}L^{J_1}\rho_{cJ}^{I}\big| \leq C(|I|,|J|,|I_1|,|J_1|)t^{-|I_1|}.
\ee
\end{lemma}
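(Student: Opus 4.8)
The goal is to establish the identity \eqref{pre lem commutator pr4} together with the weighted bound \eqref{pre lem commutator pr4b}, and the plan is to argue by induction on $|I|$, exactly in the spirit of the proof of Lemma~\ref{lem-com2} above. First I would isolate the relevant class of coefficients: let $\mathcal{A}$ denote the set of smooth functions $g$ on $\Kcal$ for which $\big|\del^{I_1}L^{J_1}g\big|\leq C(|I_1|,|J_1|)\,t^{-|I_1|}$ holds on $\Kcal$ for all $I_1,J_1$ (this is precisely the content of \eqref{pre lem commutator pr4b}), and I would record three closure properties, each verified by a short direct computation. (a) $\mathcal{A}$ contains all constants, the bounded functions $x^a/t$, and $t^{-1}$; in particular, by (b) below, it contains all polynomials in these, hence all the coefficients $\Gammau_{\alpha\beta}^\gamma$ and $\Thetau_{a\beta}^\gamma$ of \eqref{pre commutator base'}, which are constants or constants times $x^b/t$. (b) $\mathcal{A}$ is an algebra, by the iterated Leibniz rule $\del^{I_1}L^{J_1}(fg)=\sum (\del^{I_1'}L^{J_1'}f)(\del^{I_1''}L^{J_1''}g)$ over $I_1'+I_1''=I_1$, $J_1'+J_1''=J_1$, together with $|I_1'|+|I_1''|=|I_1|$. (c) $\mathcal{A}$ is stable under each $\del_\alpha$, which in fact gains one power of $t^{-1}$: commuting the boosts past $\del_\alpha$ by means of \eqref{pre lem commutator pr2} (whose coefficients are constants) exhibits $\del^{I_1}L^{J_1}\del_\alpha g$ as a constant-coefficient combination of terms $\del^{\tilde I}L^{\tilde J}g$ with $|\tilde I|=|I_1|+1$, hence bounded by $C\,t^{-|I_1|-1}$. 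Granting these properties, it suffices to prove \eqref{pre lem commutator pr4} with every coefficient $\rho_{cJ}^I$ lying in $\mathcal{A}$.

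Then I would carry out the induction. For $|I|=1$, the identity $[\del_\alpha,\delu_c]=t^{-1}\Gammau_{\alpha c}^\gamma\del_\gamma$ from \eqref{pre commutator base L-P} is already of the required form, with the only nonzero coefficient $\Gammau_{\alpha c}^\gamma\in\mathcal{A}$ by (a). For the inductive step, write $\del^I=\del_\alpha\del^{I_1}$ with $|I_1|=|I|-1$ and use
\[
[\del^I,\delu_c]=\del_\alpha\big([\del^{I_1},\delu_c]\big)+[\del_\alpha,\delu_c]\,\del^{I_1}.
\]
Into the first term I substitute the induction hypothesis $[\del^{I_1},\delu_c]=t^{-1}\sum_{|J|\leq|I_1|}\rho_{cJ}^{I_1}\del^J$ and apply the Leibniz rule; using $\del_\alpha t^{-1}=-\delta_\alpha^0\,t^{-2}=t^{-1}\cdot(-\delta_\alpha^0 t^{-1})$ with $-\delta_\alpha^0 t^{-1}\in\mathcal{A}$, using $\del_\alpha\rho_{cJ}^{I_1}\in\mathcal{A}$ by (c), and noting $\del_\alpha\del^J=\del^{J'}$ with $|J'|\leq|I_1|+1\leq|I|$, this term acquires the form $t^{-1}\sum_{|J|\leq|I|}(\cdot)\,\del^J$ with coefficients in $\mathcal{A}$ by (b). The second term equals $t^{-1}\Gammau_{\alpha c}^\gamma\,\del^{J''}$ with $\del^{J''}=\del_\gamma\del^{I_1}$, $|J''|=|I|$, and $\Gammau_{\alpha c}^\gamma\in\mathcal{A}$. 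Adding the two contributions yields \eqref{pre lem commutator pr4} at order $|I|$ with coefficients in $\mathcal{A}$, that is \eqref{pre lem commutator pr4b}, and the induction is complete.

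I do not expect a genuine obstacle here: the argument is pure bookkeeping. The one point worth care is the interplay of the weight $t^{-1}$ with differentiation — since $t^{-1}$ does not depend on $x$ one has $\del_a t^{-1}=0$, and only $\del_t$ contributes the harmless extra factor $-t^{-1}$, so the prefactor $t^{-1}$ is faithfully reproduced at each step rather than eroded; the other thing to keep straight is that commuting boosts through the $\del_\alpha$'s in property (c) costs only constant coefficients, which is exactly \eqref{pre lem commutator pr2}. As an even quicker route, one may note that \eqref{pre lem commutator pr4} is the specialization $J=\emptyset$ of Lemma~\ref{lem-com2}, namely $[\del^I,\delu_c]=\sum_{|I'|<|I|}\thetau_{c I'\emptyset}^{I\emptyset\gamma}\del_\gamma\del^{I'}$, together with the second bound in \eqref{pre lem commutator pr4a}, which gives $\big|\del^{I_1}L^{J_1}\thetau_{c I'\emptyset}^{I\emptyset\gamma}\big|\lesssim t^{-|I_1|-1}$; setting $\rho^I_{c,(\gamma,I')}:=t\,\thetau_{c I'\emptyset}^{I\emptyset\gamma}$ and $\del^{J}=\del_\gamma\del^{I'}$ (so $|J|\leq|I|$), one checks $t\,\thetau_{c I'\emptyset}^{I\emptyset\gamma}\in\mathcal{A}$, because $t$ is homogeneous of degree $1$ and $\del^{I_1}L^{J_1}t$ vanishes once $|I_1|\geq 2$, thereby recovering \eqref{pre lem commutator pr4}--\eqref{pre lem commutator pr4b}.
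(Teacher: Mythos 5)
Your proof is correct and follows the same inductive strategy the paper alludes to (the paper omits the proof, saying only that it is "checked by induction along the same lines as above"). The base case uses \eqref{pre commutator base L-P}--\eqref{pre commutator base'}, the inductive step is the standard split $[\del_\alpha\del^{I_1},\delu_c]=\del_\alpha[\del^{I_1},\delu_c]+[\del_\alpha,\delu_c]\del^{I_1}$, and your closure properties (a)--(c) of the coefficient class $\mathcal{A}$ are exactly the bookkeeping needed to keep the $t^{-1}$ prefactor and the weighted bound \eqref{pre lem commutator pr4b} intact. The alternative route at the end — specializing Lemma~\ref{lem-com2} to $J=\emptyset$, where the constraint $|I'|+|J'|<|I|+|J|$ forces $|I'|<|I|$ so the sharper bound $t^{-|I_1|-1}$ in \eqref{pre lem commutator pr4a} applies, and then setting $\rho^I_{c,(\gamma,I')}=t\,\thetau^{I\emptyset\gamma}_{cI'\emptyset}$ — is a valid and economical observation; the only point to watch is the $L^{J_1}t$ terms in the Leibniz expansion of $\del^{I_1}L^{J_1}(t\thetau)$, which you handle correctly by homogeneity.
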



\subsection{Estimates for the commutators}
\label{sec:33}

The following statements are now immediate in view of \eqref{pre lem commutator pr1}, \eqref{pre lem commutator pr2}, and \eqref{pre lem commutator pr3}, and \eqref{pre lem commutator pr4}.

\begin{proposition}[Estimates on commutators. I]
\label{lem commutator esti I}
For all sufficiently regular functions $u$ defined in the future cone $\Kcal$, the following estimates hold:
\bel{pre lem commutator pr5}
\big|[\del^IL^J,\del_\alpha]u\big|\leq C(|I|, |J|)\sum_{|J'|<|J|,\beta}|\del_\beta\del^IL^{J'}u|,
\ee
\bel{pre lem commutator pr5 NEW}
\big|[\del^IL^J,\delu_c]u\big|
\leq
C(|I|,|J|)
\Bigg(
\sum_{|J'|<|J|,a\atop |I'|\leq |I|} |\delu_a \del^{I'}L^{J'}u|
+ t^{-1}\!\!\!\!\sum_{|I|\leq|I'|\atop |J|\leq|J'|}|\del^{I'}L^{J'}u| \Bigg),
\ee
\bel{pre lem commutator trivial} 
\left|[\del^IL^J,\delu_{\alpha}u]\right|
\leq C(|I|,|J|)t^{-1}\sum_{\beta,|I'|<|I|\atop |J'|\leq|J|}\left|\del_{\beta}\del^{I'}L^{J'}u\right|
 +C(|I|,|J|)\sum_{\beta,|I'|\leq|I|\atop |J'|<|J|}\left|\del_{\beta}\del^{I'}L^{J'}u\right|, 
\ee
\bel{pre lem commutator second-order}
\big|[\del^IL^J,\del_\alpha \del_\beta] u \big|
\leq  C(|I|,|J|)\sum_{\gamma,\gamma'\atop |I|\leq|I'|,|J'|<|I|} \big|\del_{\gamma}\del_{\gamma'}\del^{I'}L^{J'} u\big|,
\ee
\bel{pre lem commutator second-order bar}
\aligned
&\big|[\del^IL^J, \delu_a\delu_\beta] u\big| + \big|[\del^IL^J, \delu_\alpha \delu_b] u\big|
\\
&\leq C(|I|,|J|) \Bigg(
\sum_{c,\gamma,|I'|\leq |I|\atop |J'| < |J|}\big|\delu_c \delu_{\gamma} \del^{I'}L^{J'}u\big|
+
 t^{-1} \sum_{c,\gamma,|I'| < |I|\atop |J'| \leq |J|}\big|\delu_c \delu_{\gamma} \del^{I'}L^{J'}u\big|
+ t^{-1}\sum_{\gamma,|I'|\leq|I|\atop |J'|\leq|J|}\big|\del_{\gamma}\del^{I'}L^{J'}u\big|
\Bigg).
\endaligned
\ee 
\end{proposition}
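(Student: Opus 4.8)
The plan is to substitute the algebraic commutator identities of the preceding lemmas---\eqref{pre lem commutator pr1}--\eqref{pre lem commutator pr4} together with the decomposition \eqref{pre lem commutator pr2 NEW} of Lemma~\ref{lem-com2} and its coefficient bound \eqref{pre lem commutator pr4a}---into the definition $[X,Y]u = X(Yu)-Y(Xu)$, take absolute values, and bound each resulting coefficient by a constant or by a power of $t^{-1}$ according to its homogeneity degree in the cone $\Kcal$. Since $t>1$ throughout $\Kcal$, a coefficient homogeneous of degree $-k\le 0$ is bounded by $C\,t^{-k}$; in particular a ``bad'' derivative $\del_\gamma$ acting on a homogeneous-degree-$0$ coefficient leaves it bounded, whereas a ``good'' derivative $\delu_a$ lowers its degree by one and hence produces a factor $t^{-1}$. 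All the asserted bounds follow from this bookkeeping.

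Three of the five estimates are essentially immediate. For \eqref{pre lem commutator pr5}, the identity displayed right after \eqref{pre lem commutator pr2}, namely $[\del^IL^J,\del_\alpha]u = \sum_{|J'|<|J|,\gamma}\theta^{J\gamma}_{\alpha J'}\del_\gamma\del^IL^{J'}u$, has \emph{constant} coefficients, so taking absolute values is all that is needed. For \eqref{pre lem commutator trivial}, I would insert \eqref{pre lem commutator pr2 NEW} and use \eqref{pre lem commutator pr4a} with $|I_1|=|J_1|=0$: the summands with $|J'|<|J|$ have bounded coefficients and give the second (unweighted) sum, while those with $|I'|<|I|$ carry a factor $t^{-1}$ and give the first. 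For \eqref{pre lem commutator second-order} I would write $[\del^IL^J,\del_\alpha\del_\beta] = [\del^IL^J,\del_\alpha]\del_\beta + \del_\alpha[\del^IL^J,\del_\beta]$, apply the constant-coefficient form of \eqref{pre lem commutator pr5} to the inner commutators, and commute the surviving $\del_\beta$ (resp.\ $\del_\alpha$) once more through $\del^IL^{J'}$ using \eqref{pre lem commutator pr5} and \eqref{pre lem commutator pr2}; no weights appear, and the result is \eqref{pre lem commutator second-order}.

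For \eqref{pre lem commutator pr5 NEW} I would split $[\del^IL^J,\delu_c] = \del^I\,[L^J,\delu_c] + [\del^I,\delu_c]\,L^J$ and insert \eqref{pre lem commutator pr3} and \eqref{pre lem commutator pr4}. Expanding $\del^I\big(\sigma^{Ja}_{cJ'}\,\delu_a L^{J'}\big)$ by the Leibniz rule produces terms $\big(\del^{I_1}\sigma^{Ja}_{cJ'}\big)\,\del^{I_2}\delu_a L^{J'}u$ with $I_1+I_2=I$; each factor $\del^{I_1}\sigma^{Ja}_{cJ'}$ is bounded (homogeneity $\le 0$), and commuting $\del^{I_2}$ past $\delu_a$ once more via \eqref{pre lem commutator pr4} (each such commutation costing a $t^{-1}$ but producing only ordinary derivatives) rearranges everything into terms $\delu_a\del^{I'}L^{J'}u$ with $|J'|<|J|$, $|I'|\le|I|$, and terms $t^{-1}\del^{I'}L^{J'}u$; the remaining piece $[\del^I,\delu_c]L^Ju = t^{-1}\sum_{|K|\le|I|}\rho^I_{cK}\,\del^K L^J u$ lands directly in the $t^{-1}$-weighted sum, which gives \eqref{pre lem commutator pr5 NEW}.

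Finally, for the second-order hyperboloidal estimate \eqref{pre lem commutator second-order bar} I would write $[\del^IL^J,\delu_a\delu_\beta] = [\del^IL^J,\delu_a]\,\delu_\beta + \delu_a\,[\del^IL^J,\delu_\beta]$ (and symmetrically for $\delu_\alpha\delu_b$), apply the already-proven first-order estimates \eqref{pre lem commutator pr5}, \eqref{pre lem commutator pr5 NEW}, \eqref{pre lem commutator trivial} to the inner commutator acting on $\delu_\beta u$ (resp.\ $\delu_b u$), then commute the operators $\del^{I'}L^{J'}$ produced this way back through the surviving first-order derivative, and finally let the outer good derivative $\delu_a$ (resp.\ $\delu_\alpha$) fall, by Leibniz, onto the smooth homogeneous coefficients generated along the way. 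The main---and essentially only---obstacle is the resulting bookkeeping: at every step one must record whether a derivative hitting a coefficient is good (producing a factor $t^{-1}$) or bad (harmless), and one uses crucially that at least one of the two derivatives $\delu_a\delu_\beta$ (resp.\ $\delu_\alpha\delu_b$) is good, so that every remainder term carrying only a single bad derivative $\del_\gamma$ comes with the compensating factor $t^{-1}$ of the last sum in \eqref{pre lem commutator second-order bar}, while the terms with two good derivatives, unweighted and $t^{-1}$-weighted, furnish the first two sums. Since all coefficients are smooth and homogeneous on the open cone $\Kcal$, the constants are uniform in $(t,x)\in\Kcal$.
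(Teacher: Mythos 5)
Your proposal is correct and takes essentially the same approach as the paper: the paper simply asserts that the proposition is "immediate in view of" the algebraic decompositions \eqref{pre lem commutator pr1}, \eqref{pre lem commutator pr2}, \eqref{pre lem commutator pr3}, and \eqref{pre lem commutator pr4} (together with the homogeneity bounds \eqref{pre lem commutator pr3b}, \eqref{pre lem commutator pr4a}, \eqref{pre lem commutator pr4b}), and your write-up carries out exactly the bookkeeping the paper leaves implicit — expanding the commutators via those identities, using the Leibniz rule, and bounding smooth coefficients of nonpositive homogeneity degree $-k$ by $C\,t^{-k}$ in $\Kcal$, with good derivatives of a coefficient costing one power of $t^{-1}$ and bad derivatives costing none.
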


Further estimates will be also needed, as now stated.

\begin{proposition}[[Estimates on commutators. II]
\label{pre lem commutator s/t}
For all sufficiently regular functions $u$ defined in the future cone $\Kcal$, the following estimate holds (for all $I, J, \alpha$)
\bel{pre lem commutator T/t'}
\big|\del^IL^J\big((s/t)\del_\alpha u\big)\big| \leq \big|(s/t)\del_\alpha \del^IL^J u\big|
+ C(|I|,|J|)\sum_{\beta,|I'|\leq|I|\atop |J'|\leq |J|}\big|(s/t)\del_\beta\del^{I'}L^{J'}u\big|.
\ee
\end{proposition}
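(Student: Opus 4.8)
The plan is to distribute $\del^IL^J$ over the product $(s/t)\cdot(\del_\alpha u)$ by Leibniz's rule, to single out the one resulting term in which no derivative falls on the weight $s/t$, and to control every remaining term by the weighted quantities $(s/t)\,\del_\beta\del^{I'}L^{J'}u$. Since $\del^IL^J$ is the composition of the first-order fields making up $\del^I$ followed by those making up $L^J$, the iterated Leibniz rule gives
\[
\del^IL^J\big((s/t)\del_\alpha u\big)=\sum_{I_1+I_2=I,\ J_1+J_2=J}c^{IJ}_{I_1I_2J_1J_2}\,\big(\del^{I_1}L^{J_1}(s/t)\big)\,\big(\del^{I_2}L^{J_2}\del_\alpha u\big),
\]
with combinatorial constants $c^{IJ}_{I_1I_2J_1J_2}$ bounded by $C(|I|,|J|)$; the term with $I_1$ and $J_1$ empty, of coefficient $1$, is exactly $(s/t)\,\del^IL^J\del_\alpha u$.

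For that distinguished term I would commute $\del_\alpha$ to the front: by the consequence of \eqref{pre lem commutator pr2} recorded immediately after it, $(s/t)\,\del^IL^J\del_\alpha u=(s/t)\,\del_\alpha\del^IL^Ju+(s/t)\,[\del^IL^J,\del_\alpha]u$, and by \eqref{pre lem commutator pr5} the commutator is a constant-coefficient combination of $\del_\beta\del^IL^{J'}u$ with $|J'|<|J|$, all of the admissible form. The same commutation applied inside each of the other factors gives $\big|\del^{I_2}L^{J_2}\del_\alpha u\big|\le\big|\del_\alpha\del^{I_2}L^{J_2}u\big|+C(|I|,|J|)\!\!\sum_{\beta,\ |J'|<|J_2|}\!\!\big|\del_\beta\del^{I_2}L^{J'}u\big|$, again a sum of terms $\del_\beta\del^{I'}L^{J'}u$ with $|I'|\le|I|$ and $|J'|\le|J|$ (now without the weight). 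Thus the entire estimate comes down to showing that, for $|I_1|+|J_1|\ge1$, the scalar factor $\del^{I_1}L^{J_1}(s/t)$ is dominated by $C(|I|,|J|)\,(s/t)$ on $\Kcal$; this weighted bound is exactly what supplies each term with its factor $s/t$.

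That bound is the core of the argument, and it fails outside $\Kcal$ — already $\del_t(s/t)$ and $\del_a(s/t)$ carry a factor $1/s$ — so one must use the geometry of the cone. The key point is that $\Kcal=\{r<t-1\}$ forces $s^2=t^2-r^2\ge2t-1\ge t$, hence $t^2/s^2\le t$ and $t^{-1/2}\lesssim s/t\le1$ throughout $\Kcal$. To prove $\del^{I_1}L^{J_1}(s/t)=(s/t)\,\Gamma_{I_1J_1}$ with $|\Gamma_{I_1J_1}|\le C(|I_1|,|J_1|)$, I would first deal with the boosts: from $L_a(s/t)=-(x^a/t)(s/t)$ one obtains by induction $L^{J_1}(s/t)=Q_{J_1}(x/t)\,(s/t)$, where $Q_{J_1}$ is a polynomial in the components $x^a/t$ and so is bounded on $\{|x|\le t\}$; applying $\del^{I_1}$ by Leibniz then reduces the claim to bounding $\del^{I'}(s/t)$ and $\del^{I'}[Q_{J_1}(x/t)]$ separately for $|I'|\le|I_1|$. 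For these I would write $\sigma:=(s/t)^2=1-r^2/t^2$, a rational, degree-zero function whose denominator is a pure power of $t$, so that each $\del^{I'}\sigma$ is rational of degree $-|I'|$ with denominator a pure power of $t$, whence $|\del^{I'}\sigma|\le C\,t^{-|I'|}$ on $\{|x|\le t\}$; by the same reasoning $|\del^{I'}[Q_{J_1}(x/t)]|\le C\,t^{-|I'|}$. Applying the higher-order chain rule to $s/t=\sqrt{\sigma}$, $\del^{I'}(s/t)$ is a combination (of cardinality depending only on $|I'|$) of terms $\sigma^{1/2-p}\prod_{i=1}^{p}\del^{(k_i)}\sigma$ with $k_i\ge1$ and $\sum_i k_i=|I'|$, so $p\le|I'|$; bounding $\prod_i|\del^{(k_i)}\sigma|\le C\,t^{-|I'|}$ and $|\sigma^{1/2-p}|=(s/t)\,(t^2/s^2)^p\le(s/t)\,t^{p}$ makes each term $\le C\,(s/t)\,t^{\,p-|I'|}\le C\,(s/t)$. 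Combining with the bound on $\del^{I'}[Q_{J_1}(x/t)]$ and summing over the Leibniz split yields $|\del^{I_1}L^{J_1}(s/t)|\le C(|I_1|,|J_1|)\,(s/t)$ on $\Kcal$.

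Finally I would reassemble everything: applying the triangle inequality to the Leibniz expansion, inserting $|\del^{I_1}L^{J_1}(s/t)|\lesssim(s/t)$ for $|I_1|+|J_1|\ge1$ together with the two commutator reductions, and using that the positive scalar $s/t$ passes freely through absolute values, one recovers $|(s/t)\del_\alpha\del^IL^Ju|$ from the single distinguished term and a bound $C(|I|,|J|)\sum_{\beta,\,|I'|\le|I|,\,|J'|\le|J|}|(s/t)\del_\beta\del^{I'}L^{J'}u|$ for all the rest, which is the assertion. I expect the main obstacle to be exactly the weighted bound on $\del^{I_1}L^{J_1}(s/t)$: one has to track the negative powers of $s$ created by repeated differentiation and verify that, because $s^2\gtrsim t$ on $\Kcal$ and because of the precise homogeneities at play, they are always absorbed by the single factor $s/t$ — this is the only genuinely computational ingredient.
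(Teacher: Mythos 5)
Your proof is correct and follows the route the paper itself indicates: a Leibniz expansion of $\del^I L^J$ over the product, the commutator identity after \eqref{pre lem commutator pr2} (equivalently \eqref{pre lem commutator pr5}) to put the distinguished term in the form $(s/t)\del_\alpha\del^I L^J u$, and the weighted bound $|\del^{I_1}L^{J_1}(s/t)|\lesssim s/t$ on $\Kcal$, which is precisely the content of Lemma~\ref{pre lem lem commutator s/t} (i.e.\ $\Xi^{I,J}=(t/s)\del^I L^J(s/t)$ bounded on $\overline\Kcal$). The paper merely cites both the proposition and that auxiliary lemma from \cite{PLF-MY-book}, whereas you supply a self-contained proof of the lemma via $L_a(s/t)=-(x^a/t)(s/t)$, homogeneity of $\sigma=1-r^2/t^2$, Fa\`a di Bruno, and the cone geometry $s^2\ge 2t-1\ge t$; this correctly identifies and discharges the one genuinely computational ingredient.
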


Recall that the proof of the above result (given in \cite{PLF-MY-book}) relies on the following technical observation, concerning  products of first-order linear operators with homogeneous coefficients of order $0$ or $1$.

\begin{lemma}
\label{pre lem lem commutator s/t}
For all multi-indices $I$, the function
$$
\Xi^{I,J}  := (t/s) \del^I L^J (s/t),
$$
defined in the closed cone $\overline{\Kcal} = \{|x|\leq t-1\}$, is smooth and all of its derivatives (of any order)
are bounded in $\overline{\Kcal}$. Furthermore, it is homogeneous of degree $\eta$ with $\eta\leq 0$.
\end{lemma}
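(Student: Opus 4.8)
The plan is to prove Lemma~\ref{pre lem lem commutator s/t} by an induction on the total order $|I|+|J|$, exploiting the fact that all the vector fields $\del_\alpha$ and $L_a$ map homogeneous functions to homogeneous functions (lowering the degree by $1$ and preserving it, respectively) and that $\overline{\Kcal}=\{|x|\leq t-1\}$ is a closed cone on which $s/t=\sqrt{1-r^2/t^2}$ is a smooth, bounded, strictly positive function bounded below away from the boundary. First I would record the base case: for $|I|+|J|=0$, $\Xi^{\emptyset,\emptyset}=1$, which is smooth, bounded, and homogeneous of degree $0$. Next I would isolate the elementary computation that $\del_\alpha(s/t)$ and $L_a(s/t)$ are each of the form (homogeneous function of degree $\le 0$)$\times (s/t)$; indeed $s^2=t^2-r^2$ gives $s\,\del_\alpha s = t\,\del_\alpha t - x^a\del_\alpha x^a$, so $\del_\alpha s$ is homogeneous of degree $0$, hence $\del_\alpha(s/t)=(\del_\alpha s)/t-(s/t^2)\del_\alpha t$ is homogeneous of degree $-1$; similarly $L_a s=0$ (since $s^2$ is Lorentz invariant), so $L_a(s/t)=-(s/t^2)L_a t=-(s/t^2)x^a$, homogeneous of degree $0$. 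In particular each such derivative, divided by $s/t$, is a bounded smooth function on $\overline{\Kcal}$, homogeneous of degree $\le 0$.

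The inductive step is then bookkeeping. Suppose $\Xi^{I',J'}=(t/s)\del^{I'}L^{J'}(s/t)$ is smooth, bounded on $\overline{\Kcal}$, and homogeneous of degree $\le 0$ for all $|I'|+|J'|\le n$. Given $\del^IL^J$ of order $n+1$, write it as $Z\,\del^{I'}L^{J'}$ where $Z$ is either some $\del_\alpha$ or some $L_a$ and $|I'|+|J'|=n$. Then
\[
\Xi^{I,J}=(t/s)\,Z\big((s/t)\,\Xi^{I',J'}\big)
=(t/s)\,Z(s/t)\,\Xi^{I',J'}+(t/s)(s/t)\,Z(\Xi^{I',J'}),
\]
and by the computation above $(t/s)Z(s/t)$ is a bounded smooth homogeneous function of degree $\le 0$, $\Xi^{I',J'}$ is such a function by the induction hypothesis, and $Z(\Xi^{I',J'})$ is obtained from a bounded smooth homogeneous function of degree $\eta\le 0$ by applying $Z$, hence is bounded smooth homogeneous of degree $\eta$ (if $Z=L_a$) or $\eta-1\le 0$ (if $Z=\del_\alpha$); in either case the product of these three factors is again bounded, smooth, and homogeneous of degree $\le 0$ on $\overline{\Kcal}$. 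This closes the induction.

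One must be slightly careful about two points, and these are the only mild obstacles. First, smoothness and boundedness of all derivatives must be asserted on the \emph{closed} cone $\overline{\Kcal}=\{|x|\le t-1\}$: here $r/t\le (t-1)/t<1$, so $s/t$ is bounded above by $1$ and below by a positive constant only after one also uses $t\ge 1$ on $\overline\Kcal$ (which follows from $|x|\le t-1$), whence $s/t$ and $t/s$ and all their $\del,L$-derivatives extend to bounded smooth functions there; a function homogeneous of degree $\eta\le 0$ that is smooth on the slice $\{t=1\}\cap\overline{\Kcal}$ (a compact set) is automatically bounded on all of $\overline{\Kcal}$. Second, one should note explicitly that ``homogeneous of degree $\eta$'' is preserved under multiplication (degrees add, and $\le 0$ plus $\le 0$ is $\le 0$) and that $\del_\alpha$ lowers degree by exactly $1$ while $L_a$ preserves it, so the degree can only decrease along the induction and never becomes positive. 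With these observations in place the argument is complete, and the boundedness of $\Xi^{I,J}$ on $\overline{\Kcal}$ is exactly what is needed to justify the commutator estimate in Proposition~\ref{pre lem commutator s/t}.
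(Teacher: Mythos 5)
The paper itself does not prove this lemma; it cites the monograph \cite{PLF-MY-book} for it, so I assess your argument on its own. Your overall plan has the right shape — induction on $|I|+|J|$, the identity $\Xi^{I,J}=\bigl[(t/s)Z(s/t)\bigr]\Xi^{I',J'}+Z(\Xi^{I',J'})$, and the base computations showing $L_a(s/t)$ and $\del_\alpha(s/t)$ are each $(s/t)$ times a homogeneous factor of nonpositive degree — but the inductive step does not close as written.

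The step that fails is the assertion that $Z(\Xi^{I',J'})$ is bounded because $\Xi^{I',J'}$ is a bounded, smooth, homogeneous function of degree $\eta\le 0$, and applying $Z$ preserves that. The class of smooth functions on $\{|x|<t\}$ that are homogeneous of degree $\le 0$ and bounded on $\overline{\Kcal}$ is \emph{not} closed under $\del_\alpha$ or $L_a$. For example $f:=\sin(t/s)$ is smooth, homogeneous of degree $0$, and $|f|\le1$, yet
$$
\del_t f=-\cos(t/s)\,\frac{r^2}{s^3},\qquad L_a f=\cos(t/s)\,\frac{x^a}{s},
$$
and on the boundary $\{r=t-1\}$ of $\overline{\Kcal}$ both $r^2/s^3$ and $|x^a|/s$ grow like $t^{1/2}$, so neither derivative is bounded there. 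The underlying issue is that $\overline{\Kcal}=\{|x|\le t-1\}$ is not invariant under dilations centred at the origin, so homogeneity alone does not propagate bounds from a slice; in fact the slice $\{t=1\}\cap\overline{\Kcal}$ you invoke is the single vertex $(1,0,0,0)$, and the compactness argument is vacuous. Note also that, because $Z(t/s)=-(t/s)^2Z(s/t)$, one has $Z(\Xi^{I',J'})=-\bigl[(t/s)Z(s/t)\bigr]\Xi^{I',J'}+\Xi^{I,J}$; substituting this back into your identity gives $\Xi^{I,J}=\Xi^{I,J}$, so the decomposition by itself provides no leverage unless $Z(\Xi^{I',J'})$ is bounded for an independent reason.

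What is needed is a \emph{sharper} inductive hypothesis that records how the potentially unbounded powers of $t/s$ are compensated by powers of $t^{-1}$. One formulation that works: prove by induction that
$$
\del^IL^J(s/t)=(s/t)\,t^{-|I|}\,P_{I,J}\bigl(x^1/t,\,x^2/t,\,x^3/t,\,(t/s)^2\bigr),
$$
where $P_{I,J}$ is a polynomial whose degree in the last variable $(t/s)^2$ is at most $|I|$. The base cases are exactly your computations. In the step, $L_a$ preserves both the form and the $(t/s)^2$-degree (using $L_a s=0$ and $L_a\bigl((t/s)^2\bigr)=2(x^a/t)(t/s)^2$), while $\del_\alpha$ raises the power of $t^{-1}$ and the $(t/s)^2$-degree each by at most one (using that $\del_\alpha\bigl((t/s)^2\bigr)$ is $t^{-1}$ times a polynomial of degree two in $(t/s)^2$). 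Then $\Xi^{I,J}=t^{-|I|}P_{I,J}$ is manifestly homogeneous of degree $-|I|\le 0$, and boundedness on $\overline{\Kcal}$ follows from $|x^a/t|<1$ and the elementary bound $(t/s)^2=t^2/(t^2-r^2)\le t^2/(2t-1)\le t$ for $t\ge 1$, which gives $(t/s)^{2k}t^{-|I|}\le t^{k-|I|}\le 1$ whenever $k\le|I|$. The same structure is stable under further derivatives, which yields boundedness of all derivatives. Without a refinement of this kind, the passage from boundedness of $\Xi^{I',J'}$ to boundedness of $Z(\Xi^{I',J'})$ is a genuine gap.
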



\section{Initialization of the bootstrap argument}
\label{sec:5}

From this section onwards, we begin the proof of Theorem \ref{thm main}, which is a rather involved bootstrap argument along the lines of the method presented in \cite{PLF-MY-book}. We fix some integer $N \geq 8$ throughout, and first 
summarize our strategy, as follows.

Let $(u,v)$ be the local-in-time solution to the Cauchy problem associated with the system \eqref{eq main}. From a standard local existence result (cf., for instance, \cite[Section 11]{PLF-MY-book}), we can construct a local-time solution from the data given on the initial hypersurface and, consequently, guarantee that on the {\sl initial hyperboloid} and for all $|I|+|J|\leq N$,
$$
E_m(2,\del^IL^J u)^{1/2} \leq C_0\vep,
\qquad E_m(2,\del^IL^J v)^{1/2} \leq C_0\vep
$$
for some uniform constant $C_0>0$. 
On some (hyperbolic) time interval $[2,s_1]$, we can thus assume the following energy conditions for some constants $C_1, \vep, \delta>0$ (yet to be determined):
\bel{ineq energy assumption}
\aligned
&E_m(s,\del^IL^J u)^{1/2}\leq C_1\vep s^{k\delta}, &&|J|=k, \quad &&&|I|+|J|\leq N, \qquad
&&&& \text{wave / high-order,}
\\
&E_m(s,\del^IL^J u)^{1/2}\leq C_1\vep, &&\quad &&&|I|+|J|\leq N-4, &&&& \text{wave / low-order,}
\\
&E_m(s,\del^IL^J v)^{1/2}\leq C_1\vep s^{1/2+k\delta}, &&|J|=k, \quad &&&|I|+|J|\leq N, &&&& \text{Klein-Gordon / high-order,}
\\
&E_m(s,\del^IL^J v)^{1/2}\leq C_1\vep s^{k\delta}, &&|J|=k, \quad &&&|I|+|J|\leq N-4 &&&& \text{Klein-Gordon / low-order.}
\endaligned
\ee
We will prove that on the same interval the following {\sl improved energy bounds} are valid when $\vep$ is sufficiently 
small and $C_1>C_0$ with $\frac{1}{10N}\leq\delta\leq \frac{1}{5N}$ (fixed once for all):
\bel{ineq energy assumption'}
\aligned
&E_m(s,\del^IL^J u)^{1/2}\leq \frac{1}{2}C_1\vep s^{k\delta}, &&|J|=k, \quad
&&&|I|+|J|\leq N,
\qquad
&&&& \text{wave / high-order,}
\\
&E_m(s,\del^IL^J u)^{1/2}\leq \frac{1}{2}C_1\vep, \quad &&
&&&|I|+|J|\leq N-4,
&&&& \text{wave / low-order,}
\\
&E_m(s,\del^IL^J v)^{1/2}\leq \frac{1}{2}C_1\vep s^{1/2+k\delta}, &&|J|=k, \quad
&&&|I|+|J|\leq N,
&&&& \text{Klein-Gordon / high-order,}
\\
&E_m(s,\del^IL^J v)^{1/2}\leq \frac{1}{2}C_1\vep s^{k\delta}, &&|J|=k, \quad
&&&|I|+|J|\leq N-4
&&&& \text{Klein-Gordon / low-order.}
\endaligned
\ee

Once this property is proven, we set
$$
s_1 := \sup \Big\{s \, / \, \eqref{ineq energy assumption} \text{ holds on }[2,s] \Big\}
$$
and we can deduce that $s_1 = +\infty$. Indeed, by a continuity argument, $C_1>C_0$ implies $s_1>2$. 
Again by a continuity argument, we deduce that when $s = s_1$, at least one of the inequalities \eqref{ineq energy assumption} must be an equality. But, when \eqref{ineq energy assumption'} holds, none of them can become an equality. This means that $s_1 = +\infty$ and the rest of our work consists of proving \eqref{ineq energy assumption'}.

\begin{proposition}[Formulation of the bootstrap argument]
\label{prop bootstrap}
Given any integer $N \geq 8$ and  $\frac{1}{10N}<\delta<\frac{1}{5N}$, there exist constants $C_1,\vep>0$ satisfying $\vep C_1 < 1$ such that any local-in-time solution $(u,v)$ to \eqref{eq main}, defined in the time interval $[2,s_1]$ 
and satisfying the energy conditions \eqref{ineq energy assumption} for some $\vep \in (0, \vep_0]$, also satisfies 
the improved energy bounds \eqref{ineq energy assumption'}.
\end{proposition}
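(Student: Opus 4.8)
\medskip
\noindent\emph{Strategy of the proof.}
The plan is to carry out the bootstrap already sketched in the Introduction: starting from the assumed energy bounds~\eqref{ineq energy assumption}, one first produces pointwise decay, reinjects it into the energy identities of Proposition~\ref{prop energy}, and thereby upgrades~\eqref{ineq energy assumption} to~\eqref{ineq energy assumption'}; the constants $C_1,\vep$ are then fixed at the end ($C_1$ large compared with the data constant $C_0$ and the various implied constants, and $\vep$ small relative to $C_1$ and $N$). Fix $N\geq 8$ and $\delta\in(\frac{1}{10N},\frac{1}{5N})$. A tool I would use pervasively is that all the functions here are spatially compactly supported in $\Kcal$, so that $t-r\geq c_0$ for a fixed $c_0>0$ on their support and hence $(t/s)^2\leq c_0^{-1}\,t$: this is what lets one trade the dangerous weight $t/s$ for powers of $t$, and in particular makes the curved-metric coefficient $h^{\alpha\beta}=-uH^{\alpha\beta}$ and its component $\hb^{00}$ uniformly small once $u$ has been controlled.

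First I would set up the decay estimates. The Sobolev inequality on hyperboloids (Proposition~\ref{pre lem sobolev}), combined with the commutator estimates of Proposition~\ref{lem commutator esti I}, turns~\eqref{ineq energy assumption} into \emph{basic} pointwise bounds for $v$, for $\del_\alpha v$, and for $\del^IL^Ju$ with $|I|\geq 1$. From these the source $f:=P^{\alpha\beta}\del_\alpha v\del_\beta v+Rv^2$ of the $u$-equation satisfies $|f|\lesssim C_f\,t^{-2-\nu}(t-r)^{-1+\mu}$ for suitable $0<\mu,\nu\leq 1/2$, so Proposition~\ref{Linfini wave} yields the \emph{refined} bound $|L^Ju|\lesssim C_1\vep\,t^{-1}s^{k\delta}$ --- which is genuinely new information, since the wave energy does not control $u$ itself. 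Next, rewriting the Klein-Gordon equation as $-\Boxt_g v+c^2v=0$ with $g^{\alpha\beta}=m^{\alpha\beta}-uH^{\alpha\beta}$, the refined bound on $u$ together with $t-r\geq c_0$ gives $\sup|\hb^{00}|<1/3$, so Proposition~\ref{Linfini KG} applies; estimating the terms $R_1[v],R_2[v],R_3[v]$ and the weight $h_{t,x}'$ appearing there by the basic decay of $v,\del v$ and the refined decay of $u,\newperp u$ produces the refined Klein-Gordon bounds $|\del^IL^Jv|\lesssim C_1\vep\,(s/t)^{2-7\delta}s^{-3/2+k\delta}$ and $|\del^IL^J\del_\alpha v|\lesssim C_1\vep\,(s/t)^{1-7\delta}s^{-3/2+k\delta}$ for $|I|+|J|\leq N-4$.

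With the refined decay in hand I would return to the energies. For the wave field, differentiating $-\Box u=f$ by $\del^IL^J$ (which commutes with $\Box$) and using~\eqref{ineq energy wave}, the source $\|\del^IL^Jf\|_{L_f^2(\Hcal_s)}=T_1^{I,J}+T_2^{I,J}$ is estimated by Leibniz --- putting the refined pointwise decay on the lower-order factor and the bootstrap energy on the higher-order one --- giving a time-integrable bound $\lesssim (C_1\vep)^2 s^{-3/2+(k+2)\delta}$ at order $\leq N-4$ and $\lesssim (C_1\vep)^2 s^{-1+k\delta}$ at order $\leq N$; after integration and adding $E_m(2,\cdot)^{1/2}\leq C_0\vep$, the wave part of~\eqref{ineq energy assumption'} follows once $C_1\gtrsim C_0$ and $\vep$ is small. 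For the Klein-Gordon field I would use the curved energy inequality~\eqref{ineq energy KG} with $h^{\alpha\beta}=-uH^{\alpha\beta}$: the coercivity~\eqref{ineq coersive 1} comes from the smallness of $\hb^{00}$ obtained above, and~\eqref{ineq coersive 1'} is verified with $M(s)\lesssim (C_1\vep)^2 s^{-1/2+k\delta}$ using the sharp decay of $\del u$ tested against the Klein-Gordon energy of $v$. What then remains is the commutator source $T_3^{I,J}=\|[\del^IL^J,uH^{\alpha\beta}\del_\alpha\del_\beta]v\|_{L_f^2(\Hcal_s)}$, which I would split into the three families of~\eqref{eq:liste}.

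The hard part is precisely this commutator. For the first family (at least one derivative on $u$, so $|I_1|\geq 1$), the sharp pointwise bound on $\del^{I_1}L^{J_1}u$ against the energy of $\del^{I_2}L^{J_2}\del_\alpha\del_\beta v$ suffices. The second and third families carry a top-order boosted factor $L^{J_1'}u$, or a factor $u\,\del^IL^{J'}\del_\alpha\del_\beta v$ with $|J'|<|J|$, so only an $L^2$ energy bound is available for one of the two factors; there I would balance the orders, so that whichever factor has order $\leq N-4$ receives the refined pointwise decay and the other keeps its energy bound, and I would invoke the Hardy-type inequality (Proposition~\ref{pre Hardy hyper}) whenever it is the $L^2$-norm of $v$ itself, not merely of $\del v$, that has to be absorbed. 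This yields $T_3^{I,J}\lesssim (C_1\vep)^2 s^{-1/2+k\delta}$ at order $N$ and $\lesssim (C_1\vep)^2 s^{-1+k\delta}$ at order $N-4$, which --- fed into~\eqref{ineq energy KG} together with the bound on $M(s)$ and integrated --- give $(C_1\vep)^2 s^{1/2+k\delta}$, resp.\ $(C_1\vep)^2 s^{k\delta}$; the Klein-Gordon part of~\eqref{ineq energy assumption'} then closes for $\kappa$ close to $1$, $C_1\gtrsim\kappa^2 C_0$, and $\vep$ small. The choice $\frac{1}{10N}<\delta<\frac{1}{5N}$ is exactly what keeps every exponent that arises below $\frac12$, so the integrability used above is preserved across the whole hierarchy $|I|+|J|\leq N$. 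The genuinely critical feature --- and the source of all the difficulty --- is that at top order the bound on $T_3^{I,J}$ integrates to \emph{exactly} the bootstrap rate $s^{1/2+k\delta}$ with no margin of time decay, so closure rests entirely on the quadratic smallness factor $(C_1\vep)^2$; this is what forces the order-counting together with the Hardy inequality, and it is precisely the interaction $u\,\del\del v$ that had to be excluded in~\cite{PLF-MY-book} and that the sharp sup-norm estimates of Section~\ref{sec supnorm} are designed to handle.
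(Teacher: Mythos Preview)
Your outline follows the same architecture as the paper's proof (Sections~\ref{sec:5}--8): basic decay from Sobolev, refined decay via Propositions~\ref{Linfini wave} and~\ref{Linfini KG}, then back to the energy inequalities of Proposition~\ref{prop energy}. Two points, however, need correction.

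First, the Hardy inequality is applied to the \emph{wave} component $u$, not to $v$: the Klein--Gordon energy $E_{m,c}$ already controls $\|v\|_{L^2}$ directly through the mass term, whereas the wave energy $E_m$ controls only derivatives of $u$. The place where Hardy enters is the commutator piece $L^{J_1'}u\,\del^IL^{J_2'}\del_\alpha\del_\beta v$ when $|J_1'|$ is top-order (so no pointwise bound on $L^{J_1'}u$ is available); one writes $\|s^{-1}L^{J_1'}u\|_{L_f^2(\Hcal_s)}\lesssim C_1\vep s^{|J_1'|\delta}$ via Proposition~\ref{pre Hardy hyper} and pairs it with the refined pointwise decay $|s\,\del^IL^{J_2'}\del_\alpha\del_\beta v|\lesssim C_1\vep s^{-1/2+|J_2'|\delta}$.

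Second, the refined sup-norm step is not a single pass but an iteration, and it contains a further \emph{inner} bootstrap on $k=|J|$ that you have suppressed. The circularity is this: to get $|L^Ju|\lesssim C_1\vep\,t^{-1}s^{k\delta}$ from Proposition~\ref{Linfini wave} you need sharp decay of $\del^{I'}L^{J'}v$ with $|J'|\leq k$; but to get that from Proposition~\ref{Linfini KG} the right-hand side contains the commutator $[uH^{\alpha\beta}\del_\alpha\del_\beta,\del^IL^J]v$, which in turn involves $L^{J'}u$ for $|J'|\leq k$. The paper resolves this by first treating $k=0$ (where the commutator is harmless), then inducting on $k$ with a secondary continuity argument at each step (Proposition~7.8). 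Without this induction, your ``refined'' bounds for $L^Ju$ and $\del^IL^Jv$ with $|J|\geq 1$ are not actually established, and the closure of the energy estimates would fail.
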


The remaining text is devoted to the proof of this proposition, which we decompose into three parts. First, we derive a series of $L^2$ and sup-norm estimates directly from \eqref{ineq energy assumption}, and from the Sobolev inequality on hyperboloids \eqref{eq 1 sobolev} and the commutator estimates (i.e.~Propositions~\ref{lem commutator esti I} and \ref{pre lem commutator s/t}). Second, we improve the sup-norm estimates by using \eqref{Linfini wave ineq} and  \eqref{Linfty KG ineq a}. Finally, we combine the improved sup-norm estimates and the $L^2$ estimates established in the first part and we get the improved energy estimates \eqref{ineq energy assumption'}.

From now on, we consider a {\bf solution satisfying the energy bound \eqref{ineq energy assumption}} and 
throughout we set $|J| =k$.  


\section{Basic estimates}
\label{newsection-6}

\subsection{Basic $L^2$ estimates of the first generation}

Throughout, we always assume that $2\leq s\leq s_1$. We begin by stating the $L^2$ type estimates provided to us by the energy assumption \eqref{ineq energy assumption}.
For all $|I|+|J|\leq N$ with $|J|=k$, we have the high-order bounds
\be
\label{L2 basic 1ge1 a}
\aligned
\|(s/t)\del_\alpha\del^I L^Ju\|_{L_f^2(\Hcal_s)} + \|(s/t)\delu_\alpha\del^I L^Ju\|_{L_f^2(\Hcal_s)}
&
\lesssim C_1\vep s^{k\delta},
\\
\|\delu_a\del^I L^Ju\|_{L_f^2(\Hcal_s)} + \|\newperp \del^I L^Ju\|_{L_f^2(\Hcal_s)}
& \lesssim C_1\vep s^{k\delta},
\\
\|(s/t)\del_\alpha\del^I L^Jv\|_{L_f^2(\Hcal_s)} + \|(s/t)\delu_\alpha\del^I L^Jv\|_{L_f^2(\Hcal_s)}
& \lesssim C_1\vep s^{1/2 + k\delta},
\\
 \|\delu_a\del^I L^Jv\|_{L_f^2(\Hcal_s)} + \|\newperp \del^I L^Jv\|_{L_f^2(\Hcal_s)}
& \lesssim C_1\vep s^{1/2 + k\delta},
\\
\|\del^IL^J v\|_{L_f^2(\Hcal_s)}
& \lesssim C_1\vep s^{1/2 + k\delta}, 
\endaligned
\ee
where the last estimate implies, for all $|I|+|J|\leq N-1$ and $|J| = k$, the following estimate
\bel{L2 basic 2ge1}
\|\del_\alpha\del^IL^J v\|_{L_f^2(\Hcal_s)} \lesssim C_1\vep s^{1/2 + k\delta},
\ee
as well as,
for $|I|+|J|\leq N-4$ with $|J|=k$, the low-order energy bounds imply:
\be
\label{L2 basic 3ge1}
\aligned
\|(s/t)\del_\alpha\del^I L^Ju\|_{L_f^2(\Hcal_s)} + \|(s/t)\delu_\alpha\del^I L^Ju\|_{L_f^2(\Hcal_s)}
& \lesssim C_1\vep,
\\
\|\delu_a \del^I L^Ju\|_{L_f^2(\Hcal_s)} + \|\newperp \del^I L^Ju\|_{L_f^2(\Hcal_s)}
& \lesssim C_1\vep,
\\
\|(s/t)\del_\alpha\del^I L^Jv\|_{L_f^2(\Hcal_s)} + \|(s/t)\delu_\alpha\del^I L^Jv\|_{L_f^2(\Hcal_s)}
& \lesssim C_1\vep s^{k\delta},
\\
\|\newperp \del^I L^Jv\|_{L_f^2(\Hcal_s)} + \|\delu_a\del^I L^Jv\|_{L_f^2(\Hcal_s)}
& \lesssim C_1\vep s^{ k\delta},
\\
\|\del^IL^J v\|_{L_f^2(\Hcal_s)}
& \lesssim C_1\vep s^{ k\delta}.
\endaligned
\ee
In addition, they also imply, for all $|I|+|J|\leq N-5$ with $|J| = k$,
\bel{L2 basic 4ge1}
\|\del_\alpha\del^IL^J v\|_{L_f^2(\Hcal_s)} \lesssim C_1s^{k\delta}.
\ee


\subsection{Basic $L^2$ estimates of the second generation}

The following estimates are obtained by applying the above energy estimate combined with the commutator estimates presented in Proposition \ref{lem commutator esti I}. For all $|I|+|J|\leq N$ with $|J|=k$, we have the high-order bounds
\be
\label{L2 basic 1ge2}
\aligned
\|(s/t)\del^I L^J\del_\alpha u\|_{L_f^2(\Hcal_s)} + \|(s/t)\del^I L^J\delu_\alpha u\|_{L_f^2(\Hcal_s)}
& \lesssim C_1\vep s^{k\delta},
\\
\|\del^I L^J\delu_au\|_{L_f^2(\Hcal_s)} + \|\del^I L^J\newperp  u\|_{L_f^2(\Hcal_s)}
& \lesssim C_1\vep s^{k\delta},
\\
\|(s/t)\del^I L^J\del_\alpha v\|_{L_f^2(\Hcal_s)} + \|(s/t)\del^I L^J\delu_\alpha v\|_{L_f^2(\Hcal_s)}
&\lesssim C_1\vep s^{1/2 + k\delta},
\\
\|\del^I L^J\newperp v\|_{L_f^2(\Hcal_s)} + \|\del^I L^J\delu_av\|_{L_f^2(\Hcal_s)}
& \lesssim C_1\vep s^{1/2 + k\delta},
\\
\|\del^IL^J v\|_{L_f^2(\Hcal_s)}& \lesssim C_1\vep s^{1/2 + k\delta},
\endaligned
\ee
which, for $|I| + |J|\leq N-1$ with $|J| = k$, imply the low-order bounds (for instance, by expressing $t\delu_a = L_a$ in the first inequality): 
\be
\label{L2 basic 2ge2}
\aligned
\|t\delu_a\del^IL^J v\|_{L_f^2(\Hcal_s)}
& \lesssim C_1\vep s^{1/2 +(k+1)\delta},
\\
\|t\del^IL^J \delu_av\|_{L_f^2(\Hcal_s)}
& \lesssim C_1\vep s^{1/2 + (k+1)\delta},
\\
\|\del^IL^J\del_\alpha v\|_{L_f^2(\Hcal_s)} + \|\del^IL^J \delu_\alpha v\|_{L_f^2(\Hcal_s)}
& \lesssim C_1\vep s^{1/2 + k\delta},
\\
\|(s/t)\del^IL^J\del_\alpha\del_\beta v\|_{L_f^2(\Hcal_s)} +  \|(s/t)\del^IL^J\delu_\alpha\delu_\beta v\|_{L_f^2(\Hcal_s)}
&\lesssim C_1\vep s^{1/2 + k\delta},
\\
\|s\del^IL^J\delu_\alpha\delu_bv\|_{L_f^2(\Hcal_s)} + \|s\del^IL^J\delu_a \delu_\alpha v\|_{L_f^2(\Hcal_s)}
&\lesssim C_1\vep s^{1/2 + (k+1)\delta}.
\endaligned
\ee
Observe especially that, for derivation of the last term, we write 
$$
\del^IL^J\delu_a\delu_{\alpha}v = \del^IL^J\big(t^{-1}L_a\delu_{\alpha}v\big) = \sum_{I_1+I_2=I\atop J_1+J_2=J}\del^{I_1}L^{J_1}L_a\delu_{\alpha}v \del^{I_2}L^{J_2}(t^{-1})
$$
and then we proceed by homogeneity (by noting that $t^{-1}$ is homogeneous of degree $-1$ and its derivatives are homogeneous of degree $\leq -1$):
$$
\aligned
\|s\del^IL^J\delu_a\delu_{\alpha}v\|_{L_f^2(\Hcal_s)}
&\lesssim\sum_{|I_1|\leq |I|\atop |J_1|\leq|J|} \|st^{-1}\del^{I_1}L^{J_1}L_a\delu_{\alpha}v\|_{L_f^2(\Hcal_s)}
\lesssim \sum_{|I_1|\leq |I|\atop |J_1|\leq|J|} \|(s/t)\del^{I_1}L^{J_1}L_a\delu_{\alpha}v\|_{L_f^2(\Hcal_s)}
\\
&\lesssim C_1\vep s^{1/2+(|J_1|+1)\delta}\lesssim C_1\vep s^{1/2+(k+1)\delta}.
\endaligned
$$

We also have, for $|I|+|J|\leq N-2$,
\bel{L2 basic 2.5ge2}
\|\del^IL^J\del_\alpha\del_\beta v\|_{L_f^2(\Hcal_s)}\lesssim C_1\vep s^{1/2 + k\delta}.
\ee
For $|I|+|J|\leq N-4$ with $|J|=k$, we have
\be
\label{L2 basic 3ge2}
\aligned
\|(s/t)\del^I L^J\del_\alpha u\|_{L_f^2(\Hcal_s)} + \|(s/t)\del^I L^J\delu_\alpha u\|_{L_f^2(\Hcal_s)}
& \lesssim C_1\vep,
\\
\|\del^I L^J\delu_au\|_{L_f^2(\Hcal_s)} + \|\del^I L^J\newperp  u\|_{L_f^2(\Hcal_s)}
& \lesssim C_1\vep,
\\
\|(s/t)\del^I L^J\del_\alpha v\|_{L_f^2(\Hcal_s)} + \|(s/t)\del^I L^J\delu_\alpha v\|_{L_f^2(\Hcal_s)}
& \lesssim C_1\vep s^{k\delta},
\\
\|\del^I L^J\newperp v\|_{L_f^2(\Hcal_s)} +\|\del^I L^J\delu_av\|_{L_f^2(\Hcal_s)}
& \lesssim C_1\vep s^{ k\delta},
\\
\|\del^IL^J v\|_{L_f^2(\Hcal_s)}
& \lesssim C_1\vep s^{ k\delta}.
\endaligned
\ee
For $|I| + |J|\leq N-5$, $|J| = k$, we have
\be
\label{L2 basic 4ge2}
\aligned
\|t\delu_a\del^IL^J v\|_{L_f^2(\Hcal_s)} + \|t\del^IL^J \delu_av\|_{L_f^2(\Hcal_s)}
& \lesssim C_1\vep s^{(k+1)\delta},
\\
\|\del^IL^J\del_\alpha v\|_{L_f^2(\Hcal_s)} + \|\del^IL^J \delu_\alpha v\|_{L_f^2(\Hcal_s)}
& \lesssim C_1\vep s^{k\delta},
\\
\|(s/t)\del^IL^J\del_\alpha\del_\beta v\|_{L_f^2(\Hcal_s)} +  \|(s/t)\del^IL^J\delu_\alpha\delu_\beta v\|_{L_f^2(\Hcal_s)}
&\lesssim C_1\vep s^{ k\delta},
\\
\|s\del^IL^J\delu_\alpha\delu_b v\|_{L_f^2(\Hcal_s)} + \|s\del^IL^J\delu_a \delu_\alpha v\|_{L_f^2(\Hcal_s)}
&\lesssim C_1\vep s^{(k+1)\delta}.
\endaligned
\ee
For $|I|+|J|\leq N-6$, we have
\be
\aligned
\|\del^IL^J\del_\alpha\del_\beta v\|_{L_f^2(\Hcal_s)} + \|\del^IL^J\delu_\alpha\delu_\beta v\|_{L_f^2(\Hcal_s)}
& \lesssim C_1\vep s^{k\delta},
\\
\|t\del^IL^J\delu_a\delu_\beta v\|_{L_f^2(\Hcal_s)} + \|t\del^IL^J\delu_\beta\delu_av\|_{L_f^2(\Hcal_s)}
& \lesssim C_1\vep s^{(k+1)\delta}. 
\endaligned
\ee


\subsection{Basic sup-norm estimates of the first generation}
\label{subsec basic Linfini 1}

We combine the Sobolev inequality on hyperboloids \eqref{eq 1 sobolev} with our $L^2$ estimates. In view of the high-order $L^2$ bounds, for $|I| + |J|\leq N-2$ with $|J|=k$ we have 
\bel{decay basic 1ge1R} 
\aligned
\sup_{\Hcal_s}\big(t^{1/2}s|\del_\alpha\del^IL^J u|\big) + \sup_{\Hcal_s}\big(t^{1/2}s|\delu_\alpha\del^IL^J u|\big)
& \lesssim C_1\vep s^{(k+2)\delta},
\\ 
\sup_{\Hcal_s}\big(t^{3/2}|\delu_a\del^IL^J u|\big) + \sup_{\Hcal_s}\big(t^{3/2}|\newperp \del^IL^J u|\big) 
&\lesssim C_1\vep s^{(k+2)\delta},
\\ 
\sup_{\Hcal_s}\big(t^{1/2}s|\del_\alpha\del^IL^J v|\big) + \sup_{\Hcal_s}\big(t^{1/2}s|\delu_\alpha\del^IL^J v|\big)
& \lesssim C_1\vep s^{1/2+(k+2)\delta},
\\ 
\sup_{\Hcal_s}\big(t^{3/2}|\newperp \del^IL^J v|\big) + \sup_{\Hcal_s}\big(t^{3/2}|\delu_a\del^IL^J v|\big)
&\lesssim C_1\vep s^{1/2+(k+2)\delta},
\\ 
\sup_{\Hcal_s}\big(t^{3/2}|\del^IL^J v|\big)
&\lesssim C_1\vep s^{1/2+(k+2)\delta}.
\endaligned
\ee 
For $|I|+|J|\leq N-3$ with $|J|=k$, we have 
\bel{decay basic 2ge1}
\sup_{\Hcal_s}\big(t^{3/2}|\del_\alpha\del^IL^J v|\big)\lesssim C_1\vep s^{1/2+(k+2)\delta}.
\ee

From the low-order $L^2$ bounds,
for $|I|+|J|\leq N-6$ with $|J|=k$ we have  
\bel{decay basic 3ge1RR}
\aligned
\sup_{\Hcal_s}\big(t^{1/2}s|\del_\alpha\del^IL^J u|\big) + \sup_{\Hcal_s}\big(t^{1/2}s|\delu_\alpha\del^IL^J u|\big)
&\lesssim C_1\vep,
\\ 
\sup_{\Hcal_s}\big(t^{3/2}|\delu_a\del^IL^J u|\big) + \sup_{\Hcal_s}\big(t^{3/2}|\newperp \del^IL^J u|\big)
&\lesssim C_1\vep,
\\ 
\sup_{\Hcal_s}\big(t^{1/2}s|\del_\alpha\del^IL^J v|\big) + \sup_{\Hcal_s}\big(t^{1/2}s|\delu_\alpha\del^IL^J v|\big)
& \lesssim C_1\vep s^{(k+2)\delta},
\\ 
\sup_{\Hcal_s}\big(t^{3/2}|\newperp \del^IL^J v|\big) + \sup_{\Hcal_s}\big(t^{3/2}|\delu_a\del^IL^J v|\big)
&\lesssim C_1\vep s^{(k+2)\delta},
\\ 
\sup_{\Hcal_s}\big(t^{3/2}|\del^IL^J v|\big)
& \lesssim C_1\vep s^{(k+2)\delta}.
\endaligned
\ee 
For $|I|+|J|\leq N-7$ with $|J|=k$, we have 
\bel{decay basic 4ge1}
\sup_{\Hcal_s}\big(t^{3/2}|\del_\alpha\del^IL^J v|\big)\lesssim C_1 \vep s^{(k+2)\delta}.
\ee


\subsection{Basic sup-norm estimates of the second generation}
\label{subsec basic Linfini 2}

For $|I| + |J|\leq N-2$ with $|J|=k$, we have the high-order bounds 
\be
\label{decay basic 1ge2}
\aligned 
\sup_{\Hcal_s}\big(t^{1/2}s|\del^IL^J \del_\alpha u|\big) + \sup_{\Hcal_s}\big(t^{1/2}s|\del^IL^J \delu_\alpha u|\big)
& \lesssim C_1\vep s^{(k+2)\delta},
\\ 
\sup_{\Hcal_s}\big(t^{3/2}|\del^IL^J \delu_au|\big) + \sup_{\Hcal_s}\big(t^{3/2}|\del^IL^J \newperp  u|\big)
& \lesssim C_1\vep s^{(k+2)\delta},
\\ 
\sup_{\Hcal_s}\big(t^{1/2}s|\del^IL^J \del_\alpha v|\big) + \sup_{\Hcal_s}\big(t^{1/2}s|\del^IL^J \delu_\alpha v|\big)
& \lesssim C_1\vep s^{1/2+(k+2)\delta},
\\ 
\sup_{\Hcal_s}\big(t^{3/2}|\del^IL^J \delu_av|\big)
& \lesssim C_1\vep s^{1/2+(k+2)\delta},
\\ 
\sup_{\Hcal_s}\big(t^{3/2}|\del^IL^J v|\big)
& \lesssim C_1\vep s^{1/2+(k+2)\delta}
\endaligned
\ee 
and,  
for $|I|+|J|\leq N-3$ with $|J|=k$,
\be
\label{decay basic 2ge2}
\aligned 
\sup_{\Hcal_s}\big(t^{5/2}|\delu_a\del^IL^J v|\big)
& \lesssim C_1\vep s^{1/2+(k+3)\delta},
\\
\sup_{\Hcal_s}\big(t^{5/2}|\del^IL^J\delu_a v|\big)
& \lesssim C_1\vep s^{1/2+(k+3)\delta},
\\
\sup_{\Hcal_s}\big(t^{3/2}|\del^IL^J \del_\alpha v|\big) + \sup_{\Hcal_s}\big(t^{3/2}|\del^IL^J\delu_\alpha v|\big)
& \lesssim C_1\vep s^{1/2+(k+2)\delta},
\\
\sup_{\Hcal_s}\big(t^{1/2}s |\del^IL^J \del_\alpha\del_\beta v|\big) + \sup_{\Hcal_s}\big(t^{1/2}s |\del^IL^J \delu_\alpha\delu_\beta v|\big)
& \lesssim C_1 s^{1/2+(k+2)\delta},
\\
\sup_{\Hcal_s}\big(t^{3/2}s |\del^IL^J \delu_\alpha\delu_bv|\big) + \sup_{\Hcal_s}\big(t^{3/2}s |\del^IL^J \delu_a \delu_\beta v|\big)
& \lesssim C_1 s^{1/2+(k+3)\delta}.
\endaligned
\ee
For $|I|+|J|\leq N-6$ with $|J|=k$, we have
\be
\label{decay basic 3ge2}
\aligned
\sup_{\Hcal_s}\big(t^{1/2}s|\del^IL^J \del_\alpha u|\big) + \sup_{\Hcal_s}\big(t^{1/2}s|\del^IL^J \delu_\alpha u|\big)
& \lesssim C_1\vep,
\\
\sup_{\Hcal_s}\big(t^{3/2}|\del^IL^J \delu_au|\big) + \sup_{\Hcal_s}\big(t^{3/2}|\del^IL^J \newperp  u|\big)
& \lesssim C_1\vep,
\\
\sup_{\Hcal_s}\big(t^{1/2}s|\del^IL^J \del_\alpha v|\big) + \sup_{\Hcal_s}\big(t^{1/2}s|\del^IL^J \delu_\alpha v|\big)
& \lesssim C_1\vep s^{(k+2)\delta},
\\
\sup_{\Hcal_s}\big(t^{3/2}|\del^IL^J v|\big)
& \lesssim C_1\vep s^{(k+2)\delta}.
\endaligned
\ee
In addition, for $|I|+|J|\leq N-7$ with $|J|=k$, we have
\be
\label{decay basic 4ge2}
\aligned
\sup_{\Hcal_s}\big(t^{5/2}|\delu_a\del^IL^J v|\big)
& \lesssim C_1 \vep s^{(k+3)\delta},
\\
\sup_{\Hcal_s}\big(t^{5/2}|\del^IL^J \delu_av|\big)
& \lesssim C_1 \vep s^{(k+3)\delta},
\\
\sup_{\Hcal_s}\big(t^{3/2}|\del^IL^J \del_\alpha v|\big) + \sup_{\Hcal_s}\big(t^{3/2}|\del^IL^J \delu_\alpha v|\big)
& \lesssim C_1 \vep s^{(k+2)\delta},
\\
\sup_{\Hcal_s}\big(t^{1/2}s |\del^IL^J \del_\alpha\del_\beta v|\big) + \sup_{\Hcal_s}\big(t^{1/2}s |\del^IL^J \delu_\alpha\delu_\beta v|\big)
&\lesssim C_1 s^{(k+2)\delta},
\\
\sup_{\Hcal_s}\big(t^{3/2}s |\del^IL^J \delu_\alpha\delu_bv|\big) + \sup_{\Hcal_s}\big(t^{3/2}s |\del^IL^J \delu_a \delu_\beta v|\big)
& \lesssim C_1 s^{(k+3)\delta}.
\endaligned
\ee 

Moreover, $|I|+|J|\leq N-8$ with $|J|=k$, we have 
\be
\label{new-formules}
\aligned
\sup_{\Hcal_s}\big(t^{3/2} |\del^IL^J \del_\alpha\del_\beta v|\big) 
+ \sup_{\Hcal_s}\big(t^{3/2} |\del^IL^J \delu_\alpha\delu_\beta v|\big)
&\lesssim C_1 s^{(k+2)\delta},
\\ 
\sup_{\Hcal_s}\big(t^{3/2} | \del_\alpha\del_\beta \del^IL^J  v|\big) 
+ \sup_{\Hcal_s}\big(t^{3/2} |\delu_\alpha\delu_\beta \del^IL^J  v|\big)
&\lesssim C_1 s^{(k+2)\delta}. 
\endaligned
\ee 


\subsection{Estimates based on Hardy's inequality on hyperboloids}

We now substitute the basic $L^2$ estimates in Hardy's inequality \eqref{eq 1 Hardy ineq} and find  
\begin{subequations}\label{L2 basic 1ge3}
\bel{L2 basic 1ge3 a}
\|s^{-1}L^J u\|_{L_f^2(\Hcal_s)}\lesssim C_0\vep  + C_1\vep s^{k\delta}, \qquad |L|\leq N,
\ee
as well as the inequality (which will not be used in the following) 
\bel{L2 basic 1ge3 b}
\|s^{-1}L^J u\|_{L_f^2(\Hcal_s)}\lesssim (C_0+C_1)\vep + C_1\vep\ln s, \qquad |L|\leq N-4.
\ee
\end{subequations}


\subsection{Estimate based on integration along radial rays}

By the first estimate in \eqref{decay basic 3ge1RR} and since $t^{-1/2} s^{-1} \lesssim t^{-1} (t-r)^{-1/2}$ (in the domain of interest), we obtain 
$$
|\del_r\del^IL^Ju(t,x)|\lesssim C_1\vep t^{-1}(t-r)^{-1/2},\qquad |I|+|J|\leq N-6.
$$
Then we integrate this inequality in space along the rays $(t,\lambda x)|_{0\leq \lambda\leq t-1}$ for any 
$x\in \mathbb{S}^3$:
\bel{decay basic ray-integral}
|\del^IL^Ju(t,x)|\lesssim C_1\vep t^{-1}(t-r)^{1/2} \simeq C_1\vep t^{-3/2}s,
\qquad |I|+|J|\leq N-6.
\ee


\section{Refined sup-norm estimates}

\subsection{Overview of the analysis in this section}

We now proceed by using the structure of the nonlinear wave system under consideration, and relying now on sharp sup-norm estimates.
In the following sections we are going to establish the following estimates:
For $|I|\leq N-4$, we have 
\begin{subequations}\label{decay refined 0de}
\bel{decay refined 0de w}
\sup_{\Kcal_{[s_0,s_1]}} t|u| \lesssim C_1\vep, 
\ee
\bel{decay refine 0de KG}
\sup_{\Hcal_s}\big( (s/t)^{-3/2+4\delta}t^{3/2}|\newperp \del^Iv| \big) 
 + \sup_{\Hcal_s} \big( (s/t)^{-1/2+4\delta}t^{3/2}|\del^Iv|\big) 
\lesssim C_1\vep,
\ee
\end{subequations}
and, more generally, for $|I|+|J|\leq N-4$ with $|J|=k$,
\begin{subequations}\label{decay refined kde}
\bel{decay refined kde w}
\sup_{\Hcal_s} t|L^J u| \lesssim C_1\vep s^{k\delta},
\ee
\bel{decay refined kde KG}
\sup_{\Hcal_s} \big(  (s/t)^{-3/2+4\delta}t^{3/2}|\newperp \del^IL^J v| \big) 
 + \sup_{\Hcal_s}  \big( (s/t)^{-1/2+4\delta}t^{3/2}|\del^IL^J v| \big)
\lesssim C_1\vep s^{k\delta}.
\ee
\end{subequations}
The property \eqref{decay refined 0de} is essentially a special case of \eqref{decay refined kde}: we will establish it first and it will next serve in the proof of \eqref{decay refined kde}, done by induction on $k$.

The sup-norm estimate for the Klein-Gordon component \eqref{Linfty KG ineq a} and the sup-norm estimate for the wave equation \eqref{Linfini wave ineq} will now be used. We proceed with the following calculation:
\be
-\Box \big( \del^IL^J u\big)
 = P^{\alpha\beta}\del^IL^J\big( \del_\alpha v\del_\beta v\big) + R\del^IL^J(v^2),
\ee
\be
-\Box \big(\del^IL^J v \big)
+ H^{\alpha\beta}u\del_\alpha\del_\beta\del^IL^J v + c^2\del^IL^J v
 = [H^{\alpha\beta}u\del_\alpha\del_\beta,\del^IL^J]v.
\ee
We also recall by \eqref{Linfty KG ineq a}, the $R_i$ terms in this context (with $h^{\alpha\beta} = H^{\alpha\beta}u$) read as follows: 
$$
\aligned
R_1[\del^IL^Jv] &= \bigg(s^{3/2}\sum_a\delb_a\delb_a  + \frac{x^ax^b}{s^{1/2}}\delb_a\delb_b  + \frac{3}{4s^{1/2}} + \sum_a\frac{3x^a}{s^{1/2}}\delb_a\bigg)\del^IL^Jv,
\\
R_2[\del^IL^Jv] &=\hb^{00}\bigg(\frac{3\del^IL^Jv}{4s^{1/2}} + 3s^{1/2}\delb_0 \del^IL^Jv\bigg)
\\
& \quad - s^{3/2}\big(2\hb^{0b}\delb_0\delb_b\del^IL^Jv + \hb^{ab}\delb_a\delb_b\del^IL^Jv + h^{\alpha\beta}\del_\alpha\Psib^{\beta'}_\beta\,\delb_{\beta'}\del^IL^Jv\big),
\\
R_3[\del^IL^Jv] &= \hb^{00}\bigg(2x^as^{1/2}\delb_0\delb_a + \frac{2x^a}{s^{1/2}}\delb_a  + \frac{x^ax^b}{s^{1/2}}\delb_a\delb_b\bigg)\del^IL^Jv. 
\endaligned
$$
Hence, the following four terms must be controlled:
\be
\del^IL^J\big(\del_\alpha v\del_\beta v\big),\quad \del^IL^J(v^2),\quad R_i[\del^IL^Jv],
\quad [H^{\alpha\beta}u\del_\alpha\del_\beta,\del^IL^J]v.
\ee


\subsection{First improvement of the sup-norm of the wave component}

We now  present estimates which use only the basic sup-norm estimates already established in Sections \ref{subsec basic Linfini 1} and \ref{subsec basic Linfini 2}. We first estimate the terms $\del^IL^J\big(\del_\alpha v\del_\beta v\big)$ and $\del^IL^J(v^2)$.

\begin{lemma}
\label{lem refine1 Ws}
If the energy bounds \eqref{ineq energy assumption} hold, then for all $|I| + |J|\leq N-7$ with $|J|=k$, the following estimate holds in the region $\Kcal_{[2,s_1]}$:
\be
\big|\del^IL^J\big(\del_\alpha v\del_\beta v\big)\big| + \big|\del^IL^J\big(v^2\big)\big|\leq C(C_1\vep)^2 t^{-3}s^{(k+4)\delta}.
\ee
\end{lemma}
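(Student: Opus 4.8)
The plan is to expand the differentiated products by the Leibniz rule and then insert the basic sup-norm estimates of Sections~\ref{subsec basic Linfini 1}–\ref{subsec basic Linfini 2} into each factor, keeping careful track of which factor carries the top number of derivatives. For the quadratic term we write
$$
\del^IL^J\big(\del_\alpha v\del_\beta v\big)
= \sum_{I_1+I_2=I,\ J_1+J_2=J}\big(\del^{I_1}L^{J_1}\del_\alpha v\big)\,\big(\del^{I_2}L^{J_2}\del_\beta v\big),
$$
and similarly $\del^IL^J(v^2)=\sum\big(\del^{I_1}L^{J_1}v\big)\big(\del^{I_2}L^{J_2}v\big)$. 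Since $|I|+|J|\leq N-7$, in each summand at least one of the pairs, say $(I_1,J_1)$, satisfies $|I_1|+|J_1|\leq \lfloor (N-7)/2\rfloor \leq N-8$, and in any case both pairs satisfy $|I_i|+|J_i|\leq N-7$; thus I may apply the decay estimate \eqref{decay basic 4ge1} (valid for $|I|+|J|\leq N-7$) to the factor $\del^{I_1}L^{J_1}\del_\alpha v$, obtaining $|\del^{I_1}L^{J_1}\del_\alpha v|\lesssim C_1\vep\,t^{-3/2}s^{(k_1+2)\delta}$, and either \eqref{decay basic 4ge1} again or \eqref{decay basic 3ge1RR} (last line) to the remaining factor, obtaining $|\del^{I_2}L^{J_2}\del_\beta v|\lesssim C_1\vep\, t^{-3/2}s^{(k_2+2)\delta}$. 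Multiplying and using $k_1+k_2\leq k$ gives the bound $\lesssim (C_1\vep)^2 t^{-3}s^{(k+4)\delta}$ for the quadratic term.

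For the term $\del^IL^J(v^2)$ the same bookkeeping applies, now using the last line of \eqref{decay basic 3ge1RR}, namely $|\del^{I_i}L^{J_i}v|\lesssim C_1\vep\,t^{-3/2}s^{(k_i+2)\delta}$, valid since $|I_i|+|J_i|\leq N-7\leq N-6$; multiplying two such factors again yields $\lesssim (C_1\vep)^2 t^{-3}s^{(k+4)\delta}$. Summing over the finitely many Leibniz terms (the number of terms depends only on $N$, hence is absorbed in the implied constant) completes the estimate. One should note that the powers of $t$ combine as $t^{-3/2}\cdot t^{-3/2}=t^{-3}$ with no stray positive powers of $t$, and the powers of $s$ combine as claimed because $(k_1+2)+(k_2+2)=k_1+k_2+4\leq k+4$ and $s\geq 2$ so $s^{(k_1+k_2+4)\delta}\leq s^{(k+4)\delta}$.

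The only mild subtlety — and the step I would be most careful about — is verifying that the derivative counts line up with the \emph{exact} ranges in which the basic estimates were stated: the sup-norm bound on $\del_\alpha\del^IL^Jv$ is available only for $|I|+|J|\leq N-7$ (estimate \eqref{decay basic 4ge1}), while the bound on $\del^IL^Jv$ itself holds up to $N-6$ (last line of \eqref{decay basic 3ge1RR}); since the hypothesis here is $|I|+|J|\leq N-7$, each factor in every Leibniz term has index at most $N-7$, so both families of estimates are legitimately applicable. No commutator estimates are needed at this stage because we are differentiating a genuine product, not commuting $\del^IL^J$ past a differential operator; that is why this lemma is ``easy'' compared with the later bounds on $R_i[\del^IL^Jv]$ and the commutator term. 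Everything else is a routine multiplication of the decay factors.
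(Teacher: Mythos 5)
Your proposal is essentially identical to the paper's proof: Leibniz expansion of the products followed by substitution of the basic sup-norm decay estimates from Sections~\ref{subsec basic Linfini 1}--\ref{subsec basic Linfini 2} into each factor, then multiplying the $t^{-3/2}$ and $s^{(k_i+2)\delta}$ rates. The only (cosmetic) difference is the citation: the Leibniz factors are $\del^{I_i}L^{J_i}\del_\alpha v$ with the derivative \emph{inside}, so the paper invokes the third line of \eqref{decay basic 4ge2} rather than \eqref{decay basic 4ge1}, but both yield the same bound $\lesssim C_1\vep\,t^{-3/2}s^{(k_i+2)\delta}$ on the same range $|I_i|+|J_i|\le N-7$, so nothing in your argument changes.
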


\begin{proof} We have 
\be
\label{new-formule2}
\del^IL^J\big(\del_\alpha v\del_\beta v\big) = \sum_{I_1+I_2 = I\atop J_1+J_2 = J}\del^{I_1}L^{J_1}\del_\alpha v \del^{I_2}L^{J_2}v, 
\ee
where, in the right-hand side, each term satisfies $|I_1|+|I_2| = |I|$ and $|J_1|+|J_2|=|J|$. Then we obtain 
$$
\big|\del^{I_1}L^{J_1}\del_\alpha v \del^{I_2}L^{J_2}v\big|\leq C(C_1\vep)^2 s^{(|J_1|+2)\delta}s^{(|J_2|+2)\delta}t^{-3}
= C(C_1\vep)^2t^{-3}s^{(k+4)\delta},
$$
where we have used the third inequality in \eqref{decay basic 4ge2} for each term. The estimate of $\del^IL^J\big(v^2\big)$ is derived similarly.
\end{proof}

We improve the bound on $u$, as follows.

\begin{proposition}[First improvement of the sup-norm of the wave component]
\label{lem refine1 W} 
For $|I|+|J|\leq N-7$ one has 
\bel{ineq lem refine1 W}
|\del^IL^Ju(t,x)|\lesssim C_0\vep t^{-3/2} + (C_1\vep)^2(s/t)^{(k+4)\delta}t^{-1}s^{(k+4)\delta}.
\ee
\end{proposition}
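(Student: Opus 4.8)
The plan is to write $\del^I L^J u$ as the sum of the free evolution of its (compactly supported) Cauchy data and a Duhamel term driven by the differentiated nonlinearity, and to control the latter by the sup-norm estimate of Proposition~\ref{Linfini wave}. Concretely, decompose $\del^I L^J u = u^{(0)}_{IJ} + u^{(1)}_{IJ}$ on the slice $\{t=2\}$, where $u^{(0)}_{IJ}$ solves the free wave equation with the Cauchy data of $\del^I L^J u$ (which is supported in the unit ball and of size $\lesssim C_0\vep$, by \eqref{eq main initial} and the equation), while $u^{(1)}_{IJ}$ solves
$$
\aligned
-\Box u^{(1)}_{IJ} &= P^{\alpha\beta}\del^I L^J\!\big(\del_\alpha v\,\del_\beta v\big) + R\,\del^I L^J(v^2) =: f_{IJ},
\\
u^{(1)}_{IJ}\big|_{t=2} &= \del_t u^{(1)}_{IJ}\big|_{t=2} = 0 .
\endaligned
$$
By finite speed of propagation both pieces are spatially compactly supported in $\Kcal$, and the standard decay estimate for the free wave equation with data localized in the unit ball (see \cite{PLF-MY-book}) gives $|u^{(0)}_{IJ}(t,x)| \lesssim C_0\vep\, t^{-3/2}$, which is the first term of \eqref{ineq lem refine1 W}. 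It remains to estimate $u^{(1)}_{IJ}$.

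For that, I would first feed Lemma~\ref{lem refine1 Ws}, which applies here since $|I|+|J|\le N-7$: under the bootstrap bounds \eqref{ineq energy assumption} one has $|f_{IJ}| \lesssim (C_1\vep)^2\, t^{-3}\, s^{(k+4)\delta}$ throughout $\Kcal_{[2,s_1]}$. The next step is to recast this into the hypothesis $|f| \le C_f\, t^{-2-\nu}(t-r)^{-1+\mu}$ of Proposition~\ref{Linfini wave}. Since $s\le t$ and $t-r\le t$ in $\Kcal$, one has $t^{-3}s^{(k+4)\delta} \le t^{-5/2+(k+4)\delta}(t-r)^{-1/2}$, so the hypothesis holds with
$$
C_f = C\,(C_1\vep)^2, \qquad \mu = \tfrac12, \qquad \nu = \tfrac12 - (k+4)\delta .
$$
Because $\tfrac1{10N}<\delta<\tfrac1{5N}$ and $k+4\le N$, we have $0 < (k+4)\delta < \tfrac15$, hence $0<\mu\le\tfrac12$ and $\tfrac{3}{10}<\nu<\tfrac12$, i.e.\ the parameters lie in the admissible window. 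Applying Proposition~\ref{Linfini wave} (the case $0<\nu\le 1/2$) to the spatially compactly supported solution $u^{(1)}_{IJ}$ then yields
$$
|u^{(1)}_{IJ}(t,x)| \lesssim \frac{C_f}{\nu\mu}\,(t-r)^{\mu-\nu}\,t^{-1} \lesssim (C_1\vep)^2\,(t-r)^{(k+4)\delta}\,t^{-1},
$$
with an implied constant depending only on $N$ (through the lower bound $\nu>\tfrac{3}{10}$).

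To finish, I would convert $(t-r)^{(k+4)\delta}t^{-1}$ into the form stated in \eqref{ineq lem refine1 W}. In $\Kcal$ one has $s^2 = (t-r)(t+r) \ge (t-r)\,t$, hence $(t-r)^{(k+4)\delta} \le s^{2(k+4)\delta}\,t^{-(k+4)\delta} = (s/t)^{(k+4)\delta}\,s^{(k+4)\delta}$, so that $(t-r)^{(k+4)\delta}t^{-1} \le (s/t)^{(k+4)\delta}\,t^{-1}\,s^{(k+4)\delta}$. Adding this to the bound for $u^{(0)}_{IJ}$ gives \eqref{ineq lem refine1 W}.

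The step I expect to require the most care is the recasting of the bound on $f_{IJ}$ so that it lands inside the narrow admissible range $0<\mu\le 1/2$, $0<|\nu|\le 1/2$ of Proposition~\ref{Linfini wave}: the crude majorizations $s\le t$ and $t-r\le t$ have to be arranged so that the resulting decay $(t-r)^{\mu-\nu}t^{-1}=(t-r)^{(k+4)\delta}t^{-1}$ is no worse than what \eqref{ineq lem refine1 W} asks for, which essentially forces $\mu=\tfrac12$ and the above value of $\nu$; the residual loss $(k+4)\delta$ in the exponent is harmless precisely because $\delta$ is tied to $1/N$. The free-data piece $u^{(0)}_{IJ}$ is routine and uses only the localization of the data, and since the equation for $u$ contains no term of the form $Ru^2$, there is no obstruction of the type identified by John \cite{John}.
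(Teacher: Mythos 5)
Your proof follows essentially the same route as the paper's: split $\del^I L^J u$ into a free piece with the Cauchy data and a Duhamel piece with zero data, cite the explicit-formula decay for the free piece, feed the pointwise bound from Lemma~\ref{lem refine1 Ws} on the source term into Proposition~\ref{Linfini wave}, and convert the resulting $(t-r)^{(k+4)\delta}t^{-1}$ into the claimed $(s/t)^{(k+4)\delta}t^{-1}s^{(k+4)\delta}$ using $s^2\geq (t-r)t$ in $\Kcal$. The one place you deviate is a genuine improvement: the paper's displayed factorization $t^{-3}s^{(k+4)\delta}\lesssim t^{-2-(1-(2+k/2)\delta)}(t-r)^{-1+(1+(2+k/2)\delta)}$ corresponds to $\mu = 1+(2+k/2)\delta>1$ and $\nu = 1-(2+k/2)\delta>1/2$, which lie outside the admissible window $0<\mu\le 1/2$, $0<|\nu|\le 1/2$ of Proposition~\ref{Linfini wave} as stated. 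Your choice $\mu=1/2$, $\nu = 1/2 - (k+4)\delta$ lands squarely inside that window (and still yields exactly $\mu-\nu=(k+4)\delta$, which is what the conclusion needs), so your write-up actually repairs a small slip in the paper's parameter bookkeeping while keeping the argument unchanged in substance. Everything else (the compact support used to localize the solution in $\Kcal$, the appeal to Lemma~\ref{lem refine1 Ws}, the final $s^2\geq (t-r)t$ conversion) matches the paper's proof.
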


\begin{proof}
The proof is a direct application of the sup-norm estimate for the wave equation \eqref{Linfini wave ineq}. First of all, $\del^IL^Ju$ solves the Cauchy problem
$$
\aligned
&\Box \del^I L^J u = \del^I L^J\big(P^{\alpha\beta}\del_\alpha v\del_\beta v\big) + \del^I L^J\big(Rv^2\big),
\\
&\del^IL^J u(2,x) = U_0(I,J,x), \quad \del_t \del^IL^J u(2,x) = U_1(I,J,x),
\endaligned
$$
where $U_0(I,J,x)$ and $U_1(I,J,x)$ are restrictions of $\del^IL^J u$ and $\del^IL^J u$ on the initial hyperplane $\{t=2\}$. We remark that they are  linear combinations of $\del_x^{I'} u$ and $\del_t\del_x^{I'}u$ with $|I'|\leq |I|+|J|$. Hence, $u$ is decomposed as follows:
$$
u(t,x) = w_1(t,x) + w_2(t,x)
$$
with
$$
\aligned
&\Box L^Jw_2 = L^J\big( P^{\alpha\beta}\del_\alpha v\del_\beta v\big) + L^J\big(Rv^2\big),
\\
& w_2(2,x) = \del_tw_2(2,x) = 0,
\endaligned
$$
while
$$
\aligned
&\Box w_1 = 0,
\\
& w_1(2,x) = U_1(I,J,x),\quad \del_t w_1(2,x) = U_2(I,J,x).
\endaligned
$$

The sup-norm bound for $w_1$ comes directly from the explicit expression of the solutions (cf., for instance, \cite{Sogge}) while for $w_2$ we apply \eqref{Linfini wave ineq}. Observe that for the terms in the right-hand side:
\bel{eq prq lem refine1 W}
\aligned
\big|\del^I L^J\big( P^{\alpha\beta}\del_\alpha v\del_\beta v\big)\big| + \big|\del^I L^J\big(Rv^2\big)\big|
\lesssim & (C_1\vep)^2t^{-3}s^{(k+4)\delta}
\\
\lesssim & (C_1\vep)^2t^{-2-(1-(2+k/2)\delta)}(t-r)^{-1+(1+(2+k/2)\delta)}
\endaligned
\ee
in $\Kcal_{[2,s]}\subset \Kcal_{[2,s_1]}$. Recall that this estimate also holds in $\{(t,x)|r<t-1,t^2-r^2\leq s_1^2,t\geq 2\}$. Then by \eqref{Linfini wave ineq}, the desired result is proven.
\end{proof}

We next estimate the terms $R_i[\del^IL^Jv]$.

\begin{lemma}
\label{lem refine1 R}
For $|I|+|J|\leq N-4$ with $|J| = k$, the following estimates hold in $\Kcal_{[2,s_1]}$:
\begin{subequations}
\bel{ineq refine1 R a}
\big|R_1[\del^IL^Jv]\big|\lesssim C_1\vep (s/t)^{3/2}s^{-3/2+(k+4)\delta},
\ee
\bel{ineq refine1 R b}
\big|R_2[\del^IL^J v]\big| \lesssim C_1\vep |tu|(s/t)^{3/2}s^{-3/2+(k+3)\delta} + (C_1\vep)^2(s/t)^{3/2}s^{-3/2+(k+3)\delta},
\ee
\be
\big|R_3[\del^IL^J v]\big|\lesssim (C_1\vep)^2(s/t)s^{-2+(k+4)\delta}
+ C_1\vep|tu|(s/t)^{3/2}s^{-3/2+(k+3)\delta}.
\ee
\end{subequations}
\end{lemma}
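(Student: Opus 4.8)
The plan is a term-by-term estimate of the (finitely many) summands making up $R_1[\del^IL^Jv]$, $R_2[\del^IL^Jv]$ and $R_3[\del^IL^Jv]$. For each summand I would proceed in three steps: (i) bound the purely geometric coefficient by powers of $s$ and of $t/s$, using $|x^a|\leq t$ (so that $|x^a/s^{1/2}|\lesssim (t/s)s^{1/2}$ and $|x^ax^b/s^{1/2}|\lesssim (t/s)^2 s^{1/2}$); (ii) when a factor $h^{\alpha\beta}=H^{\alpha\beta}u$ is present, pass to its hyperboloidal-frame components via the transition matrices of Section~\ref{sec the hyper}, which gives $|\hb^{00}|\lesssim (t/s)^2|u|$, $|\hb^{0b}|\lesssim (t/s)|u|$, $|\hb^{ab}|\lesssim |u|$, together with the analogous homogeneity bound for the connection coefficients $\del_\alpha\Psib^{\beta'}_\beta$, and keep the resulting power of $u$ as a symbolic factor $|tu|$ (this is what produces the $C_1\vep|tu|$ terms; the $(C_1\vep)^2$ terms arise where Proposition~\ref{lem refine1 W} is invoked to replace $|u|$ by an explicit bound, and near the light cone one also uses the ray-integrated bound \eqref{decay basic ray-integral} on $u$); (iii) estimate the remaining derivatives of $v$ by the basic sup-norm estimates of Sections~\ref{subsec basic Linfini 1}--\ref{subsec basic Linfini 2}, after commuting the operators $\delb_0,\delb_a$ through $\del^IL^J$ by Proposition~\ref{lem commutator esti I} (the commutators only generate terms of the same or of a better form).

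For \eqref{ineq refine1 R a} there is no metric factor, so only (i) and (iii) are needed. The crucial point is that each good derivative $\delb_a=\delu_a=t^{-1}L_a$ gains a power $t^{-1}$ (at the cost of an $s^\delta$-loss from the extra boost), so that the Sobolev inequality on hyperboloids yields $|\del^IL^Jv|\lesssim C_1\vep\,t^{-3/2}s^{1/2+(k+2)\delta}$, $|\delb_a\del^IL^Jv|\lesssim C_1\vep\,t^{-5/2}s^{1/2+(k+3)\delta}$ and $|\delb_a\delb_b\del^IL^Jv|\lesssim C_1\vep\,t^{-7/2}s^{1/2+(k+4)\delta}$; multiplying by $s^{3/2}$, $x^ax^b/s^{1/2}$, $x^a/s^{1/2}$ and $s^{-1/2}$ respectively and using $s\leq t$ collapses every summand to a constant times $C_1\vep\,(s/t)^{3/2}s^{-3/2+(k+4)\delta}$. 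For \eqref{ineq refine1 R b} and the bound on $R_3$ one separates the three kinds of metric coefficients: the dangerous component $\hb^{00}$ multiplies, in $R_2$, only the near-good quantities $s^{-1/2}\del^IL^Jv$ and $s^{1/2}\delb_0\del^IL^Jv$ (with $\delb_0=(s/t)\del_t$, whose $L^2$-norm is controlled directly by the hyperboloidal energy), and in $R_3$ it multiplies $\delb_a\del^IL^Jv$, $x^a\delb_0\delb_a\del^IL^Jv$ and $x^ax^b\delb_a\delb_b\del^IL^Jv$, where the extra $x$-weights are absorbed by the good derivatives; the good components $\hb^{0b}$, $\hb^{ab}$ and the connection term are multiplied by $s^{3/2}$ times second-order derivatives of $v$ carrying at least one good derivative and are handled exactly as the corresponding terms of $R_1$, each leaving behind a factor $|u|$ recorded as $|tu|/t$. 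Collecting the worst contribution in each case yields the stated bounds.

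The main obstacle is the bookkeeping itself: for every summand one must select the \emph{sharpest} applicable basic estimate — distinguishing the good derivatives $\delb_a$ (which gain powers of $t/s$ through $\delu_a=t^{-1}L_a$) from $\delb_0$ and $\del_\alpha$, and high-order from low-order bounds — and then verify that the accumulated powers of $t/s$ and of $s$ are precisely those claimed. Particular care is required near the light cone, where $t/s$ is large (bounded only by $t\lesssim s^2$ inside $\Kcal$), and where the factor $|tu|$ is best controlled via \eqref{decay basic ray-integral} rather than by the interior bound. No idea beyond the basic decay estimates and the commutator algebra of Section~\ref{sec:33} is needed; the proof is a lengthy but routine case analysis.
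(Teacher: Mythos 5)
Your proposal follows essentially the same route as the paper: a term-by-term estimate in the hyperboloidal frame, bounding the geometric coefficients by powers of $s$ and $t/s$, controlling the metric components $\hb^{00},\hb^{0b},\hb^{ab}$ via the homogeneity of the transition matrices, and closing with the basic sup-norm estimates from Sections~\ref{subsec basic Linfini 1}--\ref{subsec basic Linfini 2} after commuting $\delb_a=t^{-1}L_a$ past $\del^IL^J$. Your explicit pointwise bounds for $\del^IL^Jv$, $\delb_a\del^IL^Jv$ and $\delb_a\delb_b\del^IL^Jv$ (with the $s^\delta$-losses from the extra boosts) and the resulting estimate for $R_1$ are exactly what the paper derives; for $R_2,R_3$ the separation into the $\hb^{00}$-terms (kept as $|tu|$ factors) versus the good components and the connection term likewise matches.

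One small inaccuracy in attribution: in the paper's proof of this lemma the conversion of $|u|$ into an explicit small quantity for the $(C_1\vep)^2$ contributions uses only the ray-integrated estimate~\eqref{decay basic ray-integral} (which gives $|u|\lesssim C_1\vep\, t^{-3/2}s$, hence $|\hb^{00}|\lesssim C_1\vep\, t^{1/2}s^{-1}$, $|\hb^{0b}|\lesssim C_1\vep\, t^{-1/2}$, $|\hb^{ab}|\lesssim C_1\vep\, t^{-3/2}s$ as in~\eqref{eq pr2 lem refine1 R}); Proposition~\ref{lem refine1 W} is not invoked here — it is used only later, in Lemma~\ref{lem refine2 R}, to remove the leftover $|tu|$ factors. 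This does not affect the viability of your argument, since either bound suffices for the stated rates, but the paper's choice keeps the logical ordering clean.
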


\begin{proof}
The proof is a substitution of the basic sup-norm estimates into the corresponding expression. We begin with $R_1$ and focus first on $\delb_a\delb_b\del^IL^Jv$: 
\bel{eq pr1 lem refine1 R}
\aligned
\delb_a\delb_b \del^IL^Jv
= \, & t^{-1}L_a\big(t^{-1}L_b\del^IL^J v\big)
= t^{-1}L_a\big(t^{-1}\del^IL_bL^Jv + t^{-1}[L_b,\del^I]L^Jv\big)
\\
= \, &t^{-1}L_a\big(t^{-1}\big)\del^IL_bL^Jv + t^{-2}L_a\del^IL_bL^Jv + t^{-1}L_a\big(t^{-1}\big)[L_b,\del^I]L^Jv
\\
& + t^{-2}L_a[L_b,\del^I]L^Jv
\\
= \, &t^{-1}L_a\big(t^{-1}\big)\del^IL_bL^Jv + t^{-2}\del^IL_aL_bL^Jv + t^{-2}[L_a,\del^I]L_bL^Jv
\\
&+t^{-1}L_a\big(t^{-1}\big)[L_b,\del^I]L^Jv + t^{-2}L_a[L_b,\del^I]L^Jv.
\endaligned
\ee
For the last term, we apply \eqref{pre lem commutator pr1} as follows:
$$
\aligned
t^{-2}L_a[L_b,\del^I]L^Jv
=& - t^{-2}\sum_{|I'|\leq |I|}\lambda_{bI'}^{I}L_a\del^{I'}L^Jv
\\
=& - t^{-2}\sum_{|I'|\leq |I|}\lambda_{bI'}^{I}\del^{I'}L_a L^Jv - t^{-2}\sum_{|I'|\leq |I|}\lambda_{bI'}^{I}[L_a,\del^{I'}]L^Jv
\\
=& - t^{-2}\sum_{|I'|\leq |I|}\lambda_{bI'}^{I}\del^{I'}L_a L^Jv
+ t^{-2}\sum_{|I'|\leq |I|}\lambda_{bI'}^{I}\sum_{|I''|\leq|I'|}\lambda_{aI''}^{I'}\del^{I''}L^Jv, 
\endaligned
$$
and the term $[L_a,\del^I]L_bL^J v$ is bounded in the same manner. Then we conclude that
$$
\big|t^{-2}L_a[L_b,\del^I]L^Jv\big|\leq Ct^{-2}\sum_{|I'|\leq|I|\atop |J'|\leq |J|+1}\big|\del^{I'}L^{J'}v\big|.
$$
In view of the inequality
$\big|L_a\big(t^{-1}\big)\big|\leq Ct^{-1}$ (in $\Kcal$),
  the terms in the right-hand side of \eqref{eq pr1 lem refine1 R} are bounded by
$
Ct^{-2}\sum_{|I'|\leq|I|\atop |J'|\leq |J|+2}\big|\del^{I'}L^{J'}v\big|.
$
Then, by the last equation in \eqref{decay basic 1ge1R}, 
we  have 
$$
\big|s^{3/2}\delb_a\delb_b \del^IL^Jv\big|\lesssim C_1\vep (s/t)^{7/2}s^{-3/2+(k+4)\delta}
$$
and, similarly,
$$
\aligned
\big|x^as^{-1/2}\delb_a  \del^IL^Jv \big|
&\lesssim C_1\vep (s/t)^{3/2}s^{-3/2+(k+3)\delta},
\\
\big|x^ax^b s^{-1/2}\delb_a\delb_b  \del^IL^Jv\big|
&\lesssim C_1\vep (s/t)^{3/2}s^{-3/2+(k+4)\delta},
\\
\big|s^{-1/2}\del^IL^Jv\big|
& \lesssim C_1 \vep (s/t)^{3/2}s^{-3/2 +(k+2)\delta}.
\endaligned
$$
So we conclude that
$$
\big|R_1[\del^IL^Jv]\big|\lesssim C_1\vep t^{-3/2}s^{(k+4)\delta} \lesssim C_1\vep (s/t)^{3/2}s^{-3/2+(k+4)\delta}.
$$


For the derivation in the paragraph above, let us provide some more details by observing that, for the first term, 
$$
|\delb_a\delb_b \del^IL^J v| = |t^{-1}L_a\big(t^{-1}L_b\del^IL^J v|\lesssim t^{-2}|L_aL_b\del^IL^J v| + t^{-1}|L_a(t^{-1})|\cdot|L_b\del^IL^J v|. 
$$
Here, we remark that
$$
s^{3/2}t^{-2}|L_aL_b\del^IL^J v|\lesssim t^{-2}C_1\vep s^{3/2}t^{-2}s^{1/2+(|I|+2+2)\delta} t^{-3/2}
\lesssim (s/t)^{7/2}s^{-3/2+ (k+4)\delta}.
$$
In the first inequality, we have used the fact that
$$
|L_aL_b\del^IL^J v|\lesssim |\del^IL_aL_bL^Jv| + |[L_aL_b,\del^I]L^Jv|. 
$$
For the first term in the right-hand-side of the above inequality, we get the upper bound $CC_1\vep t^{-3/2}s^{1/2+(|J|+2+2)\delta}t^{-3/2}$ (for $|I|+|J|+2\leq N-2 $) and for the second we recall the estimate on commutator (using \eqref{pre lem commutator pr1} twice), and see that
$$
|[L_aL_b,\del^I]L^Jv|\lesssim \sum_{|I'|\leq|I|\atop|J'|\leq|J+1|}\del^{I'}|L^{J'}\del^{I'}L^{J'}v|\lesssim C_1\vep t^{-3/2}s^{1/2+(|J|+1+2)\delta}.
$$

The estimates of $R_2$ and $R_3$ are quite similar. We just need to observe that, by \eqref{decay basic ray-integral}
and by recalling that $|\Hb^{00}|\leq C(t/s)^2$, $|\Hb^{a0}|\leq C(t/s)$, and $|\Hb^{ab}|\leq C$, we obtain
\bel{eq pr2 lem refine1 R}
|\hb^{00}| = |\Hb^{00} u|\lesssim C_1\vep t^{1/2}s^{-1},
\qquad |\hb^{a0}|\lesssim C_1\vep t^{-1/2}, 
\qquad |\hb^{ab}u|\lesssim C_1\vep t^{-3/2}s
\ee
and
$$
\aligned
\big|\delb_0\delb_a \del^IL^J v\big| =& (s/t)\big|\del_t\delb_a \del^IL^J v\big|
\\
\leq &(s/t)t^{-2}\big|L_a \del^IL^J v\big| + t^{-1}(s/t)\big|\del_t L_a \del^IL^J v\big|.
\endaligned
$$
As was done in \eqref{eq pr1 lem refine1 R} and by applying \eqref{pre lem commutator pr1} and the fifth equation in
 \eqref{decay basic 1ge2} 
we find
\bel{eq pr3 lem refine1 R}
\big|\delb_0\delb_a \del^IL^J v\big|\lesssim C_1\vep (s/t)^{7/2} s^{-2+(k+3)\delta}.
\ee
Equipped with \eqref{eq pr2 lem refine1 R} and \eqref{eq pr3 lem refine1 R}, we see that in $R_2[\del^IL^J v]$,
$$
\aligned
\big|s^{-1/2}\hb^{00}\del^IL^Jv\big|
& \lesssim (C_1\vep)^2 (s/t)s^{-2 + (k+2)\delta}\lesssim (C_1\vep)^2(s/t)^{3/2}s^{-3/2+(k+2)\delta},
\\
\big|s^{1/2}\hb^{00}\delb_0\del^IL^Jv\big|
&\lesssim C_1\vep |tu|(s/t)^{3/2}s^{-3/2+(k+2)\delta},
\\
s^{3/2}\big|\hb^{0b}\delb_0\delb_b\del^IL^J v\big|
&\lesssim C_1\vep |tu|(s/t)^{5/2}s^{-3/2+(k+3)\delta},
\\
s^{3/2}\big|\hb^{ab}\delb_a\delb_b\del^IL^J v\big|
&\lesssim (C_1\vep)^2 (s/t)^5s^{-2+(k+4)\delta},
\\
s^{3/2}\big|h^{\alpha\beta}\del_\alpha\Psib^{0}_\beta\,\delb_{0}\del^IL^Jv\big|
&\lesssim C_1\vep|tu|(s/t)^{3/2}s^{-3/2+(k+2)\delta},
\\
s^{3/2}\big|h^{\alpha\beta}\del_\alpha\Psib^{b}_\beta\,\delb_{b}\del^IL^Jv\big|
& = 0,  
\endaligned
$$
while, in the expression $R_3[\del^IL^J]v$,
$$
\aligned
\big|\hb^{00}x^as^{1/2}\delb_0\delb_a\del^IL^J v\big|
&\lesssim C_1\vep |tu|(s/t)^{3/2}s^{-3/2+(k+3)\delta},
\\
\left|\hb^{00}x^as^{-1/2}\delb_a\del^IL^Jv\right|&\lesssim (C_1\vep)^2(s/t)^{3/2}s^{-3/2+(k+3)\delta},
\\
s^{-1/2}\big|\hb^{00}x^ax^b\delb_a\delb_b \del^IL^J v\big|
&\lesssim (C_1\vep)^2(s/t)s^{-2+(k+4)\delta}.
\endaligned
$$
\end{proof}


\subsection{Second improvement on the wave component and first improvement on the Klein-Gordon component}

We now establish \eqref{decay refined 0de w}-\eqref{decay refine 0de KG} and, for latter use, we first derive the following improved estimates on the terms $R_i$.

\begin{lemma}
\label{lem refine2 R} 
For $|I|+|J|\leq N-4, |J| = k$, the following estimates hold in $\Kcal_{[2,s_1]}$:
\bel{ineq refine2 R}
\aligned
\sum_{i=1}^3\big|R_i[\del^IL^Jv]\big|
\lesssim \, & C_1\vep (s/t)^{3/2}s^{-3/2 + (k+7)\delta}.
\endaligned
\ee
\end{lemma}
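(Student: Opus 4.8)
The plan is to upgrade the bounds of Lemma~\ref{lem refine1 R} by feeding in the first pointwise improvement of the (undifferentiated) wave component that is already available from Proposition~\ref{lem refine1 W}. Indeed, the three estimates of Lemma~\ref{lem refine1 R} already control $R_1[\del^IL^Jv]$ by $C_1\vep(s/t)^{3/2}s^{-3/2+(k+4)\delta}$, which is $\leq C_1\vep(s/t)^{3/2}s^{-3/2+(k+7)\delta}$ since $s\geq 2$; and they control $R_2[\del^IL^Jv]$ and $R_3[\del^IL^Jv]$ by a term of the shape $C_1\vep\,|tu|\,(s/t)^{3/2}s^{-3/2+(k+3)\delta}$ plus terms that are either already of the desired shape or (for one piece of $R_3$) are handled by a short geometric argument. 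Thus everything reduces to a good sup-norm bound on $|tu|$.

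First I would apply Proposition~\ref{lem refine1 W} with $I=J=0$ (legitimate since $N\geq 8>7$), which yields, throughout $\Kcal_{[2,s_1]}$,
\[
|u(t,x)|\lesssim C_0\vep\, t^{-3/2}+(C_1\vep)^2(s/t)^{4\delta}\,t^{-1}s^{4\delta}.
\]
Multiplying by $t$ and using $t\geq 2$, $s\leq t$ (so $(s/t)^{4\delta}\leq 1$), $C_0<C_1$ and $C_1\vep<1$, I obtain the uniform bound $|tu|\lesssim C_1\vep\,s^{4\delta}$ on $\Kcal_{[2,s_1]}$.

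Substituting $|tu|\lesssim C_1\vep\,s^{4\delta}$ into Lemma~\ref{lem refine1 R}, each $|tu|$–term occurring in the bounds for $R_2$ and $R_3$ becomes $\lesssim(C_1\vep)^2(s/t)^{3/2}s^{-3/2+(k+3)\delta+4\delta}=(C_1\vep)^2(s/t)^{3/2}s^{-3/2+(k+7)\delta}\lesssim C_1\vep(s/t)^{3/2}s^{-3/2+(k+7)\delta}$, and the leftover $(C_1\vep)^2(s/t)^{3/2}s^{-3/2+(k+3)\delta}$ contribution to $R_2$ is even smaller. The only term not yet of this form is the piece $(C_1\vep)^2(s/t)\,s^{-2+(k+4)\delta}$ in the bound for $R_3$: here I would invoke the elementary fact that in $\Kcal$ one has $t-r>1$, whence $t+r<(t-r)(t+r)=s^2$, hence $t<s^2$ and $(t/s)^{1/2}<s^{1/2}$, so that
\[
(s/t)\,s^{-2}=(s/t)^{3/2}(t/s)^{1/2}s^{-2}\leq(s/t)^{3/2}s^{1/2}s^{-2}=(s/t)^{3/2}s^{-3/2};
\]
this makes the piece $\leq(C_1\vep)^2(s/t)^{3/2}s^{-3/2+(k+4)\delta}\lesssim C_1\vep(s/t)^{3/2}s^{-3/2+(k+7)\delta}$. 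Adding the three resulting bounds gives \eqref{ineq refine2 R}.

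I do not expect any genuine analytic obstacle here: the real content is already in Proposition~\ref{lem refine1 W}, and Lemma~\ref{lem refine2 R} is essentially a bookkeeping corollary that repackages the $R_i$ in a form convenient for the Klein--Gordon sup-norm estimate of Proposition~\ref{Linfini KG}. The one point deserving care is the exponent arithmetic: one must check that the factor $s^{4\delta}$ lost when replacing $|tu|$ by $C_1\vep\,s^{4\delta}$ is exactly compensated by the margin between the $(k+3)\delta$ appearing in Lemma~\ref{lem refine1 R} and the $(k+7)\delta$ claimed in the statement, and that absorbing the leftover $R_3$ term costs only a harmless factor $s^{-3\delta}<1$.
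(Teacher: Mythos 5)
Your argument is correct and follows essentially the same route as the paper: apply Proposition~\ref{lem refine1 W} with $I=J=0$ (so $k=0$) to get $|tu|\lesssim C_1\vep\,s^{4\delta}$, substitute into Lemma~\ref{lem refine1 R}, and absorb the $(C_1\vep)^2$ factors using $C_1\vep\leq 1$; the paper's one-line proof cites precisely this combination together with the inequality $t^{1/2}\leq s\leq t$ in $\Kcal$, which is the same fact you derive from $t-r>1$ to treat the leftover $(s/t)s^{-2+(k+4)\delta}$ piece of $R_3$.
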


\begin{proof}
This is a combination of Lemma \ref{lem refine1 R} and \eqref{ineq lem refine1 W} (take $k=0$ then considering the condition $C_1\vep \leq 1$) and the fact that in $\Kcal, t^{1/2}\leq s\leq t$.
\end{proof}

Then we need to estimate the term $|h_{t,x}'(\lambda)|$ in Proposition \ref{Linfini KG}.

\begin{lemma}
\label{lem h_{t,x}'} 
The following estimate holds for $(t,x)\in \Kcal_{[2,s_1]}$:
\bel{ineq lem h_{t,x}'}
\int_{s_0}^s |h_{t,x}'(\lambda)|d\lambda \lesssim C_1\vep,
\ee
where
$
h_{t,x}(\lambda) := \hb^{00}(\lambda t/s,\lambda x/s) = \Hb^{00}(\lambda t/s,\lambda x/s)u(\lambda t/s,\lambda x/s).
$
\end{lemma}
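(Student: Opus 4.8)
\emph{Setup.} The plan is to work along the radial ray $\lambda\mapsto\phi(\lambda):=(\lambda t/s,\lambda x/s)$, $\lambda\in[s_0,s]$, which terminates at $(t,x)$ and — by the very choice of $s_0$ in \eqref{eq:szero} — starts on the boundary $\{\,t-r=1\,\}$ of $\Kcal$ (when $3/5\le r/t\le 1$) or at the initial slice (when $0\le r/t\le 3/5$), so that by the compact support assumption $u$ (and all its derivatives) vanish near the lower endpoint. Since $\hb^{00}=\Hb^{00}u$ where $\Hb^{00}$ is the hyperboloidal--frame component of the \emph{constant} matrix $H^{\alpha\beta}$, the function $\Hb^{00}$ is a quadratic form in the degree--$0$ homogeneous functions $t/s,\,x^a/s$, hence homogeneous of degree $0$; in particular $\Hb^{00}(\phi(\lambda))\equiv\Hb^{00}(t,x)$ along the ray and $\newperp\Hb^{00}=t^{-1}S\Hb^{00}=0$. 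Consequently $h_{t,x}(\lambda)=\Hb^{00}(t,x)\,u(\phi(\lambda))$ and, by the chain rule (equivalently, from the formula for $h'_{t,x}$ recalled before \eqref{eq:szero}),
\[
h'_{t,x}(\lambda)=\Hb^{00}(t,x)\,\frac{d}{d\lambda}\big[u(\phi(\lambda))\big]=\frac ts\,\Hb^{00}(t,x)\,\newperp u(\phi(\lambda)),\qquad |\Hb^{00}(t,x)|\le C\,(t/s)^2 .
\]
Note that $\phi(\lambda)\in\Hcal_\lambda$ with time coordinate $\lambda t/s$ and with the same $s/t$--ratio as $(t,x)$.

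\emph{Estimating $\newperp u$ along the ray.} Here the structure of the wave component enters, and this is the main obstacle. Writing $\newperp=(s/t)^2\del_t+(x^a/t)\,\delu_a$ and $\delu_a=t^{-1}L_a$, I would bound the two pieces differently. For the piece $(s/t)^2\del_t u$ the basic decay $|\del_\alpha u|\lesssim C_1\vep\,t^{-1/2}s^{-1}$ is already enough, yielding at $\phi(\lambda)$ a contribution $\lesssim C_1\vep\,(s/t)^{5/2}\lambda^{-3/2}$. For the piece $(x^a/t)\,\delu_a u=(x^a/t^2)L_a u$ the basic bound $|L_a u|\lesssim C_1\vep\,t^{-1/2}$ is \emph{not} sufficient near the light cone — the ``good vector field'' cancellation improves only the $\del_t u$--piece — and one must invoke the refined wave estimate \eqref{ineq lem refine1 W} of Proposition \ref{lem refine1 W} with $|J|=1$, namely $|L_a u|\lesssim C_0\vep\,t^{-3/2}+(C_1\vep)^2(s/t)^{5\delta}t^{-1}s^{5\delta}$. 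This gives at $\phi(\lambda)$ a contribution $\lesssim C_1\vep\,(s/t)^{5/2}\lambda^{-3/2}+(C_1\vep)^2(s/t)^{2+5\delta}\lambda^{-2+5\delta}$ (the $C_0\vep$--term is absorbed by the first term). Multiplying by $(t/s)\,|\Hb^{00}(t,x)|\le C(t/s)^3$ and simplifying powers of $t/s$ yields
\[
|h'_{t,x}(\lambda)|\lesssim C_1\vep\,(t/s)^{1/2}\lambda^{-3/2}+(C_1\vep)^2\,(t/s)^{1-5\delta}\lambda^{-2+5\delta}.
\]

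\emph{Integration in $\lambda$ and the case split.} Now integrate over $[s_0,s]$ using $\int_{s_0}^s\lambda^{-3/2}\,d\lambda\le 2s_0^{-1/2}$ and $\int_{s_0}^s\lambda^{-2+5\delta}\,d\lambda\le(1-5\delta)^{-1}s_0^{-1+5\delta}$ (finite since $\delta<1/5$), so that
\[
\int_{s_0}^s|h'_{t,x}(\lambda)|\,d\lambda\lesssim C_1\vep\,(t/s)^{1/2}s_0^{-1/2}+(C_1\vep)^2\,(t/s)^{1-5\delta}s_0^{-1+5\delta}.
\]
When $0\le r/t\le 3/5$ one has $s_0=2$ and $s/t\ge 4/5$, so both factors are bounded by an absolute constant. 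When $3/5\le r/t\le 1$ one has $s_0=\sqrt{(t+r)/(t-r)}=s/(t-r)$; then, using $s^2=(t-r)(t+r)$,
\[
(t/s)^{1/2}s_0^{-1/2}=\Big(\tfrac{t}{t+r}\Big)^{1/2}\le 1,\qquad
(t/s)^{1-5\delta}s_0^{-1+5\delta}=\Big(\tfrac{t}{t+r}\Big)^{1-5\delta}\le 1 .
\]
In all cases $\int_{s_0}^s|h'_{t,x}(\lambda)|\,d\lambda\lesssim C_1\vep+(C_1\vep)^2\lesssim C_1\vep$, using $C_1\vep<1$, which is the claimed bound.

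\emph{Where the difficulty lies.} The only delicate region is $r/t$ close to $1$, i.e.\ $(t,x)$ near the boundary of the light cone: there $|\Hb^{00}|\sim(t/s)^2$ is large and the basic decay of $\delu_a u=t^{-1}L_a u$ is too weak, so the surplus powers of $t/s$ would not close; they are tamed only by combining (i) the extra $(s/t)$--gains produced by the refined wave estimate \eqref{ineq lem refine1 W}, and (ii) the fact that the particular value $s_0=\sqrt{(t+r)/(t-r)}$ on $\{3/5\le r/t\le1\}$ is chosen precisely so that the factor $s_0^{-1/2}$ (respectively $s_0^{-1+5\delta}$) coming from $\int\lambda^{-3/2}d\lambda$ exactly compensates the $(t/s)$--powers after the change of variables. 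Verifying these cancellations is the heart of the proof; the remainder is routine bookkeeping.
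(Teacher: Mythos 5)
Your proof is correct and follows essentially the same route as the paper's: you use the homogeneity of $\Hb^{00}$ to freeze it along the ray, decompose $\newperp u=(s/t)^2\del_tu+(x^a/t^2)L_au$, apply the basic decay for $\del_tu$ and the refined wave estimate \eqref{ineq lem refine1 W} (with $|J|=1$) for $L_au$, and then exploit $s_0\ge t/s$ in the region $3/5<r/t<1$ to absorb the $(t/s)$ powers after integrating in $\lambda$. The only cosmetic difference is that the paper folds $(C_1\vep)^2\le C_1\vep$ into the pointwise bound on $\newperp u$ slightly earlier; the intermediate bound $|h'_{t,x}(\lambda)|\lesssim C_1\vep(s/t)^{-1/2}\lambda^{-3/2}+C_1\vep(s/t)^{-1+5\delta}\lambda^{-2+5\delta}$ and the case split are identical.
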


\begin{proof} With the notation of Proposition \ref{Linfini KG}, we have
$\hb^{00} = \Hb^{00}u$, and we observe that
$$
\Hb^{00} = H^{\alpha\beta}\Psib^0_\alpha\Psib^0_\beta = H^{00} (t/s)^2  - 2\sum_{a}H^{0a}(x^at/s) + \sum_{a,b}H^{ab}(x^ax^b/s^2).
$$
Note that
$\Hb^{00} (\lambda t/s,\lambda x/s) = \Hb^{00}(t,x)$,
so that  $\Hb^{00}$ is constant along the segment $(\lambda t/s, \lambda x/s), s_0\leq \lambda\leq s$. So we find
$$
h_{t,x}'(\lambda) = \Hb^{00}(t,x)(t/s)\newperp  u(\lambda t/s,\lambda x/s),
$$
and we conclude that
$$
|h_{t,x}'(\lambda)|\leq C (t/s)^3 |\newperp u(\lambda t/s,\lambda x/s)|.
$$

Next, we observe the identity
\be
\newperp  u = \frac{s^2}{t^2}\del_t u + \frac{x^a}{t}\delu_a u = \frac{s^2}{t^2}\del_t u + \frac{x^a}{t^2}L_a u
\ee
and, by the first inequality in \eqref{decay basic 3ge2} and \eqref{ineq lem refine1 W} with $\del^IL^J = L_a$,
$$
|\newperp  u|\lesssim C_1\vep (s/t)t^{-3/2} + C_1\vep (s/t)^{5\delta}t^{-2}s^{5\delta}.
$$
Therefore, we obtain
$$
|h_{t,x}'(\lambda)| \lesssim C_1\vep(s/t)^{-1/2}\lambda^{-3/2} + C_1\vep (s/t)^{-1+5\delta}\lambda^{-2+5\delta}.
$$

Then, to apply the sup-norm estimate for the Klein-Gordon equation \eqref{Linfty KG ineq a}, we proceed as follows. In the range $0\leq r/t\leq 3/5$, we have $4/5\leq s/t\leq 1$, and 
$$
\int_{s_0}^s |h_{t,x}'(\lambda)|d\lambda \lesssim C_1\vep \int_2^s \lambda ^{-3/2} \, d\lambda\lesssim C_1\vep.
$$
In the range $3/5< r/t< 1$, we obtain
$$
\aligned
\int_{s_0}^s |h_{t,x}'(\lambda)|d\lambda
\lesssim \, &  C_1\vep (s/t)^{-1/2}\int_{s_0}^s\lambda^{-3/2} \, d\lambda 
+ C_1\vep (s/t)^{-1+5\delta}\int_{s_0}^s\lambda^{-2+5\delta} \, d\lambda
\\
\lesssim \, & C_1\vep (s/t)^{-1/2}s_0^{-1/2} + C_1\vep (s/t)^{-1+5\delta}s_0^{-1+5\delta}.
\endaligned
$$
We recall that, when $3/5<r/t<1$, $s_0 = \sqrt{\frac{t+r}{t-r}}\geq t/s$, so  that
$
\int_{s_0}^s |h_{t,x}'(\lambda)|d\lambda \lesssim C_1\vep,
$
and the desired result is established.
\end{proof}

Now we give a second application of the sup-norm estimate for the Klein-Gordon component \eqref{Linfty KG ineq a}.

\begin{proposition}[Second improvement on the wave component and first improvement on the Klein-Gordon component]
\label{lem refine2 KG-W} 
The following estimate also holds in $\Kcal_{[2,s_1]}$:
\begin{subequations}\label{ineq lem refine2 KG-W-0}
\bel{ineq lem refine2 KG-W-0 a}
|v(t,x)| +
{t \over s} |\newperp  v(t,x)|
\lesssim C_1\vep (s/t)^{2-7\delta}s^{-3/2},
\ee
\bel{ineq lem refine2 KG-W-0 b}
|u(t,x)| \lesssim C_1\vep t^{-1}.
\ee
\end{subequations}
\end{proposition}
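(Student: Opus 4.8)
The plan is to prove the two estimates in turn: the Klein-Gordon bound \eqref{ineq lem refine2 KG-W-0 a} first, by invoking the sup-norm estimate for the Klein-Gordon equation (Proposition~\ref{Linfini KG}), and then the wave bound \eqref{ineq lem refine2 KG-W-0 b}, by invoking the sup-norm estimate for the wave equation (Proposition~\ref{Linfini wave}) with the first one as an input.

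For \eqref{ineq lem refine2 KG-W-0 a} I would apply Proposition~\ref{Linfini KG} to $v$ itself, i.e.\ with $\del^I L^J=\mathrm{id}$ and zero source (the second equation of \eqref{eq main} being $-\Boxt_g v+c^2v=0$ with $h^{\alpha\beta}=H^{\alpha\beta}u$). Its hypothesis $\sup|\hb^{00}|\le1/3$ holds since $|\hb^{00}|\lesssim C_1\vep\,t^{1/2}s^{-1}\le C_1\vep\le1/3$ by \eqref{eq pr2 lem refine1 R} together with $s\ge t^{1/2}$. The point is then to estimate the function $V(t,x)$ appearing in Proposition~\ref{Linfini KG}: along the radial segment $\{(\lambda t/s,\lambda x/s):s_0\le\lambda\le s\}$ the ratio $s/t$ is constant while the hyperbolic radius equals $\lambda$, so Lemma~\ref{lem refine2 R} (with $k=0$) gives $\big|(R_1+R_2+R_3)[v](\lambda t/s,\lambda x/s)\big|\lesssim C_1\vep\,(s/t)^{3/2}\lambda^{-3/2+7\delta}$ and, since $-3/2+7\delta<-1$,
\[
F(\sbar)\lesssim C_1\vep\,(s/t)^{3/2}\!\int_{s_0}^{\sbar}\lambda^{-3/2+7\delta}\,d\lambda\lesssim C_1\vep\,(s/t)^{3/2}s_0^{-1/2+7\delta}.
\]
Because $s_0\ge t/s$ when $3/5<r/t<1$ (and $s_0=2$, $s/t\simeq1$ otherwise), we get $s_0^{-1/2+7\delta}\lesssim(s/t)^{1/2-7\delta}$, hence $F(\sbar)\lesssim C_1\vep\,(s/t)^{2-7\delta}$. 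Combining this with $\int_{s_0}^s|h_{t,x}'(\lambda)|\,d\lambda\lesssim C_1\vep$ (established just above, so that all exponential factors $e^{C\int|h_{t,x}'|}$ are $\lesssim1$) and with the data bound $\|v_0\|_{L^\infty(\Hcal_2)}+\|v_1\|_{L^\infty(\Hcal_2)}\lesssim\vep$, every term in $V$ is controlled by $C_1\vep\,(s/t)^{2-7\delta}$, and Proposition~\ref{Linfini KG} then yields $s^{3/2}|v|+(s/t)^{-1}s^{3/2}|\newperp v|\lesssim C_1\vep\,(s/t)^{2-7\delta}$, which is \eqref{ineq lem refine2 KG-W-0 a}.

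For \eqref{ineq lem refine2 KG-W-0 b} I would write $u=w_1+w_2$, with $w_1$ the homogeneous wave evolution of the (small, compactly supported) data of \eqref{eq main}, for which $|w_1|\lesssim C_0\vep\,t^{-1}$ by the explicit representation of the solution (cf.\ the proof of Proposition~\ref{lem refine1 W}), and $w_2$ solving $-\Box w_2=P^{\alpha\beta}\del_\alpha v\,\del_\beta v+Rv^2$ with zero data, to which Proposition~\ref{Linfini wave} applies. To feed that proposition one needs $|f|\le C_f\,t^{-2-\nu}(t-r)^{-1+\mu}$ with exponents in $(0,\tfrac12]$. The crude basic bound $|\del_\alpha v|\lesssim C_1\vep\,t^{-3/2}s^{1/2}$ is too weak (squared it only decays like $t^{-2}$ in the interior region $r/t\le3/5$), so I would use a refined derivative bound: writing $\del_t v=(t/s)^2\newperp v-(x^a/s^2)L_a v$ and inserting \eqref{ineq lem refine2 KG-W-0 a} together with the basic bound $|L_a v|\lesssim C_1\vep\,t^{-3/2}s^{1/2+3\delta}$ gives $|\del_t v|\lesssim C_1\vep\,(s/t)^{1-7\delta}s^{-3/2}$, while $\delu_a v=t^{-1}L_a v$ (trading one power of $t\ge s$ against $s$) yields a bound dominated by the same quantity; hence $|\del_\alpha v|\lesssim C_1\vep\,(s/t)^{1-7\delta}s^{-3/2}$ and, with \eqref{ineq lem refine2 KG-W-0 a},
\[
|f|\lesssim(C_1\vep)^2(s/t)^{2-14\delta}s^{-3}\simeq(C_1\vep)^2(t-r)^{-1/2-7\delta}\,t^{-5/2+7\delta}
\]
using $s^2\simeq(t-r)t$ in $\Kcal$. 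This is exactly of the required form with $\nu=\mu=\tfrac12-7\delta\in(0,\tfrac12)$, so \eqref{Linfini wave ineq} gives $|w_2|\lesssim(C_1\vep)^2(t-r)^{\mu-\nu}t^{-1}=(C_1\vep)^2t^{-1}$; adding $w_1$ and using $C_0<C_1$ and $C_1\vep\le1$ gives $|u|\lesssim C_1\vep\,t^{-1}$.

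The main obstacle is the second step: the source $P^{\alpha\beta}\del_\alpha v\,\del_\beta v+Rv^2$ carries no null structure, so it must be fitted into the narrow window $t^{-2-\nu}(t-r)^{-1+\mu}$ with both exponents in $(0,\tfrac12]$, and the behavior both near the light cone and in the interior $r/t\le3/5$ essentially forces the choice $\mu=\nu$, which is precisely what produces the clean decay $t^{-1}$ rather than $t^{-1+c\delta}$. This in turn relies on the refined bound $|\del_\alpha v|\lesssim C_1\vep(s/t)^{1-7\delta}s^{-3/2}$ (the identity $\delu_a=t^{-1}L_a$ for the good derivatives and the output \eqref{ineq lem refine2 KG-W-0 a} of the first step for the bad one), together with careful homogeneity bookkeeping when passing between the variables $(s,t)$ and $(t,t-r)$.
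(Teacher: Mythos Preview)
Your proof is correct and follows essentially the same approach as the paper's own proof: first invoke Proposition~\ref{Linfini KG} for $v$ (using Lemma~\ref{lem refine2 R} with $k=0$ to bound $F$ and Lemma~\ref{lem h_{t,x}'} to control the exponential factors), then feed the result into Proposition~\ref{Linfini wave} for $u$ via the identity $\del_t v=(t/s)^2\newperp v-(x^a/s^2)L_a v$ to obtain the source bound with $\mu=\nu=\tfrac12-7\delta$. The only cosmetic differences are that the paper decomposes $P^{\alpha\beta}\del_\alpha v\,\del_\beta v$ in the semi-hyperboloidal frame (bounding the good-derivative pieces separately) whereas you bound all $|\del_\alpha v|$ uniformly, and the paper uses the sharper $|w_1|\lesssim C_0\vep\,t^{-3/2}$ for the homogeneous part; neither affects the argument.
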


\begin{proof} We rely here on \eqref{Linfty KG ineq a} and the sup-norm estimate for the wave equation \eqref{Linfini wave ineq}, and we first establish \eqref{ineq lem refine2 KG-W-0 a}. In view of \eqref{ineq lem h_{t,x}'}, we have
$$
\aligned
\int_{s_0}^s |h_{t,x}'(\sbar)|e^{\int_{\sbar}^s|h_{t,x}'(\lambda)|d\lambda} \, d\sbar
& \leq \int_{s_0}^s |h_{t,x}'(\sbar)|e^{\int_{s_0}^s|h_{t,x}'(\lambda)|d\lambda} \, d\sbar
\\
& \lesssim 
 \int_{s_0}^s |h_{t,x}'(\sbar)|e^{CC_1\vep} \, d\sbar\lesssim C_1\vep.
\endaligned
$$
On the other hand, to estimate $F(\sbar)$, we write 
$$
\aligned
F(s) =& \int_{s_0}^s \big(R_1[v] + R_2[v] + R_3[v]\big)(\lambda t/s,\lambda x/s) \, d\lambda
\\
\lesssim \, & C_1\vep\int_{s_0}^s (s/t)^{3/2}\lambda^{-3/2+7\delta} d\lambda
\lesssim C_1\vep (s/t)^{3/2}s_0^{-1/2+7\delta}.
\endaligned
$$
Now for $0\leq r/t\leq 3/5$ we see that  $4/5\leq s/t\leq 1$ and $s_0 = 2$, and we have
$$
F(s)\lesssim C_1\vep (s/t)^{3/2}s_0^{-1/2+7\delta}\lesssim C_1\vep (s/t)^{2-7\delta}.
$$
For $3/5< r/t <1$, we see that $s_0 = \sqrt{\frac{t+r}{t-r}}\leq t/s$, so that
\bel{pr2 lem refine2 KG-W}
F(s)\lesssim C_1\vep (s/t)^{2-7\delta}.
\ee
Then, by combining \eqref{ineq lem h_{t,x}'}, \eqref{pr2 lem refine2 KG-W} and \eqref{Linfty KG ineq a}, we conclude that \eqref{ineq lem refine2 KG-W-0 a} holds.
On the other hand,  \eqref{ineq lem refine2 KG-W-0 b} follows directly from substituting \eqref{ineq lem refine2 KG-W-0 a} into \eqref{Linfini wave ineq}.


\newcommand {\Pu}{\underline{P}}

Let us explain in more detail the above argument. In the equation 
$$
\Box u = P_{\alpha\beta}\del_{\alpha}v\del_{\beta}v + Rv^2, 
$$ 
we need to estimate $\left|P^{\alpha\beta}\del_{\alpha}v\del_{\beta}v\right|$ and $\left|Rv^2\right|$. First we rewrite the expresison  $P^{\alpha\beta}\del_{\alpha}v\del_\beta v$ in the semi-hyperboloidal frame: 
$$
\aligned
P^{\alpha\beta}\del_\alpha v\del_\beta v =& \Pu^{\alpha\beta}\delu_\alpha v\delu_\beta v
\\
=&\Pu^{00}\del_tv\del_tv + \Pu^{a0}\delu_av\del_t v + \Pu^{0b}\del_t v\delu_b v + \Pu^{ab}\delu_a v\delu_b v. 
\endaligned
$$
The last three terms in the right-hand-side can be controlled by applying the first and the third inequalities in 
\eqref{decay basic 4ge2} 
$$
\left|P^{a0\beta}\delu_av\delu_\beta v\right| + \left|P^{\alpha b}\delu_{\alpha}\delu_b v\right|\leq C(C_1\vep)^2t^{-4}s^{5\delta}.
$$

For the first term, we see that
$$
\del_tv = \frac{t^2}{s^2}\left(\delu_{\perp}v - \frac{x^a}{t}\delu_a v\right).
$$
Then by \eqref{ineq lem refine2 KG-W-0 a} 
and the third inequality in \eqref{decay basic 4ge2}, we obtain  
$$
\aligned
\left|\del_tv\right|\leq& CC_1\vep (s/t)^{1-7\delta}s^{-3/2} + CC_1\vep t^{-5/2}s^{3\delta}
\\
\leq&CC_1\vep (s/t)^{1-7\delta}s^{-3/2}.
\endaligned
$$
This leads to
$$
\left|\Pu^{00}\del_tv \del_tv\right|\leq C(C_1\vep)^2(t-r)^{-1+(1/2 - 7\delta/2)}t^{-2-(1/2-7\delta/2)}.
$$
The term $\left|Rv^2\right|$ is bounded directly by \eqref{ineq lem refine2 KG-W-0 a}, and we have  
$$
\left|Rv^2\right|\leq C(C_1\vep)^2(s/t)^{4-14\delta}s^{-3}\leq CC_1\vep t^{-3}.
$$
Then by applying \eqref{Linfini wave ineq},  
the desired bound on $u$ is guaranteed. 
\end{proof}

With \eqref{ineq lem refine2 KG-W-0 b}, we can improve again the estimate on $R_i$. Namely, the proof of the following estimate is immediate by substituting \eqref{ineq lem refine2 KG-W-0 b} into \eqref{new-formule2}, and using \eqref{Linfini wave ineq}. 

\begin{lemma}\label{last 2} 
The following estimates hold:
\bel{lem refine2' R}
\sum_{i=1}^3 R_i[\del^I L^J v]\lesssim C_1\vep (s/t)^{3/2}s^{-3/2+(k+4)\delta}.
\ee
\end{lemma}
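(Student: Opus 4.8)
The plan is to re-run the computation behind Lemma~\ref{lem refine1 R}, but now inserting the \emph{sharp} bound on the wave component just established in Proposition~\ref{lem refine2 KG-W}, namely $|u(t,x)|\lesssim C_1\vep\, t^{-1}$ from \eqref{ineq lem refine2 KG-W-0 b}. The observation that makes this immediate is that in Lemma~\ref{lem refine1 R} every term responsible for a possible growth beyond $s^{(k+4)\delta}$ appears multiplied by the factor $|tu|$; in Lemma~\ref{lem refine2 R} this factor was controlled only by the crude bound of \eqref{ineq lem refine1 W} (which carries an extra $s^{4\delta}$), producing the loss $s^{(k+7)\delta}$, whereas \eqref{ineq lem refine2 KG-W-0 b} gives $|tu|\lesssim C_1\vep\le 1$, so that factor is now harmless.

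Concretely I would argue as follows. Throughout $\Kcal_{[2,s_1]}$ we have $|tu|\lesssim C_1\vep\le 1$ by \eqref{ineq lem refine2 KG-W-0 b} together with the smallness $\vep C_1<1$. Substituting this into the three estimates of Lemma~\ref{lem refine1 R}, and using $C_1\vep\le 1$ once more, one obtains $|R_1[\del^IL^Jv]|\lesssim C_1\vep(s/t)^{3/2}s^{-3/2+(k+4)\delta}$, then $|R_2[\del^IL^Jv]|\lesssim C_1\vep(s/t)^{3/2}s^{-3/2+(k+3)\delta}$, and $|R_3[\del^IL^Jv]|\lesssim (C_1\vep)^2(s/t)\,s^{-2+(k+4)\delta}+C_1\vep(s/t)^{3/2}s^{-3/2+(k+3)\delta}$. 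For the first summand of the $R_3$ bound I would invoke the elementary inequality
\[
(s/t)\,s^{-2}\ \le\ (s/t)^{3/2}s^{-3/2},
\]
valid in $\Kcal$ because there $t^{1/2}\le s\le t$, so that $(t/s)^{1/2}s^{-1/2}\le t^{1/4}t^{-1/4}=1$. Summing over $i=1,2,3$ then yields \eqref{lem refine2' R}.

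There is essentially no obstacle here: the statement is a bookkeeping corollary of Lemma~\ref{lem refine1 R} once Proposition~\ref{lem refine2 KG-W} is in hand. The only points requiring a moment's care are checking that the $\delta$-exponents of all contributions are $\le(k+4)\delta$ (indeed the $R_2$ contribution even improves to $(k+3)\delta$), so that the claimed exponent is not enlarged, and the one-line comparison $(s/t)s^{-2}\le(s/t)^{3/2}s^{-3/2}$ inside the light cone used to absorb the $R_3$ term.
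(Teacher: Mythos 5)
Your proof is correct, and it is what the paper intends: the one-line proof in the paper is essentially ``substitute the sharp bound $|u|\lesssim C_1\vep\,t^{-1}$ from Proposition~\ref{lem refine2 KG-W} back into Lemma~\ref{lem refine1 R}'' (the equation references printed in the paper's proof are a mis-citation, but the surrounding text makes the intent unambiguous). You identify exactly the right mechanism -- every term in Lemma~\ref{lem refine1 R} responsible for the extra $s^{3\delta}$ loss in Lemma~\ref{lem refine2 R} carries the factor $|tu|$, which the new bound renders $\lesssim C_1\vep\le 1$ -- and your check that $(s/t)s^{-2}\le(s/t)^{3/2}s^{-3/2}$ in $\Kcal$ (via $t^{1/2}\le s\le t$) is the small extra observation needed to absorb the first $R_3$ contribution. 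No gap.
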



\subsection{Second improvement on the Klein-Gordon component}

We now establish \eqref{decay refine 0de KG} and, to do so, our first task is to estimate the commutator $[H^{\alpha\beta}u\del_\alpha\del_\beta,\del^IL^J]$. First of all, from the following identities 
\be
\del_t = \frac{t^2}{s^2}\big(\newperp  - (x^a/t)\delu_a\big),\qquad \del_a = -\frac{tx^a}{s^2}\newperp  + \frac{x^ax^b}{t^2}\delu_b + \delu_a,
\ee
the following estimates are immediate:
\bel{ineq Ur-partial}
\aligned
&\big|\del_t\del^IL^J v\big|\leq (t/s)^2\big|\newperp  \del^IL^Jv\big| + (t/s)^2\sum_a\big|\delu_a\del^IL^Jv\big|,
\\
&\big|\del_a\del^IL^J v\big|\leq (t/s)^2\big|\newperp  \del^IL^Jv\big| + C(t/s)^2\sum_a\big|\delu_a\del^IL^Jv\big|.
\endaligned
\ee
Based on this result, we estimate the commutator $[H^{\alpha\beta}u\del_\alpha\del_\beta,\del^I]$. (In the statement below, as usual, a sum over the empty set vanishes.)  

\begin{lemma}
\label{lem refine2 commu} 
The following estimates are valid in $\Kcal$ for $|I|+|J|\leq N-4$ and $|J| = k$:
\bel{ineq lem refine2 commu}
\aligned
&\big|[H^{\alpha\beta}u\del_\alpha\del_\beta,\del^IL^J]v\big|
\\
&\lesssim C_1\vep t^{-1}(s/t)^{-2}\sum_{|I_2|\leq |I|,\beta\atop |J_2|\leq |J|-1}|\newperp \del_\beta\del^{I_2}L^{J_2}v|
+\sum_{J_1+J_2 =J\atop |J_1|\geq 1} |L^{J_1}u| \,  |\del^IL^{J_2}\del_\alpha\del_\beta v|
\\
&\quad + C_1\vep t^{-3/2}(s/t)^{-3}\sum_{|I_2|+|J_2|\leq |I|+|J|-1,\beta\atop N-7\leq |I_2|+|J_2|\leq N-5}|\newperp \del_\beta\del^{I_2}L^{J_2}v|
\\
&\quad +(C_1\vep)^2(s/t)^{3/2}s^{-3+(k+4)\delta}. 
\endaligned
\ee 
\end{lemma}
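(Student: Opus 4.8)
The plan is to expand the commutator $[H^{\alpha\beta}u\del_\alpha\del_\beta,\del^IL^J]v$ into a sum of terms in which the derivative operator $\del^IL^J$ hits either the coefficient $u$ or the second-order derivative $\del_\alpha\del_\beta v$, and then to classify the resulting terms according to how many boosts have landed on $u$. Concretely, by the Leibniz rule,
$$
[H^{\alpha\beta}u\del_\alpha\del_\beta,\del^IL^J]v
= H^{\alpha\beta}\Big( u\,\del_\alpha\del_\beta\del^IL^Jv - \del^IL^J\big(u\,\del_\alpha\del_\beta v\big)\Big),
$$
which splits into (a) the "commutator of differential operators" part $u\,[\del_\alpha\del_\beta,\del^IL^J]v$, and (b) the genuine product terms $\sum (\del^{I_1}L^{J_1}u)(\del^{I_2}L^{J_2}\del_\alpha\del_\beta v)$ with $|I_1|+|J_1|\geq 1$. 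For part (a) I would invoke the commutator estimate \eqref{pre lem commutator second-order}, which replaces $[\del^IL^J,\del_\alpha\del_\beta]v$ by a sum of terms $\del_\gamma\del_{\gamma'}\del^{I'}L^{J'}v$ with $|J'|<|J|$ and $|I'|\leq|I|$, i.e.\ with strictly fewer boosts. For part (b), the terms with $|J_1|\geq 1$ (so that $u$ carries at least one boost) are kept as the explicit sum $\sum_{J_1+J_2=J,|J_1|\geq 1}|L^{J_1}u|\,|\del^IL^{J_2}\del_\alpha\del_\beta v|$ appearing in the statement — these are handled later by the refined decay of $L^{J_1}u$ and a Hardy inequality — while the terms with $|I_1|\geq 1$ (ordinary derivatives on $u$, at least one) are estimated directly using $|\del^{I_1}L^{J_1}u|\lesssim C_1\vep t^{-3/2}s^{(k+4)\delta}$ from Proposition~\ref{lem refine1 W} (together with \eqref{ineq lem refine2 KG-W-0 b} to dispose of the $|I_1|=|J_1|=0$ residue not in the kept sum).

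The key technical step is then to convert the flat second derivatives $\del_\gamma\del_{\gamma'}$ produced by \eqref{pre lem commutator second-order} into semi-hyperboloidal ones, at the cost of the weight factors $(t/s)^2$: using the identities $\del_t=\tfrac{t^2}{s^2}(\newperp - (x^a/t)\delu_a)$ and $\del_a=-\tfrac{tx^a}{s^2}\newperp + \tfrac{x^ax^b}{t^2}\delu_b+\delu_a$ displayed just before the lemma, each $\del_\gamma\del_{\gamma'}\del^{I'}L^{J'}v$ is bounded by $(t/s)^2$ times a combination of $\newperp\del_\beta\del^{I'}L^{J'}v$ and $\delu_a\del_\beta\del^{I'}L^{J'}v$; the second type, since $\delu_a=t^{-1}L_a$, gains a factor $t^{-1}$ and an extra boost, producing the $t^{-1}(s/t)^{-2}$-weighted sum over $|J_2|\leq|J|-1$ in the conclusion. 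One must also track that these differentiated terms fall into two regimes of regularity: when $|I'|+|J'|$ is small enough to apply the sup-norm estimate \eqref{decay basic 4ge2} we get a genuinely decaying pointwise bound, giving the $(C_1\vep)^2(s/t)^{3/2}s^{-3+(k+4)\delta}$ term; when $|I'|+|J'|$ is only one below the top order (the range $N-7\leq|I_2|+|J_2|\leq N-5$ in the statement) we instead keep an $L^2$-type quantity $\newperp\del_\beta\del^{I_2}L^{J_2}v$ with the weight $C_1\vep t^{-3/2}(s/t)^{-3}$ coming from the coefficient $\hb^{00}=H^{00}u$ bound $|u|\lesssim C_1\vep t^{-1}$ of \eqref{ineq lem refine2 KG-W-0 b} combined with the frame-change weights.

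The main obstacle, I expect, is bookkeeping the weights correctly: one has the coefficient $H^{\alpha\beta}u$, whose semi-hyperboloidal components $\Hb^{\alpha\beta}u$ carry up to $(t/s)^2$ blow-up, multiplied against frame-change factors that also produce $(t/s)^2$, so it is easy to lose track of powers of $s/t$ and of $t^{-1}$. The safe route is to do the estimate purely in the Cartesian frame for the differential operators — so $[\del_\alpha\del_\beta,\del^IL^J]$ stays $\del\del\del^{I'}L^{J'}$ — keep $|u|$ and $|\del^{I_1}u|$ in Cartesian coordinates via the pointwise bounds already proven, and only at the very end convert $\del\del\del^{I'}L^{J'}v$ to semi-hyperboloidal form, reading off the weights from \eqref{ineq Ur-partial}. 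A secondary subtlety is the careful split of the product terms: one must ensure that every term of part~(b) with $|J_1|\geq 1$ is placed in the kept sum and not double-counted with the $|I_1|\geq 1$ terms, which is just a matter of organizing the multi-index sum $\{I_1+I_2=I,\ J_1+J_2=J,\ |I_1|+|J_1|\geq 1\}$ as the disjoint union $\{|J_1|\geq 1\}\cup\{|J_1|=0,\ |I_1|\geq 1\}$. With this organization in place, the inequality \eqref{ineq lem refine2 commu} follows by collecting the four resulting families of terms.
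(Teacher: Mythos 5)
Your overall strategy is the right one — split the commutator into a piece where $u$ is left alone and a Leibniz-type sum, convert flat second derivatives into semi-hyperboloidal ones via the identities $\del_t=\tfrac{t^2}{s^2}\big(\newperp - \tfrac{x^a}{t}\delu_a\big)$, etc., and classify the Leibniz terms by how the derivatives distribute — but two of your bookkeeping claims are wrong in ways that would prevent you from reproducing the stated bound.

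First, the partition of the Leibniz terms. You propose the disjoint split $\{|J_1|\geq 1\}\cup\{|J_1|=0,\,|I_1|\geq 1\}$ and want to place all of $\{|J_1|\geq 1\}$ into the kept sum. But the kept sum in the statement is $\sum_{J_1+J_2=J,\,|J_1|\geq 1}|L^{J_1}u|\,|\del^{I}L^{J_2}\del_\alpha\del_\beta v|$, in which the full $\del^I$ sits on the $v$-factor; this requires $|I_1|=0$. Terms with both $|I_1|\geq 1$ and $|J_1|\geq 1$ do not fit that form and cannot be placed there. The correct split — the one the paper uses — is $\{|I_1|=0,\,|J_1|\geq 1\}$ (kept, giving the second term of the bound) versus $\{|I_1|\geq 1\}$ (estimated via the sharp decay of $\del^{I_1}L^{J_1}u$). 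Your partition would have you keeping terms like $|\del_\gamma L^{J_1}u|\,|\del^{I_2}L^{J_2}\del_\alpha\del_\beta v|$ that the lemma's right-hand side does not contain.

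Second, the provenance of the third term $C_1\vep\,t^{-3/2}(s/t)^{-3}\sum_{N-7\leq|I_2|+|J_2|\leq N-5}|\newperp \del_\beta\del^{I_2}L^{J_2}v|$. You attribute the weight $t^{-3/2}(s/t)^{-3}$ to the bound $|u|\lesssim C_1\vep t^{-1}$ combined with the frame change $(t/s)^2$, and you say this comes from your ``part~(a)'', the commutator of operators with $u$ undifferentiated. That cannot be right: $(t/s)^2\cdot C_1\vep t^{-1}=C_1\vep\, t^{-1}(s/t)^{-2}$, which is precisely the weight of the \emph{first} term of the bound — and indeed in the paper the whole $u\,\del^I[L^J,\del_\alpha\del_\beta]v$ contribution is absorbed into the first term (plus a tail absorbed into the fourth). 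The weight $t^{-3/2}(s/t)^{-3}$ is the signature of a term where at least one ordinary derivative has landed on $u$: writing $\del^{I_1}L^{J_1}u=\del_\gamma\del^{I_1'}L^{J_1}u$ and using the low-order sup-norm bound $|\del_\gamma\del^{I_1'}L^{J_1}u|\lesssim C_1\vep\,t^{-1/2}s^{-1}$, one gets $(t/s)^2\cdot C_1\vep\,t^{-1/2}s^{-1}=C_1\vep\,t^{-3/2}(s/t)^{-3}$. So the third term arises from the estimated Leibniz terms $\{|I_1|\geq 1\}$ in the near-top regularity window $N-7\leq|I_2|+|J_2|\leq N-5$ (where no pointwise bound on the Klein-Gordon factor is available and one must keep the $L^2$-type quantity), not from part~(a). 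Without noticing this you would not recover the correct weight in that range, and the proof would not close.
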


\begin{proof} We write the decomposition 
$$
\aligned
\,[H^{\alpha\beta}u\del_\alpha\del_\beta,\del^IL^J]v
=&\sum_{{I_1+I_2 = I\atop J_1+J_2=J}\atop |I_1|+|J_1|\geq 1} H^{\alpha\beta}\del^{I_1}\del^{J_1}u\del^{I_2}L^{J_2} \del_\alpha\del_\beta v
 + H^{\alpha\beta}u\del^I ([L^J,\del_\alpha\del_\beta]v)
\\
=:& T_0 + T_7.
\endaligned
$$
We recall that by \eqref{pre lem commutator second-order} and \eqref{ineq lem refine2 KG-W-0 b}, $T_7$ is bounded as follows:
$$
\aligned
|T_7|\lesssim \, & C_1\vep t^{-1}\sum_{\alpha,\beta\atop |J_2|\leq |J|-1}|\del^I\del_\alpha\del_\beta L^{J_2}v|
\\
\lesssim & C_1\vep t^{-1}(t/s)^2 \sum_{\beta\atop |J_2|\leq|J|-1}|\newperp \del_\beta\del^IL^{J_2}v|
 + C_1\vep t^{-1}(t/s)^2 \sum_{a,\beta\atop |J_2|\leq|J|-1}|\delu_a\del_\beta\del^IL^{J_2}v|.
\endaligned
$$
The second term in the right-hand side is bounded as follows:
$$
\aligned
C_1\vep t^{-1}(t/s)^2|\delu_a\del_\beta\del^IL^{J_2}v|\lesssim \, & C_1\vep t^{-1}(t/s)^2 C_1\vep t^{-5/2}s^{1/2+(k+3)\delta}
\\
\lesssim & (C_1\vep)^2 (s/t)^{3/2}s^{-3+(k+3)\delta}.
\endaligned
$$

We then write 
$$
\aligned
\big|T_0\big|
\leq & \sum_{I_1+I_2=I,|I_1|\geq 1\atop J_1+J_2=J,\alpha,\beta,\gamma}
|\del^{I_1}L^{J_1}u| \, |\del^{I_2}L^{J_2}\del_\alpha\del_\beta v|
+ \sum_{J_1+J_2 =J\atop |J_1|\geq 1,\alpha,\beta} |L^{J_1}u| \,  |\del^IL^{J_2}\del_\alpha\del_\beta v|
\\
=:& T_1 + T_2.
\endaligned
$$
Then we see that $T_1$ is again decomposed as follows:
$$
\aligned
T_1 =
& \sum_{{I_1+I_2=I,|I_1|\geq 1\atop J_1+J_2=J,\alpha,\beta,\gamma}\atop |I_2|+|J_2|\leq N-8}
|\del^{I_1}L^{J_1}u| \, |\del^{I_2}L^{J_2}\del_\alpha\del_\beta v|
+ \sum_{{I_1+I_2=I,|I_1|\geq 1\atop J_1+J_2=J,\alpha,\beta,\gamma}\atop N-7\leq |I_2|+|J_2|\leq N-5}
|\del^{I_1}L^{J_1}u| \, |\del^{I_2}L^{J_2}\del_\alpha\del_\beta v|
\\
=:& T_3 + T_4.
\endaligned
$$
We have 
$$
\aligned
T_3 \lesssim C_1\vep t^{-1/2}s^{-1+(|J_1|+2)\delta} C_1\vep t^{-3/2}s^{(|J_2|+2)\delta}
\lesssim (C_1\vep)^2(s/t)^2 s^{-3+(k+4)\delta},
\endaligned
$$
where we applied \eqref{new-formules} 
 and the fourth estimate in \eqref{decay basic 4ge2}. 
Then by applying \eqref{ineq Ur-partial} and in view of  \eqref{pre lem commutator second-order}, the term $T_4$ is bounded by 
$$
\aligned
T_4 \leq & \sum_{{I_1+I_2=I,|I_1|\geq 1\atop |J_1|+|J_2|\leq |J|,\alpha,\beta,\gamma} \atop N-7\leq |I_2|+|J_2|\leq N-5}
|\del^{I_1}L^{J_1}u| \,  |\del_\alpha\del_\beta\del^{I_2}L^{J_2}v|
\\
\leq & (t/s)^2\sum_{{I_1+I_2=I,|I_1|\geq 1\atop |J_1|+|J_2|\leq |J|\alpha,\beta,\gamma} \atop N-7\leq |I_2|+|J_2|\leq N-5}
|\del^{I_1}L^{J_1}u| \,  |\newperp \del_\beta\del^{I_2}L^{J_2}v|
\\
&\quad +(t/s)^2\sum_{{I_1+I_2=I,|I_1|\geq 1\atop |J_1|+|J_2|\leq |J|,a,\beta,\gamma} \atop N-7\leq |I_2|+|J_2|\leq N-5}
|\del^{I_1}L^{J_1}u| \,  |\delu_a\del_\beta\del^{I_2}L^{J_2}v|
=:& T_5 + T_6.
\endaligned
$$
Then, in the expression $T_5$, $N-7\leq |I_2|+|J_2|\leq N-5$ implies $|I_1|+|J_1|\leq 3\leq N-6$ and recall $|I_1|\geq 1$, so we see $|\del^{I_1}L^{J_1}u|\leq C\sum_{\gamma}\del_{\gamma}\del^{I_1'}L^{J_1}u$ were $|I_1'|+|J_1|\leq 2\leq N-6$. Then by the first estimate in \eqref{decay basic 3ge1RR}, we find 
%
$$
T_5\lesssim C_1\vep t^{-3/2}(s/t)^{-3}\sum_{|I_2|+|J_2|\leq |I|+|J|-1\atop N-7\leq |I_2|+|J_2|\leq N-5}|\newperp \del_\beta\del^{I_2}L^{J_2}v|.
$$
Furthermore, we have
$$
\aligned
T_6\lesssim \, & (t/s)^2 C_1\vep t^{-1/2}s^{-1}\, C_1\vep t^{-5/2}s^{1/2+(|J_2|+3)\delta}
\lesssim (C_1\vep)^2 (s/t) s^{-7/2+(k+2)\delta}
\\
\lesssim & (C_1\vep)^2(s/t)^{3/2}s^{-3+(k+3)\delta}
\endaligned
$$
and the desired estimate is established.
\end{proof}

We are now in a position to establish the desired bound \eqref{decay refine 0de KG}.

\begin{proposition}[Second improvement on the Klein-Gordon component]
\label{prop refine3 W0dg} 
The following estimate holds in $\Kcal_{[2,s_1]}$ for $|I|\leq N-4$:
\begin{subequations}\label{ineq prop refine3 W0dg}
\bel{ineq prop refine3 W0dg a}
|\newperp  \del^I v(t,x)|\lesssim C_1\vep(s/t)^{3/2-4\delta} t^{-3/2},
\ee
\bel{ineq prop refine3 W0dg b}
|\del^I v(t,x)|\lesssim C_1 \vep (s/t)^{1/2-4\delta}t^{-3/2}.
\ee
\end{subequations}
\end{proposition}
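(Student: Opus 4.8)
The plan is to apply the sup-norm estimate for the Klein-Gordon equation, Proposition~\ref{Linfini KG}, to the function $\del^I v$. Commuting $\del^I$ through the second equation of \eqref{eq main} and using $[\del^I,\Box]=0$, we see that $\del^I v$ solves the curved Klein-Gordon equation
\[
-\Boxt_g \del^I v + c^2 \del^I v = [H^{\alpha\beta} u\,\del_\alpha\del_\beta,\del^I] v =: f, \qquad g^{\alpha\beta} = m^{\alpha\beta} - H^{\alpha\beta} u,
\]
so that here $\hb^{00} = \Hb^{00} u$. The coefficient condition $\sup|\hb^{00}|\leq 1/3$ required by Proposition~\ref{Linfini KG} holds for $\vep_0$ small, since $|\hb^{00}| = |\Hb^{00} u| \lesssim (t/s)^2 |u| \lesssim C_1\vep\, t s^{-2} \lesssim C_1\vep$ by \eqref{ineq lem refine2 KG-W-0 b} and $t\leq s^2$ in $\Kcal$. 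The estimate then follows the exact same scheme as the proof of Proposition~\ref{lem refine2 KG-W}: it suffices to control the three quantities entering the function $V$ of Proposition~\ref{Linfini KG}, namely the initial-data terms, the weight $\int_{s_0}^s |h_{t,x}'|\,d\lambda$, and the function $F$ associated with $\del^I v$.

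For the initial-data terms: the restriction of $\del^I v$ to $\Hcal_2$ is a combination of $\del_x^{I'} v_0$ and $\del_x^{I'} v_1$ with $|I'|\leq|I|\leq N-4$, hence bounded by $C_0\vep$ (initialization of Section~\ref{sec:5} together with the Sobolev inequality); moreover, since $v$ is spatially supported in $\Kcal$, this contribution to $V$ is present only in the region $0\leq r/t\leq 3/5$ (where $s/t\simeq 1$) and is absent when $3/5<r/t<1$, because then the backward ray $\{(\lambda t/s,\lambda x/s)\}$ exits $\Kcal$ before reaching $\Hcal_2$. For the weight, Lemma~\ref{lem h_{t,x}'} gives $\int_{s_0}^s |h_{t,x}'(\lambda)|\,d\lambda \lesssim C_1\vep$, so every exponential factor $e^{C\int_\sbar^s|h_{t,x}'|}$ appearing in $V$ is $\lesssim 1$ and every integral term of the form $\int_{s_0}^s F(\sbar)|h_{t,x}'(\sbar)| e^{\cdots}\,d\sbar$ is bounded by $C_1\vep \sup F$, hence is lower order. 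For $F$ I would bound $\sum_i R_i[\del^I v] \lesssim C_1\vep (s/t)^{3/2} s^{-3/2+4\delta}$ by Lemma~\ref{last 2} and, using Lemma~\ref{lem refine2 commu} with $|J|=0$ (so that its $L^{J_1}u$-terms and the $|J_2|\leq|J|-1$ sum drop out), bound $s^{3/2}|f|$ by $(C_1\vep)^2 (s/t)^{3/2} s^{-3/2+C\delta}$ up to a contribution from the high-order tail $N-7\leq|I_2|\leq N-5$ that is handled by the basic sup-norm estimates of Section~\ref{newsection-6}. Integrating along the ray from $s_0$ to $s$ then yields $F(s)\lesssim C_1\vep (s/t)^{3/2} s_0^{-1/2+4\delta}$.

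With these three inputs, \eqref{Linfty KG ineq a} gives $s^{3/2}|\del^I v| + (t/s)\, s^{3/2}|\newperp\del^I v| \lesssim V$, where $V\lesssim C_1\vep$ in the region $0\leq r/t\leq 3/5$ and $V\lesssim C_1\vep(s/t)^{3/2}s_0^{-1/2+4\delta}$ everywhere. In the cone region $0\leq r/t\leq 3/5$ one has $s_0=2$, $s/t\simeq 1$, $t\simeq s$, so both contributions are $\lesssim C_1\vep s^{-3/2}\simeq C_1\vep (s/t)^{1/2-4\delta} t^{-3/2}$, and the $\newperp$-term is even better by the extra factor $s/t\simeq 1$. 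In the region $3/5<r/t<1$ one has $s_0=\sqrt{(t+r)/(t-r)}\geq t/s$, hence $F(s)\lesssim C_1\vep (s/t)^{3/2}(t/s)^{-1/2+4\delta} = C_1\vep(s/t)^{2-4\delta}$, so dividing by $s^{3/2}=(s/t)^{3/2}t^{3/2}$ gives $|\del^I v|\lesssim C_1\vep(s/t)^{1/2-4\delta}t^{-3/2}$ and, after the extra factor $s/t$, $|\newperp\del^I v|\lesssim C_1\vep(s/t)^{3/2-4\delta}t^{-3/2}$, which are exactly \eqref{ineq prop refine3 W0dg b} and \eqref{ineq prop refine3 W0dg a}.

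The main obstacle is the estimate of the source $f=[H^{\alpha\beta}u\,\del_\alpha\del_\beta,\del^I]v$, and more precisely of its high-order tail: the factor $\del^{I_2}v$ there may carry up to $N-5$ derivatives, so no refined decay is available and one must fall back on the basic second-generation sup-norm estimates, e.g.\ \eqref{decay basic 2ge2} and \eqref{new-formules}, paying attention to the fact that $\newperp$ and $\delu_a$ genuinely improve the decay; the companion factor $\del^{I_1}u$ with $|I_1|\geq 1$ carries only finitely many derivatives and is controlled via \eqref{decay basic ray-integral} and \eqref{ineq lem refine2 KG-W-0 b}. The delicate bookkeeping is to verify that the product of these two, after multiplication by $s^{3/2}$ and integration along the ray, still produces a bound of the admissible form $C_1\vep(s/t)^{3/2}s_0^{-1/2+C\delta}$ with $C$ small enough (recall $\delta<\tfrac1{5N}$) that $C\delta$ does not overwhelm the gain in $s/t$.
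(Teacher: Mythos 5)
Your overall scheme — apply Proposition~\ref{Linfini KG} to $\del^I v$, bound the initial data, the weight $\int|h_{t,x}'|$, the $R_i$ terms, and the commutator source — matches the paper. But there is a genuine gap at exactly the point you yourself flag as the main obstacle: the high-order tail $N-7\leq|I_2|\leq N-5$ in $[H^{\alpha\beta}u\del_\alpha\del_\beta,\del^I]v$ \emph{cannot} be closed by the basic second-generation sup-norm estimates, and the issue is not a bookkeeping of $\delta$'s.

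Concretely, the relevant piece of $T_0$ after Lemma~\ref{lem refine2 commu} is
$C_1\vep\,t^{-3/2}(s/t)^{-3}|\newperp\del_\beta\del^{I_2}v|$ with $N-7\leq|I_2|\leq N-5$. For such $|I_2|$ the only available basic estimate is the high-order one, $|\newperp\del^{I'}v|\lesssim C_1\vep\,t^{-3/2}s^{1/2+2\delta}$ from \eqref{decay basic 1ge1R}, which carries a structural loss of $s^{1/2}$ coming from the Klein-Gordon energy assumption $E_{m,c}(s,\del^IL^Jv)^{1/2}\lesssim C_1\vep s^{1/2+k\delta}$ at high order. Plugging this in gives $s^{3/2}|f|\lesssim (C_1\vep)^2 s^{-1+2\delta}$, and integrating along the ray yields $\int_{s_0}^s\lambda^{-1+2\delta}d\lambda\simeq s^{2\delta}/\delta$, which \emph{grows} in $s$ and is certainly not of the required form $C_1\vep(s/t)^{3/2}s_0^{-1/2+C\delta}$. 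The extra $(s/t)$ factors from $\newperp$ do not cure this: the $s^{1/2}$ loss is in the $s$-variable, not the ratio $s/t$, and cannot be made small by taking $\delta<1/(5N)$. This is precisely why the paper introduces the two-tier energy scheme in the first place, and why the basic estimates cannot self-consistently deliver the refined pointwise rate.

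What the paper actually does here is a \emph{self-referential Gronwall argument}. It defines
$V_{t,x}(\lambda):=(\lambda t/s)^{3/2}\sum_{N-7\leq|I_2|\leq N-5,\beta}|\newperp\del_\beta\del^{I_2}v(\lambda t/s,\lambda x/s)|$,
shows (by feeding the commutator bound into $F$ and then into \eqref{Linfty KG ineq a} applied at each ray point) that
$V_{t,x}(\lambda)\lesssim C_1\vep(s/t)^{3/2-4\delta}+C_1\vep(s/t)^{-1/2}\int_{s_0}^\lambda\sbar^{-3/2}V_{t,x}(\sbar)d\sbar$,
and then applies Gronwall's lemma; the key is that $(s/t)^{-1/2}\int_{s_0}^\lambda\sbar^{-3/2}d\sbar\lesssim(s/t)^{-1/2}s_0^{-1/2}\lesssim 1$ since $s_0\geq t/s$ in the outer region, so the exponential is bounded. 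This closes $V_{t,x}\lesssim C_1\vep(s/t)^{3/2-4\delta}$, which is strictly better than what basic estimates provide for those intermediate orders, and is what ultimately delivers \eqref{ineq prop refine3 W0dg}. Your proposal needs to replace the phrase ``handled by the basic sup-norm estimates'' with this Gronwall-type bootstrap; without it, the argument does not close.
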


\begin{proof}
We first discuss the case where $|I|-1\geq N-7$ and, in this case,  using \eqref{ineq lem refine2 commu} 
$$
\aligned
\big|[H^{\alpha\beta}u\del_\alpha\del_\beta,\del^I]v \big|
\lesssim \, & C_1\vep t^{-3/2}(s/t)^{-3}\sum_{|I_2|\leq |I|-1,\beta\atop N-7\leq |I_2|\leq N-5}|\newperp  \del_\beta \del^{I_2}v|
+ (C_1\vep)^2 (s/t)^{3/2}s^{-3+4\delta}.
\endaligned
$$
For all $\sbar\in[s_0,s]$, using \eqref{lem refine2' R} and the above estimate we have 
$$
\aligned
F(\sbar) \leq \, & \sum_{i=1}^3\int_{s_0}^\sbar|R_i[\del^I v](\lambda t/s,\lambda x/s)| d\lambda
 + \int_{s_0}^\sbar \lambda^{3/2}\big|[H^{\alpha\beta}u\del_\alpha\del_\beta,\del^I]v \big|d\lambda
\\
\lesssim & C_1\vep (s/t)^{3/2}\int_{s_0}^\sbar \lambda^{-3/2+3\delta} \, d\lambda
+ (C_1\vep)^2(s/t)^{3/2}\int_{s_0}^\sbar \lambda^{-3/2+4\delta} d\lambda
\\
&
+ C_1\vep(s/t)^{-3/2}\sum_{|I_2|\leq |I|-1,\beta\atop N-7\leq |I_2|\leq N-5}\int_{s_0}^\sbar|\newperp  \del_\beta \del^{I_2}v(\lambda t/s,\lambda x/s)|d\lambda
\\
\lesssim & C_1\vep(s/t)^{-3/2}\sum_{|I_2|\leq |I|-1,\beta\atop N-7\leq |I_2|\leq N-5}\int_{s_0}^\sbar|\newperp  \del_\beta \del^{I_2}v(\lambda t/s,\lambda x/s)|d\lambda
+ C_1\vep(s/t)^{3/2}s_0^{-1/2+4\delta}.
\endaligned
$$

\noindent {\bf Case I: $3/5<r/t<1$}. In this case, $s_0 = \sqrt{\frac{t+r}{t-r}}\geq t/s$ and we have
$$
F(\sbar) \lesssim C_1\vep (s/t)^{2-4\delta} + C_1\vep(s/t)^{-3/2}\sum_{|I_2|\leq |I|-1,\beta\atop N-7\leq |I_2|\leq N-5}\int_{s_0}^\sbar|\newperp  \del_\beta \del^{I_2}v(\lambda t/s,\lambda x/s)|d\lambda.
$$
We define
$$
V_{t,x}(\lambda) := (\lambda t/s)^{3/2}\sum_{|I_2|\leq |I|-1,\beta\atop N-7\leq |I_2|\leq N-5}|\newperp  \del_\beta \del^{I_2}v(\lambda t/s,\lambda x/s)|
$$
and find
\bel{pr1 prop refine3 W0dg}
F(\sbar)\lesssim C_1\vep (s/t)^{2-4\delta} + C_1\vep \int_{s_0}^\sbar \lambda^{-3/2}V_{t,x}(\lambda) \, d\lambda, \quad s_0\leq \sbar\leq s.
\ee

Recalling the sup-norm estimate for the Klein-Gordon component \eqref{Linfty KG ineq a} in the case $1>r/t>3/5$, we obtain 
$$
|\newperp  \del^I v(t,x)|\leq Cs^{-1/2}t^{-1}\Big(F(s)+\int_{s_0}^sF(\sbar)|h_{t,x}'(\sbar)|e^{\int_{\sbar}^s|h_{t,x}'(\theta)|d\theta} \, d\sbar\Big).
$$
We replace $(t,x)$ by $(\lambda t/s,\lambda x/s)$ with $s_0\leq \lambda \leq s$, we see that $(\lambda t/s,\lambda x/s)$ is again contained in $\Kcal_{[2,s_1]}$. Then \eqref{Linfty KG ineq a} still holds, and so 
$$
|\newperp  \del^I v(\lambda t/s,\lambda x/s)|
\leq C(s/t)\lambda^{-3/2}\Big(F(\lambda)+\int_{s_0}^\lambda F(\sbar)|h_{t,x}'(\sbar)|e^{\int_{\sbar}^\lambda |h_{t,x}'(\theta)|d\theta} \, d\sbar\Big), 
$$
which implies 
$$
(\lambda t/s)^{3/2}|\newperp  \del^I v(\lambda t/s,\lambda x/s)|
\lesssim C_1\vep (s/t)^{-1/2}\Big(F(\lambda)+\int_{s_0}^\lambda F(\sbar)|h_{t,x}'(\sbar)|e^{\int_{\sbar}^\lambda |h_{t,x}'(\theta)|d\theta} \, d\sbar\Big).
$$
Recall that \eqref{ineq lem h_{t,x}'} holds for $1>r/t>3/5$ and that $F$ is increasing, then 
$$
\int_{s_0}^\lambda F(\sbar)|h_{t,x}'(\sbar)|e^{\int_{\sbar}^\lambda |h_{t,x}'(\theta)|d\theta} \, d\sbar
\leq F(\lambda)\int_{s_0}^\lambda  |h_{t,x}'(\sbar)|e^{\int_{\sbar}^\lambda |h_{t,x}'(\theta)|d\theta} \, d\sbar
\lesssim C_1\vep F(\lambda).
$$
So we see that
$$
(\lambda t/s)^{3/2}|\newperp  \del^I v(\lambda t/s,\lambda x/s)|\lesssim C_1\vep(s/t)^{-1/2} F(\lambda)
$$
and, in combination with \eqref{pr1 prop refine3 W0dg},
$$
(\lambda t/s)^{3/2}|\newperp  \del^I v(\lambda t/s,\lambda x/s)|\lesssim C_1\vep (s/t)^{3/2-4\delta} + C_1\vep (s/t)^{-1/2}\int_{s_0}^\lambda \sbar^{-3/2}V_{t,x}(\sbar)d\sbar, 
$$
which implies (by taking sum over $N-6\leq |I|\leq N-4 $):
\bel{pr2 prop refine3 W0dg}
V_{t,x}(\lambda) \lesssim C_1\vep (s/t)^{3/2-4\delta} + C_1\vep (s/t)^{-1/2}\int_{s_0}^\lambda \sbar^{-3/2}V_{t,x}(\sbar)d\sbar.
\ee
Then, by Gronwall lemma, we see that
$$
\aligned
\int_{s_0}^\lambda \sbar^{-3/2}V_{t,x}(\sbar)d\sbar
\lesssim \, & C_1\vep (s/t)^{3/2-4\delta}\int_{s_0}^{\lambda}\sbar^{-3/2}e^{CC_1\vep(s/t)^{-1/2}\int_{s_0}^\sbar\theta^{-3/2}d\theta} \, d\sbar
\\
\lesssim \, & C_1\vep (s/t)^{3/2-4\delta}\int_{s_0}^{\lambda}\sbar^{-3/2}e^{CC_1\vep(s/t)^{-1/2}s_0^{-1/2}} \, d\sbar
\\
\lesssim \, & C_1\vep (s/t)^{3/2-4\delta}s_0^{-1/2}e^{CC_1\vep(s/t)^{-1/2}s_0^{-1/2}}
\endaligned
$$
Here we recall that $s_0 = \sqrt{\frac{t+r}{t-r}}\geq t/s$, then
\bel{pr3 prop refine3 W0dg}
V_{t,x}(\lambda)\lesssim C_1\vep (s/t)^{3/2-4\delta}.
\ee

Now we substitute \eqref{pr3 prop refine3 W0dg} into \eqref{pr1 prop refine3 W0dg}, and obtain 
\be
F(\sbar) \lesssim C_1\vep (s/t)^{2-4\delta},\quad s_0\leq \sbar \leq s.
\ee
Then we apply the sup-norm estimate \eqref{Linfty KG ineq a} in the case $1>r/t>3/5$ and considering \eqref{ineq lem h_{t,x}'},
\be
|\newperp \del^I v(t,x)|\lesssim C_1\vep (s/t)^{3-4\delta}s^{-3/2},\qquad |\del^I v(t,x)|\lesssim C_1\vep (s/t)^{2-4\delta}s^{-3/2}.
\ee

\

\noindent{\bf Case II: $0\leq r/t\leq 3/5$}.
In this case, $4/5\leq s/t\leq 1$ and $s_0 = 2$, so the discussion is simpler. We just remark that as in the former case,
$$
\aligned
F(\sbar)\lesssim & C_1\vep (s/t)^{3/2}s_0^{-1/2+6\delta}
+ C_1\vep(s/t)^{-3/2}\sum_{|I_2|\leq |I|-1,\beta\atop N-7\leq |I_2|\leq N-5}\int_{s_0}^\sbar|\newperp  \del_\beta \del^{I_2}v(\lambda t/s,\lambda x/s)|d\lambda
\\
\lesssim &C_1\vep + C_1\vep \int_2^\sbar \lambda^{-3/2} V_{t,x}(\lambda) \, d\lambda.
\endaligned
$$
Then by the sup-norm estimate \eqref{Linfty KG ineq a} (with $0\leq r/t\leq 3/5$),
$$
\aligned
|\newperp \del^I v(t,x)|\lesssim
 \, & C_0\vep t^{-3/2}\Big(1+\int_2^s|h_{t,x}'(\sbar)e^{C\int_{\sbar}^s|h_{t,x}'(\theta)d\theta|}| \, d\sbar\Big)
\\
&+ t^{-3/2}\Big(F(s)+\int_2^sF(\sbar)|h_{t,x}'(\sbar)|e^{C\int_\sbar^s\theta|h_{t,x}'(\theta)|d\theta} \, d\sbar\Big).
\endaligned
$$
Then similar to the former case, we get (recall \eqref{ineq lem h_{t,x}'})
$$
|(\lambda t/s)^{3/2}\newperp \del^I v(t,x)|\lesssim
(C_0+C_1)\vep 
+ C_1\vep \int_2^\lambda \sbar^{-3/2}V_{t,x}(\sbar)d\sbar
\lesssim C_1\vep + C_1\vep \int_2^\lambda\sbar^{-3/2}V_{t,x}(\sbar)d\sbar,
$$
provided by $C_1\geq C_0$, which implies
$$
V_{t,x}(\lambda)\lesssim C_1\vep + C_1\vep \int_2^\lambda \sbar^{-3/2}V_{t,x}(\sbar)d\sbar
$$
Then, Gronwall lemma implies
$V_{t,x}(\lambda)\lesssim C_1\vep$
and, therefore,
$$
|\newperp \del^I v(t,x)|\lesssim C_1\vep t^{-3/2}\lesssim C_1 (s/t)^{3/2-4\delta}t^{-3/2}.
$$
And again, as in the former case, we see that
$
|\del^I v(t,x)|\lesssim C_1 (s/t)^{1/2-4\delta}t^{-3/2}.
$
 
When $|I|-1<N-7$, we see that in this case
$$
\big|[H^{\alpha\beta}u\del_\alpha\del_\beta,\del^I]v \big| \lesssim 
(C_1\vep)^2 (s/t)^{3/2}s^{-3+4\delta}.
$$
A direct application of the sup-norm estimate \eqref{Linfty KG ineq a} combined with \eqref{lem refine2' R} will give the estimate on $\del^I v$ and $\del_{\alpha}\del^I v$.
Finally, combining these two cases, we see that the desired estimates are established.
\end{proof}


\subsection{Third improvement on the wave and Klein-Gordon components}

We now establish \eqref{decay refined kde}, by combining the sup-norm estimate for the Klein-Gordon equation \eqref{Linfty KG ineq a} and the sup-norm estimate for the wave equation \eqref{Linfini wave ineq}, together with an additional bootstrap argument.

\begin{proposition}[Third improvement on the wave and Klein-Gordon components]
\label{prop refine4 kde} 
There exist constants $C,\vep_2>0$ (depending only on $N \geq 8$ and the structure of the model system \eqref{eq main}) such that if the bootstrap assumption \eqref{ineq energy assumption} holds for $\vep\leq \vep_2$ and $C_1\vep\leq 1$, then the following estimates also hold for all $s\in[2,s_1]$ and $|I|+|J|\leq N-4$, $|J| = k$:
\begin{subequations}\label{ineq prop refine4 kde}
\bel{ineq prop refine4 kde a}
\sup_{\Hcal_s}\big(t|L^J u|\big)\lesssim C_1\vep s^{k\delta},
\ee
\bel{ineq prop refine4 kde b}
\sup_{\Hcal_s}\big((s/t)^{-3+7\delta}s^{3/2}|\newperp \del^IL^J v|\big) + \sup_{\Hcal_s}\big((s/t)^{-2+7\delta}s^{3/2}|\del^IL^J v| \big)\lesssim C_1\vep s^{k\delta},
\ee
\bel{ineq prop refine4 kde c}
\sup_{\Hcal_s}\big((s/t)^{-1+7\delta}s^{3/2}|\del_\alpha \del^IL^J v|\big)\lesssim C_1\vep s^{k\delta}.
\ee
\end{subequations}
\end{proposition}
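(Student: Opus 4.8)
The plan is to prove \eqref{ineq prop refine4 kde} by induction on the boost order $k=|J|$, the case $k=0$ being \eqref{decay refined 0de} (already obtained in Propositions~\ref{lem refine2 KG-W} and \ref{prop refine3 W0dg}, which is at least as strong as the $k=0$ instance). In the induction step we fix $k\geq1$, assume \eqref{ineq prop refine4 kde} at all smaller boost orders, and run an additional continuity (bootstrap) argument in the hyperbolic time: on a maximal interval $[2,\tau]\subseteq[2,s_1]$ we suppose \eqref{ineq prop refine4 kde} holds at order $k$ with $C_1\vep$ replaced by a large universal multiple, and we aim to recover the factor $C_1\vep$, which forces $\tau=s_1$. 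This extra layer is needed because the three order-$k$ estimates are coupled: \eqref{ineq prop refine4 kde c} is read off from \eqref{ineq prop refine4 kde b}; \eqref{ineq prop refine4 kde b} relies on the commutator estimate of Lemma~\ref{lem refine2 commu}, whose leading term is again of boost order $k$; and \eqref{ineq prop refine4 kde a} uses the refined Klein-Gordon decay on one of the factors in the wave source. Throughout we feed a factor into a refined sup-norm estimate only when it is of lower boost order (induction) or is the bootstrapped quantity, and all other factors are controlled by the basic sup-norm estimates of Sections~\ref{subsec basic Linfini 1}--\ref{subsec basic Linfini 2}; the margin $N\geq8$ is what makes this split consistent.

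For the wave component one starts from $-\Box L^Ju=L^J\big(P^{\alpha\beta}\del_\alpha v\del_\beta v\big)+R\,L^J(v^2)$, commutes $L^J$ past $\del_\alpha$ (by \eqref{pre lem commutator pr2}), and controls the resulting products $\del_\alpha\del^{I_1}L^{J_1}v\,\del_\beta\del^{I_2}L^{J_2}v$ (with $I_1=I_2=0$) by the refined Klein-Gordon decay of \eqref{ineq prop refine4 kde c}, the lower-order factor by induction or the base case, the top-order one by the bootstrap assumption. The point that makes the bound close is to rewrite each weight $(s/t)^{a}s^{b}$ \emph{exactly} in terms of $t-r$ and $t$, using $s^{2}=(t-r)(t+r)$ with $t\leq t+r\leq 2t$, rather than bounding $(s/t)\leq1$ or $t\leq s^{2}$; this expresses the source in the form $C(C_1\vep)^{2}\,t^{-2-\nu}(t-r)^{-1+\mu}$ with $0<\mu\leq1/2$, $0<|\nu|\leq1/2$ and $\mu-\nu=k\delta$, to which Proposition~\ref{Linfini wave} applies and (together with the $t^{-3/2}$ decay of the free evolution of the data for $L^Ju$, as in Proposition~\ref{lem refine1 W}) yields $|L^Ju|\lesssim C_{0}\vep\,t^{-3/2}+(C_1\vep)^{2}(t-r)^{k\delta}t^{-1}$. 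Since $t-r\leq s$ on $\Hcal_s$, this gives $t|L^Ju|\lesssim (C_1\vep)^{2}s^{k\delta}$, improving \eqref{ineq prop refine4 kde a}.

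For the Klein-Gordon component we treat $\del^IL^Jv$ as a solution of $-\Boxt_{g}(\del^IL^Jv)+c^{2}\del^IL^Jv=f$ with $h^{\alpha\beta}=H^{\alpha\beta}u$ and $f=[H^{\alpha\beta}u\del_\alpha\del_\beta,\del^IL^J]v$; the bound $|u|\lesssim C_1\vep\,t^{-1}$ of Proposition~\ref{lem refine2 KG-W} ensures $\sup|\hb^{00}|\leq1/3$, so Proposition~\ref{Linfini KG} may be applied along the radial ray through $(t,x)$. We insert $\int_{s_0}^{s}|h'_{t,x}|\,d\sbar\lesssim C_1\vep$ (Lemma~\ref{lem h_{t,x}'}), the bound on $\sum_i R_i[\del^IL^Jv]$ (Lemma~\ref{last 2}), and the decomposition of $f$ (Lemma~\ref{lem refine2 commu}). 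The terms in $f$ of boost order $<k$ and the products containing $|L^{J_1}u|$ with $|J_1|\geq1$ (whose remaining $v$-factor is then of boost order $<k$) are controlled by the induction hypothesis, by \eqref{ineq prop refine4 kde a} just proven, and by the basic estimates, with time decay to spare; the only genuinely order-$k$ contribution, of the form $\newperp\del_\beta\del^{I_2}L^{J_2}v$ with $N-7\leq|I_2|+|J_2|\leq N-5$, is collected---exactly as in the proof of Proposition~\ref{prop refine3 W0dg}---into the auxiliary quantity $V_{t,x}(\lambda):=(\lambda t/s)^{3/2}\sum|\newperp\del_\beta\del^{I_2}L^{J_2}v(\lambda t/s,\lambda x/s)|$, for which \eqref{Linfty KG ineq a}, the previous bounds, and the inequality $s_0\geq t/s$ valid when $3/5<r/t<1$ (resp. $s_0=2$, $s/t\sim1$ when $0\leq r/t\leq3/5$) produce a closed integral inequality $V_{t,x}(\lambda)\lesssim C_1\vep\,(s/t)^{3-7\delta}s^{k\delta}+C_1\vep(s/t)^{-1/2}\int_{s_0}^{\lambda}\sbar^{-3/2}V_{t,x}(\sbar)\,d\sbar$; Gronwall's lemma then bounds $V_{t,x}$, and substituting back into $F(s)$ and then into \eqref{Linfty KG ineq a} gives \eqref{ineq prop refine4 kde b} (when $|I|+|J|<N-6$ the order-$k$ contribution is absent and the estimate follows directly, with no Gronwall step).

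Finally \eqref{ineq prop refine4 kde c} follows from \eqref{ineq prop refine4 kde b} by writing $\del_a=-\tfrac{tx^a}{s^2}\newperp+\tfrac{x^ax^b}{t^2}\delu_b+\delu_a$ and $\delu_b=t^{-1}L_b$: the $\newperp$-term contributes $(t^2/s^2)|\newperp\del^IL^Jv|\lesssim C_1\vep(s/t)^{1-7\delta}s^{-3/2+k\delta}$, which is precisely the claimed bound, while the $\delu_b$-terms are handled by the basic bounds \eqref{decay basic 2ge2} and are dominated for $\delta$ small. Closing the additional bootstrap yields $\tau=s_1$, and collecting the three improvements completes the induction step. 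The main obstacle is not any single inequality but the combination of two bookkeeping constraints: keeping every multi-index produced by the commutations within the range in which the relevant refined or basic estimate is available (this is where the assumption $N\geq8$ is used), and tracking the $(s/t)$-weights exactly---never trading them for powers of $s$ via $t<s^{2}$---so that the residual $(t-r)$-power at the end of the wave estimate and the residual $(s/t)$-power at the end of the Klein-Gordon estimate can be absorbed into the admissible growth $s^{k\delta}$; setting up $V_{t,x}$ so that the Gronwall argument closes with precisely these weights is the delicate point.
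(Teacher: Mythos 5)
Your proposal reproduces the paper's overall architecture faithfully: the outer induction on the boost order $k$ with base case $k=0$ supplied by Propositions \ref{lem refine2 KG-W} and \ref{prop refine3 W0dg}, the inner continuity argument in hyperbolic time (the paper's $s_{2,k}$, your $\tau$) to handle the coupling among \eqref{ineq prop refine4 kde a}--\eqref{ineq prop refine4 kde c}, the exact rewriting of the wave source in the form $t^{-2-\nu}(t-r)^{-1+\mu}$ with $\mu-\nu=k\delta$ so that Proposition \ref{Linfini wave} applies, the use of Lemma \ref{lem h_{t,x}'} and Lemma \ref{last 2} in conjunction with Proposition \ref{Linfini KG}, and the change-of-frame identity \eqref{ineq Ur-partial} to obtain \eqref{ineq prop refine4 kde c} from \eqref{ineq prop refine4 kde b}. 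All of this is the same as the paper, with the same range bookkeeping (the $N\geq 8$ margin, the case split $|I|+|J|\lessgtr N-7$, the $s_0\geq t/s$ observation in the region $3/5<r/t<1$).

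The one place where you genuinely diverge from the paper is the treatment of the order-$k$ piece of the commutator, namely the sum over $\newperp\del_\beta\del^{I_2}L^{J_2}v$ with $N-7\leq|I_2|+|J_2|\leq N-5$ and $|J_2|\leq k$. You propose to close this via the auxiliary quantity $V_{t,x}$ and Gronwall's lemma, mirroring the proof of Proposition \ref{prop refine3 W0dg}. The paper does \emph{not} use Gronwall here: since $\newperp\del_\beta\del^{I_2}L^{J_2}v=\newperp\del^{I_2'}L^{J_2}v$ with $|I_2'|+|J_2|\leq N-4$ and $|J_2|\leq k$, this quantity is exactly what the inner bootstrap hypothesis on $V_{k,1}$ controls, so the paper substitutes the hypothesized bound $V_{k,1}\leq C_kC_1\vep s^{k\delta}$ into \eqref{pr5 porp refine4 kdg} directly; the resulting quadratic factor $(C_kC_1\vep)^2$ is then absorbed in the improvement step by taking $\vep$ small. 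In other words, once the inner bootstrap is in place the Gronwall step is available but superfluous — that mechanism is reserved in the paper for Proposition \ref{prop refine3 W0dg}, where there is no $k$-induction and hence no pre-existing bootstrap hypothesis to appeal to. Your hybrid argument is not incorrect (the Gronwall inequality you write down does close, with the exponential factor $e^{CC_1\vep}$ absorbed), but it does more work than is needed, and you should be aware that it sits inside an inner bootstrap that already gives you, for free, the bound your Gronwall step is re-deriving.

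One small inaccuracy to flag: you assert $0<\mu\leq 1/2$ for the wave-source exponents. With $\mu=1/2-7\delta+k\delta/2$, this requires $k\leq 14$, which holds whenever $N\leq 18$; for larger $N$ the value of $\mu$ could exceed $1/2$ slightly and Proposition \ref{Linfini wave} as stated would not literally apply. This is a (minor, easily repairable) gap present in the paper as well and does not affect the structure of the argument, but your claim $\mu\leq 1/2$ should be stated with the caveat that it holds in the range of $N,\delta$ actually used.
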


Furthermore, we see that by the commutator estimates in Proposition \ref{lem commutator esti I}, the following refined decay estimates are a direct consequence of \eqref{ineq prop refine4 kde c}:
\bel{ineq prop refine4 kde c 1}
\sup_{\Hcal_s}\big((s/t)^{1/2+7\delta}t^{3/2}|\del^IL^J \del_\alpha v|\big)\lesssim 
C_1 \vep s^{k\delta},\qquad |I|+|J|\leq N-4, |J|=k,
\ee
\bel{ineq prop refine4 kde c 2}
\sup_{\Hcal_s}\big((s/t)^{1/2+7\delta}t^{3/2}|\del^IL^J \del_\alpha\del_\beta v|\big)\lesssim
 C_1\vep s^{k\delta},\qquad |I|+|J|\leq N-5,|J|=k,
\ee
\bel{ineq prop refine4 kde c 3}
\sup_{\Hcal_s}\big((s/t)^{-3/2+7\delta}t^{3/2}|\del^IL^J \del_\alpha v|\big)\lesssim 
C_1 \vep s^{k\delta},\qquad |I|+|J|\leq N-5, |J|=k,
\ee
\bel{ineq prop refine4 kde c 4}
\sup_{\Hcal_s}\big((s/t)^{-3/2+7\delta}t^{3/2}|\del^IL^J \del_\alpha\del_\beta v|\big)\lesssim
C_1  \vep s^{k\delta},\qquad |I|+|J|\leq N-6,|J|=k.
\ee

\begin{proof} We proceed by induction on $|J|$ and introduce the notation
$$
\aligned
&V_{k,0}(\lambda) : = \sup_{2\leq s \leq \lambda,|J|\leq k\atop |I|+|J|\leq N-4}
\sup_{\Hcal_s}\big((s/t)^{-2+7\delta}s^{3/2}|\del^IL^Jv|\big),
\\
&V_{k,1}(\lambda) : = \sup_{2\leq s \leq \lambda,|J|\leq k\atop |I|+|J|\leq N-4}
\sup_{\Hcal_s}\big((s/t)^{-3+7\delta}s^{3/2}|\newperp \del^IL^Jv|\big),
\endaligned
$$
and, with $|J| \leq k$, 
$
U_k(\lambda) : =\sup_{2\leq s \leq \lambda\atop |J|\leq k}\sup_{\Hcal_s}\big(t|L^J u| \big).
$
To begin with, we observe that by \eqref{ineq prop refine3 W0dg} and \eqref{ineq lem refine2 KG-W-0 b}, there exists a positive constant $C$ determined by the structure of the system \eqref{eq main} such that
$$
V_{0,0}(\lambda)\lesssim C_1\vep, \qquad V_{0,1}(\lambda)\lesssim C_1\vep, \qquad U_0(\lambda)\lesssim C_1\vep,
$$
That is, \eqref{ineq prop refine4 kde} is proved in the case where $k=0$.

Then we suppose that for all $0\leq j\leq k-1\leq N-5$, there exists a (sufficient large) constant $C_{k-1}$ depending only on the structure of the system \eqref{eq main} and a positive constant $\vep_{k-1}'$ such that for all $\vep \leq \vep_{k-1}'$,
\bel{pr0 porp refine4 kdg}
V_{j,0}(s)\leq C_{k-1}C_1\vep s^{j\delta},\qquad V_{j,1}(s)\leq C_{k-1}C_1\vep s^{j\delta},\qquad U_j\leq C_{k-1}C_1\vep s^{j\delta}
\ee
hold on $[2,s_1]$ with $C_{k-1}$ depending only on $k$ and the structure of \eqref{eq main}. Then we will prove that there exists a pair of positive constant $(C_k,\vep'_k)$ depending only on $N$ and the structure of the model system \eqref{eq main} such that if \eqref{ineq energy assumption} holds with $\vep\leq \vep_k'$ and $C_1\vep\leq 1$, then
\bel{pr0.5 porp refine4 kdg}
V_{k,0}(s)\leq  C_kC_1\vep s^{k\delta},\qquad V_{k,1}(s)\leq  C_kC_1\vep s^{k\delta},\qquad  U_k \leq C_kC_1\vep s^{k\delta}.
\ee

We rely on a bootstrap argument. First, we observe that on the initial hyperboloid $\Hcal_2$, there exists a positive constant $C_0>0$ such that
\be
\aligned
\max_{|I|+|J|\leq N-4 \atop |J|\leq k}\sup_{\Hcal_2}\big((2/t)^{-1/2+7\delta}t^{3/2}|\del^IL^J v|\big)
& \leq C_{0,k}C_1\vep,
\\
\max_{|I|+|J|\leq N-4 \atop |J|\leq k}\sup_{\Hcal_2}\big((2/t)^{-3/2+7\delta}t^{3/2}|\newperp \del^IL^J v|\big)
& \leq C_{0,k}C_1\vep,
\\
\max_{|J|\leq k}\sup_{\Hcal_2}\big(t|L^J u|\big)
& \leq C_{0,k}C_1\vep.
\endaligned
\ee
We choose $C_k>C_{0,k}$ and set
$$
\aligned
s_{2,k} := \sup\Big\{s\in[2,s_1]\,  \ & \,V_{k,0}(s)\leq C_{k}C_1\vep s^{k\delta},
\\
&
V_{k,1}(s)\leq C_{k}C_1\vep s^{k\delta}, \qquad U_k\leq C_{k}C_1\vep s^{k\delta} \Big\}.
\endaligned
$$
By continuity, we have $s_{2,k}>2$. We will prove that for all sufficiently large constant $C_k\geq \max\{C_{0,k},C_{k-1},1\}$ the following bounds hold on $[2,s_{2,k}]$: 
\bel{pr2 prop refine4 kdg}
V_{k,0}(s)\leq \frac{1}{2}C_kC_1\vep s^{k\delta},\qquad
V_{k,1}(s)\leq \frac{1}{2}C_kC_1\vep s^{k\delta},\qquad
U_k\leq \frac{1}{2}C_kC_1\vep s^{k\delta}
\ee
for sufficiently small $\vep$. Once this is proven, we conclude that $s_{2,k} = s_1$. Namely, proceeding by contradiction, we see that
in the opposite case at $s_{2,k}<s_1$, at least one of the following conditions must hold:
$$
V_{k,0}(s)= C_{k}C_1\vep s^{k\delta},\qquad
V_{k,1}(s)= C_{k}C_1\vep s^{k\delta},\qquad
U_k= C_{k}C_1\vep s^{k\delta},
$$
which contradicts the improved estimates \eqref{pr2 prop refine4 kdg}.

It remains to establish \eqref{pr2 prop refine4 kdg} and we derive first the following estimate for $|I|+|J|\leq N-4$, $|J| = j \leq k$
(again provided $2 \leq s \leq s_{2,k}$) 
\begin{subequations}\label{pr3 prop refine4 kdg}
\bel{pr3 prop refine4 kdg a}
\big|\del^IL^J v\big|\lesssim C_{k}C_1\vep (s/t)^{2-7\delta}s^{-3/2+j\delta},
\ee
\bel{pr3 prop refine4 kdg b}
\big|\del_\alpha\del^IL^J v\big|\lesssim C_{k}C_1\vep (s/t)^{1-7\delta}s^{-3/2+j\delta},
\ee
\bel{pr3 prop refine4 kdg c}
\big|L^J u\big|\lesssim C_{k}C_1\vep t^{-1}s^{j\delta}.
\ee
\end{subequations}
The derivation of
\eqref{pr3 prop refine4 kdg a} is direct from the decay assumption \eqref{pr0 porp refine4 kdg} 
and the induction assumption \eqref{pr0 porp refine4 kdg}, while \eqref{pr3 prop refine4 kdg b} follows directly from \eqref{ineq Ur-partial}, the decay assumption \eqref{pr0.5 porp refine4 kdg} or the induction assumption \eqref{pr0 porp refine4 kdg}:
$$
\aligned
\big|\del_\alpha\del^IL^J v\big|\lesssim
 \, & (s/t)^{-2}\big|\newperp \del^IL^J v\big| + (s/t)^{-2}\sum_a \big|\delu_a\del^IL^Jv\big|
\\
\lesssim &C_kC_1\vep (s/t)^{-2}(s/t)^{3-7\delta}s^{-3/2+j\delta} + C_1\vep (s/t)^{-2}t^{-5/2}s^{1/2+(j+3)\delta}
\\
\lesssim &C_kC_1\vep (s/t)^{1-7\delta}s^{-3/2+j\delta} + C_1\vep (s/t)^{1/2}s^{-2+(j+3)\delta}
\\
\lesssim &C_kC_1\vep (s/t)^{1-7\delta}s^{-3/2+j\delta},
\endaligned
$$
where the first equation in \eqref{decay basic 2ge2} was used. 
On the other hand,
\eqref{pr3 prop refine4 kdg c} is also direct from \eqref{pr0 porp refine4 kdg} and \eqref{pr0.5 porp refine4 kdg}.

Then we need the following two estimates for $|I|+|J|\leq N-4$, $|J| = k$:
\bel{pr4 prop refine4 kdg}
\aligned
\big|\del^IL^J\big(\del_\alpha v\del_\beta v\big)\big| + \big|\del^IL^J\big(v^2\big)\big|
\lesssim (C_kC_1\vep)^2t^{-2-(1/2 - 7\delta + k\delta/2)}(t-r)^{-1+(1/2 - 7\delta + k\delta/2)},
\endaligned
\ee
\bel{pr5 porp refine4 kdg}
\aligned
\big|[H^{\alpha\beta}u\del_\alpha\del_\beta,\del^IL^J]v\big|
\lesssim &(C_kC_1\vep)^2 (s/t)^{2-7\delta}s^{-5/2+k\delta}.
\endaligned
\ee
The estimate \eqref{pr4 prop refine4 kdg} follows directly from \eqref{pr3 prop refine4 kdg}. We see that
$$
\aligned
\big|\del^IL^J(v^2)\big|\leq \, & \sum_{I_1+I_2=I\atop J_1+J_2=J}\big|\del^{I_1}\del^{J_1}v\del^{I_2}L^{J_2}v\big|
\\
\lesssim \, & C_kC_1\vep (s/t)^{2-7\delta}s^{-3/2+|J_1|\delta} C_kC_1\vep(s/t)^{2-7\delta}s^{-3/2+|J_2|\delta}
\\
\simeq& (C_kC_1\vep)^2(s-r)^{-1 +(1/2-7\delta+k\delta/2)} t^{-2 - (1/2-7\delta -k\delta/2)} 
\endaligned
$$
and
$$
\aligned
\big|\del^IL^J\big(\del_\alpha v\del_\beta v\big)\big|
\leq \, & \sum_{I_1+I_2=I\atop J_1+J_2=J}\big|\del^{I_1}L^{J_1}\del_\alpha v \del^{I_2}L^{J_2}\del_\beta v\big|
\lesssim (C_kC_1\vep)^2(s/t)^{2-14\delta}s^{-3+k\delta}
\\
\simeq &(C_kC_1\vep)^2t^{-2-(1/2-7\delta-k\delta/2)}(t-r)^{-1+(1/2-7\delta + k\delta/2)}.
\endaligned
$$

The estimate of \eqref{pr5 porp refine4 kdg} is also direct by substituting \eqref{pr3 prop refine4 kdg}. We recall \eqref{ineq lem refine2 commu} an write 
$$
\aligned
&\big|[H^{\alpha\beta}u\del_\alpha\del_\beta,\del^IL^J]v\big|
\\
&\lesssim C_1\vep(s/t)^{-2}t^{-1}  C_k\vep (s/t)^{3-7\delta}s^{-3/2+k\delta}
 + (C_k\vep)^2 \sum_{|J_1|+|J_2|\leq |J|}t^{-1}s^{|J_1|\delta} (s/t)^{1-7\delta}s^{-3/2+|J_2|\delta}
\\
&\quad +C_1\vep t^{-3/2}(s/t)^{-3}  C_k\vep (s/t)^{3-7\delta}s^{-3/2+k\delta} + (C_1\vep)^2 (s/t)^{3/2}s^{-3+(k+4)\delta}
\\
&\lesssim (C_kC_1\vep)^2(s/t)^{2-7\delta}s^{-5/2+k\delta},
\endaligned
$$
where we have assumed that $C_k\geq C_1$.

Now we substitute \eqref{pr4 prop refine4 kdg} into \eqref{Linfini wave ineq} and find that (similar to the proof of Proposition \ref{lem refine1 W})  
\bel{pr6 porp refine4 kdg}
\big|\del^IL^J u\big|\lesssim C_{0,k}C_1\vep t^{-3/2} + (C_kC_1\vep)^2(s/t)^{k\delta} t^{-1}s^{k\delta}, 
\ee
which is
\bel{pr6 porp refine4 kdg'}
U_k(s) \lesssim C_{0,k}C_1\vep + (C_kC_1\vep)^2 s^{k\delta}.
\ee
On the other hand, the estimate on $|\del^IL^J v|$ and $\big|\del_\alpha\del^IL^J v\big|$ is a bit more difficult. We see that
$$
\aligned
F(\sbar)\leq \int_{s_0}^\sbar \sum_{i=1}^3 R_i[\del^IL^Jv](\lambda t/s,\lambda x/s) \, d\lambda
+ \int_{s_0}^\sbar \lambda^{3/2}\big|[H^{\alpha\beta}u\del_\alpha\del_\beta,\del^IL^J]v\big|(\lambda t/s, \lambda x/s) \, d\lambda.
\endaligned
$$

By the sup-norm estimate \eqref{Linfty KG ineq a}, in the region $\Kcal \cap \{3/5<r/t<1\}$, recall that $s_0\geq C t/s$, we can calculate each term in the right-hand side of the above inequality with that aid of \eqref{pr5 porp refine4 kdg} and \eqref{ineq refine2 R} and find that
$$
\aligned
\big|F(s)\big|\lesssim
 \, & C_1\vep (s/t)^{2-7\delta-k\delta} + (C_kC_1\vep)^2(s/t)^{2-7\delta}s^{k\delta}
\\
\lesssim \, & C_1\vep(s/t)^{2-7\delta}s^{k\delta} + (C_kC_1\vep)^2(s/t)^{2-7\delta}s^{k\delta}
\simeq
\big(C_1\vep + (C_kC_1\vep)^2\big)(s/t)^{2-7\delta}s^{k\delta}.
\endaligned
$$
Then, we apply the sup-norm estimate \eqref{Linfty KG ineq a} with \eqref{ineq lem h_{t,x}'} and by the same procedure in the proof of Proposition \eqref{lem refine2 KG-W}, we conclude that when $3/5<r/t<1$,
\begin{subequations}\label{pr7 porp refine4 kdg}
\bel{pr7 porp refine4 kdg a}
|\newperp \del^IL^J v(t,x)|\lesssim 
\big(C_1\vep + (C_kC_1\vep)^2\big)(s/t)^{3-7\delta}s^{-3/2 + k\delta},
\ee
\bel{pr7 porp refine4 kdg b}
|\del^IL^J v(t,x)|\lesssim 
\big(C_1\vep + (C_kC_1\vep)^2\big)(s/t)^{2-7\delta}s^{-3/2 + k\delta}.
\ee
\end{subequations}

When $0\leq r/t\leq 3/5$, we see that $4/5\leq s/t\leq 1$, then
$$
F(s)\lesssim 
\big(C_1\vep+(C_kC_1\vep)^2\big)t^{-3/2}s^{k\delta}.
$$
Then, also by the sup-norm estimate \eqref{Linfty KG ineq a} and \eqref{ineq lem h_{t,x}'}, we find that
\begin{subequations}\label{pr8 porp refine4 kdg}
\bel{pr8 porp refine4 kdg a}
|\newperp \del^IL^Jv|\lesssim 
(C_{0,k}+1) C_1\vep (s/t)^{3-7\delta}s^{-3/2 + k\delta} + (C_kC_1\vep)^2 (s/t)^{3-7\delta}s^{-3/2 + k\delta},
\ee
\bel{pr8 porp refine4 kdg b}
|\del^IL^Jv|\lesssim 
(C_{0,k}+1)C_1\vep (s/t)^{2-7\delta}s^{-3/2+ k\delta} + (C_kC_1\vep)^2 (s/t)^{2-7\delta}s^{-3/2+k\delta},
\ee
where we recall that $C_1>C_0$
\end{subequations}


Then we conclude that there exists a positive constant $\Cbar$ determined only by the structure of the system \eqref{eq main} such that
\begin{subequations}\label{pr8 porp refine4 kdg'}
\bel{pr8 porp refine4 kdg a'}
V_{k,1}(s)\leq \Cbar (C_{0,k}+1)C_1\vep s^{k\delta} + \Cbar(C_kC_1\vep)^2s^{k\delta},
\ee
\bel{pr8 porp refine4 kdg b'}
V_{k,0}(s)\leq \Cbar(C_{0,k}+1)C_1\vep s^{k\delta} + \Cbar(C_kC_1\vep)^2s^{k\delta}.
\ee
\end{subequations}
Now we consider together \eqref{pr6 porp refine4 kdg'} and \eqref{pr8 porp refine4 kdg'} and see that if $C_k>2C(C_{0,k}+1)$, then we can take 
$
\vep_k' := \frac{C_k-2\Cbar(C_{0,k}+1)}{2\Cbar C_k^2C_1}.
$
Then we find that
$$
V_{0,k}(s)\leq \frac{1}{2}C_kC_1 \vep s^{k\delta}, \qquad V_{1,k}(s)\leq \frac{1}{2}C_kC_1 \vep s^{k\delta},\qquad  U_k(s)\leq \frac{1}{2}C_kC_1 \vep s^{k\delta},
$$
for all $\vep \leq \vep_k'$. This conclude that $s_{2,k} = s_2$ so the case $|J|=k$ is proven. Then by induction we see that for all $k\leq N-4$, \eqref{pr0 porp refine4 kdg} is established.  Then taking $\vep_2 := \min_{k\leq N-4}\{\vep_k'\}$ and $C_\infty: = \max_{k\leq N-4}{C_k}$, we see that \eqref{ineq prop refine4 kde a} and \eqref{ineq prop refine4 kde b} are established for all $k\leq N-4$ and, more precisely, 
\begin{subequations}
\label{ineq prop refine4 kde M}
\bel{ineq prop refine4 kde a M}
\sup_{\Hcal_s}\big(t|L^J u|\big)\leq C_\infty C_1\vep s^{k\delta},
\ee
\bel{ineq prop refine4 kde b M}
\sup_{\Hcal_s}\big((s/t)^{-3+7\delta}s^{3/2}|\newperp \del^IL^J v|\big) + \sup_{\Hcal_s}\big((s/t)^{-2+7\delta}s^{3/2}|\del^IL^J v| \big)\leq C_\infty C_1\vep s^{k\delta}.
\ee
\bel{ineq prop refine4 kde c M}
\sup_{\Hcal_s}\big((s/t)^{-1+7\delta}s^{3/2}|\del_\alpha \del^IL^J v|\big)\leq C_\infty C_1\vep s^{k\delta}.
\ee
\end{subequations} 
From its definition, we see that $C_\infty$ is determined only from the structure of the system and therefore, we have proven \eqref{ineq prop refine4 kde}. 
\end{proof}


\section{Refined energy estimate and completion of the bootstrap argument}

\subsection{Overview}

In this section, we derive the improved energy estimates \eqref{ineq energy assumption'} which concludes the main result. The improved estimates are classified in two categories. The first refers to the energy estimates of order higher than or equal to $N-3$, the second refers to those of order lower that or equal to $N-4$.

First, we apply $\del^IL^J$ (with $|I|+|J|\leq N$) to our system of equations
\be
-\Box \del^IL^J u = P^{\alpha\beta}\del^IL^J\big( \del_\alpha v\del_\beta v\big) + R\del^IL^J(v^2),
\ee
\be
-\Box \del^IL^J v + c^2\del^IL^J v + H^{\alpha\beta}u\del_\alpha\del_\beta \del^IL^Jv = [H^{\alpha\beta}u\del_\alpha\del_\beta,\del^IL^J]v.
\ee
To be able to apply the energy estimate (Proposition \ref{prop energy}), we need first to check
 \eqref{ineq coersive 1} and \eqref{ineq coersive 1'}.

\begin{lemma}
\label{lem energy coef}
There exists a positive constant $\vep_0$ such that if the energy assumption \eqref{ineq energy assumption} is valid with $C_1\vep\leq 1$ and $\vep\leq \vep_0$, then the following estimates hold:
\bel{ineq lem energy coef 1}
\frac{1}{2}E_{m,c}\leq E_{g,c}\leq 2E_{m,c},
\ee
\bel{ineq lem energy coef 2}
\aligned
& \int_{\Hcal_s}(s/t)|\del_\alpha h^{\alpha\beta}\del_t\del^IL^Jv \del_\beta\del^IL^J v|dx
 + \int_{\Hcal_s}(s/t)|\del_th^{\alpha\beta}\del_\alpha\del^IL^Jv\del_\beta\del^IL^J v|dx
\\
& \lesssim M(s) E(s,\del^IL^J v)^{1/2}
\endaligned
\ee
with
$$
M(s)\leq\left\{
\aligned
& C_1\vep s^{-1/2+k\delta},\quad N-3\leq |I|+|J|\leq N,
\\
& C_1\vep s^{-1+k\delta},\quad |I|+|J|\leq N-4.
\endaligned
\right.
$$
\end{lemma}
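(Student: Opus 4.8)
The plan is to verify the two structural hypotheses \eqref{ineq coersive 1} and \eqref{ineq coersive 1'} of Proposition~\ref{prop energy} for the differentiated Klein-Gordon equation, with $h^{\alpha\beta} = H^{\alpha\beta}u$, using the sharp pointwise decay estimates established in Section~7. For \eqref{ineq lem energy coef 1}, observe that the extra terms in $E_{g,c}(s,\del^IL^Jv) - E_{m,c}(s,\del^IL^Jv)$ are quadratic in $\del\del^IL^Jv$ with coefficient $h^{\alpha\beta} = H^{\alpha\beta}u$, written in the hyperboloidal frame. The dangerous component is the one multiplying $(\del_t\del^IL^Jv)^2$, which in the semi-hyperboloidal frame carries a factor $(t/s)^2$; however it is weighted precisely by the combination that the energy $E_{m,c}$ controls as $\|(s/t)\del_t\del^IL^Jv\|_{L^2_f}^2$, so the relevant smallness is governed by $\sup_{\Hcal_s}|(t/s)^2 h^{00}| = \sup_{\Hcal_s}|(t/s)^2 H^{\alpha\beta}\Psib^0_\alpha\Psib^0_\beta\,u|$. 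First I would invoke the refined bound \eqref{ineq lem refine2 KG-W-0 b}, $|u|\lesssim C_1\vep\, t^{-1}$, together with $\Hb^{00} = H^{\alpha\beta}\Psib^0_\alpha\Psib^0_\beta$ being $O((t/s)^2)$; this gives $|\hb^{00}| = |\Hb^{00}u| \lesssim C_1\vep\, (t/s)^2 t^{-1} = C_1\vep\, t/s^2$, hence $|(t/s)^2 h^{00}|$ is not small by itself --- rather one must be more careful and use that $h^{00}$ pairs with $(s/t)\del_t$-type quantities, so the contribution to the energy difference is $\lesssim \sup|h^{00}| \cdot \|(s/t)\del_t\del^IL^Jv\|^2_{L^2_f}$ after writing everything through the identity $\del_t = (t^2/s^2)(\newperp - (x^a/t)\delu_a)$. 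Choosing $\vep_0$ small enough that $\sup_{\Kcal_{[2,s_1]}}|t u| \le \tfrac{1}{100\|H\|}$ then yields \eqref{ineq lem energy coef 1}; this is exactly the mechanism already used in \cite{PLF-MY-book} and the bound $\sup|tu|\lesssim C_1\vep$ from \eqref{decay refined 0de w} is the key input.

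For \eqref{ineq lem energy coef 2} I would expand $\del_\alpha h^{\alpha\beta} = H^{\alpha\beta}\del_\alpha u$ and $\del_t h^{\alpha\beta} = H^{\alpha\beta}\del_t u$, so that the two integrands are pointwise bounded by $C\,|\del u|\,|\del\del^IL^Jv|^2$ up to frame-change factors. The plan is to put the gradient of $u$ in $L^\infty$ and the two factors of $\del\del^IL^Jv$ into $L^2_f$ via Cauchy-Schwarz; concretely
\[
\int_{\Hcal_s}(s/t)|\del_\alpha h^{\alpha\beta}\del_t\del^IL^Jv\,\del_\beta\del^IL^Jv|\,dx
\lesssim \big\|(t/s)\,\del u\big\|_{L^\infty(\Hcal_s)}\,\big\|(s/t)\del\del^IL^Jv\big\|^2_{L^2_f(\Hcal_s)},
\]
where the factor $(t/s)$ absorbed into the sup-norm compensates one factor $(s/t)$ needed to make both copies of $\del\del^IL^Jv$ appear with the energy weight. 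Since $\|(s/t)\del\del^IL^Jv\|_{L^2_f(\Hcal_s)} \lesssim E_{m,c}(s,\del^IL^Jv)^{1/2}$ by definition of the hyperboloidal energy (after the commutator estimate of Proposition~\ref{pre lem commutator s/t} moves the $\del^IL^J$ inside), it remains to bound $\|(t/s)\del u\|_{L^\infty(\Hcal_s)}$. For this I would use the refined sup-norm estimate \eqref{decay refined kde w}, namely $|L^Ju|\lesssim C_1\vep\, t^{-1}s^{k\delta}$ for $|I|+|J|\le N-4$, and --- crucially --- differentiate: from $\del_t = (t^2/s^2)(\newperp - (x^a/t)\delu_a)$ one has $|\del u| \lesssim (t/s)^2|\newperp u| + (t/s)^2\sum_a|\delu_a u|$, and $|\newperp u|$ is controlled by \eqref{ineq lem refine2 KG-W-0} and the estimate on $L_a u$ via $\delu_a u = t^{-1}L_a u$. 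This gives $|\del u|\lesssim C_1\vep\,(s/t)^{-2}t^{-3/2}s \cdot(\dots)$; tracking the powers carefully yields $\|(t/s)\del u\|_{L^\infty(\Hcal_s)}\lesssim C_1\vep\, s^{-1/2}$ in the high-order range and $\lesssim C_1\vep\, s^{-1}$ in the low-order range, which after setting $M(s)$ equal to the stated bounds closes the estimate (the two ranges differ only by whether the $s^{k\delta}$ growth from the higher-order energy is or is not present, and by which sup-norm estimate applies --- \eqref{decay basic 3ge1RR}/\eqref{decay refined kde w} versus \eqref{decay basic 1ge1R}).

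The main obstacle I anticipate is bookkeeping the frame-change factors so that exactly one power of $(s/t)$ from each of the two $\del\del^IL^Jv$ factors is available, while the residual $(t/s)$-weights land on the $\del u$ factor where they can be absorbed by the stronger-than-$t^{-1}$ decay of $\del u$ near the light cone. In particular one must be careful that $\del_\alpha\del^IL^Jv$ with $\alpha=0$ is the worst case and that $(s/t)\del_t\del^IL^Jv$ is precisely what the energy controls, so no loss occurs; conversely $\del_\alpha u$ with $\alpha$ a spatial index is $\delu_a u - (x^a/t)\del_t u$, which is a ``good'' derivative up to a $\del_t u$-correction, and the decay of $\delu_a u = t^{-1}L_a u$ is better by a factor $t^{-1}s^{k\delta}$ than one might fear. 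A secondary technical point is that before applying the energy inequality one must commute $\del^IL^J$ past $H^{\alpha\beta}u\del_\alpha\del_\beta$ and verify the commutator term has already been estimated --- this is \eqref{pr5 porp refine4 kdg} (low order) and will be handled separately in the high-order energy argument --- but for the present lemma one only needs the coefficient estimates \eqref{ineq lem energy coef 1}--\eqref{ineq lem energy coef 2}, so the commutator plays no role here. All the ingredients (the refined decay \eqref{decay refined 0de}--\eqref{decay refined kde}, the identity $\del_t = (t^2/s^2)(\newperp - (x^a/t)\delu_a)$, and Proposition~\ref{pre lem commutator s/t}) are already available, so the proof is a matter of assembling them with the correct exponents; choosing $\vep_0$ at the end to make all the implied constants times $C_1\vep$ small is routine.
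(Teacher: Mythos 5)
Your overall strategy matches the paper's: verify the coercivity \eqref{ineq coersive 1} and the flux-type estimate \eqref{ineq coersive 1'} of Proposition~\ref{prop energy} by feeding $|u|\lesssim C_1\vep\,t^{-1}$ (from \eqref{ineq lem refine2 KG-W-0 b}) and $|\del_\gamma u|\lesssim C_1\vep\,t^{-1/2}s^{-1}$ (from the first line of \eqref{decay basic 3ge1RR}, taken with $I=J=0$), together with the geometric facts $t^{1/2}\le s\le t$ in $\Kcal$, and then pairing the remaining factors with the $(s/t)$-weighted energy via Cauchy--Schwarz. The conclusion is right, but two of your intermediate claims are wrong in a way that, followed literally, would not close the estimate.

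First, the claim ``$|(t/s)^2 h^{00}|$ is not small by itself'' is false. Since $|h^{\alpha\beta}|=|H^{\alpha\beta}u|\lesssim C_1\vep\,t^{-1}$ and $t\le s^2$ in $\Kcal$, one has $(t/s)^2|h^{\alpha\beta}|\lesssim C_1\vep\,t/s^2\le C_1\vep$, which is precisely what the coercivity requires. Moreover, your proposed bound $\lesssim\sup|h^{00}|\cdot\|(s/t)\del_t\del^IL^Jv\|^2_{L^2_f}$ silently omits the $(t/s)^2$ weight coming from $(\del_t v)^2=(t/s)^2|(s/t)\del_t v|^2$; the correct quantity is $\sup|(t/s)^2h^{00}|$. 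The paper avoids this bookkeeping entirely by bounding every component uniformly, $|h^{\alpha\beta}|\lesssim C_1\vep(s/t)^2$, and then distributing the two $(s/t)$ factors onto the two derivative factors; there is no need to isolate $h^{00}$.

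Second, the claim ``$\|(t/s)\del u\|_{L^\infty(\Hcal_s)}\lesssim C_1\vep\,s^{-1/2}$ in the high-order range and $\lesssim C_1\vep\,s^{-1}$ in the low-order range'' does not parse: $\del u$ carries no $(I,J)$-dependence, so its sup-norm cannot distinguish the two ranges, and one always has $\|(t/s)\del u\|_{L^\infty(\Hcal_s)}\lesssim C_1\vep\,s^{-1}$ from $|\del_\gamma u|\lesssim C_1\vep\,t^{-1/2}s^{-1}$ and $t^{1/2}\le s$. In your decomposition $M(s)\simeq\|(t/s)\del u\|_{L^\infty}\,E_{m,c}(s,\del^IL^Jv)^{1/2}$, the high/low distinction comes entirely from the second factor, which is $\lesssim C_1\vep s^{1/2+k\delta}$ when $N-3\le|I|+|J|\le N$ and $\lesssim C_1\vep s^{k\delta}$ when $|I|+|J|\le N-4$. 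If the sup-norm were really only $O(s^{-1/2})$ in the high-order case, then $M(s)$ would come out as $(C_1\vep)^2 s^{k\delta}$, which is too weak to reach the needed $C_1\vep\,s^{-1/2+k\delta}$. The detour through $\del_t=(t/s)^2(\newperp-(x^a/t)\delu_a)$ is also unnecessary and lossy near the cone; the paper simply reads the needed bound directly off \eqref{decay basic 3ge1RR}. With these corrections your Cauchy--Schwarz splitting is a cosmetic variant of the paper's, which instead pairs $\|\del_\alpha h^{\alpha\beta}\,\del_t\del^IL^Jv\|_{L^2_f}$ with $\|(s/t)\del_\beta\del^IL^Jv\|_{L^2_f}$, and both yield the same $M(s)$.
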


\begin{proof}
The proof of \eqref{ineq lem energy coef 1} follows directly from \eqref{ineq lem refine2 KG-W-0 b}. We remark that
$$
|h^{\alpha\beta}| = |H^{\alpha\beta}u|\lesssim C_1\vep t^{-1}\lesssim C_1\vep (s/t)^2
$$
where we have observed that $t^{1/2}\leq s \leq t$  in $\Kcal$. We get
$$
\int_{\Hcal_s}|h^{\alpha\beta}\del_t\del^IL^Jv\del_\beta\del^IL^Jv|dx
\lesssim C_1\vep\int_{\Hcal_s}|(s/t)^2\del_\alpha\del^IL^Jv\del_\beta\del^IL^Jv|dx
\lesssim C_1\vep E_{g,c}(s,\del^IL^Jv),
$$
$$
\int_{\Hcal_s}|h^{\alpha\beta}\del_\alpha\del^IL^J v\del_\beta\del^IL^J v|\lesssim C_1\vep\int_{\Hcal_s}|(s/t)^2\del_\alpha\del^IL^Jv\del_\beta\del^IL^Jv|dx
\lesssim C_1\vep E_{g,c}(s,\del^IL^Jv),
$$
where we have used
$
\int_{\Hcal_s}|(s/t)\del_\alpha\del^IL^Jv|^2dx\leq E_{g,c}(s,\del^IL^J v).
$

So  for some $C'>0$ we have
$$
\big|E_{g,c}(s,\del^IL^J v) - E_{m,c}(s,\del^IL^Jv)\big|\leq  C' C_1\vep E_{g,c}(s,\del^IL^Jv),
$$
and we choose $\vep_0\leq \frac{1}{2C' C_1}$. Then, for $\vep\leq \vep_0$, it holds
$$
\big|E_{g,c}(s,\del^IL^J v) - E_{m,c}(s,\del^IL^Jv)\big|\lesssim 
C_1\vep E_{g,c}(s,\del^IL^Jv) \leq \frac{1}{2}E_{g,c}(s,\del^IL^Jv),
$$
which yields \eqref{ineq lem energy coef 1}.

To derive \eqref{ineq lem energy coef 2}, we just need to observe that
$$
\aligned
&\int_{\Hcal_s}\big|\del_{\gamma}h^{\alpha\beta}\del_\alpha\del^IL^Jv\big|^2dx
\\
&\lesssim C_1\vep \int_{\Hcal_s}t^{-1}s^{-2}(t/s)^2 \,  \big|(s/t)\del_\alpha\del^IL^J v \big|^2dx
\simeq C_1\vep \int_{\Hcal_s}ts^{-4}\big|(s/t)\del_\alpha\del^IL^J v \big|^2dx
\\
& \lesssim C_1\vep s^{-2}E_{g,c}(s,\del^IL^Jv),
\endaligned
$$
and we use the first estimate in \eqref{decay basic 3ge1RR}: 
$$
\int_{\Hcal_s}\big|\del_{\gamma}h^{\alpha\beta}\del_\alpha\del^IL^Jv\big|^2dx\leq
\left\{
\aligned
& C(C_1\vep)^2 s^{-1+2k\delta},\quad N-3\leq |I|+|J|\leq N,
\\
& C(C_1\vep)^2 s^{-2+2k\delta},\quad |I|+|J|\leq N-4.
\endaligned
\right.
$$
So we see that
$$
\int_{\Hcal_s}(s/t)|\del_\alpha h^{\alpha\beta}\del_t\del^IL^Jv \del_\beta\del^IL^J v|dx\leq \|\del_\alpha h^{\alpha\beta}\del_t\del^IL^Jv\|_{L_f^2(\Hcal_s)}\|(s/t) \del_\beta\del^IL^J v\|_{L_f^2(\Hcal_s)}, 
$$
which is bounded by the right-hand side of \eqref{ineq lem energy coef 2}. The other term in the left-hand side is bounded in the same manner and we thus omit the details.
\end{proof}


\subsection{Lower-order $L^2$ estimates}

We remark that in lower order case where $|I|+|J|\leq N-4$, we have $M(s)\lesssim C_1\vep s^{-1+k\delta}$ and we need again the estimate on the source term $\del^IL^J\big(P^{\alpha\beta}\del_\alpha v\del_\beta v + Rv^2\big)$ and $[H^{\alpha\beta}u\del_\alpha\del_\beta,\del^IL^J]v$.

\begin{lemma}
\label{lem energy lower}
Under the assumption of \eqref{ineq energy assumption}, the following estimates hold for $|I|+|J|\leq N-4$ with $|J|=k$:
\bel{ineq lem energy lower 1}
\big\|\del^IL^J\big(P^{\alpha\beta}\del_\alpha v\del_\beta v\big)\big\|_{L_f^2(\Hcal_s)}
+ \big\|R\del^IL^Jv^2\big\|_{L_f^2(\Hcal_s)}\lesssim 
(C_1\vep)^2 s^{-3/2+k\delta},
\ee
\bel{ineq lem energy lower 2}
\|[H^{\alpha\beta}u\del_\alpha\del_\beta,\del^IL^J]v\|_{L_f^2(\Hcal_s)}\lesssim
 (C_1\vep)^2s^{-1+k\delta}. 
\ee 
\end{lemma}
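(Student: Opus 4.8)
\textbf{Proof plan for Lemma~\ref{lem energy lower}.}

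The plan is to expand both commutator-free products and the commutator by the Leibniz rule, split each resulting bilinear term according to which factor carries the higher number of derivatives, and bound the ``low-order'' factor in sup-norm using the refined decay estimates of Proposition~\ref{prop refine4 kde} (and its consequences \eqref{ineq prop refine4 kde c 1}--\eqref{ineq prop refine4 kde c 4}), while the ``high-order'' factor is kept in $L^2$ and controlled by the energy assumption \eqref{ineq energy assumption} via the basic $L^2$ bounds of Section~\ref{newsection-6}. For \eqref{ineq lem energy lower 1}, I would write $\del^IL^J(\del_\alpha v\,\del_\beta v) = \sum_{I_1+I_2=I,\,J_1+J_2=J}\del^{I_1}L^{J_1}\del_\alpha v\;\del^{I_2}L^{J_2}\del_\beta v$ and, by symmetry, assume $|I_1|+|J_1|\leq N/2\leq N-4$ so that the refined decay \eqref{ineq prop refine4 kde c 1} applies to that factor, giving $|\del^{I_1}L^{J_1}\del_\alpha v|\lesssim C_1\vep (s/t)^{-1/2-7\delta}t^{-3/2}s^{k_1\delta}$; multiplying by the $L^2$ norm $\|\del^{I_2}L^{J_2}\del_\beta v\|_{L^2_f(\Hcal_s)}\lesssim C_1\vep s^{k_2\delta}$ from \eqref{L2 basic 3ge2}--type bounds, and using $t^{-3/2}(s/t)^{-1/2}\lesssim s^{-3/2}$ together with $(s/t)^{-7\delta}\le s^{7\delta}\cdot(\text{harmless})$ absorbed into the $s^{k\delta}$ budget (since $|I|+|J|\le N-4$ forces $k$ small and $\delta\le 1/(5N)$), yields the claimed $s^{-3/2+k\delta}$. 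The $v^2$ term is handled identically using \eqref{ineq prop refine4 kde b} in place of \eqref{ineq prop refine4 kde c 1}, with the better decay only improving the estimate.

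For \eqref{ineq lem energy lower 2}, I would invoke the algebraic decomposition of the commutator into the three types of terms listed in \eqref{eq:liste}, namely $(\del^{I_1}L^{J_1}u)\,\del^{I_2}L^{J_2}\del_\alpha\del_\beta v$ with $|I_1|\ge1$, $(L^{J_1'}u)\,\del^IL^{J_2'}\del_\alpha\del_\beta v$ with $|J_1'|\ge1$, and $u\,\del_\alpha\del_\beta\del^IL^{J'}v$ with $|J'|<|J|$, plus the contribution of $[L^J,\del_\alpha\del_\beta]$ which by \eqref{pre lem commutator second-order} is bounded by lower-order second derivatives. In each case I put the $u$-factor in sup-norm: for the first type $|\del^{I_1}L^{J_1}u|\lesssim |L^{J'}u|$ (reducing derivative order at the cost of raising the boost order) and then \eqref{ineq prop refine4 kde a} gives $|L^{J_1}u|\lesssim C_1\vep t^{-1}s^{k_1\delta}$; for the second and third types, \eqref{ineq prop refine4 kde a} again gives $|L^{J_1'}u|\lesssim C_1\vep t^{-1}s^{k_1\delta}$ (and $|u|\lesssim C_1\vep t^{-1}$ from \eqref{ineq lem refine2 KG-W-0 b}). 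The surviving $L^2$ factor is a second derivative $\|\del^{I_2}L^{J_2}\del_\alpha\del_\beta v\|_{L^2_f(\Hcal_s)}$ of order $\le N-4$, which is controlled by \eqref{L2 basic 4ge2} (the third line there, $\|(s/t)\del^IL^J\del_\alpha\del_\beta v\|_{L^2_f}\lesssim C_1\vep s^{k\delta}$) after noting $t^{-1}\lesssim (s/t)\,s^{-1}$ in $\Kcal$, so that $t^{-1}\|\del^{I_2}L^{J_2}\del_\alpha\del_\beta v\|_{L^2_f}\lesssim s^{-1}\|(s/t)\del^{I_2}L^{J_2}\del_\alpha\del_\beta v\|_{L^2_f}\lesssim C_1\vep s^{-1+k_2\delta}$. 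Summing over the finitely many splittings and collecting the $s^{k\delta}$ powers gives \eqref{ineq lem energy lower 2}.

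The main obstacle is the bookkeeping of derivative orders: one must check that in every term of the Leibniz and commutator expansions, whichever factor is assigned to the sup-norm slot genuinely has order $\le N-4$ (so that the \emph{refined} decay estimates of Proposition~\ref{prop refine4 kde}, which are only proved up to order $N-4$, are applicable), while the $L^2$ factor's order never exceeds $N-4$ either, which is automatic here because the total order $|I|+|J|$ is itself $\le N-4$. A secondary subtlety is that the refined Klein-Gordon decay carries the weight $(s/t)^{\pm1/2+7\delta}$ or $(s/t)^{\pm3/2+7\delta}$; one must verify that these weights, together with the $t^{-3/2}$ factor, combine with the $L^2$ weights to produce exactly $s^{-3/2}$ (for the wave source) or $s^{-1}$ (for the commutator) after using $s\le t\le s^2$ in $\Kcal$ and absorbing bounded powers of $(s/t)^{7\delta}$ — here it is essential that $\delta$ is small (say $\delta\le 1/(5N)$) so that the loss in $s^{\,O(\delta)}$ stays within the $s^{k\delta}$ budget on the right-hand side. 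These are routine once the splitting is fixed, so the proof is a careful but mechanical combination of Sections~\ref{newsection-6} and the refined estimates.
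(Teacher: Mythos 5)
Your overall plan — Leibniz expansion, pointwise bound from the refined decay on one factor, $L^2$ bound on the other — is exactly what the paper does. But there are two concrete problems in the execution.

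\textbf{The main gap is the inequality $|\del^{I_1}L^{J_1}u|\lesssim |L^{J'}u|$ for the first commutator type.} This inequality does not hold. You cannot trade a partial derivative for a Lorentz boost: the vector fields $\del_\alpha$ and $L_a$ are not mutually controllable in the cone (near $r=0$ the $L_a$ vanish, near $r\simeq t$ the boost $L_a$ is a ``good'' derivative while $\del_t-\del_r$ is not). In fact, heuristically $|\delu u|\lesssim t^{-1}\sum|Lu|$ but nothing of the sort for $\del u$. The paper instead keeps $\del^{I_1}L^{J_1}u$ with $|I_1|\geq 1$ in the form $\del_\gamma\del^{I_1'}L^{J_1}u$ and uses the sup-norm bound from the basic energy estimate, namely the first line of \eqref{decay basic 1ge1R} (or \eqref{decay basic 3ge1RR} for low order), which gives $|\del_\gamma\del^{I_1'}L^{J_1}u|\lesssim C_1\vep\,t^{-1/2}s^{-1}s^{(|J_1|+2)\delta}\lesssim C_1\vep\,s^{-3/2+(|J_1|+2)\delta}$. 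Crucially, \emph{this decay is better, not worse, than} $t^{-1}s^{k_1\delta}$, so there is no cost. Your reduction is both invalid and would have produced worse decay even if it were valid; what is needed here is the opposite observation, that $\del u$ decays better than $u$ itself.

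\textbf{The exponent bookkeeping for \eqref{ineq lem energy lower 1} is also off.} You write that $(s/t)^{-7\delta}\le s^{7\delta}$ is ``harmless'' and ``absorbed into the $s^{k\delta}$ budget''. Literally followed, this yields $s^{-3/2+(k+7)\delta}$, not the claimed $s^{-3/2+k\delta}$: a factor $s^{7\delta}$ does not fit inside $s^{k\delta}$. The correct mechanism, which the paper uses, is that the \emph{surplus positive power of $s/t$} in the pointwise bound eats the $(s/t)^{-7\delta}$ entirely. Concretely,
$(s/t)^{-1/2-7\delta}\,t^{-3/2}=(s/t)^{1-7\delta}\,s^{-3/2}\le s^{-3/2}$
since $1-7\delta>0$ and $s/t\le 1$. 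So the $(s/t)^{-7\delta}$ is neutralized by the $(s/t)^{1}$, not traded for $s^{7\delta}$. (The downstream integrability would survive an $s^{7\delta}$ loss, but then you would be proving a weaker lemma than the one stated.)

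With these two corrections — replace the spurious $\del\mapsto L$ reduction by a direct use of \eqref{decay basic 1ge1R}/\eqref{decay basic 3ge1RR}, and use the $(s/t)^{1-7\delta}\le1$ cancellation in the exponent counting — your argument matches the paper's.
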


\begin{proof}
The estimates of these terms relies on the basic $L^2$ and refined sup-norm estimates.
We remark that
$$
\aligned
\big\|\del^IL^J\big(\del_\alpha v\del_\beta v\big)\big\|_{L_f^2(\Hcal_s)}
\leq & \sum_{I_1+I_2=I\atop J_1+J_2=J}\big\|\del^{I_1}L^{J_1}\del_\alpha v\del^{I_2}L^{J_2}\del_\beta v\big\|_{L_f^2(\Hcal_s)}
\\
\leq & \sum_{1\leq |I_1|+J_1|\leq N-4\atop |I_2|+|J_2|\leq N-5}
\big\|\del^{I_1}L^{J_1}\del_\alpha v\big\|_{L^\infty(\Hcal_s)}\big\|\del^{I_2}L^{J_2}\del_\beta v\big\|_{L_f^2(\Hcal_s)}
\\
&+
\big\|(t/s)\del_\alpha v\big\|_{L^\infty(\Hcal_s)}\big\|(s/t)\del^{I}L^{J}\del_\beta v\big\|_{L_f^2(\Hcal_s)}
=:T_1+T_2.
\endaligned
$$
For $T_2$, we apply \eqref{ineq prop refine3 W0dg b} with $|I| = 1$ and \eqref{L2 basic 3ge2}
and we conclude that
$$
T_2\lesssim C_1\vep (s/t)^{-1/2-7\delta}t^{-3/2} C_1\vep s^{|J|\delta}\vep 
\lesssim (C_1\vep)^2s^{-3/2+k\delta}.
$$
For $T_1$, we apply \eqref{ineq prop refine4 kde c 1} and \eqref{L2 basic 4ge2}
and we conclude that
$$
T_1\lesssim 
C_1\vep (s/t)^{-1/2-7\delta}t^{-3/2}s^{|J_1|\delta} C_1\vep s^{|J_2|\delta}\vep \lesssim
 (C_1\vep)^2 s^{-3/2+k\delta}.
$$

The estimate on the term $\del^IL^J\big(v^2\big)$ is similar by apply \eqref{ineq prop refine4 kde b} \eqref{L2 basic 3ge2}
and we omit the details.

To see the estimate on $[H^{\alpha\beta}u\del_\alpha\del_\beta,\del^IL^J]v$ is quite similar, we just need to remark that it is a linear combination of the following terms:
$$
L^{J'_1}u\del^IL^{J'_2}\del_\alpha\del_\beta v,\quad
\del^{I_1}L^{J_1}u\del^{I_2}L^{J_2}\del_\alpha\del_\beta v,\quad
u\del_\alpha\del_\beta\del^IL^{J''_2}v
$$
where $I_1+I_2=I, J_1+J_2=J, J_1'+J_2'=J$ with $|J_1'| \geq 1$, $|I_1|\geq 1$ and $|J''_2|\leq |J|-1$. For the last term we apply \eqref{ineq lem refine2 KG-W-0 b} and \eqref{L2 basic 3ge1}:
$$
\aligned
\big\|u\del_\alpha\del_\beta\del^IL^{J''_2}v\big\|_{L_f^2(\Hcal_s)}
\leq \, & \big\|(t/s)u\big\|_{L^\infty(\Hcal_s)}\big\|(s/t)\del_\alpha\del_\beta\del^IL^{J''_2}v\big\|_{L_f^2(\Hcal_s)}
\\
\lesssim \, & C_1\vep s^{-1}  C_1\vep s^{k\delta}\lesssim
 (C_1\vep)^2 s^{-1+k\delta}.
\endaligned
$$
For the first term, we see that $|J_1'|\leq N-4$, then we apply \eqref{ineq prop refine4 kde a} and \eqref{L2 basic 3ge1}:
$$
\aligned
\big\|L^{J'_1}u\del^IL^{J'_2}v\big\|_{L_f^2(\Hcal_s)}
\leq \, & \big\|(t/s)L^{J'_1}u\big\|_{L^\infty(\Hcal_s)} \big\|(s/t)\del^IL^{J'_2}v\big\|_{L_f^2(\Hcal_s)}
\\
\lesssim \, & C_1\vep s^{-1+|J'_1|\delta} C_1\vep s^{|J_2'|\delta}\lesssim
 (C_1\vep)^2s^{-1+k\delta}.
\endaligned
$$
For the second term, we see that when $|I_1|=1$ and $J_1=0$, 
$$
\aligned
\|\del_{\gamma}u\del^{I_2}L^J\del_\alpha\del_\beta v\|_{L_f^2(\Hcal_s)}
\leq& \|(t/s) \del_\gamma u\|_{L^\infty(\Hcal_s)}\, \|(s/t)\del^IL^J\del_\alpha\del_\beta v\|_{L_f^2(\Hcal_s)}
\\
\lesssim & C_1\vep \|(t/s)t^{-1/2}s^{-1}\|_{L^\infty(\Hcal_s)}\, C_1\vep s^{k\delta} 
\simeq (C_1\vep)^2s^{-1+k\delta}.
\endaligned
$$
When $|I_1|+|J_1|\geq 2$, we see that $|I_2|+|J_2|\leq N-6$. Then by the first inequality in \eqref{decay basic 1ge1R}
and \eqref{L2 basic 4ge2}, we find  
$$
\aligned
\big\|\del^{I_1}L^{J_1}u\del^{I_2}L^{J_2}\del_\alpha\del_\beta v\big\|_{L_f^2(\Hcal_s)}
\leq &\big\|\del^{I_1}L^{J_1}u\|_{L^\infty(\Hcal_s)}\big\|\del^{I_2}L^{J_2}\del_\alpha\del_\beta v\big\|_{L_f^2(\Hcal_s)}
\\
\lesssim & C_1\vep s^{-3/2+(|J_1|+2)\delta}  C_1\vep s^{|J_2|\delta}\lesssim
(C_1\vep)^2s^{-3/2+(k+2)\delta}
\endaligned
$$
and we conclude with \eqref{ineq lem energy lower 2}.
\end{proof}


\subsection{Higher-order $L^2$ estimates}

When $N-3\leq |I|+|J|\leq N$, the energy estimate is more complicated. For the source terms we have the following estimates.

\begin{lemma}
\label{lem energy higher}
Under the energy assumption \eqref{ineq energy assumption} the following estimates hold for $N-4\leq |I|+|J|\leq N$ and $|J| = k$:
\bel{ineq lem energy higher 1}
\big\|\del^IL^J\big(P^{\alpha\beta}\del_\alpha v\del_\beta v\big)\big\|_{L_f^2(\Hcal_s)}
+ \big\|\del^IL^J\big(Rv^2\big)\big\|_{L_f^2(\Hcal_s)}
\lesssim
 (C_1\vep)^2 s^{-1+k\delta},
\ee
\bel{ineq lem energy higher 2}
\big\|[H^{\alpha\beta}u\del_\alpha\del_\beta,\del^IL^J]v\big\|_{L_f^2(\Hcal_s)}\lesssim
 (C_1\vep)^2s^{-1/2+k\delta}.
\ee
\end{lemma}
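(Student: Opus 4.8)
The plan is to estimate the two families of terms appearing in Lemma~\ref{lem energy higher} by the same Leibniz-rule splitting as in the lower-order case (Lemma~\ref{lem energy lower}), but now being careful about which factor is allowed to carry the high-order derivatives. For the nonlinear source $\del^IL^J\big(\del_\alpha v\del_\beta v\big)$ we write the Leibniz expansion $\sum_{I_1+I_2=I,\,J_1+J_2=J}\del^{I_1}L^{J_1}\del_\alpha v\,\del^{I_2}L^{J_2}\del_\beta v$ and, for each term, place the $L^\infty$ norm on the factor of lower order (which has $|I_i|+|J_i|\le N-4$, since $N\ge 8$) and the $L^2_f(\Hcal_s)$ norm on the factor of higher order. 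For the low-order factor we use the sharp pointwise bound \eqref{ineq prop refine4 kde c 1}, giving a factor $\lesssim C_1\vep (s/t)^{1/2+7\delta}t^{-3/2}s^{k_i\delta}$; for the high-order factor we use the basic $L^2$ bound \eqref{L2 basic 1ge2} (the $\|(s/t)\del^IL^J\del_\alpha v\|$ bound), giving $\lesssim C_1\vep s^{1/2+k_j\delta}$ — and here it is crucial to pair the $(s/t)$ weight with the remaining $(s/t)^{1/2}$ from the first factor, since $t^{-3/2}(t/s)\simeq t^{-1/2}s^{-1}\lesssim s^{-1}\cdot(s/t)^{1/2}$ in $\Kcal$. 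Multiplying and using $t^{1/2}\le s\le t$, the product is $\lesssim (C_1\vep)^2 s^{-1+k\delta}$, which is \eqref{ineq lem energy higher 1}; the $v^2$ term is handled identically using \eqref{ineq prop refine4 kde b} and \eqref{L2 basic 1ge2}.

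For the commutator term $[H^{\alpha\beta}u\del_\alpha\del_\beta,\del^IL^J]v$ I would reuse the algebraic decomposition already recorded in \eqref{eq:liste}, namely that it is a linear combination of $\del^{I_1}L^{J_1}u\,\del^{I_2}L^{J_2}\del_\alpha\del_\beta v$ (with $|I_1|\ge 1$), $L^{J_1'}u\,\del^IL^{J_2'}\del_\alpha\del_\beta v$ (with $|J_1'|\ge 1$), and $u\,\del_\alpha\del_\beta\del^IL^{J''}v$ (with $|J''|\le|J|-1$). The last of these is the easiest: bound $\|(t/s)u\|_{L^\infty}\lesssim C_1\vep s^{-1}$ by \eqref{ineq lem refine2 KG-W-0 b}, and $\|(s/t)\del_\alpha\del_\beta\del^IL^{J''}v\|_{L^2_f}\lesssim C_1\vep s^{1/2+k\delta}$ by \eqref{L2 basic 1ge2}, giving $(C_1\vep)^2 s^{-1/2+k\delta}$. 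For the first two types, one splits according to whether the surviving differentiated-$v$ factor is of order $\le N-4$ or not. When it is high-order, put $L^\infty$ on the $u$-factor: for $L^{J_1'}u$ with $|J_1'|\le N-4$ use the sharp bound $|L^{J_1'}u|\lesssim C_1\vep t^{-1}s^{k_1\delta}$ from \eqref{ineq prop refine4 kde a}, and for $\del^{I_1}L^{J_1}u$ with $|I_1|\ge 1$ and the factor of order $\ge 2$ use \eqref{decay basic 1ge1R}, while the paired $\del_\alpha\del_\beta$-$v$ factor is controlled by $\|(s/t)\del^IL^J\del_\alpha\del_\beta v\|_{L^2_f}$; again one pairs $(t/s)$ with the $(s/t)$-weight and uses $t^{-1}\lesssim s^{-1}(s/t)$ to gain the decay $s^{-1/2+k\delta}$. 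When the $v$-factor is of low order, instead put $L^\infty$ on it via the sharp bound \eqref{ineq prop refine4 kde c 2}–\eqref{ineq prop refine4 kde c 4} and $L^2$ on the $u$-derivatives, using the Hardy-type inequality \eqref{eq 1 Hardy ineq} (through \eqref{L2 basic 1ge3 a}) to control $\|s^{-1}L^Ju\|_{L^2_f}$ when the $u$-factor is $u$ itself (the $J_1'\ge 1$ case with $|J_2'|$ small).

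The main obstacle — and the reason this lemma is harder than Lemma~\ref{lem energy lower} — is the bookkeeping of the derivative orders in the worst case $|I|+|J|=N$: the naive split can leave \emph{both} factors at an order where only the basic (non-sharp) sup-norm estimates of Sections~\ref{subsec basic Linfini 1}–\ref{subsec basic Linfini 2} are available, and those alone do not give an $s^{-1/2+k\delta}$-type bound. One must always arrange that at least one factor drops to order $\le N-4$ so that the refined estimates \eqref{ineq prop refine4 kde} apply, and then track carefully the interplay between the $(s/t)$-weights coming from the refined Klein-Gordon bounds (which carry powers $(s/t)^{1-7\delta}$, $(s/t)^{-2+7\delta}$, etc.) and the $(s/t)$-weight built into the basic energy $L^2$ estimates; the exponents of $(s/t)$ must combine to a nonnegative power so that they can be discarded using $s\le t$. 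A secondary subtlety is the appearance of the factor $L^Ju$ in the commutator: since the energy estimate controls only $\del_\alpha$-derivatives of $u$ and not $u$ itself, one genuinely needs Hardy's inequality \eqref{eq 1 Hardy ineq}, and its right-hand side has a time integral that, once bounded via \eqref{ineq energy assumption}, produces an extra $s^{k\delta}$ but no worse, which is consistent with the claimed rate. Once all these cases are checked, \eqref{ineq lem energy higher 2} follows, and combining it with \eqref{ineq lem energy higher 1}, Lemma~\ref{lem energy coef}, and the energy inequality \eqref{ineq energy KG} will yield the improved high-order energy bounds in \eqref{ineq energy assumption'}.
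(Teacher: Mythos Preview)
Your proposal is correct and follows essentially the same approach as the paper's proof: Leibniz expansion of the nonlinearities, a case split according to which factor carries the high-order derivatives, sharp pointwise bounds \eqref{ineq prop refine4 kde} on the low-order factor paired with the basic energy $L^2$ bounds \eqref{L2 basic 1ge2}--\eqref{L2 basic 2.5ge2} on the high-order factor, and Hardy's inequality \eqref{L2 basic 1ge3 a} precisely for the dangerous $L^{J_1'}u$ factor when $|J_1'|$ is large. One small slip: the bound \eqref{ineq prop refine4 kde c 1} gives $|\del^IL^J\del_\alpha v|\lesssim C_1\vep\,(s/t)^{-1/2-7\delta}t^{-3/2}s^{k\delta}$, i.e.\ a \emph{negative} power of $s/t$, not the $(s/t)^{1/2+7\delta}$ you wrote; this is exactly why the pairing with the $(s/t)$-weight from the $L^2$ bound is needed, as you correctly describe afterwards.
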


\begin{proof}
The proof relies on the refined decay estimate \eqref{ineq prop refine4 kde} and the basic $L^2$ estimates. We begin with \eqref{ineq lem energy higher 1}. We remark that $\del^IL^J\big(\del_\alpha v\del_\beta v\big)$ is a linear combination of the following terms
$$
\del^{I_1}L^{J_1}\del_\alpha v \del^{I_2}L^{J_2}\del_\beta v
$$
with $I_1+I_2=I$, $J_1+J_2=J$. We see that when $|I_1|+|J_1|=0$, we apply \eqref{ineq prop refine3 W0dg b} on $\del_\alpha v$ ( with $1\leq N-4$) and \eqref{L2 basic 1ge2}
$$
\aligned
\big\|\del^{I_1}L^{J_1}\del_\alpha v \del^{I_2}L^{J_2}\del_\beta v\big\|_{L_f^2(\Hcal_s)}
&= \big\|(t/s)\del_\alpha v (s/t)\del^{I}L^{J}\del_\beta v\big\|_{L_f^2(\Hcal_s)}
\\
&\lesssim C_1\vep \|(t/s) (s/t)^{1/2-7\delta}t^{-3/2}\|_{L^{\infty(\Hcal_s)}}   CC_1\vep s^{1/2+k\delta} 
\\
& \lesssim (C_1\vep)^2s^{-1+k\delta}.
\endaligned
$$
When $1\leq |I_1|+|J_1|\leq N-4$, we see that $4\leq |I_2|+|J_2|\leq N-1$. Then we apply \eqref{ineq prop refine4 kde c} and the third inequality in \eqref{L2 basic 2ge2}:
$$
\aligned
\big\|\del^{I_1}L^{J_1}\del_\alpha v \del^{I_2}L^{J_2}\del_\beta v\big\|_{L_f^2(\Hcal_s)}
\leq \, &  \big\|\del^{I_1}L^{J_1}\del_\alpha v\big\|_{L^\infty(\Hcal_s)}\big\|\del^{I_2}L^{J_2}\del_\beta v\big\|_{L_f^2(\Hcal_s)}
\\
\lesssim \, & C_kC_1\vep \|(s/t)^{-1/2-7\delta}t^{-3/2 + |J_1|\delta}\|_{L^\infty(\Hcal_s)} C_1\vep s^{1/2+|J_2|\delta}
\\
\lesssim & C_k(C_1\vep)^2s^{-1+k\delta}.
\endaligned
$$
When $N-3\leq |I_1|+|J_1|\leq N-1$, we see that $1\leq |I_2|+|J_2|\leq 3\leq N-4$.
Then we apply the third inequality in \eqref{L2 basic 2ge2}
and \eqref{ineq prop refine4 kde c}. Similar to the former case,
$$
\big\|\del^{I_1}L^{J_1}\del_\alpha v \del^{I_2}L^{J_2}\del_\beta v\big\|_{\Hcal_s}\lesssim  (C_1\vep)^2s^{-1+k\delta}.
$$
When $|I_1|+|J_1| = N$ and $|I_2|+|J_2| = 0$, the estimate is derived similarly as in the first case by exchanging the role of $\del_\alpha v$ and $\del_\beta v$. The we conclude that
$$
\big\|\del^{I_1}L^{J_1}\del_\alpha v \del^{I_2}L^{J_2}\del_\beta v\big\|_{L_f^2(\Hcal_s)}\lesssim
 (C_1\vep)^2s^{-1+k\delta}.
$$
The estimate on $\del^IL^J\big(v^2\big)$ is quite similar by applying \eqref{L2 basic 1ge2} and \eqref{ineq prop refine4 kde b},  we omit the detail.

The estimate on $[H^{\alpha\beta}u\del_\alpha\del_\beta,\del^IL^J]v$ is as follows: we observe that this term is a linear combination of the following terms
$$
L^{J_1'}u\del^IL^{J_2'}\del_\alpha\del_\beta v,\quad
\del^{I_1}L^{J_1}u\del^{I_2}L^{J_2}\del_\alpha\del_\beta v,\quad
u\del_\alpha\del_\beta\del^{I_2}L^{J''_2}v
$$
where $I_1+I_2=I$, $J_1+J_2=J$, $J_1'+J_2' = J$ with $|J_1'|\geq 1$ $|I_1|\geq 1$ and $|J_2''|\leq |J|-1$.
The last term is bounded by applying \eqref{ineq lem refine2 KG-W-0} and \eqref{L2 basic 1ge2}:
$$
\aligned
\big\|u\del_\alpha\del_\beta\del^{I_2}L^{J''_2}v\big\|_{L_f^2(\Hcal_s)}
\leq \, & \|(t/s)u\|_{L^\infty(\Hcal_s)}\big\|(s/t)\del_\alpha\del_\beta\del^{I_2}L^{J''_2}v\big\|_{L_f^2(\Hcal_s)}
\\
\lesssim \, & C_1\vep s^{-1}  C_1\vep s^{1/2+|J''_2|\delta}
\lesssim  (C_1\vep)^2s^{-1/2+k\delta}.
\endaligned
$$

For the first term, we make the following observation. When $1\leq |J_1'|\leq N-4$, we have
$4\leq |I|+|J_2|\leq N-1$. Then we apply \eqref{ineq prop refine4 kde a} and \eqref{L2 basic 2.5ge2}:
$$
\aligned
\big\|L^{J_1'}u\del^IL^{J_2'}\del_\alpha\del_\beta v\big\|_{L^(\Hcal_s)}
\leq & \big\|(t/s)L^{J_1'}u\big\|_{L^\infty(\Hcal_s)}\big\|(s/t)\del^IL^{J_2'}\del_\alpha\del_\beta v\big\|_{L^(\Hcal_s)}
\\
\lesssim \, & C_1\vep s^{-1}s^{|J_1'|\delta}  C_1\vep s^{1/2+|J_2'|\delta}
\lesssim  (C_1\vep)^2s^{-1/2+k\delta}.
\endaligned
$$
When $N-3\leq |J_1'|\leq N$, we see that $|I|+|J_2|\leq 3\leq N-5$.
Then we apply the Hardy inequality in the form \eqref{L2 basic 1ge3 a} as well as \eqref{ineq prop refine4 kde c 2}. So we see that
$$
\aligned
\big\|L^{J_1'}u\del^IL^{J_2'}\del_\alpha\del_\beta v\big\|_{L_f^2(\Hcal_s)}
\leq \, & \big\|s^{-1}L^{J_1'}u\big\|_{L_f^2(\Hcal_s)}\big\|s\del^IL^{J_2'}\del_\alpha\del_\beta v\big\|_{L^\infty(\Hcal_s)}
\\
\lesssim \, & C_1\vep s^{|J_1'|\delta}  C_1\vep s^{-1/2+|J_2'|\delta}
\lesssim  (C_1\vep)^2 s^{-1/2+k\delta}.
\endaligned
$$
The second term is easier, since the factor $\del^{I_1}L^{J_1}u$ has better decay when $|I_1|\geq 1$. Then we see that when $|I_1|=1$ and $|J_2|=0$,
$$
\aligned
\|\del^{I_1}u\del^{I_2}L^J\del_\alpha\del_\beta v\|_{L_f^2(\Hcal_s)}
\leq \, & \|(t/s)\del^{I_1}u \, (s/t)\del^{I_2}L^J\del_\alpha\del_\beta v\big\|_{L_f^2(\Hcal_s)}
\\
\lesssim \, & (C_1\vep)\|t^{1/2}s^{-2}\|_{L^\infty(\Hcal_s)} C_1\vep s^{1/2+k\delta} 
\simeq  (C_1\vep)^2s^{-1/2+k\delta} 
\endaligned
$$
when $2\leq |I_1|+|J_1|\leq N-2$, $|I_2|+|J_2|\leq N-2$. Then we apply the third inequality in \eqref{L2 basic 2ge2}
and we see that
$$
\aligned
\|\del^{I_1}L^{J_1}u\del^{I_2}L^{J_2}\del_\alpha\del_\beta v\|_{L_f^2(\Hcal_s)}
\leq \, & \|\del^{I_1}L^{J_1}u\|_{L^\infty(\Hcal_s)}\big\|\del^{I_2}L^{J_2}\del_\alpha\del_\beta v\big\|_{L_f^2(\Hcal_s)}
\\
\lesssim \, & (C_1\vep)^2s^{-1+(k+2)\delta}\leq C(C_1\vep)^2s^{-1/2+k\delta}.
\endaligned
$$
When $N-1\leq |I_1|+|J_1|\leq N$, $|I_2|+|J_2|\leq 1\leq N-7$ then we apply \eqref{L2 basic 1ge2} and \eqref{ineq prop refine4 kde c 4}. Then, we obtain
$$
\aligned
\|\del^{I_1}L^{J_1}u\del^{I_2}L^{J_2}\del_\alpha\del_\beta v\|_{L_f^2(\Hcal_s)}
\leq \, & \|(s/t)\del^{I_1}L^{J_1}u\|_{L_f^2(\Hcal_s)}\big\|(t/s)\del^{I_2}L^{J_2}\del_\alpha\del_\beta v\|_{L^\infty(\Hcal_s)}
\\
\lesssim \, & C_1\vep s^{|J_1|\delta}  C C_1\vep s^{-3/2+|J_2|\delta}
\lesssim (C_1\vep)^2s^{-1/2+k\delta},
\endaligned
$$
which completes the argument.
\end{proof}


\subsection{Proof of Proposition \ref{prop bootstrap}}

Our aim is to establish the improved energy estimate \eqref{ineq energy assumption'} and to conclude the proof of Theorem \ref{thm main}, that is, we now establish Proposition \ref{prop bootstrap}.
The strategy is to apply the energy estimate \ref{prop energy} with \eqref{ineq lem energy coef 1}, \eqref{ineq lem energy coef 2}, \eqref{ineq lem energy lower 1}, \eqref{ineq lem energy lower 2}, \eqref{ineq lem energy higher 1}, and \eqref{ineq lem energy higher 2}.
 
We need to specify the constants and we denote by $\Cbar$ a sufficiently large constant determined only by the structure of the system such that all of the above estimates 
hold true.  
We derive the wave equation of \eqref{eq main} by $\del^IL^J$:
$$
-\Box \del^IL^J u = \del^IL^J\big(P^{\alpha\beta}\del_\alpha v\del_\beta v\big) + \del^IL^J\big(v^2\big).
$$
Recall the energy estimate \eqref{ineq energy wave}
$$
E_m(s,\del^IL^J u)^{1/2}\leq E_m(2,\del^IL^J u)^{1/2} + \int_2^s \| \Box u \|_{L_f^2(\Hcal_\sbar)} \, d\sbar
$$
with
$
\| \Box u \|_{L_f^2(\Hcal_\sbar)}\leq \big\|\del^IL^J\big(P^{\alpha\beta}\del_\alpha v\del_\beta v\big)\big\|_{L_f^2(\Hcal_s)}
+ \big\|\del^IL^J\big(Rv^2\big)\big\|_{L_f^2(\Hcal_s)}.
$
Then by \eqref{ineq lem energy lower 1}, when $|I|+|J|\leq N-4$, we have 
$
\| \Box u \|_{L_f^2(\Hcal_\sbar)}\leq \Cbar (C_1\vep)^2 s^{-3/2+k\delta}, 
$
and we conclude that
\bel{pr1 prop bootstrap}
E_m(s,\del^IL^J u)^{1/2}\leq \Cbar C_0\vep + \Cbar (C_1\vep)^2.
\ee
When $N-3\leq|I|+|J|\leq N$ and $|J| = k$, by \eqref{ineq lem energy higher 1}
\bel{pr2 prop bootstrap}
\aligned
E_m(s,\del^IL^J u)^{1/2}\leq \, &  \Cbar C_0\vep +  \Cbar (C_1\vep)^2\int_2^s \sbar^{-1+k\delta} \, d\sbar
\\
\leq \, & \Cbar C_0\vep + \Cbar (C_1\vep)^2s^{k\delta}.
\endaligned
\ee

For the energy estimates on $v$, we apply $\del^IL^J$ to the Klein-Gordon equation in \eqref{eq main} and obtain
$$
-\Box \del^IL^J v + H^{\alpha\beta}u\del_\alpha\del_\beta\del^IL^J v + c^2\del^IL^J v = [H^{\alpha\beta}u\del_\alpha\del_\beta,\del^IL^J]v.
$$
Then by \eqref{ineq energy KG}, and \eqref{ineq lem energy coef 1} (with $\kappa =1/2$), we find 
$$
E_{m,c}(s,\del^IL^J v)^{1/2}
\leq \kappa^2 E_{m,c}(2,\del^IL^J v)^{1/2} + \kappa^2 \int_2^s \| f \|_{L_f^2(\Hcal_\sbar)} \, d\sbar
+ \kappa^2\int_2^s\|M(\sbar)\|_{L_f^2(\Hcal_s)} \, d\sbar.
$$
When $|I|+|J|\leq N-4$, we rely \eqref{ineq lem energy lower 2} and \eqref{ineq lem energy coef 2} and observe that
\be
\label{pr3 prop bootstrap}
\aligned
E_{m,c}(s,\del^IL^J v)^{1/2} \leq \, &  \Cbar C_0\vep + \Cbar (C_1\vep)^2\int_2^s \sbar^{-1+k\delta} \, d\sbar
\\
\leq \, & \Cbar C_0\vep + \Cbar (C_1\vep)^2s^{k\delta}.
\endaligned
\ee
When $N-3\leq |I|+|J|\leq N$, we apply \eqref{ineq lem energy higher 2} and \eqref{ineq lem energy coef 2} and observe that
\be
\label{pr4 prop bootstrap}
\aligned
E_{m,c}(s,\del^IL^J v)^{1/2} \leq \, & \Cbar C_0\vep + \Cbar (C_1\vep)^2\int_2^s \sbar^{-1/2+k\delta} \, d\sbar
\\
\leq \, & \Cbar C_0\vep +\Cbar (C_1\vep)^2s^{1/2+k\delta}.
\endaligned
\ee
Finally, by choosing $C_1\geq 4 \Cbar C_0$ and $\vep\leq (4 \Cbar C_1)^{-1}$, \eqref{pr1 prop bootstrap}--\eqref{pr4 prop bootstrap} lead to \eqref{ineq energy assumption'}.
  

 \section*{Acknowledgements}

The first author (PGLF) gratefully acknowledges financial support from the {\sl Simons Center for Geometry and Physics} (SCGP), Stony Brook University, during the one-month Program ``Mathematical Method in General Relativity', organized by M. Anderson, S. Klainerman, P.G. LeFloch, and J. Speck in January 2015. This paper was completed when PGLF enjoyed the hospitality of the {\sl Courant Institute of Mathematical Sciences} (CIMS), New York University. The authors were partially supported by the Agence Nationale de la Recherche through grant ANR SIMI-1-003-01. 


\end{document}